\newcommand\reallywidehat[1]{
\savestack{\tmpbox}{\stretchto{
  \scaleto{
    \scalerel*[\widthof{\ensuremath{#1}}]{\kern-.6pt\bigwedge\kern-.6pt}
    {\rule[-\textheight/2]{1ex}{\textheight}}
  }{\textheight}%
}{0.5ex}}%
\stackon[1pt]{#1}{\tmpbox}%
}
\DeclareMathOperator{\SL}{SL}
\DeclareMathOperator{\GL}{GL}
\DeclareMathOperator{\Orm}{O}
\DeclareMathOperator{\tr}{tr}
\DeclareMathOperator{\Hom}{{Hom}}
\DeclareMathOperator{\Ind}{Ind}
\DeclareMathOperator{\diag}{diag}
\DeclareMathOperator{\Urm}{U}
\newcommand{\zfrak}{{\mathfrak{z}}}
\newcommand{\nfrak}{\mathfrak{n}}
\newcommand{\nbar}{\bar{\mathfrak{n}}}
\newcommand{\Nbar}{\overline{N}}
\newcommand{\R}{\mathbb{R}}
\newcommand{\C}{\mathbb{C}}
\newcommand{\g}{\mathfrak{g}}
\newcommand{\Z}{\mathbb{Z}}
\newcommand{\N}{\mathbb{N}}
\renewcommand{\Im}{\operatorname{Im}}
\renewcommand{\Re}{\operatorname{Re}}
\newcommand{\F}{\mathbb{F}}
\newcommand{\Supp}{\operatorname{Supp}}
\newcommand{\T}{T}
\newcommand{\HH}{\mathbb{H}}
\newcommand{\qi}{{\bf i}}
\newcommand{\qj}{{\bf j}}
\newcommand{\qk}{{\bf k}}
\renewcommand{\setminus}{-}
\newcommand{\LocalPoles}{\;\backslash\kern-0.8em{\backslash} \;}
\newcommand{\GlobalPoles}{\; /\kern-0.8em{/} \; }
\newcommand{\IntersectionPoles}{\mathbb{X} }
\DeclarePairedDelimiter\abs{\lvert}{\rvert}
\newtheoremstyle{normal}
{10pt}
{10pt}
{\normalfont}
{}
{\bfseries}
{}
{0.8em}
{\bfseries{\thmname{#1}\thmnumber{ #2}.\thmnote{ \hspace{0.5em}(#3)\newline}}}
\newtheoremstyle{kursiv}
{10pt}
{10pt}
{\itshape}
{}
{\bfseries}
{}
{0.8em}
{\bfseries{\thmname{#1}\thmnumber{ #2}.\thmnote{ \hspace{0.5em}(#3)\newline}}}
\theoremstyle{plain}
\newtheorem{theorem}{Theorem}[section]
\newtheorem{lemma}[theorem]{Lemma}
\newtheorem{corollary}[theorem]{Corollary}
\newtheorem{prop}[theorem]{Proposition}
\newtheorem{theoremalph}{Theorem}
\newtheorem{fact}{Fact}
\theoremstyle{definition}
\newtheorem{remark}[theorem]{Remark}
\numberwithin{equation}{section}
\title{Symmetry breaking operators for\\real reductive groups of rank one}
\author{Jan Frahm and Clemens Weiske}
\date{}
\begin{document}

\maketitle

\abstract{For a pair of real reductive groups $G'\subset G$ we consider the space $\Hom_{G'}(\pi|_{G'},\tau)$ of intertwining operators between spherical principal series representations $\pi$ of $G$ and $\tau$ of $G'$, also called \emph{symmetry breaking operators}. Restricting to those pairs $(G,G')$ where $\dim\Hom_{G'}(\pi|_{G'},\tau)<\infty$ and $G$ and $G'$ are of real rank one, we classify all symmetry breaking operators explicitly in terms of their distribution kernels. This generalizes previous work by Kobayashi--Speh for $(G,G')=({\rm O}(1,n+1),{\rm O}(1,n))$ to the reductive pairs
$$ (G,G') = (\Urm(1,n+1;\F),\Urm(1,m+1;\F)\times F), \qquad \F=\C,\HH,\mathbb{O}\mbox{ and }F<\Urm(n-m;\F). $$

In most cases, all symmetry breaking operators can be constructed using one meromorphic family of distributions whose poles and residues we describe in detail. In addition to this family, there may occur some sporadic symmetry breaking operators which we determine explicitly.}

\tableofcontents
\thispagestyle{empty}
\newpage

\section*{Introduction}\label{sec:introduction}
\addcontentsline{toc}{section}{Introduction}

Group representations occur in various branches of mathematics and mathematical physics. One particular problem in group representation theory is the decomposition of restricted representations. Given an irreducible representation $\pi$ of a group $G$ and a subgroup $G'\subseteq G$, the restricted representation $\pi|_{G'}$ is in general not irreducible anymore, so one is interested in a decomposition of $\pi|_{G'}$ into irreducible representations of $G'$.

In the context of finite or compact groups, an irreducible representation $\pi$ is finite-dimensional and its restriction $\pi|_{G'}$ decomposes into an algebraic direct sum of irreducible representations $\tau$ of $G'$ which occur with finite multiplicities $m(\pi,\tau)\in\Z_{\geq0}$.

In the context of real reductive groups, irreducible representations are typically infinite-dimensional and their restrictions do no longer decompose into direct sums of irreducibles. However, there still exists a notion of multiplicity which describes the occurrence of a given irreducible representation $\tau$ of $G'$ inside a restricted representation $\pi|_{G'}$ as a quotient, namely
$$ m(\pi,\tau) = \dim\Hom_{G'}(\pi|_{G'},\tau). $$
The study of these multiplicities turns out to be very fruitful and has recently attracted a lot of attention.

In contrast to the case of finite or compact groups, the multiplicities for real reductive groups are not necessarily finite. Therefore, Kobayashi~\cite{kobayashi_2015} has proposed to single out settings where only finite multiplicities occur.

\begin{fact}[{see \cite[Theorem C]{KO13}}]\label{fact:FiniteMultiplicities}
Assume that $G$ and $G'$ are defined algebraically over $\R$. Then
$$ \dim\Hom_{G'}(\pi|_{G'},\tau)<\infty $$
for all smooth admissible representations $\pi$ of $G$ and $\tau$ of $G'$ if and only if the pair $(G,G')$ is strongly spherical, i.e. the homogeneous space $(G\times G')/{\rm diag}(G')$ is real spherical.
\end{fact}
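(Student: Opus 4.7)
The plan is to split the biconditional into the two implications and treat each via the standard reduction from admissible representations to invariant distributions on double flag varieties. Throughout I would work with the equivalent reformulation
$$ \Hom_{G'}(\pi|_{G'},\tau) \cong \Hom_{G\times G'}\!\bigl(\pi\boxtimes\tau^\vee,\,C^\infty(\mathrm{diag}(G')\backslash(G\times G'))\bigr), $$
which converts multiplicity bounds into bounds on equivariant functionals on the homogeneous space $X := (G\times G')/\mathrm{diag}(G')$.

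For the hard direction (real sphericity $\Rightarrow$ finiteness), I would first reduce to principal series via Casselman's subrepresentation theorem: any smooth admissible $\pi$ embeds into $\mathrm{Ind}_P^G(\sigma)$ and any smooth admissible $\tau$ is a quotient of some $\mathrm{Ind}_{P'}^{G'}(\sigma')$, so up to a factor that is controlled by finite-dimensional data on the Levi, it suffices to bound
$$ \dim\Hom_{G\times G'}\!\bigl(\mathrm{Ind}_{P\times P'}^{G\times G'}(\sigma\boxtimes\sigma'),\, C^\infty(X)\bigr). $$
By Frobenius reciprocity in the distributional sense (Bruhat's double coset analysis), this space injects into the space of $\mathrm{diag}(G')$-equivariant distributions on $(G\times G')/(P\times P')$ with prescribed infinitesimal character along $P\times P'$. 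The geometric input is then that real sphericity of $X$ is equivalent to $(P\times P')$ having an open orbit on $X$, which in turn is equivalent to $\mathrm{diag}(G')$ having finitely many orbits on $(G\times G')/(P\times P')$ (this is the real-analytic analog of Brion--Vinberg for complex spherical varieties; one uses Matsuki duality plus a real version of Richardson's openness argument). With finitely many orbits in place, a stratified analysis distribution-theoretic analysis \`a la Kolk--Varadarajan / Bruhat produces a finite filtration whose successive quotients are transversal jets supported on individual orbits; finiteness of each graded piece follows because the conormal bundle fibers are finite-dimensional and the isotropy carries an algebraic structure controlled by the principal-series parameter.

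For the easy direction, I would argue the contrapositive: if $X$ is not real spherical, then $(P\times P')$ has no open orbit, so the orbit space $(P\times P')\backslash X$ is at least one-dimensional. Choosing a real-analytic family of orbits, one constructs a one-parameter family of linearly independent $\mathrm{diag}(G')$-invariant distributions on $(G\times G')/(P\times P')$ (for instance by integrating against a continuous family of transversal measures), yielding $\dim\Hom_{G'}(\pi|_{G'},\tau)=\infty$ for a suitably chosen unramified principal series pair $(\pi,\tau)$.

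The main obstacle will be the distribution-theoretic step in the hard direction. Bounding the transversal jets along a non-open orbit requires showing that the characters of the isotropy group acting on the normal bundle exhaust only finitely many compatible parameters; this is where the algebraicity hypothesis on $G,G'$ over $\R$ is used, via an o-minimality / semi-algebraic trivialization of the orbit stratification. A secondary technical point is to upgrade the principal-series bound to arbitrary smooth admissible $\pi,\tau$, which needs the Casselman--Wallach theorem to guarantee that the embedding/quotient used in the reduction lives in the smooth category and contributes only finite-dimensional ambiguity.
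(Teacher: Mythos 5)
The paper does not prove Fact~I; it is cited as \cite[Theorem~C]{KO13} (Kobayashi--Oshima) and used as a black box. So the comparison here is against the proof in the literature, not against anything in the present paper.

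Your general strategy---pass to $\mathrm{diag}(G')$-equivariant distributions on $(G\times G')/(P\times P')$ via the Schwartz kernel theorem, stratify over orbits, and bound transversal jets orbit-by-orbit---is the Bruhat/Kobayashi--Speh approach, and is indeed what the present paper does \emph{for rank one pairs, by explicit computation}. It is, however, not the route taken in \cite{KO13}, and the step where it breaks down in general is exactly the one you flag at the end: \emph{``finiteness of each graded piece follows because the conormal bundle fibers are finite-dimensional.''} This is insufficient. For a fixed orbit $\mathcal{O}$ of positive codimension, the conormal fiber being finite-dimensional only bounds transversal jets of a \emph{fixed} order; the full space of distributions supported on $\overline{\mathcal{O}}$ modulo those on $\partial\mathcal{O}$ is graded over all orders $k\geq 0$ and is a priori infinite-dimensional. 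To cut it down to finitely many $k$ one needs a one-parameter subgroup in the stabilizer of a point of $\mathcal{O}$ that acts with nonzero weights on the conormal fiber and is constrained by the $(\lambda,\nu)$-homogeneity. Proving that such a contracting subgroup exists at \emph{every} orbit of \emph{every} strongly spherical pair (not just the open one) is precisely the hard geometric input; it is not a formal consequence of ``finitely many orbits,'' and your appeal to ``the isotropy carries an algebraic structure controlled by the principal-series parameter'' leaves it entirely open. Kobayashi--Oshima circumvent this by a genuinely different mechanism: they realize the intertwining condition as a holonomic system of PDEs and prove boundedness via the theory of systems with \emph{regular singularities} along the boundary of the real spherical variety, rather than by an orbit-by-orbit jet count. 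Two secondary remarks: (a) the equivalence ``real spherical $\Leftrightarrow$ finitely many $P\times P'$-orbits'' over $\R$ is itself a nontrivial theorem, not ``Matsuki duality plus Richardson''---Matsuki duality concerns $K$- versus $G_\R$-orbits on complex flag varieties and is not the relevant statement here; (b) in the converse direction, producing a \emph{linearly independent continuous family} of equivariant distributions from a positive-dimensional orbit space needs care (continuity of the family, choice of generic $(\lambda,\nu)$ so that no accidental relations occur), though the idea is sound.
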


For a fixed strongly spherical reductive pair $(G,G')$ one is now interested in determining the multiplicities $m(\pi,\tau)$. It turns out that for some applications such as unitary branching laws (see e.g. \cite{MO15}) or applications to partial differential equations or analytic number theory (see e.g. \cite{FraSu_2018,MOZ16b,MO17}) the mere knowledge of the multiplicities is not sufficient and one needs an explicit description of the operators in
$$ \Hom_{G'}(\pi|_{G'},\tau), $$
so-called \emph{symmetry breaking operators} as advocated by Kobayashi in his ABC program (see \cite{kobayashi_2015}). This is the main objective of this paper.

Strongly spherical real reductive pairs have recently been classified in the case of symmetric pairs by Kobayashi--Matsuki~\cite{KM14} and in general by Knop--Krötz--Pecher--Schlichtkrull \cite{KKPS17} (The complete list can also be found in \cite{Moe17}.) In what follows we restrict to the case where $G$ is of real rank one. Up to central extensions, every simple real reductive group $G$ of rank one is of the form $\Urm(1,n+1;\F)$ with $F\in\{\R,\C,\HH\}$ and $n\geq0$ or $\F=\mathbb{O}$ and $n=1$, where we use the interpretation $\Urm(1,2;\mathbb{O})=F_{4(-20)}$.

\begin{fact}[{see \cite{KM14} and \cite{KKPS17}}]\label{fact:StronglySphericalRankOnePairs}
Up to central extensions, the irreducible strongly spherical real reductive pairs $(G,G')$ with $G$ of real rank one and $G'$ non-compact are given by
$$ (G,G') = (\Urm(1,n+1;\F),\Urm(1,m+1;\F)\times F) $$
with $\F\in\{\R,\C,\HH\}$ and $0\leq m\leq n$ or $\F=\mathbb{O}$ and $0\leq m\leq n=1$ and $F<\Urm(n-m;\F)$ such that the action of $\Urm(1;\F)\times F$ on $\F^{n-m}$ by $(a,c)\cdot z=azc^{-1}$ has an open orbit on the unit sphere in $\F^{n-m}$.
\end{fact}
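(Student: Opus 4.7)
The plan is to reduce the classification to a concrete transitivity condition on a sphere, following the strategy of Kobayashi--Matsuki in the symmetric case and of Knop--Krötz--Pecher--Schlichtkrull in general. By Fact~\ref{fact:FiniteMultiplicities}, strongly sphericality of $(G,G')$ amounts to real sphericality of the homogeneous space $(G\times G')/\mathrm{diag}(G')$. Using the identification $(G\times G')/\mathrm{diag}(G')\cong G$ via $(g,g')\mathrm{diag}(G')\mapsto g{g'}^{-1}$, and letting $P<G$, $P'<G'$ be minimal parabolic subgroups, this translates into the requirement that $P\times P'$ has an open orbit on $G$ under $(p,p')\cdot g=pg{p'}^{-1}$, equivalently that $P'$ acts with an open orbit on the real flag variety $G/P$. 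First I would recall that for $G=\Urm(1,n+1;\F)$ with $\F\in\{\R,\C,\HH,\mathbb{O}\}$ the space $G/P$ is the $\F$-projective sphere, which up to a finite cover can be identified with the unit sphere $S\subset\F^{n+1}$ modulo the $\Urm(1;\F)$-action by scalars.

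Next I would use Fact~I together with the list of simple rank-one Lie groups to enumerate the possible $G$, and enumerate candidate reductive subgroups $G'$ with non-compact semisimple part of rank one: any such $G'$ must embed, up to center, into a Levi subgroup of the form $\Urm(1,m+1;\F)\times\Urm(n-m;\F)$ (this is the centralizer in $G$ of a geodesic copy of $\F$-hyperbolic $m$-space), so $G'=\Urm(1,m+1;\F)\times F$ with $F<\Urm(n-m;\F)$ is the generic shape. I would then study the $\Urm(1,m+1;\F)\times F$-action on $G/P$: the $\Urm(1,m+1;\F)$-orbits on the projective sphere of $\F^{n+2}$ correspond, via the obvious splitting $\F^{n+2}=\F^{m+2}\oplus\F^{n-m}$, to a foliation whose generic leaf is a product, with the transverse direction controlled by $\F^{n-m}$; passing to the $P'$-action and computing dimensions shows that $P'$ has an open orbit on $G/P$ if and only if $\Urm(1;\F)\times F$ acts with an open orbit on the unit sphere of $\F^{n-m}$.

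The main obstacle is the dimension count and the identification of the generic stabilizer on $G/P$: one needs to verify that the Bruhat big cell of $G$ (an affine chart isomorphic to the Heisenberg-type nilpotent radical $N$) is a principal $P'$-bundle over an open subset of the unit sphere in $\F^{n-m}$, for which the Iwasawa decomposition of $G$ and the action of the minimal parabolic of $\Urm(1,m+1;\F)$ on $N$ must be carried out explicitly. Once this is in place, the stated open-orbit condition for $\Urm(1;\F)\times F$ on $S^{d(n-m)-1}$ (with $d=\dim_\R\F$) is exactly the remaining requirement, and one recovers the classification. The exceptional case $\F=\mathbb{O}$, $n=1$ has to be treated separately since $\Urm(1,m+1;\mathbb{O})$ only makes sense for $m\in\{-1,0,1\}$; here one verifies the list by hand using the classification of spherical subgroups of $F_{4(-20)}$.
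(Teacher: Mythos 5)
The paper does not actually prove this statement: it is labelled \emph{Fact} and cited directly from Kobayashi--Matsuki \cite{KM14} and Knop--Kr\"otz--Pecher--Schlichtkrull \cite{KKPS17} (see also the complete table in \cite{Moe17}). So there is no in-paper argument to compare against; what can be assessed is whether your sketch would supply an independent proof.

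The reduction you set up at the start is correct and is the standard one: strong sphericality of $(G,G')$ is equivalent to a minimal parabolic $P'$ of $G'$ having an open orbit on $G/P$, and for rank-one $G$ the flag variety $G/P$ is a sphere. (A small inaccuracy: $G/P$ for $G=\Urm(1,n+1;\F)$ is the sphere $S^{d(n+1)-1}$ with $d=\dim_\R\F$, not the sphere modulo the $\Urm(1;\F)$-action; taking that quotient would give a projective space.) The final reduction you describe, from the open-orbit condition for $P'$ on $G/P$ to the open-orbit condition for $\Urm(1;\F)\times F$ on the unit sphere of $\F^{n-m}$, is also consistent with what the paper itself verifies in Proposition~\ref{cor:orbits} and Remark~\ref{rem:TransUnitSphere} \emph{for subgroups of the stated form}.

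The genuine gap is the step in which you assert that any admissible $G'$ ``must embed, up to center, into a Levi subgroup of the form $\Urm(1,m+1;\F)\times\Urm(n-m;\F)$.'' This is not a consequence of anything you have set up; it is precisely the classification one is trying to establish. Reductive subgroups of a rank-one group $G$ with non-compact factor of real rank one are not a priori contained in a stabilizer of a totally geodesic $\F$-hyperbolic subspace. For instance, for $\F=\C$ the pair $(\mathrm{U}(1,n+1),\mathrm{O}(1,n+1))$, or for $\F=\HH$ the pair $(\mathrm{Sp}(1,n+1),\mathrm{U}(1,n+1))$, are reductive pairs whose $G'$ is not of the stated shape; deciding whether such pairs are strongly spherical (and in fact excluding them from the stated list) is exactly the content of \cite{KM14} and \cite{KKPS17}, and it requires either the local structure theorem for spherical pairs or a direct case-by-case comparison of candidate $G'$ against the open-orbit criterion. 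Without an argument that rules out all $G'$ not of the form $\Urm(1,m+1;\F)\times F$, the proposal only verifies the ``if'' direction of the classification, not the ``only if'' direction; the latter is the hard part, and it is skipped.
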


Having fixed the class of groups, we now have to specify a class of representations for which to study symmetry breaking operators. By the Casselman Embedding Theorem every irreducible admissible representation of a reductive group occurs as a quotient inside a principal series representation. Principal series are families of representations induced from a parabolic subgroup $P=MAN$. Here we focus on the case of spherical principal series, i.e. those induced from representations of $P$ which are trivial on $M$ and $N$. Since $A\cong\R_+$, its representations are parametrized by a complex number $\lambda\in\C$, and we denote the corresponding induced representation of $G$ by $\pi_\lambda$. In the same way the spherical principal series $\tau_\nu$ ($\nu\in\C$) of $G'$ is constructed. The topic of this paper is a full classification of $\Hom_{G'}(\pi_\lambda|_{G'},\tau_\nu)$ for all strongly spherical real reductive pairs $(G,G')$ with $G$ of rank one and all $(\lambda,\nu)\in\C$.

For the pair $(G,G')=({\rm O}(1,n+1),{\rm O}(1,n))$ of rank one orthogonal groups, symmetry breaking operators between spherical principal series were classified by Kobayashi--Speh~\cite{kobayashi_speh_2015} (see also \cite{kobayashi_speh_2018} for the vector-valued case). We therefore restrict ourselves to the strongly spherical pairs $(G,G')=(\Urm(1,n+1;\F),\Urm(1,m+1;\F)\times F)$ with $\F\in\{\C,\HH,\mathbb{O}\}$ in this paper.

\subsection*{Multiplicities}

For $\F\in\{\C,\HH,\mathbb{O}\}$ we consider the real reductive pairs
$$ (G,G') = (\Urm(1,n+1;\F),\Urm(1,m+1;\F)\times F) $$
with $0\leq m<n$ and $F<\Urm(n-m;\F)$ a closed subgroup. We assume that the pair $(G,G')$ is strongly spherical.

Let $\pi_\lambda$ ($\lambda\in\C$) denote the spherical principal series of $G$. Our normalization is chosen, such that $\pi_\lambda$ is unitary for $\lambda\in i\R$ and has a finite-dimensional quotient if and only if $\lambda\in\rho+2\Z_{\geq0}$, where $\rho=\frac{1}{2}(p+2q)$ with $p=n\cdot\dim_\R\F$ and $q=\dim_\R\F-1\in\{1,3,7\}$. In the same way we parametrize the spherical principal series $\tau_\nu$ ($\nu\in\C$) of $G'$ and let $\rho'=\frac{1}{2}(p'+2q)$ with $p'=m\cdot\dim_\R\F$ (see Section~\ref{sec:PrincipalSeries} for the precise definitions). Note that for $m=0$ we have $\mathfrak{su}(1,1;\F)=\mathfrak{so}(1,q+1)$, so that $\mathfrak{g}'$ is
the product of an indefinite orthogonal Lie algebra and a compact Lie algebra.

To state our results for the multiplicities $\dim\Hom_{G'}(\pi_\lambda|_{G'},\tau_\nu)$ we define for $m>0$
\begin{align*}
 L &= \{ (-\rho+q-1-2i,-\rho'+q-1-2j): i,j \in \Z, 0\leq j\leq i \}
\intertext{and for $m=0$}
 L &= \{ (-\rho+q-1-2i,\pm(1+2j)): i,j \in \Z, 0\leq j\leq i \},
\intertext{as well as}
 S_1 &= \{ (-\rho+q-1-2i,\rho'+2j): i,j\in\Z, 0\leq j\leq i\},\\
 S_2 &= \{ (-\rho+q-1-2i,\rho'+2i): i\in\Z_{\geq0} \},\\
 S_3 &= \{ (-\rho+q-1,\rho') \}.
\end{align*}
Note that for $m=0$ we have $S_1\subseteq L$ for $\F=\C$ and $S_2\cap L=S_3\cap L=\emptyset$ for $\F=\HH$.

\begin{theoremalph}[Multiplicities]\label{introthm:Multiplicities}
The multiplicities of symmetry breaking operators between spherical principal series of $G$ and $G'$ are given by
$$ \dim\Hom_{G'}(\pi_\lambda|_{G'},\tau_\nu) = \begin{cases}1&\mbox{for $(\lambda,\nu)\in\C^2\setminus L$,}\\2&\mbox{for $(\lambda,\nu)\in L$,}\end{cases} $$
except in the following three cases:
\begin{enumerate}[label=(\roman{*}), ref=\thetheorem(\roman{*})]
\item For $(G,G')=(\Urm(1,n+1),\Urm(1,1)\times F)$ we have
$$ \dim\Hom_{G'}(\pi_\lambda|_{G'},\tau_\nu) = \begin{cases}1&\mbox{for $(\lambda,\nu)\in\C^2\setminus L$,}\\2&\mbox{for $(\lambda,\nu)\in L \setminus S_1$,}\\3&\mbox{for $(\lambda,\nu)\in S_1$.}\end{cases} $$
\item For $(G,G')=({\rm Sp}(1,2),{\rm Sp}(1,1))$ we have
$$ \dim\Hom_{G'}(\pi_\lambda|_{G'},\tau_\nu) = \begin{cases}1&\mbox{for $(\lambda,\nu)\in\C^2\setminus (L\cup S_2)$,}\\2&\mbox{for $(\lambda,\nu)\in L$,}\\2i+4&\mbox{for $(\lambda,\nu)=(-\rho+q-1-2i,\rho'+2i)\in S_2$,}\end{cases} $$
\item For $(G,G')=({\rm Sp}(1,2),{\rm Sp}(1,1)\times\Urm(1))$ we have
$$ \dim\Hom_{G'}(\pi_\lambda|_{G'},\tau_\nu) = \begin{cases}1&\mbox{for $(\lambda,\nu)\in\C^2\setminus(L\cup S_3)$,}\\2&\mbox{for $(\lambda,\nu)\in L\cup S_3$.}\end{cases} $$
\end{enumerate}
\end{theoremalph}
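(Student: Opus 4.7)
The plan is to realise the symmetry breaking operators as distribution kernels. In the non-compact picture, restricting a $(P',\chi_{\lambda,\nu})$-equivariant distribution on $G/P$ to the open Bruhat cell $\Nbar$ identifies $\Hom_{G'}(\pi_\lambda|_{G'},\tau_\nu)$ with the space of distributions on $\Nbar\cong\F^n\oplus\zfrak$ (where $\zfrak$ is the $q$-dimensional centre of $\nfrak$) which transform by a prescribed character of $P'=P\cap G'$ and by $F$. I would set this correspondence up rigorously and then reduce the dimension count to a problem about equivariant distributions on $\Nbar$.

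Next I would stratify $\Nbar$ by $P'$-orbits. Since $(G,G')$ is strongly spherical and of rank one, there are only finitely many orbits, and the essential ones are the open stratum, the codimension-$p'$ stratum $\F^m\oplus\zfrak$ obtained by setting the transversal coordinate $v\in\F^{n-m}$ to zero, and possibly lower-dimensional strata arising from the $F$-orbits on the unit sphere of $\F^{n-m}$. On the open stratum a single meromorphic family $\mathbb{A}_{\lambda,\nu}$ of homogeneous distributions supplies the generic multiplicity $1$. I would then compute the local contribution at the closed stratum by expanding candidate kernels in a normal Taylor series and translating the $A'M'$-equivariance into an indicial equation. Matching weights shows that an additional solution exists precisely when $\lambda=-\rho+q-1-2i$ and $\nu\in\{-\rho'+q-1-2j:0\le j\le i\}$ (or $\pm(1+2j)$ when $m=0$), so that the multiplicity becomes $2$ exactly on $L$.

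The exceptional cases (i)--(iii) require accounting for additional strata or additional $F$-invariants. For $m=0$ with $\F=\C$ the group $F$ preserves an extra function on $V=\F^{n-m}$, producing a third linearly independent distribution at the parameters of $S_1$. For the ${\rm Sp}(1,2)$ cases the quaternionic centre $\zfrak\cong\R^3$ carries an ${\rm Sp}(1)$-action, and its interplay with the transversal $V$ generates sporadic polynomial solutions of the indicial system. The main obstacle is the count on $S_2$ in case (ii): there the polynomial solutions form an ${\rm Sp}(1)$-module whose dimension equals $2i+4$, and pinning down this number requires solving the coupled transversal recursions explicitly and identifying all ${\rm Sp}(1)$-isotypic components at each level. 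Once the local multiplicities have been computed at each stratum in this way, the theorem follows by summing contributions over the orbit stratification.
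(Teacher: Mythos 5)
Your overall framework — reduce to $P'$-equivariant distributions on $\nbar$, stratify by $P'$-orbits, use a meromorphic family for the generic count, and handle the sporadic cases via the failure of $\C[\nbar]^{M'}$ to be generated by quadratic invariants — is the right starting point and matches the paper's general strategy. However, the final step of your plan, "summing contributions over the orbit stratification," is not valid, and the picture of where the second dimension on $L$ comes from is wrong.

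Concretely, the relevant structure is the exact sequence
\begin{equation*}
0 \longrightarrow \mathcal{D}'_{\{0\}}(\nbar)_{\lambda,\nu} \longrightarrow \mathcal{D}'(\nbar)_{\lambda,\nu} \longrightarrow \mathcal{D}'(\nbar\setminus\{0\})_{\lambda,\nu},
\end{equation*}
and the dimension of the middle term is \emph{not} the sum of the outer dimensions: one must determine for which $(\lambda,\nu)$ the right-hand restriction map is surjective. The space $\mathcal{D}'(\nbar\setminus\{0\})_{\lambda,\nu}$ is one-dimensional for \emph{all} $(\lambda,\nu)$, so what you have to decide is precisely whether the unique (up to scalar) solution outside the origin extends to a distribution on all of $\nbar$. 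This is the hardest part of the argument and you do not address it. In the paper this is settled by a delicate Laurent-expansion argument (Proposition~\ref{prop:continuation_solutions}, using Lemma~\ref{lemma:KS_continuation}) which shows that the restriction map is \emph{zero} exactly when $(\lambda,\nu)\in\GlobalPoles\setminus L$; this is what forces the generic multiplicity to stay $1$ rather than jump to $2$ on all of $\GlobalPoles$.

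Your description of the local analysis at the origin is also off: the space $\mathcal{D}'_{\{0\}}(\nbar)_{\lambda,\nu}$ of differential symmetry breaking kernels is one-dimensional on \emph{all} of $\GlobalPoles$ (outside the sporadic cases), not only on $L$. The additional dimension on $L$ does \emph{not} come from a second solution at the closed stratum; it comes from the intermediate stratum $\nbar'=\mathfrak{v}'\oplus\mathfrak{z}$, where the renormalized family $u^B_{\lambda,\nu}$ (Theorem~\ref{thm:singular_family}) provides a non-differential kernel whose support is exactly $\nbar'$ when $(\lambda,\nu)\in L$. You mention only two strata (open and closed) in your count and omit $u^B$ entirely, which is fatal for getting the correct value $2$ on $L$. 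Finally, your attribution of the sporadic $\F=\C$ operators to "$F$ preserving an extra function on $V$" is a misdiagnosis: the extra invariant is $Z$ itself, available because $\Im\C$ is one-dimensional and therefore carries a trivial $M'$-action, independently of $F$; this is what produces the $v^C_{\lambda,\nu}$ family on $S_1$.
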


\subsection*{Regular symmetry breaking operators}

We now describe the explicit construction of all symmetry breaking operators in the space $\Hom_{G'}(\pi_\lambda|_{G'},\tau_\nu)$. For this we realize the representations $\pi_\lambda$ as smooth sections of homogeneous line bundles over the flag variety $G/P$. If $\tau_\nu$ is realized in the same way as smooth sections over $G'/P'$, $P'=P\cap G'$ a parabolic subgroup of $G'$, then symmetry breaking operators in $\Hom_{G'}(\pi_\lambda|_{G'},\tau_\nu)$ are given by convolution with a $P'$-equivariant distribution section over $G/P$ followed by restriction to $G'/P'\subseteq G/P$. For our pairs of groups $(G,G')$ the distribution kernels can most easily be described on the opposite unipotent radical $\overline{N}$ which can be identified with an open dense subset of $G/P$. In this way, symmetry breaking operators can be classified in terms of certain invariant distributions on $\overline{N}$ (see Section~\ref{sec:DistributionKernelsOnOpenBruhatCell} for details).

For $G=\Urm(1,n+1;\F)$ the group $\overline{N}$ is a $2$-step nilpotent group diffeomorphic to its Lie algebra $\nbar\cong\F^n\oplus\Im\F$, where $\Im\F=\{Z\in\F:\overline{Z}=-Z\}$. We write $\mathcal{D}'(\nbar)_{\lambda,\nu}$ for those distributions on $\nbar$ which correspond to symmetry breaking operators in $\Hom_{G'}(\pi_\lambda|_{G'},\tau_\nu)$ and now describe the classification of $\mathcal{D}'(\nbar)_{\lambda,\nu}$ in detail. For this we may without loss of generality assume that $G'$ is embedded in $G$ such that $\overline{N}\cap G'$ corresponds to the subalgebra $\nbar'=\F^m\oplus\Im\F\subseteq\nbar$ with $\F^m\subseteq\F^n$ in the first $m$ coordinates.

Since the distribution kernels on $G/P$ corresponding to symmetry breaking operators are $P'$-equivariant, the support of every such kernel is a union of $P'$-orbits in $G/P$. There are three such orbits, an open dense orbit $\mathcal{O}_A$, an intermediate orbit $\mathcal{O}_B$ and a closed orbit $\mathcal{O}_C$. Their intersections with the open dense Bruhat cell $\overline{N}P/P\simeq\nbar$ are given by
$$ \mathcal{O}_A\cap\nbar = \nbar\setminus\nbar', \qquad \mathcal{O}_B\cap\nbar = \nbar'\setminus\{0\}, \qquad \mathcal{O}_C\cap\nbar = \{0\}. $$
For each of the orbits we construct a holomorphic family of distributions which is generically supported on that orbit and contains all distributions in $\mathcal{D}'(\nbar)_{\lambda,\nu}$ which are supported on that orbit, except for $(\lambda,\nu)\in S_i$ ($i=1,2,3$) in the cases (i)-(iii) in Theorem~\ref{introthm:Multiplicities}. The remaining distribution kernels for $(\lambda,\nu)\in S_i$ are all supported on $\{0\}$ and will be described at the end of the introduction. Symmetry breaking operators whose distribution kernels are supported on the whole space $\nbar$ are called \emph{regular} while operators whose kernels have smaller support are called \emph{singular}. Note that a symmetry breaking operator is a differential operator if and only if its distribution kernel is supported on $\{0\}\subseteq\nbar$.

For $\Re(\lambda\pm\nu)>-\frac{p''}{2}$ with $p''=(n-m)\cdot\dim_\R\F=p-p'=2(\rho-\rho')$ we define the following locally integrable function on $\nbar=\F^n\oplus\Im\F $:
$$ u_{\lambda,\nu}^A(X,Z) := \frac{1}{\Gamma(\frac{\lambda+\rho-\nu-\rho'}{2})\Gamma(\frac{\lambda+\rho+\nu-\rho'}{2})}N(X,Z)^{-2(\nu+\rho')}\abs{X''}^{\lambda-\rho+\nu+\rho'}, $$
where $N(X,Z)=(\abs{X}^4+\abs{Z}^2)^{\frac{1}{4}}$ is the norm function on $\nbar$ and we have used the notation $X=(X',X'')\in\F^m\oplus\F^{n-m}=\F^n$.

\begin{theoremalph}[Regular symmetry breaking operators]\label{introthm:RegularSBOs}
$u_{\lambda,\nu}^A$ extends to a family of distributions which depends holomorphically on $(\lambda,\nu)\in\C^2$ and $u_{\lambda,\nu}^A\in\mathcal{D}'(\nbar)_{\lambda,\nu}$ for all $(\lambda,\nu)\in\C^2$. Further, $u_{\lambda,\nu}^A=0$ if and only if $(\lambda,\nu)\in L$.
\end{theoremalph}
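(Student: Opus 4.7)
The plan is to prove the three claims---holomorphic extension, $P'$-equivariance, and identification of the vanishing locus with $L$---in order, starting from the region $\Re(\lambda\pm\nu) > -p''/2$ where $u^A_{\lambda,\nu}$ is locally integrable and then extending meromorphically in $(\lambda,\nu)$.

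Local integrability in that region follows from polar coordinates adapted to the Heisenberg dilation on $\nbar = \F^n \oplus \Im\F$: writing $(X,Z) = \delta_r \omega$ with $r = N(X,Z)$ and $\omega$ on the unit sphere $\Sigma = \{N=1\}$, the Lebesgue measure becomes $r^{Q-1}\,dr\,d\omega$ with $Q = 2\rho$ the homogeneous dimension. In this region I would verify $P'$-equivariance by a direct computation of the action of $P' = M'AN'$ on $\nbar$: both $N$ and $\abs{X''}$ are $M'$-invariant; the combined Heisenberg-homogeneity degree of $N^{-2(\nu+\rho')}\abs{X''}^{\lambda-\rho+\nu+\rho'}$ matches the principal-series shift under $A$; and the $N'$-equivariance reduces to a first-order differential identity obtained from the infinitesimal action of $\nbar'$ on $\nbar$.

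For meromorphic continuation to $\C^2$ I would appeal to Bernstein's theorem, or argue directly from the polar decomposition. The numerator $N^{-2(\nu+\rho')}\abs{X''}^{\lambda-\rho+\nu+\rho'}$ acquires simple poles along two families of affine hyperplanes in $(\lambda,\nu)$: a \emph{radial} family at $\lambda + \rho - \nu - \rho' \in -2\Z_{\ge 0}$, coming from the singularity at the origin $r=0$; and a \emph{spherical} family at $\lambda + \rho + \nu - \rho' \in -2\Z_{\ge 0}$, coming from the singularity along $\Sigma \cap \{X''=0\}$. These are exactly the pole loci of $\Gamma_1 = \Gamma(\tfrac{\lambda+\rho-\nu-\rho'}{2})$ and $\Gamma_2 = \Gamma(\tfrac{\lambda+\rho+\nu-\rho'}{2})$, respectively, so the normalization cancels all poles and yields a holomorphic family on $\C^2$. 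By the identity principle, $P'$-equivariance propagates to all of $\C^2$, giving $u^A_{\lambda,\nu} \in \mathcal{D}'(\nbar)_{\lambda,\nu}$.

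The vanishing locus is identified by a double-residue computation. At $(\lambda_0,\nu_0) \in L$ the constraint $0 \le j \le i$ ensures that both $\Gamma_1$ and $\Gamma_2$ have simple poles, so $1/(\Gamma_1\Gamma_2)$ has a double zero and the numerator has a matching double pole at the transverse intersection of the two hyperplane families; $u^A_{\lambda_0,\nu_0}$ is therefore the ratio of the top residues. The top residue of the spherical family at order $\ell$ has the form $r^{2\ell}\cdot B_\ell(r)$ with $B_\ell$ smooth, because $\Delta_{X''}^\ell$ applied to $\phi(\delta_r\omega)$ extracts a factor $r^{2\ell}$ from the weight-$1$ scaling of $X''$. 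Its contribution to the $2k$-th radial Taylor coefficient vanishes whenever $k<\ell$, which is precisely the inequality read off from $j \ge 0$ in the $m>0$ cases; in the $m=0$ cases a supplementary parity argument (that $B_\ell(r)$ contains only powers of $r$ divisible by $4$, because $\Sigma'$ sits inside $\Im\F$) handles the residual $k\ge\ell$ points. Conversely, outside $L$ at most one $\Gamma$ is singular, and the simple zero of $1/(\Gamma_1\Gamma_2)$ is cancelled by a simple numerator pole whose residue is a nonzero derivative-of-delta distribution along one of the singular strata, so $u^A_{\lambda_0,\nu_0}\neq 0$. The main obstacle is this final residue bookkeeping, especially in the $m=0$ cases where the crude inequality $k<\ell$ fails and one must invoke the extra parity argument.
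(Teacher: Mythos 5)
Your treatment of the holomorphic continuation and $P'$-equivariance matches the paper's proof of Theorem~\ref{lemma:holomorphic_cont_solution}: one passes to the H-type polar coordinates $(X,Z)=(r\omega,r^2\eta)$, factors the weight into a radial piece $r^{\lambda+\rho-\nu-\rho'-1}$ and a spherical piece $\abs{\omega''}^{\lambda-\rho+\nu+\rho'}$, shows each has simple poles cancelled by the two Gamma factors, and propagates the equivariance from the region of local integrability by analytic continuation. This part is sound.

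The identification of the vanishing locus, however, contains a genuine gap. Your key claim is that the spherical residue is of the form $r^{2\ell}\,B_\ell(r)$ with $B_\ell$ smooth, so that the subsequent radial residue (the $2k$-th Taylor coefficient) vanishes whenever $k<\ell$. This is false, and the error already occurs in the abelian case: the residue of $\abs{\omega''}^\mu$ on the sphere $\mathbb{S}$ is not simply $\Delta_{\omega''}^\ell\delta(\omega'')$ paired against $\varphi(r\omega,r^2\eta)$. Writing $t=\abs{\omega''}$, the residue comes from $t$-derivatives at $t=0$, and the chain rule from $t$ to the ambient coordinates produces terms in which fewer than $\ell$ derivatives land on the $X''$-slot of $\varphi$; these terms carry powers of $r$ strictly less than $2\ell$. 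One can read this off directly from the paper's explicit formula for the singular residue (Theorem~\ref{thm:singular_family}(iii)): the residue of $u_{\lambda,\nu}^A$ along $\LocalPoles$ is a \emph{sum} over $\Delta_{\mathfrak{v}''}^{k}\delta(X'')$ for \emph{all} $k=0,\dots,\ell$, multiplied by nontrivial homogeneous distributions on $\nbar'$, not just the top term $k=\ell$. Consequently the radial Taylor coefficient of order $2k$ is generically nonzero for $k<\ell$, and the implication ``$k<\ell$ implies $u^A=0$'' fails.

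This is not a minor bookkeeping issue: your criterion $k<\ell$ does not reproduce $L$. For $m>0$ the condition defining $L$ inside $\IntersectionPoles$ (parametrized by $\lambda+\rho-\nu-\rho'=-2k$, $\lambda+\rho+\nu-\rho'=-2\ell$) is $\ell-k\geq\tfrac{p'}{2}+1$ together with a parity constraint, which is strictly stronger than $k<\ell$; for instance $(k,\ell)=(0,1)$ and $p'=2$ gives a point of $\IntersectionPoles\setminus L$ where the paper proves $u^A\neq0$, yet $k<\ell$ holds. Relatedly, your converse claim that ``outside $L$ at most one $\Gamma$ is singular'' is false: both $\Gamma$'s are singular precisely on $\IntersectionPoles$, and $\IntersectionPoles\setminus L$ is nonempty.

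The paper avoids these pitfalls by computing the residue along $\GlobalPoles$ explicitly and identifying it with the distribution $u_{\lambda,\nu}^C$ supported at the origin (Theorem~\ref{thm:u^C}), written as an explicit sum $\sum_{h+i+2j=k}c_{h,i,j}\Delta_{\mathfrak{v}'}^h\Delta_{\mathfrak{v}''}^i\Box^j\delta$. That sum is shown to be nonvanishing for all $(\lambda,\nu)\in\GlobalPoles$ by isolating one coefficient (the $i=j=0$ term for $m>0$). The vanishing of $u^A$ is then entirely controlled by a scalar gamma prefactor, and the duplication formula reduces its zero set to exactly $L$. To repair your argument you would need to replace the ansatz $a_{-1}(r)=r^{2\ell}B_\ell(r)$ by the full expansion of the spherical residue (essentially Theorem~\ref{thm:singular_family}(iii)) and track cancellations in all lower-order terms, which in effect recovers the paper's computation of $u^C$.
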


In Corollary~\ref{cor:supports} we show that $\Supp u_{\lambda,\nu}^A=\nbar$ if and only if $(\lambda,\nu)\in\C^2\setminus(\GlobalPoles\cup\LocalPoles)$, where
\begin{align*}
 \GlobalPoles &:= \{ (\lambda,\nu)\in \C^2, \lambda+\rho-\nu-\rho' \in -2\Z_{\geq 0}   \},\\
 \LocalPoles &:= \{ (\lambda,\nu)\in \C^2, \lambda+\rho+\nu-\rho' \in -2\Z_{\geq 0}   \}.
\end{align*}
Note that $L\subseteq\IntersectionPoles:=\GlobalPoles\cap\LocalPoles$ since $q$ is odd and $p'$ is even for $\F=\C,\mathbb{H},\mathbb{O}$. For $(\lambda,\nu)\in\GlobalPoles\cup\LocalPoles$ the support of $u_{\lambda,\nu}^A$ is smaller than $\nbar$ and we now describe these distributions in more detail.

\subsection*{Singular symmetry breaking operators}

Write $\nbar=\mathfrak{v}\oplus\mathfrak{z}$ with $\mathfrak{v}=\F^n$ and $\mathfrak{z}=\Im\F$. Decompose $\mathfrak{v}=\mathfrak{v}'\oplus\mathfrak{v}''$ with $\mathfrak{v}'=\F^m$ and $\mathfrak{v}''=\F^{n-m}$, so that $\nbar'=\mathfrak{v}'\oplus\mathfrak{z}$. We write
\begin{align*}
 p &= \dim\mathfrak{v}=n\cdot\dim_\R\F,\\
 p' &= \dim\mathfrak{v}'=m\cdot\dim_\R\F, & q &= \dim\mathfrak{z}=\dim_\R\F-1.\\
 p'' &= \dim\mathfrak{v}''=(n-m)\cdot\dim_\R\F,
\end{align*}
Further, let $\Delta_{\mathfrak{v}'}$, $\Delta_{\mathfrak{v}''}$ and $\Box$ denote the Euclidean Laplacians on $\mathfrak{v}'$, $\mathfrak{v}''$ and $\mathfrak{z}$, respectively.

For $(\lambda,\nu)\in\LocalPoles$ with $\lambda+\rho+\nu-\rho'=-2l\in-2\Z_{\geq 0}$  and $\Re \nu \ll 0$ let
$$ u_{\lambda,\nu}^B(X,Z):= c^B(\lambda,\nu) N(X,Z)^{-2(\nu+\rho')}\Delta_{\mathfrak{v}''}^l\delta(X'), $$
where $c^B(\lambda,\nu)$ is a meromorphic renormalization parameter defined in \eqref{eq:singular_renorm_parameter}. Further, for $(\lambda,\nu)\in\GlobalPoles$ with $\lambda+\rho-\nu-\rho'=-2k\in -2\Z_{\geq 0}$ let
$$ u_{\lambda,\nu}^C(X,Z):= \sum_{h+i+2j=k} c_{h,i,j}(\lambda,\nu) \Delta_{\mathfrak{v}'}^h\Delta_{\mathfrak{v}''}^i\square^j\delta(X,Z), $$
where $c_{h,i,j}(\lambda,\nu)$ are holomorphic functions of $(\lambda,\nu)\in\GlobalPoles$ defined in \eqref{eq:diff_op_scalars1} and \eqref{eq:diff_op_scalars2}.

\begin{theoremalph}[Singular symmetry breaking operators]\label{introthm:ResidueFormulas}
\begin{enumerate}[label=(\roman{*}), ref=\thetheorem(\roman{*})]
\item $u_{\lambda,\nu}^B$ extends to a family of distributions which depends holomorphically on $(\lambda,\nu)\in\LocalPoles$ and $u_{\lambda,\nu}^B\in\mathcal{D}'(\nbar)_{\lambda,\nu}$ for all $(\lambda,\nu)\in\LocalPoles$. The support of $u_{\lambda,\nu}^B$ is given by
$$ \Supp u_{\lambda,\nu}^B = \begin{cases}\nbar'&\mbox{for $(\lambda,\nu)\in\LocalPoles\setminus(\IntersectionPoles\setminus L)$,}\\\{0\}&\mbox{for $(\lambda,\nu)\in\IntersectionPoles\setminus L$}\end{cases} $$
and for $\lambda+\rho+\nu-\rho'=-2l$ the following residue formula holds:
$$ u_{\lambda,\nu}^A = \frac{(-1)^l\pi^{\frac{p''}{2}}}{2^l\Gamma(\frac{p''}{2}+l)}\times\begin{cases}
u_{\lambda,\nu}^B & \text{for $m>0$ and } l \leq \frac{p'}{2}, \\
\frac{\Gamma(\frac{2\nu+p'+2}{4}+ \lfloor \frac{2l-p'+2}{4}\rfloor)}{\Gamma(\frac{2\nu+p'+2}{4})}u_{\lambda,\nu}^B & \text{for $m>0$ and } l> \frac{p'}{2}, \\
\frac{\Gamma(-\frac{\nu}{2}-\lfloor \frac{l}{2} \rfloor)}{\Gamma(-\nu-l)}
u_{\lambda,\nu}^B & \text{for $m=0$.}
\end{cases} $$
\item $u_{\lambda,\nu}^C$ extends to a family of distributions which depends holomorphically on $(\lambda,\nu)\in\GlobalPoles$ and $u_{\lambda,\nu}^C\in\mathcal{D}'(\nbar)_{\lambda,\nu}$ for all $(\lambda,\nu)\in\GlobalPoles$. The support of $u_{\lambda,\nu}^C$ is given by
$$ \Supp u_{\lambda,\nu}^C = \{0\} \qquad \mbox{for all $(\lambda,\nu)\in\GlobalPoles$} $$
and for $\lambda+\rho-\nu-\rho'=-2k$ the following residue formula holds:
\begin{align*}
 u_{\lambda,\nu}^A &= \frac{(-1)^k k! \pi^{\frac{p+q}{2}}}{\Gamma(\frac{\nu+\rho'}{2})}
 \times
 \begin{cases} \frac{\Gamma(\frac{2\nu+p'}{4})}{\Gamma(\frac{2\nu+p'}{2})}u^C_{\lambda,\nu}&\mbox{for $m>0$,}\\\frac{\Gamma(\frac{\nu}{2}-\lfloor\frac{k}{2}\rfloor)}{2^k\Gamma(\nu-k)}u^C_{\lambda,\nu}&\mbox{for $m=0$.} \end{cases}
\end{align*}
\end{enumerate}
\end{theoremalph}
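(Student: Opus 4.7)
The plan is to realise $u^B_{\lambda,\nu}$ and $u^C_{\lambda,\nu}$ as renormalised residues of the holomorphic family $u^A_{\lambda,\nu}$ provided by Theorem~\ref{introthm:RegularSBOs} along the two divisors $\LocalPoles$ and $\GlobalPoles$, and to read off the analytic and equivariance statements from that identification. Wherever a nonzero residue formula has been established, the memberships $u^B_{\lambda,\nu},u^C_{\lambda,\nu}\in\mathcal{D}'(\nbar)_{\lambda,\nu}$ and the $G'$-equivariance follow for free from the corresponding property of $u^A$, by continuity of $\mathcal{D}'(\nbar)_{\lambda,\nu}$ in $(\lambda,\nu)$ along the divisor.

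\textbf{Residue along $\LocalPoles$ for part (i).} Setting $s=\lambda-\rho+\nu+\rho'$, the function $\abs{X''}^s$ extends to a meromorphic family of distributions on $\mathfrak{v}''$ with simple poles at $s=-p''-2l$ ($l\in\Z_{\geq 0}$) and residue a known constant multiple of $\Delta_{\mathfrak{v}''}^l\delta(X'')$. On $\LocalPoles$ one has $s=-p''-2l$, precisely where the denominator $\Gamma(\tfrac{\lambda+\rho+\nu-\rho'}{2})=\Gamma(-l)$ appearing in $u^A$ also has a simple pole; the two poles cancel and produce the finite value
\[
\frac{1}{\Gamma(\frac{\lambda+\rho-\nu-\rho'}{2})}\cdot\frac{(-1)^l\pi^{p''/2}}{2^l\Gamma(\tfrac{p''}{2}+l)}\cdot N(X,Z)^{-2(\nu+\rho')}\Delta_{\mathfrak{v}''}^l\delta(X'').
\]
For $\Re\nu\ll 0$ the factor $N(X,Z)^{-2(\nu+\rho')}$ is smooth on a neighbourhood of $\{X''=0\}$ away from the origin of $\nbar'$ and its restriction to $\nbar'$ is locally integrable, so the product with $\Delta_{\mathfrak{v}''}^l\delta(X'')$ is unambiguous. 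The scalar $c^B(\lambda,\nu)$ in \eqref{eq:singular_renorm_parameter} is chosen so as to absorb the further poles encountered when $\nu$ moves to the right and $N(X',0,Z)^{-2(\nu+\rho')}$ has to be continued as a distribution on $\nbar'$; this produces the holomorphic family $u^B$ on all of $\LocalPoles$ and the three cases of the residue formula, according to whether the continuation on $\nbar'$ is still regular ($l\leq p'/2$), requires one additional Gamma factor in $\nu$ to be absorbed ($l>p'/2$ with $m>0$), or uses the one-dimensional radial analysis on $\mathfrak{z}$ ($m=0$).

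\textbf{Residue along $\GlobalPoles$ for part (ii).} At $\lambda+\rho-\nu-\rho'=-2k$ the factor $\Gamma(\tfrac{\lambda+\rho-\nu-\rho'}{2})^{-1}$ has a simple zero, so the compensating pole must come from the meromorphic continuation of $N(X,Z)^{-2(\nu+\rho')}\abs{X''}^{\lambda-\rho+\nu+\rho'}$ viewed as a distribution on all of $\nbar$. Using the quasi-homogeneous dilations $(X,Z)\mapsto(tX,t^2Z)$, any such residue is supported at $\{0\}$ and has homogeneity degree matching $-2k$, so it lies in the finite-dimensional space spanned by $\Delta_{\mathfrak{v}'}^h\Delta_{\mathfrak{v}''}^i\Box^j\delta$ with $h+i+2j=k$; this yields the asserted form of $u^C_{\lambda,\nu}$. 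The explicit coefficients $c_{h,i,j}(\lambda,\nu)$ in \eqref{eq:diff_op_scalars1} and \eqref{eq:diff_op_scalars2} are determined by pairing $u^A$ with test functions that are monomials in $X',X'',Z$ times a radial bump and tracking the Gamma functions arising from the radial integration against $N^{-2(\nu+\rho')}$ and $\abs{X''}^s$.

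\textbf{Supports and main obstacle.} By construction $u^B$ is supported in $\{X''=0\}=\nbar'$ and $u^C$ in $\{0\}$. A point $(\lambda,\nu)$ lies in $\IntersectionPoles\setminus L$ exactly when, after cancellation of the $\Gamma(-l)$ pole, the restriction $N(X',0,Z)^{-2(\nu+\rho')}$ encounters a further simple pole that $c^B$ must absorb, thereby multiplying by a derivative of $\delta$ at the origin of $\nbar'$ and collapsing the support of $u^B$ to $\{0\}$; for $(\lambda,\nu)\in L$, the parity conditions (with $q$ odd and $p'$ even for $\F\in\{\C,\HH,\mathbb{O}\}$) guarantee that this further pole is not hit, and the full support $\nbar'$ survives. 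The main obstacle is the explicit Gamma-function bookkeeping that produces the three cases in the residue formula for $u^B$ and the coefficients $c_{h,i,j}$ for $u^C$, in particular the transition at $l=p'/2$ (which reflects the point at which the radial continuation on $\nbar'$ must cross its first pole) and the modifications required when $m=0$; once this bookkeeping is complete, all equivariance, holomorphy and support statements follow from Theorem~\ref{introthm:RegularSBOs} by continuity.
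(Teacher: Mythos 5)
Your overall strategy — realising $u^B_{\lambda,\nu}$ and $u^C_{\lambda,\nu}$ as (renormalised) residues of the holomorphic family $u^A_{\lambda,\nu}$ along the divisors $\LocalPoles$ and $\GlobalPoles$ — is indeed the one the paper takes (Theorems~\ref{thm:u^C} and \ref{thm:singular_family}). The residue computation for $u^B$ at generic $(\lambda,\nu)\in\LocalPoles$ with $\Re\nu\ll0$, starting from the residue of $|X''|^s$ at $s=-p''-2l$, matches Remark~\ref{remark:residue_solution_outside_origin}, and the derivation of the $u^C$-coefficients by pairing with test functions corresponds to the polar-coordinate computation in Theorem~\ref{thm:u^C}. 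But there is a real gap in the argument for part~(i).

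The difficulty is the set $L$. You write that "all equivariance, holomorphy and support statements follow from Theorem~\ref{introthm:RegularSBOs} by continuity," and that for $(\lambda,\nu)\in L$ the parity conditions ensure the full support $\nbar'$ "survives." However, by Theorem~\ref{introthm:RegularSBOs} the family $u^A_{\lambda,\nu}$ \emph{vanishes identically} on $L$, and the Gamma factor appearing in the residue formula $u^A_{\lambda,\nu}=(\text{const})\cdot\gamma(\nu)\,u^B_{\lambda,\nu}$ also vanishes there (this is precisely why $L\subseteq\IntersectionPoles$, by the parity of $q$ and $p'$). Thus on $L$ the residue formula reads $0=0$ and conveys no information about $u^B_{\lambda,\nu}$: one cannot deduce holomorphy of $u^B$ at these points, nor its non-vanishing, nor that $\Supp u^B_{\lambda,\nu}=\nbar'$, from the residue relation or from any continuity argument. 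The paper resolves this by proving an independent explicit decomposition of $u^B_{\lambda,\nu}$ (Theorem~\ref{thm:singular_family:iii}) as a finite linear combination $\sum_{k=0}^l(\text{explicit coefficient})\cdot(\text{function of }X',Z)\cdot\Delta_{\mathfrak{v}''}^k\delta(X'')$, derived via the multinomial theorem, Faà di Bruno's formula, and the multi-index summation Lemma~\ref{lem:MultiindexSum}. Only from this explicit formula — whose $k=l$ term is $c^B(\lambda,\nu)N(X',Z)^{-2(\nu+\rho')}\Delta_{\mathfrak{v}''}^l\delta(X'')$, analysed via Corollary~\ref{cor:HeisenbergNormPowersHolomorphic} (respectively Lemma~\ref{lemma:absolute_value_distr_holomorphic} when $m=0$) — does one read off holomorphy on all of $\LocalPoles$ and, crucially, the support at $(\lambda,\nu)\in L$. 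Your proposal contains no substitute for this step. A secondary, smaller omission: the residue identity for $u^C$ is only meaningful once one verifies $u^C_{\lambda,\nu}\neq0$ on all of $\GlobalPoles$; the paper does this by exhibiting a non-vanishing summand (e.g.\ the $i=j=0$ term for $m>0$), and this check is needed but not present in your sketch.
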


Using the three families $u_{\lambda,\nu}^A$, $u_{\lambda,\nu}^B$ and $u_{\lambda,\nu}^C$ of invariant distributions associated to the orbits $\mathcal{O}_A$, $\mathcal{O}_B$ and $\mathcal{O}_C$ we have the following explicit description of $\mathcal{D}'(\nbar)_{\lambda,\nu}\cong\Hom_{G'}(\pi_\lambda|_{G'},\tau_\nu)$:

\begin{theoremalph}[Classification of symmetry breaking operators]\label{introthm:Classification}
The space $\mathcal{D}'(\nbar)_{\lambda,\nu}$ is given by
$$ \mathcal{D}'(\nbar)_{\lambda,\nu} = \begin{cases}
\C u_{\lambda,\nu}^A & \text{for } (\lambda,\nu)\in \C^2 \setminus L,\\
\C u^B_{\lambda,\nu} \oplus \C u^C_{\lambda,\nu} & \text{for } (\lambda,\nu) \in L.
\end{cases} $$
except in the cases (i), (ii) and (iii) of Theorem~\ref{introthm:Multiplicities} with $(\lambda,\nu)\in S_i$.
\end{theoremalph}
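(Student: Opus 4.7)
The plan is to assemble the classification from the three preceding theorems: Theorem~\ref{introthm:Multiplicities} pins down the dimension of $\mathcal{D}'(\nbar)_{\lambda,\nu}$, while Theorems~\ref{introthm:RegularSBOs} and~\ref{introthm:ResidueFormulas} supply the explicit nonzero elements. The theorem will then follow by a dimension count together with a linear-independence argument based on comparing supports.

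For $(\lambda,\nu)\in\C^2\setminus L$ (outside the three exceptional configurations), I would invoke Theorem~\ref{introthm:Multiplicities} to obtain $\dim\mathcal{D}'(\nbar)_{\lambda,\nu}=1$, and Theorem~\ref{introthm:RegularSBOs} to produce $u_{\lambda,\nu}^A\in\mathcal{D}'(\nbar)_{\lambda,\nu}$ with $u_{\lambda,\nu}^A\neq 0$ (the nonvanishing being guaranteed precisely off $L$). A nonzero vector in a one-dimensional space spans it, so $\mathcal{D}'(\nbar)_{\lambda,\nu}=\C u_{\lambda,\nu}^A$.

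For $(\lambda,\nu)\in L$ the target dimension is $2$ by Theorem~\ref{introthm:Multiplicities}. Since $L\subseteq\IntersectionPoles=\GlobalPoles\cap\LocalPoles$, both $u_{\lambda,\nu}^B$ and $u_{\lambda,\nu}^C$ are defined and lie in $\mathcal{D}'(\nbar)_{\lambda,\nu}$ by Theorem~\ref{introthm:ResidueFormulas}. Because $L\subseteq\LocalPoles\setminus(\IntersectionPoles\setminus L)$, the support formula in Theorem~\ref{introthm:ResidueFormulas}(i) will give $\Supp u_{\lambda,\nu}^B=\nbar'$, whereas Theorem~\ref{introthm:ResidueFormulas}(ii) yields $\Supp u_{\lambda,\nu}^C=\{0\}$. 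I would then argue that any relation $a\,u_{\lambda,\nu}^B+b\,u_{\lambda,\nu}^C=0$, restricted to $\nbar\setminus\{0\}$, leaves $a\,u_{\lambda,\nu}^B|_{\nbar\setminus\{0\}}=0$; since $\Supp u_{\lambda,\nu}^B=\nbar'\supsetneq\{0\}$ (using $\dim\nbar'=p'+q\geq 1$), this forces $a=0$, and then $b=0$ as well. Hence the two distributions are linearly independent and, by the dimension count, form a basis of $\mathcal{D}'(\nbar)_{\lambda,\nu}$.

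The serious content has already been absorbed into the earlier statements: the multiplicity count in Theorem~\ref{introthm:Multiplicities}, the nonvanishing $u_{\lambda,\nu}^A\neq 0$ off $L$ from Theorem~\ref{introthm:RegularSBOs}, and the precise support determination for $u^B$ and $u^C$ on $L$ from Theorem~\ref{introthm:ResidueFormulas}, which in turn rest on the careful choice of the renormalization constants $c^B(\lambda,\nu)$ and $c_{h,i,j}(\lambda,\nu)$. Once those facts are in hand, the present theorem is essentially a bookkeeping step; the only substantive point to verify here is the support-based linear-independence argument, which is routine once the supports have been identified.
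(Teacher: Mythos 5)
Your proposal is correct in its conclusions, but the logical route it takes differs from the paper's and carries a subtle circularity risk worth flagging. The paper does not prove Theorem~\ref{introthm:Multiplicities} first and then Theorem~\ref{introthm:Classification} from it; rather, both are read off \emph{simultaneously} from the technical Theorem~\ref{theorem:classification}, which states
$$ \mathcal{D}'(\nbar)_{\lambda,\nu}=\begin{cases}\C u_{\lambda,\nu}^A&\text{if }(\lambda,\nu)\in\C^2\setminus\GlobalPoles,\\ \mathcal{D}'_{\{0\}}(\nbar)_{\lambda,\nu}&\text{if }(\lambda,\nu)\in\GlobalPoles\setminus L,\\ \C u^B_{\lambda,\nu}\oplus\mathcal{D}'_{\{0\}}(\nbar)_{\lambda,\nu}&\text{if }(\lambda,\nu)\in L,\end{cases} $$
and is proved via the exact sequence $0\to\mathcal{D}'_{\{0\}}(\nbar)_{\lambda,\nu}\to\mathcal{D}'(\nbar)_{\lambda,\nu}\to\mathcal{D}'(\nbar\setminus\{0\})_{\lambda,\nu}$ together with the continuation analysis (Proposition~\ref{prop:continuation_solutions}). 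The paper then substitutes the differential-operator classification $\mathcal{D}'_{\{0\}}(\nbar)_{\lambda,\nu}=\C u^C_{\lambda,\nu}$ (away from the sporadic parameters) and obtains Theorem~\ref{introthm:Classification} directly, with the multiplicity count of Theorem~\ref{introthm:Multiplicities} emerging as a by-product rather than an input.

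Your plan inverts this: you take the multiplicity count of Theorem~\ref{introthm:Multiplicities} as given and pin down a basis by dimension count and the support-based linear-independence argument. That argument is correct — for $(\lambda,\nu)\in L$ one indeed has $\Supp u^B_{\lambda,\nu}=\nbar'$ and $\Supp u^C_{\lambda,\nu}=\{0\}$, and since $\dim\nbar'=p'+q\geq 1$ the restriction to $\nbar\setminus\{0\}$ kills $u^C$ but not $u^B$, forcing linear independence; this is in fact the same observation the paper makes, phrased as $u^B_{\lambda,\nu}\notin\mathcal{D}'_{\{0\}}(\nbar)_{\lambda,\nu}$ in the proof of Theorem~\ref{theorem:classification}. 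The caveat is that Theorem~\ref{introthm:Multiplicities} is not an independent prerequisite in the paper's architecture: to actually establish the dimensions you would have to carry out essentially the same exact-sequence and continuation argument. So your route is logically sound given the three preceding intro theorems as black boxes, but it adds a layer of indirection and would be circular if one tried to give a self-contained proof starting from the multiplicity statement; the more direct and self-contained path is the one the paper takes, going straight from Theorem~\ref{theorem:classification} and the $\mathcal{D}'_{\{0\}}$-classification.
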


Theorems~\ref{introthm:ResidueFormulas} and \ref{introthm:Classification} show that in almost all cases the holomorphic family $u^A_{\lambda,\nu}$ and its two different renormalizations $u^B_{\lambda,\nu}$ and $u^C_{\lambda,\nu}$ when $(\lambda,\nu)\in L$ span the space of all symmetry breaking operators. Note that $u^C_{\lambda,\nu}$ is supported at the origin, so the associated symmetry breaking operator is a differential operator. In the cases (i), (ii) and (iii) of Theorem~\ref{introthm:Multiplicities} with $(\lambda,\nu)\in S_i$ there appear additional differential symmetry breaking operators that cannot be obtained as renormalizations of the holomorphic family $A_{\lambda,\nu}$ of operators with distribution kernels $u^A_{\lambda,\nu}$. Following \cite{kobayashi_speh_2018} we call such operators \emph{sporadic symmetry breaking operators} and now describe them in detail.

\subsection*{Sporadic symmetry breaking operators}

To define all sporadic differential symmetry breaking operators we treat the three cases in Theorem~\ref{introthm:Multiplicities} separately. In all three cases the differential operators $\Delta_{\mathfrak{v}'}$, $\Delta_{\mathfrak{v}''}$ and $\Box$ do not generate the algebra of $M'$-invariant differential operators on $\nbar$ with constant coefficients. The additional generators give rise to sporadic differential symmetry breaking operators.

\begin{enumerate}[label=(\roman{*})]
\item $(G,G')=(\Urm(1,n+1),\Urm(1,1)\times F)$. For $(\lambda,\nu)\in\GlobalPoles$ with $\lambda+\rho-\nu-\rho'=-2k$ we have
$$ u_{\lambda,\nu}^C = \sum_{i+2j=k} \frac{2^{-i}\Gamma(\frac{\nu}{2}-j)}{i!j!\Gamma(\frac{p}{2}+i)\Gamma(\frac{\nu}{2}-\lfloor\frac{k}{2}\rfloor)} \Delta_{\mathfrak{v}}^i\left(\frac{\partial}{\partial Z}\right)^{2j}\delta(X,Z). $$
The algebra of $M'$-invariant differential operators on $\nbar$ is generated by $\Delta_{\mathfrak{v}}$ and $\frac{\partial}{\partial Z}$ and we obtain another family of distributions by formally replacing $2j$ by $2j+1$:
$$ v_{\lambda,\nu}^C := \sum_{i+2j+1=k}\frac{2^{-i}\Gamma(\frac{\nu-1}{2}-j)}{i!\Gamma(j+\frac{3}{2})\Gamma(\frac{p}{2}+i)\Gamma(\frac{\nu-1}{2}-\lfloor\frac{k-1}{2}\rfloor)}\Delta_\mathfrak{v}^i\left(\frac{\partial}{\partial Z}\right)^{2j+1}\delta(X,Z). $$
\item $(G,G')=({\rm Sp}(1,2),{\rm Sp}(1,1))$. Note that $\nbar=\HH\oplus\Im\HH$ and $\nbar'=\Im\HH$. On $\HH$ and $\Im\HH\subseteq\HH$ we consider the usual inner product $\langle X,Y\rangle=\Re(X\overline{Y})$. With respect to this inner product we define three differential operators $P_1,P_2,P_3$ on $\nbar$ in terms of their symbols
$$ p_1(X,Z) = \langle\qi,\overline{X}ZX\rangle, \qquad p_2(X,Z) = \langle\qj,\overline{X}ZX\rangle, \qquad p_3(X,Z) = \langle\qk,\overline{X}ZX\rangle. $$
Then the algebra of $M'$-invariant differential operators on $\nbar$ is generated by $\Delta_{\mathfrak{v}}$, $\Box$ and $P_1,P_2,P_3$. For $k\in\Z_{\geq0}$ let
$$ \mathcal{H}^k(P_1,P_2,P_3)\delta = \{q(P_1,P_2,P_3)\delta(X,Z):q\in\mathcal{H}^k(\R^3)\}, $$
where $\mathcal{H}^k(\R^3)$ denotes the space of homogeneous harmonic polynomials of three variables of degree $k$.
\item $(G,G')=({\rm Sp}(1,2),{\rm Sp}(1,1)\times\Urm(1))$. The subalgebra $\mathfrak{u}(1)\subseteq\mathfrak{sp}(1)$ is spanned by a single element $U$ of the Lie algebra $\mathfrak{sp}(1)$ which is identified with $\Im\HH\subseteq\HH$. We write $U=U_1\qi+U_2\qj+U_3\qk$ and note that non-trivial $\Urm (1)$-invariant differential operators in $\mathcal{H}^{k}(P_{1},P_{2},P_{3})$ only occur for $k=0$ (then $1$ is $\Urm (1)$-invariant) and for $k=1$ (then $U_{1}P_{1}+U_{2}P_{2}+U_{3}P_{3}$ is $\Urm (1)$-invariant).
\end{enumerate}

\begin{theoremalph}[Sporadic symmetry breaking operators]\label{introthm:sporadic}
\begin{enumerate}[label=(\roman{*}), ref=\thetheorem(\roman{*})]
\item For $(G,G')=(\Urm(1,n+1),\linebreak\Urm(1,1)\times F)$ and $(\lambda,\nu)\in S_1$ we have
$$ \mathcal{D}'(\nbar)_{\lambda,\nu} =\C u^B_{\lambda,\nu} \oplus \C u_{\lambda,\nu}^C\oplus\C v_{\lambda,\nu}^C. $$
\item For $(G,G')=({\rm Sp}(1,2),{\rm Sp}(1,1))$ and $(\lambda,\nu)=(-\rho+q-1-2i,\rho'+2i)\in S_2$ we have
$$ \mathcal{D}'(\nbar)_{\lambda,\nu} = \C u_{\lambda,\nu}^C\oplus\mathcal{H}^{i+1}(P_1,P_2,P_3)\delta. $$
\item For $(G,G')=({\rm Sp}(1,2),{\rm Sp}(1,1)\times\Urm(1))$ with $\mathfrak{u}(1)=\R U$, $U=U_1\qi+U_2\qj+U_3\qk\in\Im\HH=\mathfrak{sp}(1)$, and $(\lambda,\nu)=(-\rho+q-1,\rho')\in S_3$ we have
$$ \mathcal{D}'(\nbar)_{\lambda,\nu} = \C u_{\lambda,\nu}^C\oplus\C(U_1P_1+U_2P_2+U_3P_3)\delta. $$
\end{enumerate}
\end{theoremalph}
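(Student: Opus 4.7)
The overall plan is to combine the multiplicity upper bound from Theorem~\ref{introthm:Multiplicities} with the distributions already produced in Theorems~\ref{introthm:RegularSBOs}--\ref{introthm:ResidueFormulas} and then exhibit the missing differential symmetry breaking operators supported at the origin. In case (i), Theorem~\ref{introthm:Multiplicities}(i) gives $\dim\mathcal D'(\nbar)_{\lambda,\nu}=3$ for $(\lambda,\nu)\in S_1$, while $u^B_{\lambda,\nu}$ and $u^C_{\lambda,\nu}$ contribute two independent distributions; in case (ii) the target dimension is $2i+4=1+\dim\mathcal H^{i+1}(\R^3)$, and $u^C_{\lambda,\nu}$ (or $u^A_{\lambda,\nu}$, when $u^C$ is not defined at that point) accounts for one; in case (iii) the target dimension is $2$ with $u^C_{\lambda,\nu}$ already giving one. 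So in each case the missing distributions must be supported at $\{0\}\subseteq\nbar$, and the task reduces to classifying the differential symmetry breaking operators at the sporadic parameters.

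To do this I would translate the $P'$-equivariance defining $\mathcal D'(\nbar)_{\lambda,\nu}$ into three conditions on a constant coefficient differential operator $D$ whose kernel is $D\delta_0$: $M'$-invariance of the symbol, $A$-homogeneity of the correct weighted degree under the grading $\deg X = 1,\deg Z = 2$, and infinitesimal $\overline{\mathfrak n}'$-equivariance. The generators of the $M'$-invariant algebra are recorded in the paragraphs preceding Theorem~\ref{introthm:sporadic}: $\Delta_{\mathfrak v},\partial/\partial Z$ in case (i); $\Delta_{\mathfrak v},\Box,P_1,P_2,P_3$ in case (ii); and $\Delta_{\mathfrak v},\Box,U_1P_1+U_2P_2+U_3P_3$ in case (iii). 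Intersecting the homogeneity constraint with these algebras and then imposing the $\overline{\mathfrak n}'$-equivariance leaves exactly the extra distributions $v^C_{\lambda,\nu}$, $\mathcal H^{i+1}(P_1,P_2,P_3)\delta$ and $(U_1P_1+U_2P_2+U_3P_3)\delta$ respectively. Linear independence from $u^B_{\lambda,\nu}$ and $u^C_{\lambda,\nu}$ is immediate from the different supports, the parity of the $Z$-derivative in case (i), and the distinct polynomial degrees or $M'$-types of the symbols in cases (ii) and (iii); dimension matching against Theorem~\ref{introthm:Multiplicities} then forces equality.

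The main obstacle is the verification of the infinitesimal $\overline{\mathfrak n}'$-equivariance for these explicit candidates — the homogeneity and $M'$-invariance are essentially bookkeeping once the generators are fixed. In particular, case (ii) requires an explanation of why the \emph{harmonic} subspace, rather than all of $\C[P_1,P_2,P_3]$, produces symmetry breaking operators. The algebraic input I would use is
$$ p_1^2+p_2^2+p_3^2 \;=\; |\overline X Z X|^2 - \Re(\overline X Z X)^2 \;=\; |X|^4|Z|^2, $$
where the second equality uses $\Re(\overline X Z X)=|X|^2\Re Z=0$ for $Z\in\Im\HH$. Since $|X|^4|Z|^2$ is the principal symbol of $\Delta_{\mathfrak v}^2\Box$, any non-harmonic polynomial in the $P_i$ is, modulo lower-order corrections, congruent to an operator in $\C[\Delta_{\mathfrak v},\Box]$ and is therefore already accounted for by $u^C_{\lambda,\nu}$. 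The same identity, combined with the additional $\Urm(1)$-invariance, isolates the single extra operator $U_1P_1+U_2P_2+U_3P_3$ in case (iii). Once the $\overline{\mathfrak n}'$-equivariance is checked on these representatives in explicit $(X,Z)$-coordinates, the dimension count against Theorem~\ref{introthm:Multiplicities} closes the proof.
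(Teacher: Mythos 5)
Your plan has two genuine gaps that prevent it from being a valid proof.

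First, the dimension-matching step is circular. In the paper's logical structure, the precise values $\dim\Hom_{G'}(\pi_\lambda|_{G'},\tau_\nu)=3$, $2i+4$, and $2$ on $S_1$, $S_2$, $S_3$ in Theorem~\ref{introthm:Multiplicities} are \emph{consequences} of the classification of differential symmetry breaking operators — that is, of Theorems~\ref{theorem:diff_solution_complex}, \ref{thm:diff_solution_quaternionic}, Corollary~\ref{cor:diff_solution_quaternionic} — together with the analysis away from the origin in Theorem~\ref{theorem:classification}. Theorem~\ref{introthm:sporadic} is exactly the repackaging of those classification results. Using Theorem~\ref{introthm:Multiplicities} to close the argument therefore assumes what you are trying to prove. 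There is no independent route in the paper to the multiplicities on $S_2$ (which can be arbitrarily large) that bypasses the explicit determination of $\C[\nbar]_{\lambda,\nu}$.

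Second, the key classification claim — that ``intersecting the homogeneity constraint with these algebras and then imposing the $\mathfrak n'$-equivariance leaves exactly the extra distributions'' — is asserted but not established, and the heuristic you offer is insufficient. The identity $p_1^2+p_2^2+p_3^2=\abs{X}^4\abs{Z}^2$ is correct (and is used in the paper, where it justifies restricting to the Fischer-decomposed harmonics $q_\ell^{i,j}\in\mathcal{H}^\ell(\R^3)$), but it only tells you that the radial part $\abs{p}^2$ can be rewritten in terms of $\abs{X}^4\abs{Z}^2$. It does not by itself show that such rewritten terms solve or fail to solve the $\mathfrak n'$-equivariance equations, nor that the harmonic factors $q_\ell$ with $\ell>0$ only survive for exactly one value of $\nu$. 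The paper's actual argument (Proposition~\ref{prop:DiffSBOs_SP(1,2)}) applies the Fourier-transformed equivariance equation from Proposition~\ref{prop:fourier_equations}, carries the Fischer decomposition through it via Lemma~\ref{lemma:SP(1,2)_complex_calc}, and then crucially invokes Lemma~\ref{lemma:SP(1,2)_key_point} — that $\partial_{J_TX}q(p_1,p_2,p_3)=0$ for all $T$ forces $q$ constant — to kill the terms with $q_\ell^{i,j}$, $i>0$, $\ell>0$, and to pin down the single surviving value $\nu+\rho'=k+4$. Without an analogue of that computation, you have constructed candidate operators but have not ruled out others, and the dimension count you invoke to rule them out is, as noted, unavailable at this stage. (Two minor remarks: the equivariance condition is $\mathfrak n'$-, not $\overline{\mathfrak n}'$-, equivariance, and $u^C_{\lambda,\nu}$ is defined and non-zero at every point of $\GlobalPoles\supseteq S_2$, so your parenthetical fallback to $u^A_{\lambda,\nu}$ is unnecessary.)
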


\subsection*{Functional equations}

For $\lambda\in\R$ the representations $\pi_\lambda$ are irreducible and unitarizable if and only if $\lambda\in(-\rho+q-1,\rho-q+1)$ and the corresponding unitary representions form the \emph{complementary series}. The invariant inner product can be expressed using the normalized Knapp--Stein intertwining operators $T_\lambda:\pi_\lambda\to\pi_{-\lambda}$ which are in the non-compact picture on $\nbar=\F^n\oplus\Im\F$ given by convolution with the holomorphic family of distributions
$$ \frac{1}{\Gamma(\lambda)}N(X,Z)^{2(\lambda-\rho)}. $$
Note that the operator $T_\lambda$ is a differential operator if and only if $\lambda\in-\Z_{\geq0}$ (see Section~\ref{sec:FunctionalEquations} for details).

When a complementary series representation $\pi_\lambda$ is restricted to $G'$, it decomposes into a direct integral of irreducible unitary representations of $G'$. We expect the holomorphic family $A_{\lambda,\nu}\in\Hom_{G'}(\pi_\lambda|_{G'},\tau_\nu)$ belonging to the distribution kernels $u^A_{\lambda,\nu}$ and its renormalizations to play a key role in this decomposition (see \cite{MO15} for the case $(G,G')=({\rm O}(1,n+1),{\rm O}(1,m+1)\times{\rm O}(n-m))$). To explicitly constuct the direct integral decomposition, compositions of $A_{\lambda,\nu}$ with Knapp--Stein intertwining operators $T_\lambda$ for $G$ and $T_\nu'$ for $G'$ will be important. This motivates the following functional equations:

\begin{theoremalph}[Functional equations]\label{introthm:FunctionalEquations}
For $(\lambda,\nu)\in \C^2$ we have
$$ A_{\lambda,\nu}\circ \T_{-\lambda} = \pi^{\frac{p+q}{2}}\frac{\Gamma(\frac{2\lambda+p}{4})}{\Gamma(\frac{2\lambda+p}{2})\Gamma(\frac{\lambda+\rho}{2})}A_{-\lambda,\nu} $$
and
$$ \T'_\nu \circ A_{\lambda,\nu} = \frac{\pi^{\frac{p'+q}{2}}}{\Gamma(\frac{\nu+\rho'}{2})}\times \begin{cases}
 \frac{\Gamma(\frac{2\nu+p'}{4})}{\Gamma(\frac{2\nu+p'}{2})} A_{\lambda,-\nu} & \text{if $m>0$,}\\
 A_{\lambda,-\nu} & \text{if $m=0$.}\\
 \end{cases} $$
\end{theoremalph}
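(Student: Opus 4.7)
The plan is to prove both identities by a uniqueness-and-normalization argument. First, note that $A_{\lambda,\nu}\circ T_{-\lambda}$ lies in $\Hom_{G'}(\pi_{-\lambda}|_{G'},\tau_\nu)$ while $T'_\nu\circ A_{\lambda,\nu}$ lies in $\Hom_{G'}(\pi_\lambda|_{G'},\tau_{-\nu})$. By Theorem~\ref{introthm:Multiplicities} both of these Hom-spaces are one-dimensional on the complement of $L\cup S_1\cup S_2\cup S_3$, spanned respectively by $A_{-\lambda,\nu}$ and $A_{\lambda,-\nu}$. Since $A_{\lambda,\nu}$ (Theorem~\ref{introthm:RegularSBOs}) and the Knapp--Stein operators $T_\lambda,T'_\nu$ depend meromorphically on the induction parameters, both identities reduce to determining meromorphic scalars $c(\lambda,\nu),c'(\lambda,\nu)$ on a Zariski-dense open subset of $\C^{2}$, after which they extend to all of $\C^{2}$ by meromorphic continuation.

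To compute $c(\lambda,\nu)$ I would pass to the non-compact picture. There $T_{-\lambda}$ is convolution on $\nbar$ with $\frac{1}{\Gamma(-\lambda)}N(X,Z)^{-2(\lambda+\rho)}$, and $A_{\lambda,\nu}$ is pairing with $u^{A}_{\lambda,\nu}$ followed by restriction to $\nbar'$. Choosing $(\lambda,\nu)$ in a tube where $\Re\lambda\ll 0$ and $\Re(\nu+\rho')\ll 0$ makes every integral absolutely convergent, and the distribution kernel of $A_{\lambda,\nu}\circ T_{-\lambda}$ is given by a single convolution on the nilpotent group $\nbar$, namely $\tfrac{1}{\Gamma(-\lambda)}N^{-2(\lambda+\rho)}\ast u^{A}_{\lambda,\nu}$. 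The resulting $M'A$-equivariant distribution must agree with $c(\lambda,\nu)\,u^{A}_{-\lambda,\nu}$, and the scalar is read off by pairing both sides with a well-chosen $M'A$-invariant test function, which reduces the problem to radial Euclidean Riesz/Beta integrals on $\nbar\cong\F^{n}\oplus\Im\F$. These integrals collapse, via the Legendre duplication formula $\Gamma(s)\Gamma(s+\tfrac12)=\sqrt\pi\,2^{1-2s}\Gamma(2s)$, to the stated ratio $\pi^{(p+q)/2}\Gamma(\tfrac{2\lambda+p}{4})/(\Gamma(\tfrac{2\lambda+p}{2})\Gamma(\tfrac{\lambda+\rho}{2}))$; the $\pi^{(p+q)/2}$ comes from the homogeneous dimension $p+2q$ of $\nbar$.

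For $c'(\lambda,\nu)$ the calculation moves to $\nbar'$. The composition $T'_\nu\circ A_{\lambda,\nu}$ amounts to first pushing $u^{A}_{\lambda,\nu}$ forward along $\mathfrak{v}''$---a Euclidean Riesz integral in the radial variable $|X''|^{\lambda-\rho+\nu+\rho'}$ whose outcome combines cleanly with the $\Gamma$-normalization of $u^{A}_{\lambda,\nu}$---and then convolving on $\nbar'$ with $\tfrac{1}{\Gamma(\nu)}N(X',Z)^{-2(\nu+\rho')}$. For $m>0$ the inner convolution produces the factor $\Gamma(\tfrac{2\nu+p'}{4})/\Gamma(\tfrac{2\nu+p'}{2})$, whereas for $m=0$ one has $\nbar'=\Im\F$, the group is abelian, $N$ on $\nbar'$ is a pure power of the Euclidean norm, and the corresponding Beta integral degenerates; this is precisely the origin of the case distinction. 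The main obstacle will be the bookkeeping of Gamma factors once the normalizations hidden in $u^{A}_{\lambda,\nu}$ and the Knapp--Stein kernels are combined; repeated use of the duplication formula is required and the $m=0$ case must be treated separately. The non-abelian structure of $\nbar$ itself does not cause a serious difficulty, because the distributions involved are $MN$- or $M'N$-invariant and depend only on $N(X,Z)$ and $|X''|$, so their convolutions reduce to radial Euclidean integrals of Riesz type.
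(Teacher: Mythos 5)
Your overall strategy matches the paper's: use the generic one-dimensionality of $\Hom_{G'}(\pi_{\pm\lambda}|_{G'},\tau_{\pm\nu})$ to reduce both identities to determining meromorphic scalars, then extend by analytic continuation. The difference, and where your plan is more delicate than you acknowledge, is in how the scalar is actually computed.

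You propose to compute the distribution kernel of the composition directly as a group convolution $\frac{1}{\Gamma(-\lambda)}N^{-2(\lambda+\rho)}\ast u^A_{\lambda,\nu}$ on $\nbar$, asserting that the non-abelian structure ``does not cause a serious difficulty'' because the kernels depend only on $N(X,Z)$ and $|X''|$ and thus the convolution ``reduces to radial Euclidean integrals of Riesz type.'' This claim is not justified: on an H-type group, convolutions of functions that are radial in the norm $N(X,Z)$ do \emph{not} reduce to Euclidean radial convolutions (the center of $\nbar$ contributes a genuinely non-commutative twist), and actually carrying out such a convolution would require substantial additional machinery. The paper sidesteps this entirely by exploiting $K$-type structure: it observes that $A_{\lambda,\nu}$ sends the $K$-spherical vector $\mathbf{1}_\lambda$ to a multiple of the $K'$-spherical vector $\mathbf{1}'_\nu$, and similarly for $T_\lambda$, $T'_\nu$, so the entire problem collapses to evaluating a \emph{single} Euclidean integral $\langle u^A_{\lambda,\nu},\mathbf{1}_\lambda\rangle$ (Corollary~\ref{lemma:spherical_vector_action}); no convolution on $\nbar$ is ever performed. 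You do mention ``pairing with a well-chosen $M'A$-invariant test function,'' which, if that test function is the spherical vector, is the right move --- but you would then want to reverse the order of operations (apply $T_{-\lambda}$ to the test function first, producing a new spherical vector, rather than convolving the kernels), which eliminates the H-type convolution issue. Finally, the integral $\langle u^A_{\lambda,\nu},\mathbf{1}_\lambda\rangle$ (Proposition~\ref{prop:integral_spherical_vector}) is not a Beta integral that falls to the duplication formula alone; its evaluation goes through a chain of ${}_2F_1$- and ${}_3F_2$-identities, so this step should not be treated as a simple bookkeeping exercise.
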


\subsection*{Relation to previous work}

The systematic construction and classification of symmetry breaking operators was initiated by T.~Kobayashi~\cite{kobayashi_2015}. Together with B.~Speh~\cite{kobayashi_speh_2015} he classified symmetry breaking operators between spherical principal series for $(G,G')=({\rm O}(1,n+1),{\rm O}(1,n))$. Our work can be seen as an extension of this classification to all strongly spherical reductive pairs $(G,G')$ with $G$ and $G'$ of real rank one. The only other strongly spherical pairs where a full classification of symmetry breaking operators between spherical principal series is known are the pairs $(G,G')=({\rm O}(1,n)\times{\rm O}(1,n),{\rm O}(1,n))$ treated by Clerc~\cite{Cle16,Cle17}.

The existence of the meromorphic family of distributions $u_{\lambda,\nu}^A$ and a generic multiplicity one statement were previously obtained by M\"{o}llers--{\O}rsted--Oshima~\cite{MOO16} (see also \cite{Moe17}), but the precise multiplicities also for singular parameters as well as the detailed study of the meromorphic nature of $u_{\lambda,\nu}^A$ were missing so far. Only the differential symmetry breaking operators corresponding to the distributions $u_{\lambda,\nu}^C$ were previously constructed by M\"{o}llers--{\O}rsted--Zhang~\cite{MOZ16}, but in particular the sporadic differential operators in Theorem~\ref{introthm:sporadic} were not known before.

\subsection*{Methods of proof}

Our general strategy of classifying symmetry breaking operators follows closely Kobayashi--Speh~\cite{kobayashi_speh_2015} (see Part~\ref{part:prelim} and in particular Section~\ref{sec:ClassificationStrategy}). This reformulates the problem of classifying symmetry breaking operators into a classification problem for invariant distributions on a nilpotent Lie algebra, the nilradical of a parabolic subgroup. However, whereas in \cite{kobayashi_speh_2015} the relevant Lie algebra is abelian, in our situation we have to deal with a $2$-step nilpotent Lie algebra which adds combinatorial difficulties to the computations.

The classification strategy essentially consists of two steps: the classification of all differential symmetry breaking operators (see Part~\ref{part:diff}) and the global study of the distribution kernels of symmetry breaking operators (see Part~\ref{part:classification}). For the classification of differential symmetry breaking operators we apply the F-method (see e.g. \cite{Kob13}) which uses the Euclidean Fourier transform on the nilpotent Lie algebra. Since our Lie algebra is $2$-step nilpotent, this results in a system of polynomial differential equations of order $4$ which we solve combinatorially. In contrast to all previous applications of the F-method where equations of order $2$ were solved, we cannot use classical orthogonal polynomials, but have to systematically solve the equations by hand. In particular, we find that in three special cases sporadic differential operators occur (see Theorem~\ref{introthm:sporadic}).

In the global study of invariant distributions corresponding to symmetry breaking operators we follow a more direct approach to study the meromorphic nature of the family $u_{\lambda,\nu}^A$. For the localization of all poles and to obtain $u_{\lambda,\nu}^C$ as renormalization of $u_{\lambda,\nu}^A$, we use polar type coordinates adapted to the nilpotent Lie algebra $\nbar$ and evaluate explicitly the resulting integrals (see Theorems~\ref{lemma:holomorphic_cont_solution} and \ref{thm:u^C}). Combined with some combinatorial computations this gives a direct way to obtain differential symmetry breaking operators from regular symmetry breaking operators. We remark that, in addition to the combinatorial construction in Part~\ref{part:diff}, this gives a second and more analytic construction of the distributions $u_{\lambda,\nu}^C$.

While the (unnormalized) family $u_{\lambda,\nu}^B$ is easily obtained from $u_{\lambda,\nu}^A$, it is much harder to find its optimal renormalization and to determine its support. We resolve this problem by explicitly decomposing the relevant distributions with respect to the decomposition $\nbar=\nbar'\oplus\mathfrak{v}''$ (see Theorem~\ref{thm:singular_family}). The resulting renormalization even improves the normalization used in \cite{kobayashi_speh_2015} for the corresponding distributions in the sense that our $u_{\lambda,\nu}^B$ never vanishes. The remaining arguments for the full classification of symmetry breaking operators then work similarly as in \cite{kobayashi_speh_2015}.

Theorem~\ref{introthm:RegularSBOs} is shown in Theorem~\ref{lemma:holomorphic_cont_solution} and Theorem~\ref{introthm:ResidueFormulas} is a combination of Theorems~\ref{thm:u^C} and \ref{thm:singular_family}. Theorem~\ref{introthm:sporadic} follows from Theorems~\ref{theorem:diff_solution_complex} and \ref{thm:diff_solution_quaternionic} and Corollary~\ref{cor:diff_solution_quaternionic}, and together with Theorems~\ref{theorem:diff_solutions} and \ref{theorem:classification} it implies Theorems~\ref{introthm:Multiplicities} and \ref{introthm:Classification}. Finally, Theorem~\ref{introthm:FunctionalEquations} is Theorem~\ref{thm:functional_equations}.

\subsection*{Acknowledgements}

The second author was supported by the DFG project 325558309.

\subsection*{Notation}

For two sets $B \subseteq A$ we use the Notation $A \setminus B= \{ a\in A: a \notin B  \}$. We denote Lie groups by Roman capitals and their corresponding Lie algebras by the corresponding Fraktur lower cases. 

\newpage

\part{Preliminaries}\label{part:prelim}

In this first part we recall from \cite{kobayashi_speh_2015} the basic theory of symmetry breaking operators between principal series representations and introduce the necessary notation for rank one real reductive groups.

\section{Symmetry breaking operators between principal series representations}

We recall the basic facts about symmetry breaking operators between principal series representations from \cite{kobayashi_speh_2015}.

\subsection{Principal series representations}\label{sec:PrincipalSeries}

Let $G$ be a real reductive Lie group and $P$ a minimal parabolic subgroup of $G$ with Langlands decomposition $P=MAN$. For a finite-dimensional representation $(\xi,V)$ of $M$, a character $\lambda \in \mathfrak{a}_\C^*$ and the trivial representation $\mathbf{1}$ of $N$ we obtain a finite-dimensional representation $(\xi \otimes e^\lambda \otimes \mathbf{1}, V_{\xi,\lambda})$ of $P=MAN$. By smooth normalized parabolic induction this representation gives rise to the principal series representation
$$ \pi_{\xi,\lambda}:= \Ind_P^G(\xi \otimes e^\lambda \otimes \mathbf{1}) $$
as the left-regular representation of $G$ on the space
$$\{ \varphi \in C^\infty(G,V): \ \varphi(gman)=\xi(m)^{-1}a^{-(\lambda+\rho)}\varphi(g) \ \forall man\in MAN    \},$$
where $\rho:= \frac{1}{2} \tr \operatorname{ad}|_{\mathfrak{n}} \in \mathfrak{a}^*_\C$.
Let $\mathcal{V}_{\xi,\lambda}:= G \times_P V_{\xi,\lambda+\rho} \to G/P$ be the homogeneous vector bundle associated to $V_{\xi,\lambda+\rho}$, then $\pi_{\xi,\lambda}$ identifies with the left-regular action of $G$ on the space of smooth sections $C^\infty(G/P,\mathcal{V}_{\xi,\lambda})$.

Now let $G'<G$ be a reductive subgroup. Similarly we let $P'=M'A'N'$ be a minimal parabolic subgroup of $G'$. For a finite-dimensional representation $(\eta,W)$ of $M'$ and $\nu \in (\mathfrak{a}'_\C)^*$ we obtain a finite-dimensional representation $(\eta \otimes e^\nu \otimes \mathbf{1},W_{\eta,\lambda})$ of $P'$ and the corresponding principal series representation
$$ \tau_{\eta,\nu}:= \Ind_{P'}^{G'}(\eta \otimes e^\nu \otimes \mathbf{1}). $$
Again we identify $\tau_{\eta,\nu}$ with the smooth sections $C^\infty(G'/P',\mathcal{W}_{\eta,\nu})$ of the homogeneous vector bundle $\mathcal{W}_{\eta,\nu}:= G' \times_{P'} W_{\eta,\nu+\rho'}\to G'/P'$, where $\rho':= \frac{1}{2}\tr \operatorname{ad}|_{\nfrak'}$.

\subsection{Symmetry breaking operators}

In these realizations the space of symmetry breaking operators between $\pi_{\xi,\lambda}$ and $\tau_{\eta,\nu}$ is given by the continuous linear $G'$-maps between the smooth sections of the two homogeneous vector bundles
$$\Hom_{G'}(\pi_{\xi,\lambda}|_{G'},\tau_{\eta,\nu})=\Hom_{G'}(C^\infty(G/P,\mathcal{V}_{\xi,\lambda})),C^\infty(G'/P',\mathcal{W}_{\eta,\nu})).$$
The Schwartz Kernel Theorem implies that every such operator is given by a $G'$-invariant distribution section of the tensor bundle $\mathcal{V}_{\xi^*,-\lambda}\boxtimes\mathcal{W}_{\eta,\nu}$ over $G/P\times G'/P'$, where $\xi^*$ is the representation contradigent to $\xi$. Since $G'$ acts transitively on $G'/P'$ we can consider these distributions as sections on $G/P$ with a certain $P'$-invariance:

\begin{theorem}[{\cite[Proposition 3.2]{kobayashi_speh_2015}}]
\label{theorem:KS_kernel}
There is a natural bijection
$$\Hom_{G'}(\pi_{\xi,\lambda}|_{G'},\tau_{\eta,\nu}) \stackrel{\sim}{\longrightarrow} (\mathcal{D}'(G/P,\mathcal{V}_{\xi^*,-\lambda})\otimes W_{\eta,\nu+\rho'})^{P'}, \quad T\mapsto u^T
.$$
\end{theorem}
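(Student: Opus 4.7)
The plan is to derive the bijection from the Schwartz Kernel Theorem for vector-valued distributions on products of compact manifolds, combined with descent under the transitive action of $G'$ on $G'/P'$.

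First, I would apply the vector-valued Schwartz Kernel Theorem to the compact product $G/P \times G'/P'$: every continuous linear operator $T\colon C^\infty(G/P,\mathcal{V}_{\xi,\lambda})\to C^\infty(G'/P',\mathcal{W}_{\eta,\nu})$ is represented by a unique distribution section $K_T$ of a suitable bundle on $G/P \times G'/P'$. The correct bundle on the first factor is forced by the requirement that smooth sections of $\mathcal{V}_{\xi,\lambda}$ be paired by integration against a density; since the density bundle on $G/P$ is the homogeneous bundle associated with the character $e^{2\rho}$ of $P$, normalized induction identifies the smooth topological dual of $C^\infty(G/P,\mathcal{V}_{\xi,\lambda})$ with $\mathcal{D}'(G/P,\mathcal{V}_{\xi^*,-\lambda})$. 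Consequently $K_T \in \mathcal{D}'(G/P\times G'/P',\; \mathcal{V}_{\xi^*,-\lambda}\boxtimes\mathcal{W}_{\eta,\nu})$.

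Second, I would translate the $G'$-equivariance of $T$ into diagonal $G'$-invariance of $K_T$. Because $G'$ acts transitively on $G'/P'$ with isotropy $P'$ at the base point $eP'$, every $G'$-invariant distribution section on the product is determined by, and freely constructed from, its restriction to the slice $G/P\times\{eP'\}$, provided this restriction is invariant under the induced $P'$-action. The fiber of $\mathcal{W}_{\eta,\nu}$ at $eP'$ equals $W_{\eta,\nu+\rho'}$ by the bundle convention $\mathcal{W}_{\eta,\nu}=G'\times_{P'}W_{\eta,\nu+\rho'}$, so this restriction $u^T$ lies precisely in $(\mathcal{D}'(G/P,\mathcal{V}_{\xi^*,-\lambda})\otimes W_{\eta,\nu+\rho'})^{P'}$. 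For the inverse map, one would extend a given $u^T$ to a $G'$-invariant kernel on the product by translating along the fibers of $G'\to G'/P'$; well-definedness follows from the $P'$-invariance of $u^T$, and the resulting operator is then manifestly $G'$-equivariant.

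The principal technical challenge is the bookkeeping of normalizations: the $\rho$-shift arising from the density bundle on $G/P$, the $\rho'$-shift built into the definition of the fiber $W_{\eta,\nu+\rho'}$, and the appearance of the contragredient representation $\xi^*$ paired with $e^{-\lambda}$. Once one moves uniformly between the section picture of induced representations and the picture of $P$- and $P'$-equivariant functions on the groups, both directions of the correspondence and its naturality are essentially formal, and the verification that the two constructions are mutual inverses reduces to unravelling the definitions and invoking $G'$-translation.
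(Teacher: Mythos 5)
Your proposal is correct and follows essentially the same route the paper sketches in the paragraph preceding Theorem~\ref{theorem:KS_kernel} (and which is carried out in detail in the cited Kobayashi--Speh reference): apply the vector-valued Schwartz Kernel Theorem to get a kernel in $\mathcal{D}'(G/P\times G'/P',\mathcal{V}_{\xi^*,-\lambda}\boxtimes\mathcal{W}_{\eta,\nu})$, translate $G'$-equivariance of the operator into diagonal $G'$-invariance of the kernel, and use transitivity of $G'$ on $G'/P'$ with isotropy $P'$ to descend to a $P'$-invariant distribution over the slice $G/P\times\{eP'\}$. Your bookkeeping of the $\rho$-shifts, the contragredient $\xi^*$, and the fiber identification $\mathcal{W}_{\eta,\nu}|_{eP'}\cong W_{\eta,\nu+\rho'}$ is consistent with the paper's normalized-induction conventions, so nothing is missing.
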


\subsection{Restriction to the open Bruhat cell}\label{sec:DistributionKernelsOnOpenBruhatCell}

From now on assume $M'=M\cap G'$, $A'=A\cap G'$ and $N'=N\cap G'$. Since $\Nbar$ is unipotent we obtain a parameterization of the open Bruhat cell $\Nbar P/P \subseteq G/P$ in terms of the Lie algebra $\nbar$ by the map
$$\nbar\stackrel{\exp}{\longrightarrow}  \Nbar \lhook \joinrel \longrightarrow G \longrightarrow G/P
,$$ 
so that we can consider $\nbar$ as an open dense subset of $G/P$. Then the restriction
$$\mathcal{D}'(G/P,\mathcal{V}_{\xi^*,-\lambda})\longrightarrow \mathcal{D}'(\nbar,\mathcal{V}_{\xi^*,-\lambda}|_{\nbar})$$ can be used to define a $\g$-action on $\mathcal{D}'(\nbar,\mathcal{V}_{\xi^*,-\lambda}|_{\nbar})\cong\mathcal{D}'(\nbar)\otimes V_{\xi^*,-\lambda+\rho}$ by vector fields. Moreover, since ${\rm Ad}(M'A')$ leaves $\nbar$ invariant, the restriction is further $M'A'$-equivariant.
If we assume $P'\Nbar P=G$, i.e. every $P'$-orbit in $G/P$ meets the open Bruhat cell $\Nbar P$, then symmetry breaking operators can be described in terms of $(M'A',\mathfrak{n}')$-invariant distributions on $\nbar$:

\begin{theorem}[{\cite[Theorem 3.16]{kobayashi_speh_2015}}]
\label{theorem:KS_kernel_open_cell}
	Assume $P'\Nbar P=G$, then there is a natural bijection
	$$\Hom_{G'}(\pi_{\xi,\lambda}|_{G'},\tau_{\eta,\nu}) \stackrel{\sim}{\longrightarrow} (\mathcal{D}'(\nbar)\otimes V_{\xi^*,-\lambda+\rho}\otimes W_{\eta, \nu+\rho'})^{M'A',\mathfrak{n}'}.$$
\end{theorem}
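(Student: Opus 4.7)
The plan is to derive this statement from Theorem~\ref{theorem:KS_kernel} by restricting $P'$-invariant distribution sections to the open Bruhat cell. Using the parametrization $\nbar\xrightarrow{\exp}\Nbar\hookrightarrow G\to G/P$, the open cell is identified with $\nbar$, and in this trivialization the bundle $\mathcal{V}_{\xi^*,-\lambda}$ becomes the trivial bundle with fiber $V_{\xi^*,-\lambda+\rho}$ (the $\rho$-shift coming from the normalization of parabolic induction when passing to the non-compact picture). This yields a natural restriction map
$$ (\mathcal{D}'(G/P,\mathcal{V}_{\xi^*,-\lambda})\otimes W_{\eta,\nu+\rho'})^{P'}\longrightarrow\mathcal{D}'(\nbar)\otimes V_{\xi^*,-\lambda+\rho}\otimes W_{\eta,\nu+\rho'}, $$
and the goal is to identify its image with the $(M'A',\nfrak')$-invariant subspace and to prove bijectivity.

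First I would check that the image lands in the invariants. Since $M'A'\subseteq MA$ normalizes $\Nbar$ under the adjoint action, the cell $\Nbar P/P$ is preserved by $M'A'$ and group invariance upstairs specializes directly to group invariance on $\nbar$. The subgroup $N'$ does not preserve the open cell, but the $\g'$-action by vector fields on $G/P$ restricts to a $\nfrak'$-action by vector fields on $\nbar$, and $P'$-invariance of the distribution kernel forces this restricted action to annihilate the restriction. For injectivity I would invoke the hypothesis $P'\Nbar P=G$, which is exactly the statement that $\{p'\cdot(\Nbar P/P):p'\in P'\}$ is an open cover of $G/P$. A $P'$-invariant distribution that vanishes on the open cell then vanishes on every translate by equivariance, and a partition of unity subordinate to this cover forces it to vanish globally.

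The main technical step, which I expect to be the hardest, is surjectivity. Given an $(M'A',\nfrak')$-invariant distribution $u$ on $\nbar$ with values in $V_{\xi^*,-\lambda+\rho}\otimes W_{\eta,\nu+\rho'}$, I would construct its $P'$-invariant extension to $G/P$ by decomposing any test section along a partition of unity subordinate to the cover above, transporting each piece into $\nbar$ via the appropriate $P'$-action, and pairing with $u$. The essential point to verify is well-definedness on overlaps: if $p_1,p_2\in P'$ are such that $p_1^{-1}p_2$ moves an open portion of $\Nbar P/P$ back into itself, one must show that the two candidate pairings with $u$ agree. Writing $p_1^{-1}p_2$ in the Langlands decomposition $P'=M'A'N'$, the $M'A'$-part is absorbed by the genuine group invariance of $u$, while the $N'$-part (which does not act on $\nbar$ globally as a group) is absorbed by integrating the infinitesimal $\nfrak'$-invariance along a path in the simply connected unipotent group $N'$. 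The consistency of this gluing across the stratification of $G/P$ into $P'$-orbits is the main obstacle, but it is a standard construction in the analysis of equivariant distributions on open orbits of spherical varieties and mirrors the argument given in \cite{kobayashi_speh_2015}.
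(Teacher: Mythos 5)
Your outline correctly reconstructs the Kobayashi--Speh argument that the paper cites: reduce via Theorem~\ref{theorem:KS_kernel} to $P'$-invariant distribution sections over $G/P$, restrict to the open Bruhat cell $\nbar$, and use $P'\Nbar P=G$ for both injectivity (cover by $P'$-translates) and surjectivity (partition-of-unity extension). The step you identify as the main obstacle and then defer to a ``standard construction'' is, however, precisely where the proposal as written has a gap. Integrating the infinitesimal $\nfrak'$-invariance along $t\mapsto\exp(tY)$ with $Y\in\nfrak'$ requires $\exp(tY)\cdot\Supp\psi$ to stay in $\nbar$ for all intermediate $t$, and this can fail: the vector fields $d\pi(Y)$ for $Y\in\nfrak'$ are not complete on $\nbar$, and in fact the $N'$-orbit of any point of $\nbar'\setminus\{0\}$ passes through $\tilde{w}_0 P\notin\nbar$ (cf.\ Proposition~\ref{cor:orbits}), so the derivative identity $\tfrac{d}{dt}\langle u,\pi(\exp(tY))\psi\rangle=0$ is not available along the whole one-parameter subgroup. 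The repair is to note that the $M'A'$-group-invariance together with the $\nfrak'$-Lie-algebra-invariance yields invariance under all of $\mathfrak{p}'=\mathfrak{m}'\oplus\mathfrak{a}'\oplus\nfrak'$, which permits integration along arbitrary paths in $P'$, and then to prove that the open set $\{p'\in P':p'\cdot\Supp\psi\subseteq\nbar\}$ is connected — a statement that depends on the codimension of $G/P\setminus\nbar$ and the $P'$-orbit stratification and is exactly what the hypothesis $P'\Nbar P=G$ is controlling. Making this connectedness step explicit, rather than appealing to a standard construction, is what is needed to turn the outline into a proof.
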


Given a distribution kernel $u^T$, the corresponding operator $T \in \Hom_{G'}(\pi_{\xi,\lambda}|_{G'},\tau_{\eta,\nu})$ is given by
\begin{equation}
\label{eq:distribution_operator}
T \varphi(h)=\langle u^T, \varphi(h\exp(\;\cdot\;)) \rangle.
\end{equation}

In this paper we will only be concerned with the case of spherical principal series representations. We therefore let $\xi={\bf 1}$ and $\eta={\bf 1}$ be the trivial representations and put $\pi_\lambda=\pi_{{\bf 1},\lambda}$ and $\tau_\nu=\tau_{{\bf 1},\nu}$.

For the classification of symmetry breaking operators it will be convenient to also consider invariant distributions on open subsets of $\nbar$. For an open $M'A'$-invariant subset $\Omega \subseteq \nbar$ we therefore use the notation
$$ \mathcal{D}'(\Omega)_{\lambda,\nu}:=(\mathcal{D}'(\Omega)\otimes V_{{\bf 1},-\lambda+\rho}\otimes W_{{\bf 1}, \nu+\rho'})^{M'A',\mathfrak{n}'}, $$
so that $\Hom(\pi_\lambda|_{G'},\tau_\nu)\cong\mathcal{D}'(\nbar)_{\lambda,\nu}$.

\begin{remark}
\label{remark:diff_equations}
The space $\mathcal{D}'(\Omega)_{\lambda,\nu}$ is given by all distributions $u\in\mathcal{D}'(\Omega)$ satisfying
\begin{align}\label{eq:diff_equation_gerneral}
 u({\rm Ad}(m'a')X) &= a'^{-\lambda+\rho+\nu+\rho'}u(X) && \forall\,m'\in M',a'\in A',\\
 d\pi_{-\lambda}(Y)u(X) &= 0 && \forall\,Y\in\nfrak'.\label{eq:diff_equation_gerneral_z}
\end{align}
\end{remark}

\section{Principal series representations of rank one reductive groups}

For $\F\in\{\C,\mathbb{H},\mathbb{O}\}$ let $G= \Urm(1,n+1;\F)$ denote the group of $(n+2)\times(n+2)$ matrices over $\F$ preserving the quadratic form
$$(z_0,z_1,\ldots,z_{n+1})\mapsto-|z_0|^2+|z_1|^2+\cdots+|z_{n+1}|^2. $$
Here we assume that $n\geq1$ for $\F=\C,\mathbb{H}$ and $n=1$ for $\F=\mathbb{O}$, using the interpretation $\Urm(1,2;\mathbb{O})\cong F_{4(-20)}$.

Let $P$ be the minimal parabolic subgroup of $G$ with Langlands decomposition $P=MAN$ given by
\begin{align*}
 M &= \left\{ \begin{pmatrix}a & & \\& a & \\& & b\end{pmatrix}: a \in \Urm(1;\F), b \in \Urm(n;\F)\right\},\\
 A &= \exp (\mathfrak{a}) \qquad \text{where } \mathfrak{a}=\R H, \quad H=\begin{pmatrix}0 & 1 & \\1 & 0 & \\& & \mathbf{0}_n\end{pmatrix},\\
 N &= \exp (\mathfrak{n}) \qquad \text{where } \mathfrak{n}=\left\{ \begin{pmatrix}Z & -Z & X \\Z & -Z & X \\X^* & -X^* & \mathbf{0}_n\end{pmatrix}: X\in \F^n, Z\in \Im \F \right\}.
\end{align*}
Here, for $X=(X_1,\ldots ,X_n) \in \F^n$ we write $X^*:= (\overline{X}_1, \ldots \overline{X}_n)^T$. Further, we use the notation $\Im\F=\{Z\in\F:\overline{Z}=-Z\}$. Note that $\mathfrak{n}$ is the direct sum of the eigenspaces of ${\rm ad}(H)$ to the eigenvalues $+1$ and $+2$. We abbreviate the real dimensions of these eigenspaces by
$$ p=n\cdot\dim_\R \F \quad \mbox{and} \quad q= \dim_\R \F -1. $$
We identify $\mathfrak{a}_\C^*\cong\C$ by $\lambda\mapsto\lambda(H)$. Then in particular
$$ \rho= \frac{1}{2} \tr \operatorname{ad}|_\mathfrak{n}(H)= \frac{p+2q}{2}. $$

For $\lambda\in\C$ we consider the spherical principal series representation $\pi_\lambda$ in the notation of Section~\ref{sec:PrincipalSeries}. Due to Johnson and Wallach the composition series of these representations are well known.

\begin{theorem}[{\cite{Johnson_Wallach_1977,Johnson_1976}}]\label{theorem:composition_series}
$\pi_\lambda$ is irreducible if and only if $\lambda \notin \pm(\rho-q+1+2\Z_{\geq 0})$. $\pi_\lambda$ contains an irreducible finite-dimensional submodule if and only if $\lambda \in -\rho-2\Z_{\geq 0}$.
\end{theorem}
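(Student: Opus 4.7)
The plan is to analyze $\pi_\lambda$ through its $K$-type decomposition and the Knapp--Stein intertwining operator $T_\lambda \colon \pi_\lambda \to \pi_{-\lambda}$, reducing reducibility and the existence of finite-dimensional subquotients to an explicit computation of $\Gamma$-function ratios.

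\medskip

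\textbf{Step 1: $K$-type decomposition.} Let $K$ be the maximal compact subgroup of $G$, so $K = \Urm(1;\F)\times\Urm(n+1;\F)$ for $\F=\C,\mathbb{H}$ and the appropriate compact subgroup for $\F=\mathbb{O}$. By Frobenius reciprocity, $\pi_\lambda|_K \cong \Ind_M^K \mathbf{1}$ as $K$-representations, independent of $\lambda$. Using the fact that $K/M$ is a compact symmetric space (a sphere or a projective space over $\F$), Helgason's theorem describes the $K$-spherical representations of $K$ explicitly, so $\pi_\lambda|_K = \bigoplus_{\mu \in \Lambda} V_\mu$ where $\Lambda$ is an explicit set of dominant weights and each $V_\mu$ has multiplicity one. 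In particular, $\pi_\lambda|_K$ is multiplicity-free.

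\medskip

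\textbf{Step 2: Spectrum of the intertwiner.} Since each $K$-type appears with multiplicity one, $T_\lambda$ acts on $V_\mu$ by a scalar $c_\mu(\lambda)$. Standard computations using the Iwasawa decomposition and the normalization conventions of Section~\ref{sec:PrincipalSeries} express $c_\mu(\lambda)$ as an explicit ratio of $\Gamma$-functions (generalized Gindikin--Karpelevich formula). For the pairs at hand, one obtains, up to a $\lambda$-independent factor,
\[
 c_\mu(\lambda) \;=\; \frac{\Gamma\bigl(\tfrac{\lambda+\rho}{2}\bigr)\Gamma\bigl(\tfrac{\lambda+\rho-q+1}{2}\bigr)}{\Gamma\bigl(\tfrac{\lambda+\rho}{2}+a(\mu)\bigr)\Gamma\bigl(\tfrac{\lambda+\rho-q+1}{2}+b(\mu)\bigr)}
\]
for non-negative integers $a(\mu),b(\mu)$ depending on the highest weight $\mu$ (these are read off from the branching under $M \subset K$).

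\medskip

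\textbf{Step 3: Reducibility points.} A multiplicity-free $\pi_\lambda$ is reducible precisely when the scalars $c_\mu(\lambda)$ do not all have the same order of vanishing/pole: the kernel and image of $T_\lambda$ then pick out proper $(\g,K)$-invariant subspaces. Given the closed-form in Step 2, the zeros of $c_\mu(\lambda)$ occur only at $\lambda \in -\rho - 2\Z_{\geq 0}$ and $\lambda \in -\rho+q-1-2\Z_{\geq 0}$. Because the first family of zeros is common to all $\mu$ (up to the large-$\mu$ shift), reducibility of $\pi_\lambda$ is controlled by the second factor and occurs iff $\lambda\in -(\rho-q+1+2\Z_{\geq 0})$. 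Applying the same argument to $T_{-\lambda}$ handles the symmetric family $\lambda \in +(\rho-q+1+2\Z_{\geq 0})$, yielding the first assertion.

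\medskip

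\textbf{Step 4: Finite-dimensional submodule.} By a theorem of Casselman the finite-dimensional irreducible representations of $G$ appear as quotients of some $\pi_\mu$; dually, they appear as submodules of $\pi_{-\mu}$ via the pairing with $\pi_\mu^\vee \cong \pi_{-\mu}$. For the spherical finite-dimensional representations of $G$ (those with a nonzero $K$-fixed vector), the Cartan--Helgason theorem gives the infinitesimal characters, which in our normalization correspond to $\lambda \in \rho+2\Z_{\geq 0}$. Hence $\pi_\lambda$ contains an irreducible finite-dimensional submodule precisely for $\lambda \in -\rho - 2\Z_{\geq 0}$, as the kernel of $T_\lambda$ at such $\lambda$ can be identified with the spherical finite-dimensional $G$-module of parameter $-\lambda$.

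\medskip

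The main obstacle is the explicit identification of the set $\Lambda$ of $K$-types and the computation of $a(\mu), b(\mu)$ in Step~2, particularly for $\F=\mathbb{H}$ and $\F=\mathbb{O}$, where the branching under $M \subset K$ is non-trivial. This is precisely the combinatorial content of the Johnson--Wallach analysis cited, and everything else is a formal consequence of the multiplicity-freeness and the $\Gamma$-function formula.
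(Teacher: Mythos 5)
The paper states Theorem~\ref{theorem:composition_series} as a citation from \cite{Johnson_Wallach_1977,Johnson_1976} without proof, so there is no in-paper argument to compare against. Your framework---multiplicity-free $K$-type decomposition of $\pi_\lambda$ plus explicit intertwiner spectrum---is the same framework that underlies the cited papers, so the general strategy is appropriate. However, the details as you have written them do not hold together.

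The most serious problem is internal inconsistency between Steps~2 and~3. With non-negative integers $a(\mu)$ and $b(\mu)$, the ratio
$$
\frac{\Gamma\bigl(\tfrac{\lambda+\rho}{2}\bigr)}{\Gamma\bigl(\tfrac{\lambda+\rho}{2}+a(\mu)\bigr)}
= \frac{1}{\bigl(\tfrac{\lambda+\rho}{2}\bigr)_{a(\mu)}}
$$
is a reciprocal of a polynomial, so your proposed $c_\mu(\lambda)$ has \emph{poles} and no zeros whatsoever. Step~3 then discusses ``the zeros of $c_\mu(\lambda)$,'' which your formula does not have. The correct Gindikin--Karpelevich type formula must carry $\mu$-shifted $\Gamma$-factors in \emph{both} numerator and denominator (typically with arguments involving both $+\lambda$ and $-\lambda$), which is precisely what makes the eigenvalue vanish on some $K$-types and not others at reducibility points. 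As written, the formula cannot detect any reducibility.

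Step~3 also asserts without justification that a multiplicity-free $\pi_\lambda$ is reducible precisely when the $c_\mu(\lambda)$ do not all have the same order of vanishing/pole. The implication ``$\ker T_\lambda \neq 0 \Rightarrow \pi_\lambda$ reducible'' is immediate; the converse is not. Johnson and Wallach resolve this not by a soft intertwiner argument but by directly computing the $\mathfrak{g}$-action between adjacent $K$-types and checking when the raising/lowering constants vanish---hence the phrase ``composition series \emph{and} intertwining operators'' in their titles: the composition series is determined independently and then matched against the operators. Alternatively one can invoke that $T_{-\lambda}\circ T_\lambda$ acts by a scalar and that irreducibility is equivalent to this scalar being nonzero, but you do not argue this. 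Finally, the dismissal of the first family of zeros as ``common to all $\mu$ up to the large-$\mu$ shift'' is too vague to rule out extra reducibility points; since $q$ is odd one has $-\rho-2\Z_{\geq 0}\subseteq -(\rho-q+1+2\Z_{\geq 0})$, so the two families of potential degeneracies overlap and the interaction between them needs to be made explicit.
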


\subsection{The non-compact picture}

Let $\Nbar$ be the nilradical of the parabolic subgroup opposite to $P$. Since $\Nbar$ is unipotent, we identify it with its Lie algebra $\nbar\cong \F^n \oplus \Im \F $ in terms of the exponential map:
$$
\F^n\oplus\Im\F\to\Nbar, \quad (X,Z)\mapsto\bar{n}_{(X,Z)}:=\exp \begin{pmatrix}
\frac{Z}{2} & \frac{Z}{2} & X \\
-\frac{Z}{2} & -\frac{Z}{2} & -X \\
X^* & X^* & \mathbf{0}_n
\end{pmatrix}.
$$

Since $\Nbar P$ is open and dense in $G$, the restriction of $\pi_\lambda$ to functions on $\Nbar$ is one-to-one. The resulting realization $I_\lambda\subseteq C^\infty(\nbar)$ of $\pi_\lambda$ is called the \emph{non-compact picture of $\pi_\lambda$}. For $g \in \Nbar MAN$ we write $g=\bar{n}(g) m(g) a(g) n(g)$ for the obvious decomposition. Then the $G$-action in the non-compact picture is given by
\begin{equation}
\label{eq:action_non_compact_picture}
\pi_\lambda(g)f(X,Z)=a(g^{-1} \bar{n}_{(X,Z)})^{-(\lambda+\rho)}f(\log\bar{n}(g^{-1}\bar{n}_{(X,Z)})),
\end{equation}
whenever $g^{-1}\overline{n}_{(X,Z)}\in\overline{N}MAN$.

The Lie bracket of $\nbar\cong\F^n\oplus\Im\F$ is given by
$$[(X,Z),(Y,W)]=(0,4\Im(XY^*))$$
and the group multiplication is given by
$$(X,Z)\cdot (Y,W)=(X,Z)+(Y,W)+\frac{1}{2}[(X,Z),(Y,W)] = (X+Y,Z+W+2 \Im(XY^*)).$$
We write $\nbar=\mathfrak{v}\oplus\zfrak$ with $\mathfrak{v}:=\F^n$ and $\zfrak:=\Im\F$, noting that $\zfrak$ is the center of $\nbar$. We endow $\mathfrak{v}$ and $\zfrak$ with the usual inner product given by
$$\langle X,Y \rangle = \Re(XY^*)$$
and extend it to $\nbar$ so that $\mathfrak{v}$ and $\zfrak$ are orthogonal. Then for all $Z \in \zfrak$ we obtain a linear skew-symmetric map $J_Z\in{\rm End}(\mathfrak{v})$ characterized by
$$ \langle J_Z X , Y \rangle = \langle Z,[X,Y]\rangle. $$
We note that $J_ZX=-4Z\cdot X$, the scalar multiplication of $\F$ on $\F^n$. In particular we have $J^2_Z=-16\abs{Z}^2\cdot{\rm id}_{\mathfrak{v}}$.

\begin{remark}
\begin{enumerate}[label=(\roman{*}), ref=\thetheorem(\roman{*})]
\item
Two-step nilpotent Lie algebras and their corresponding groups which satisfy the condition $J^2_Z=-16\abs{Z}^2\cdot{\rm id}_{\mathfrak{v}}$ are said to be of \emph{H-type} and many constructions in this paper could be carried out in the more general setting of H-type groups.
\item
Those H-type groups which occur as unipotent radicals of semisimple Lie groups of rank one are exactly the H-type groups satisfying the $J^2$-condition, namely for all $Z,Z' \in \zfrak$ with $\langle Z,Z'\rangle =0$ there exists $Z'' \in \zfrak$ such that $J_ZJ_{Z'}=J_{Z''}$ (see \cite{cowling_1998}).
\end{enumerate}
\end{remark}

We collect some facts that follow directly from the definitions above:

\begin{lemma}
\label{lemma:H-type_rules}
Let $X,X' \in \mathfrak{v}$ and $Z,Z' \in \zfrak$.
\begin{enumerate}[label=(\roman{*}), ref=\thetheorem(\roman{*})]
\item 
\label{lemma:H-type_rules:i}
$\langle J_Z X,X \rangle =0$,
\item 
\label{lemma:H-type_rules:ii}
$\langle J_Z X, X'   \rangle= -\langle X, J_Z X' \rangle$,
\item 
\label{lemma:H-type_rules:iii}
$\abs{J_Z X}^2=16 \abs{Z}^2\abs{X}^2$,
\item 
\label{lemma:H-type_rules:iv}
$J_Z J_{Z'}+J_{Z'} J_Z=-32 \langle Z , Z'\rangle\cdot {\rm id}_{\mathfrak{v}}$.
\end{enumerate}
\end{lemma}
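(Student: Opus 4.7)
The plan is to prove the four items in order, leveraging the two explicit descriptions of $J_Z$ stated just above the lemma: the defining identity $\langle J_Z X, Y\rangle = \langle Z, [X,Y]\rangle$ and the concrete formula $J_Z X = -4 Z\cdot X$, together with the relation $J_Z^2 = -16|Z|^2\cdot \mathrm{id}_{\mathfrak{v}}$. None of the steps looks to be hard; the point is to reduce everything to bracket identities in $\nbar$ and to the multiplicativity of the $\F$-norm.

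For \ref{lemma:H-type_rules:i}, I would observe that by the explicit Lie bracket formula $[(X,0),(X,0)] = (0, 4\Im(XX^\ast))$, and $XX^\ast = \sum_i |X_i|^2 \in \R$, so $[X,X]=0$ in $\nbar$; hence $\langle J_Z X, X\rangle = \langle Z,[X,X]\rangle = 0$. For \ref{lemma:H-type_rules:ii}, I would simply use the antisymmetry of the bracket and the symmetry of $\langle\cdot,\cdot\rangle$ to write $\langle J_Z X, X'\rangle = \langle Z, [X,X']\rangle = -\langle Z, [X',X]\rangle = -\langle J_Z X', X\rangle = -\langle X, J_Z X'\rangle$.

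For \ref{lemma:H-type_rules:iii} there are two routes: either compute directly from $J_Z X = -4Z\cdot X$ using $|Z\cdot X|^2 = \sum_i |Z X_i|^2 = |Z|^2 |X|^2$ (multiplicativity of the norm in $\F\in\{\C,\HH,\mathbb{O}\}$), or combine \ref{lemma:H-type_rules:ii} with $J_Z^2=-16|Z|^2\,\mathrm{id}$ to get $|J_ZX|^2 = \langle J_Z X, J_Z X\rangle = -\langle J_Z^2 X, X\rangle = 16|Z|^2|X|^2$. I would pick the second route, since it only uses facts already recorded in the paragraph preceding the lemma.

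For \ref{lemma:H-type_rules:iv}, the plan is a polarization argument: apply $J_Z^2 = -16|Z|^2\,\mathrm{id}_{\mathfrak{v}}$ with $Z$ replaced by $Z+Z'$, obtaining
\[
J_Z^2 + J_Z J_{Z'} + J_{Z'} J_Z + J_{Z'}^2 = -16\bigl(|Z|^2 + 2\langle Z,Z'\rangle + |Z'|^2\bigr)\,\mathrm{id}_{\mathfrak{v}},
\]
then subtract the two identities $J_Z^2 = -16|Z|^2\,\mathrm{id}$ and $J_{Z'}^2 = -16|Z'|^2\,\mathrm{id}$ to isolate $J_Z J_{Z'} + J_{Z'} J_Z = -32\langle Z,Z'\rangle\,\mathrm{id}_{\mathfrak{v}}$. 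The only real obstacle is bookkeeping of the factors of $16$ and $32$; conceptually, the whole lemma is a direct unwinding of the definitions and the $H$-type identity.
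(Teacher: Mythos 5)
Your proof is correct. The paper itself offers no argument for this lemma — it is introduced with ``We collect some facts that follow directly from the definitions above'' — so there is no proof to compare against; your write-up simply supplies the routine details the paper leaves implicit. A few small remarks on efficiency: item~(i) also follows instantly from item~(ii) by setting $X'=X$ (giving $2\langle J_ZX,X\rangle=0$), so you need not invoke the explicit bracket formula; and your choice of the second route for (iii) is the cleaner one, since it keeps everything at the level of the $H$-type identity rather than appealing to multiplicativity of the $\F$-norm (which would anyway need a word of justification for $\F=\mathbb{O}$). The polarization argument for (iv) is exactly the standard move and is correct, using the linearity of $Z\mapsto J_Z$, which is immediate from the defining relation $\langle J_ZX,Y\rangle=\langle Z,[X,Y]\rangle$.
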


We now compute the representation $\pi_\lambda$ on $\overline{P}=MA\Nbar$ and on the representative $\tilde{w}_0=\diag(-1,1,\mathbf{1}_n)$ of the longest Weyl group element of $G$ with respect to $A$. For this we first state some matrix decompositions which are easily verified.

For $(X,Z)\in\nbar$ let
$$ N(X,Z):=(\abs{X}^4+\abs{Z}^2)^{\frac{1}{4}} $$
denote the \emph{norm function} of $\nbar$.

\begin{lemma}
\label{prop:matrix_decompositions}
\begin{enumerate}[label=(\roman{*}), ref=\thetheorem(\roman{*})]
\item\label{prop:matrix_decompositions:iii} Let $m=\diag(a,a,b^{-1}) \in M$ with $a \in \Urm(1;\F)$ and $b\in \Urm(n;\F)$, then
$$ m \bar{n}_{(X,Z)}m^{-1}=\bar{n}_{(aXb,aZa^{-1})}. $$
\item\label{prop:matrix_decompositions:ii} Let $t\in \R$ and $a=\exp(tH)$, then
$$ a\bar{n}_{(X,Z)}a^{-1}=\bar{n}_{(e^{-t}X,e^{-2t}Z)}. $$
\item\label{prop:matrix_decompositions:i} Let $(X,Z)\neq (0,0)$, then  $\tilde{w}_0 \bar{n}_{(X,Z)}=\bar{n}_{(U,V)}m a n$ with $m\in M$, $n\in N$ and
\begin{equation*}
 U=\frac{(\frac{1}{4}J_Z-\abs{X}^2)X}{N(X,Z)^4}, \qquad V=\frac{-Z}{N(X,Z)^4}, \qquad a = \exp(2\log(N(X,Z))H).
\end{equation*}
\end{enumerate}
\end{lemma}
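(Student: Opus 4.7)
The plan is to handle parts (i) and (ii) by direct computation at the Lie algebra level, and to attack the substantive part (iii) via a projective trick exploiting the weight vector $e_+:=e_0+e_1$. For (i), I would conjugate the Lie algebra element inside the exponential defining $\bar{n}_{(X,Z)}$ by $m=\diag(a,a,b^{-1})$ block by block; the only nontrivial entry is the bottom-left block, which becomes $b^{-1}X^*a^{-1}=(aXb)^*$ via $a^*=a^{-1}$ and $b^*=b^{-1}$. The conjugated Lie algebra element is exactly the one associated to $(aXb,aZa^{-1})$, and exponentiating yields the claim. For (ii), the text records that $\operatorname{ad}(H)$ has eigenvalues $+1$ and $+2$ on $\mathfrak{n}$, hence eigenvalues $-1$ on $\mathfrak{v}$ and $-2$ on $\zfrak$ inside $\nbar$; therefore $\operatorname{Ad}(\exp tH)$ scales $X$ by $e^{-t}$ and $Z$ by $e^{-2t}$, and (ii) follows.

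The heart of the argument is (iii). Realize $G/P$ as the $G$-orbit of $[e_+]\in\F P^{n+1}$; the identities $n\cdot e_+=e_+$, $\exp(tH)\cdot e_+=e^t e_+$, and $\diag(a_0,a_0,b_0^{-1})\cdot e_+=a_0 e_+$ show that $P$ stabilizes the line $\F e_+$, and in rank one the complement of the open $\bar{N}$-orbit on $G/P$ is the single point $[e_-]$ with $e_-:=e_0-e_1$. A short exponential computation exploiting that the relevant Lie algebra matrix $A(X,Z)$ satisfies $A(X,Z)^3=0$ gives $\bar{n}_{(X,Z)}e_+=(1+\abs{X}^2+Z,\;1-\abs{X}^2-Z,\;2X^*)^T$, so
\[
\tilde{w}_0 \bar{n}_{(X,Z)} e_+ \;=\; \bigl(-(1+\abs{X}^2+Z),\;1-\abs{X}^2-Z,\;2X^*\bigr)^T,
\]
which is a scalar multiple of $e_-$ only if $X=0$ and $1+Z=-(1-Z)$ simultaneously---impossible for any $Z\in\Im\F$. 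Hence for $(X,Z)\neq(0,0)$ the Bruhat factorization $\tilde{w}_0\bar{n}_{(X,Z)}=\bar{n}_{(U,V)}\,man$ with uniquely determined $\bar{n}_{(U,V)}$ and $a$ exists.

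To extract $(U,V)$ and $a$ explicitly, I would compare both sides applied to $e_+$: since $p=man$ satisfies $p\cdot e_+=(a_0 e^t) e_+$, we obtain $(a_0 e^t)\bar{n}_{(U,V)} e_+=\tilde{w}_0\bar{n}_{(X,Z)} e_+$. Summing the first two coordinates yields $a_0 e^t=-(\abs{X}^2+Z)$; since $\abs{a_0}=1$ and $e^t>0$, taking moduli forces $e^t=\sqrt{\abs{X}^4+\abs{Z}^2}=N(X,Z)^2$, so $a=\exp(2\log N(X,Z)\cdot H)$. The last $n$ coordinates give $U^*(a_0 e^t)=X^*$, hence $U^*=-X^*(\abs{X}^2-Z)/N(X,Z)^4$; taking componentwise adjoints and rewriting $-ZX$ as $\tfrac14 J_Z X$ via $J_Z X=-4ZX$ gives $U=(\tfrac14 J_Z-\abs{X}^2)X/N(X,Z)^4$. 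Finally a direct calculation shows $\abs{U}^2=\abs{X}^2/N(X,Z)^4$, which combined with the difference of the first two coordinates of the identity above yields $V=-Z/N(X,Z)^4$. The only obstacle worth flagging is the careful bookkeeping of left versus right scalar multiplication over noncommutative $\F$ when passing between $U$ and $U^*$, using $(cX)^*=X^*\bar{c}$ and $(Xc)^*=\bar{c}X^*$ together with $\overline{\abs{X}^2+Z}=\abs{X}^2-Z$; once this is tracked, the remaining algebra is routine.
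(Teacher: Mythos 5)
The paper supplies no proof of Lemma~\ref{prop:matrix_decompositions} --- it is stated with the remark ``which are easily verified'' --- so there is no in-text argument to compare against. Your approach is a correct and natural one: (i) and (ii) are routine conjugations at the Lie algebra level, and the projective trick via the weight vector $e_+=e_0+e_1$ combined with the nilpotence $A(X,Z)^3=0$ (which indeed holds, since $A(X,Z)^2$ has nonzero entries only in the upper-left $2\times2$ corner and $A(X,Z)^2A(X,Z)=0$) gives a clean route to (iii). The extraction of $a_0e^t=-(\abs{X}^2+Z)$ from the sum of the first two coordinates, $U^*=X^*(a_0e^t)^{-1}$, and $V$ from the difference, together with the noncommutative bookkeeping $(X^*c)^*=\bar{c}X$ and $J_ZX=-4ZX$, all check out.

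One genuine (if harmless) slip: the condition you derive for $\tilde{w}_0\bar{n}_{(X,Z)}e_+$ to be a scalar multiple of $e_-$ is not ``$X=0$ and $1+Z=-(1-Z)$, impossible.'' Equating the first two coordinates of $\bigl(-(1+\abs{X}^2+Z),\,1-\abs{X}^2-Z,\,2X^*\bigr)^T$ against $(c,-c,0)^T$ gives $\abs{X}^2+Z=-(\abs{X}^2+Z)$, i.e.\ $\abs{X}^2=0$ and $Z=0$ --- so the condition is satisfiable precisely at $(X,Z)=(0,0)$, as it must be since $\tilde{w}_0\bar{n}_{(0,0)}=\tilde{w}_0$ does \emph{not} lie in $\Nbar MAN$. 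Your version overclaims impossibility for all $Z$, which would wrongly place $\tilde{w}_0$ in the open cell. Since the lemma hypothesizes $(X,Z)\neq(0,0)$, the conclusion you need is unaffected; but the argument as written is off by a sign. In fact the existence of the factorization can be settled without any projective computation: from $G=\Nbar MAN\sqcup\tilde{w}_0MAN$ one has $\tilde{w}_0\bar{n}_{(X,Z)}\in\tilde{w}_0MAN$ if and only if $\bar{n}_{(X,Z)}\in MAN\cap\Nbar=\{1\}$, so existence follows directly and the $e_\pm$ computation is only needed to read off $U$, $V$, $a$.
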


These decompositions immediately imply the following formulas for the action of $P$ and $\tilde{w}_0$:

\begin{prop}\label{prop:GroupAction}
\begin{enumerate}[label=(\roman{*}), ref=\thetheorem(\roman{*})]
\item\label{prop:GroupActionM} For $m=\diag(a^{-1},a^{-1},b)\in M$ with $a\in\Urm(1;\F)$ and $b\in\Urm(n;\F)$:
$$ \pi_\lambda(m)u(X,Z) = u(aXb,aZa^{-1}). $$
\item For $t\in\R$ and $a=\exp(tH)$:
$$ \pi_\lambda(a)u(X,Z) = e^{(\lambda+\rho)t}u(e^tX,e^{2t}Z). $$
\item\label{prop:GroupActionNbar} For $(S,T)\in\nbar$:
$$ \pi_\lambda(\bar{n}_{(S,T)})u(X,Z) = u(X-S,Z-T-\tfrac{1}{2}[S,X]). $$
\item
\label{prop:longest_weyl_group_element_action}
For the action of $\tilde{w}_0$ we have
\begin{equation*}
	\pi_{\lambda}(\tilde{w}_0)u(X,Z)=N(X,Z)^{-2(\lambda+\rho)}u(\sigma(X,Z)).
\end{equation*}
where $\sigma: \nbar \setminus \{0 \} \to \nbar \setminus \{0 \}$ is the inversion given by
$$ \sigma(X,Z)=\left(\frac{(\frac{1}{4}J_Z-\abs{X}^2)X}{N(X,Z)^4}, \frac{-Z}{N(X,Z)^4} \right). $$
\end{enumerate}
\end{prop}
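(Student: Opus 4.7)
The plan is to apply the general non-compact picture formula \eqref{eq:action_non_compact_picture}
\[
\pi_\lambda(g)u(X,Z) = a(g^{-1}\bar{n}_{(X,Z)})^{-(\lambda+\rho)}\,u\bigl(\log \bar{n}(g^{-1}\bar{n}_{(X,Z)})\bigr)
\]
case by case, reading off the $\bar{N}MAN$-decomposition of $g^{-1}\bar{n}_{(X,Z)}$ from the corresponding part of Lemma~\ref{prop:matrix_decompositions}. In each of the four parts the required decomposition is either immediate or can be reduced to one of the three conjugation/product formulas in that lemma by a single rewriting.

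For \ref{prop:GroupActionM}, with $m=\diag(a^{-1},a^{-1},b)$, I would write $m^{-1}\bar{n}_{(X,Z)}=\bigl(m^{-1}\bar{n}_{(X,Z)}m\bigr)\,m^{-1}$ and apply Lemma~\ref{prop:matrix_decompositions}\ref{prop:matrix_decompositions:iii} to $m^{-1}=\diag(a,a,b^{-1})$. This shows $\bar{n}(m^{-1}\bar{n}_{(X,Z)})=\bar{n}_{(aXb,\,aZa^{-1})}$ while $a(m^{-1}\bar{n}_{(X,Z)})=\mathbf{1}$ (since $m^{-1}\in M\subseteq P$), giving the claimed formula. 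Part (ii) is completely analogous: write $a^{-1}\bar{n}_{(X,Z)}=(a^{-1}\bar{n}_{(X,Z)}a)a^{-1}$ with $a=\exp(tH)$, apply Lemma~\ref{prop:matrix_decompositions}\ref{prop:matrix_decompositions:ii}, and use that $a(a^{-1})^{-(\lambda+\rho)} = e^{t(\lambda+\rho)}$ under the identification $\mathfrak{a}_\C^*\cong\C$, $\lambda\mapsto\lambda(H)$. Part \ref{prop:GroupActionNbar} is immediate: $\bar{n}_{(S,T)}^{-1}\bar{n}_{(X,Z)}=\bar{n}_{(-S,-T)\cdot(X,Z)}$ lies entirely in $\bar{N}$, so the $a$-factor is trivial; computing the group product on $\nbar$ via the explicit law $(X,Z)\cdot(Y,W)=(X+Y,Z+W+2\Im(XY^*))$ and noting that $\tfrac{1}{2}[S,X]=2\Im(SX^*)$ gives the formula.

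For \ref{prop:longest_weyl_group_element_action}, since $\tilde{w}_0^{-1}=\tilde{w}_0$, Lemma~\ref{prop:matrix_decompositions}\ref{prop:matrix_decompositions:i} directly yields the decomposition $\tilde{w}_0\bar{n}_{(X,Z)}=\bar{n}_{(U,V)}man$ with the explicit $U$, $V$ and with $a=\exp\bigl(2\log N(X,Z)\,H\bigr)$. The conformal factor is therefore $a^{-(\lambda+\rho)}=N(X,Z)^{-2(\lambda+\rho)}$, and $(U,V)=\sigma(X,Z)$ by construction. There is no real obstacle: the argument is a bookkeeping exercise that amounts to carefully inverting $g$ and plugging into \eqref{eq:action_non_compact_picture}. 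The only minor subtlety is the sign/exponent conventions (writing $m$ versus $m^{-1}$ and converting $a=\exp(tH)$ to an exponent $e^{t(\lambda+\rho)}$), which one has to keep track of so that the final formulas match exactly as stated.
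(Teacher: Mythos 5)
Your proposal is correct and matches the paper's intended argument: the paper states the proposition without explicit proof, remarking only that it follows immediately from Lemma~\ref{prop:matrix_decompositions} via the non-compact picture formula \eqref{eq:action_non_compact_picture}, which is exactly what you carry out case by case.
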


Note that $X\in\F^n$ is a row vector, so that matrix multiplication is from the right.

\begin{remark}
The map $\sigma$ is the inversion defining the Kelvin-transform for $H$-type groups (see \cite[Chapter 4]{cowling_1991}).
\end{remark}

We now use Proposition~\ref{prop:GroupAction} to compute derived representation $d\pi_\lambda$ of $\mathfrak{g}$ on $I_\lambda$. To state the action of the Lie algebra $\mathfrak{g}$ let $E_\mathfrak{v}$ denote the Euler operator on $\mathfrak{v}$ and let $E_\zfrak$ be the Euler operator on $\zfrak$. Then
$$ E:=E_\mathfrak{v}+2E_\mathfrak{z} $$
defines the \emph{weighted Euler operator} on $\nbar$ which takes into account homogeneity with respect to the action of $A$ (see Lemma~\ref{prop:matrix_decompositions:ii}). For instance, the function $N(X,Z)$ is homogeneous of degree one with respect to the weighted Euler operator, i.e. $EN(X,Z)=N(X,Z)$.

\begin{prop}\label{prop:LieAlgebraAction}
\begin{enumerate}[label=(\roman{*}), ref=\thetheorem(\roman{*})]
\item The element $H\in\mathfrak{a}$ acts by
$$ d\pi_\lambda(H) = E+\lambda+\rho. $$
\item For $S\in\mathfrak{v}$:
\begin{multline*}
 d\pi_\lambda({\rm Ad}(\tilde{w}_0S)) = -2\langle S,X\rangle(E+\lambda+\rho)+\abs{X}^2\partial_S+\frac{1}{4}\partial_{J_ZS}-\frac{1}{2}\abs{X}^2\partial_{[S,X]}\\
 +\frac{1}{8}\partial_{[S,J_ZX]}-\frac{1}{8}\partial_{J_{[S,X]}X}.
\end{multline*}
\item For $T\in\mathfrak{z}$:
\begin{multline*}
 d\pi_\lambda({\rm Ad}(\tilde{w}_0T)) = -\langle T,Z\rangle(E+E_{\mathfrak{v}}+\lambda+\rho)+N(X,Z)^4\partial_T+\frac{1}{4}\abs{X}^2\partial_{J_TX}\\
 -\frac{1}{16}\partial_{J_TJ_ZX}.
\end{multline*}
\end{enumerate}
\end{prop}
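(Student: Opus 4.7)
Part (i) is immediate from Proposition~\ref{prop:GroupAction}: differentiating $\pi_\lambda(\exp(tH))u(X,Z) = e^{(\lambda+\rho)t} u(e^tX, e^{2t}Z)$ at $t=0$ gives $(\lambda+\rho+E_\mathfrak{v}+2E_\mathfrak{z})u = (E+\lambda+\rho)u$.

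For parts (ii) and (iii) the plan is to exploit that $\operatorname{Ad}(\tilde{w}_0)$ swaps $\overline{\mathfrak{n}}$ and $\mathfrak{n}$ (since $\operatorname{Ad}(\tilde{w}_0)H = -H$) and that $\tilde{w}_0^2 = 1$, so $\pi_\lambda(\tilde{w}_0)^{-1}=\pi_\lambda(\tilde{w}_0)$ and
\[
d\pi_\lambda(\operatorname{Ad}(\tilde{w}_0)Y) = \pi_\lambda(\tilde{w}_0)\circ d\pi_\lambda(Y)\circ \pi_\lambda(\tilde{w}_0), \qquad Y\in\overline{\mathfrak{n}}.
\]
Since $\exp(tS)=\bar{n}_{(tS,0)}$ and $\exp(tT)=\bar{n}_{(0,tT)}$, the inner factor is obtained by differentiating the $\overline{N}$-action formula in Proposition~\ref{prop:GroupActionNbar} at $t=0$:
\[
d\pi_\lambda(S) = -\partial_S - \tfrac{1}{2}\partial_{[S,X]} \quad (S\in\mathfrak{v}), \qquad d\pi_\lambda(T) = -\partial_T \quad (T\in\mathfrak{z}).
\]
I would then substitute the Kelvin-type formula $\pi_\lambda(\tilde{w}_0)u(X,Z) = N(X,Z)^{-2(\lambda+\rho)}u(\sigma(X,Z))$ from Proposition~\ref{prop:longest_weyl_group_element_action} into both outer factors and apply the chain rule.

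The required analytical input is: the directional derivatives of $N^s$ along $S\in\mathfrak{v}$ and $T\in\mathfrak{z}$, obtained from $\partial_S|X|^4=4|X|^2\langle S,X\rangle$, $\partial_T|Z|^2=2\langle T,Z\rangle$, and the homogeneity $EN=N$; the Jacobian of $\sigma$, computed directly from its explicit formula; and the H-type identities of Lemma~\ref{lemma:H-type_rules}. The skew-symmetry $\langle J_ZX,X'\rangle=-\langle X,J_ZX'\rangle$ moves $J_Z$ between slots so that contractions such as $\langle J_ZS,\nabla_Xu\rangle$ can be rewritten as $\partial_{J_ZS}u$; the identity $|J_ZX|^2=16|Z|^2|X|^2$ allows $J_Z$-terms to be combined with $|X|^4$ into powers of $N^4=|X|^4+|Z|^2$; and for (iii) the anticommutator $J_ZJ_{Z'}+J_{Z'}J_Z=-32\langle Z,Z'\rangle\operatorname{id}_\mathfrak{v}$ is needed to recognize the cross derivative $\partial_{J_TJ_ZX}$.

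The main obstacle is the combinatorial bookkeeping after the chain rule is applied. Three groups of contributions appear: differentiation of the prefactor $N^{-2(\lambda+\rho)}$ produces the Euler-type scalar operators $-2\langle S,X\rangle(E+\lambda+\rho)$ in (ii) and $-\langle T,Z\rangle(E+E_\mathfrak{v}+\lambda+\rho)$ in (iii); differentiation of $u\circ\sigma$ via the Jacobian of $\sigma$ supplies the translation pieces $|X|^2\partial_S+\tfrac{1}{4}\partial_{J_ZS}$ in (ii) and $N^4\partial_T+\tfrac{1}{4}|X|^2\partial_{J_TX}$ in (iii); and the bracket term $\partial_{[S,X]}$ inside $d\pi_\lambda(S)$, once transported through $\sigma$ by the chain rule, generates the remaining mixed derivatives $-\tfrac{1}{2}|X|^2\partial_{[S,X]}+\tfrac{1}{8}\partial_{[S,J_ZX]}-\tfrac{1}{8}\partial_{J_{[S,X]}X}$ in (ii) and, via the anticommutator identity, the cross term $-\tfrac{1}{16}\partial_{J_TJ_ZX}$ in (iii). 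Once these pieces are collected and simplified using the H-type identities, the formulas in the statement follow.
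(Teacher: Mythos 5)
Your approach is essentially the paper's: (i) by differentiating the $A$-action, and (ii), (iii) by conjugating the right-invariant vector fields $d\pi_\lambda(S)=-\partial_S-\tfrac12\partial_{[S,X]}$ and $d\pi_\lambda(T)=-\partial_T$ by the Kelvin-type operator $\pi_\lambda(\tilde w_0)$ from Proposition~\ref{prop:longest_weyl_group_element_action} and simplifying with the H-type identities of Lemma~\ref{lemma:H-type_rules}. The only ingredient you leave implicit is the specific algebraic step the paper uses to collapse the chain-rule terms, namely the identity $\langle S,U\rangle U-\tfrac{1}{16}J_{[S,U]}U=SU^*U$ specialized to $U=(\abs{X}^2-\tfrac14 J_Z)X$, which produces the factor $N(X,Z)^4$ cancelling the denominators coming from $\sigma$.
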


\begin{proof}
(i) follows immediately from Lemma~\ref{prop:matrix_decompositions:ii} by differentiation. 

Ad (ii): Let $S \in \mathfrak{v}$. By Proposition~\ref{prop:GroupActionNbar} the derived representation acts by the right-invariant vector fields $d\pi_\lambda(S)=-\partial_S-\frac{1}{2}\partial_{[S,X]}$. Conjugation with $\pi_\lambda(\tilde{w}_0)=\pi_\lambda(\tilde{w}_0^{-1})$ gives, using Proposition~\ref{prop:longest_weyl_group_element_action}:
\begin{align}
\begin{split}
 & \pi_{\lambda}(\tilde{w}_0)d\pi_{\lambda}(-S)\pi_{\lambda}(\tilde{w}_0^{-1})u(X,Z)=\Bigg(\langle S,X \rangle 2(E+E_\mathfrak{v}+\lambda+\rho)-\partial_{ (\abs{X}^2+\frac{1}{4}J_Z)S } \\
 & \hspace{3.5cm} + \frac{1}{2} \partial_{[S,(\abs{X}^2-\frac{1}{4}J_Z)X]}-\frac{2\left\langle (\abs{X}^2-\frac{1}{4}J_Z)X,S \right\rangle}{N(X,Z)^4}\partial_{(\abs{X}^2-\frac{1}{4}J_Z)X} \\
 & \hspace{3.5cm} + \frac{1}{8N(X,Z)^4}  \partial_{J_{[S,(\abs{X}^2-\frac{1}{4}J_Z)X]}(\abs{X}^2-\frac{1}{4}J_Z)X} \Bigg)u(X,Z).
\end{split}\label{eq:differential_equation_non_central_0}
\end{align}
Note that for $U,S \in \F^n$ we have
$$\langle S,U \rangle U- \frac{1}{16}J_{[S,U]}U=(\Re(SU^*)+\Im(SU^*))U=SU^*U, $$
which is for $U=(\abs{X}^2-\frac{1}{4}J_ZX)X$ equal to
\begin{multline*}
 SX^*\left(\abs{X}^2+\frac{1}{4}J_Z\right)\left(\abs{X}^2-\frac{1}{4}J_Z\right)X=N(X,Z)^4SX^*X\\
 =N(X,Z)^4\left(\langle S,X \rangle - \frac{1}{16}J_{[S,X]}X \right)X,
\end{multline*}
so that \eqref{eq:differential_equation_non_central_0} is
\begin{equation*}
\left(2\langle S,X \rangle(\rho+\lambda+ E)-\abs{X}^2\partial_{S} -\frac{1}{4}\partial_{J_ZS }+ \frac{1}{2} \abs{X}^2\partial_{[S,X]}-\frac{1}{8}\partial_{[S,J_ZX]}+ \frac{1}{8}\partial_{J_{[S,X]}X}\right)
\end{equation*}
applied to $u(X,Z)$.

Ad (iii): As in the case before, for $T \in \zfrak$ the derived representation acts by the right-invariant vector field $d\pi_{\lambda}(T)= -\partial_T$. Then
\begin{multline*}
\pi(\tilde{w}_0)d\pi(-T)\pi(\tilde{w}_0)u(X,Z)\\=\left(\langle T,Z \rangle(E+E_\mathfrak{v}+\lambda+\rho)- \frac{1}{4}\partial_{  J_T(\abs{X}^2-\frac{1}{4}J_Z)X  }  -N(X,Z)^4\partial_T\right)u(X,Z).\qedhere
\end{multline*}
\end{proof}

\subsection{Strongly spherical reductive pairs}

In view of Facts~\ref{fact:FiniteMultiplicities} and \ref{fact:StronglySphericalRankOnePairs} we consider strongly spherical real reductive pairs of the form
$$ (G,G') = (\Urm(1,n+1;\F),\Urm(1,m+1;\F)\times F), $$
where $F<\Urm(n-m;\F)$. To avoid arbitrarily large finite groups, we assume throughout the paper that $F$ is connected. The results for non-connected $F$ can easily be deduced from our classification using classical invariant theory for the component group $F/F_0$. We further assume $0\leq m<n$, excluding the case $m=n$ which leads to the classical theory of intertwining operators between principal series for the group $G$. Note that for $\F = \mathbb{O}$ we have $n=1$ which implies $m=0$ and $G'={\rm Spin}_0(1,8)$.

The intersection $P':= G'\cap P$ is a minimal parabolic subgroup of $G'$ with Langlands decomposition $M'A'N'$ given by $M'=G' \cap M$, $A'=A$ and $N'=G' \cap N$. The opposite parabolic $\overline{P}'=M'A'\Nbar'$ has unipotent radical $\Nbar'$ with Lie algebra $\nbar'\cong \F^m \oplus \Im \F$. Again we write $\nbar'=\mathfrak{v}' +\zfrak$ with $\mathfrak{v}'=\F^m$ and $\zfrak=\Im\F$ and we denote by $\mathfrak{v}''=\F^{n-m}$ the orthogonal complement of $\nbar'$ in $\nbar$, or equivalently $\mathfrak{v}'$ in $\mathfrak{v}$. Then
$$ p' = \dim_\R\mathfrak{v}' = m\cdot\dim_\R \F, \qquad \mbox{and} \qquad p'' = \dim_\R\mathfrak{v}'' = (n-m)\cdot\dim_\R \F. $$
Note that for $m=0$ the Lie algebra $\nbar'=\zfrak$ is abelian and hence $\mathfrak{g}'\cong\mathfrak{so}(1,q+1)$.

\begin{remark}\label{rem:TransUnitSphere}
The classification in \cite{KM14} and \cite{KKPS17} (see also \cite[Theorem 1.6]{Moe17} for a complete table) shows that the pair $(G,G')$ is strongly spherical if and only if $M'A'=\Urm(1;\F)\times\Urm(m;\F)\times F$ has an open orbit on the unit sphere in $\mathfrak{v}''=\F^{n-m}$. We remark that in this case $M'$ always acts transitively on the unit sphere in $\mathfrak{v}''$. In fact, $\mathfrak{v}''=\F^{n-m}$ is of real dimension $\geq2$ and therefore its unit sphere is connected. Hence, an open orbit of the compact group $M'$ on the unit sphere has to be the entire sphere. Moreover, the action of $\Urm(m;\F)$ on $\F^{n-m}$ is trivial, so the group $\Urm(1;\F)\times F$ acts transitively on the unit sphere in $\F^{n-m}$.
\end{remark}

In this paper we classify symmetry breaking operators between the spherical principal series representations $\pi_\lambda$ of $G$ and $\tau_\nu$ of $G'$.

\subsection{Orbit structure of $G/P$}

The $P'$-orbits in $G/P$ will be important in the following since Theorem~\ref{theorem:KS_kernel} reduces the classification of symmetry breaking operators to the classification of $P'$-invariant distributions on $G/P$. Note that we have $N'=\tilde{w}_0\Nbar'\tilde{w}_0^{-1}$, so that $N'$ acts on $\tilde{w}_0\Nbar'$ by left multiplication.

\begin{prop}
\label{cor:orbits}
The $P'$-orbits in $G/P$ and their closure relations are
$$ \mathcal{O}_A \stackrel{p''}{\rule[.5ex]{3em}{.4pt}} \mathcal{O}_B \stackrel{p'+q}{\rule[.5ex]{3em}{.4pt}} \mathcal{O}_C,$$
where
\begin{align*}
 \mathcal{O}_A &= P'\cdot\tilde{w}_0\bar{n} P = \tilde{w}_0(\Nbar\setminus\Nbar')P,\\
 \mathcal{O}_B &= P'\cdot\tilde{w}_0P = \tilde{w}_0\Nbar'P,\\
 \mathcal{O}_C &= P'\cdot\mathbf{1}_{n+2}P,
\end{align*}
for some $\bar{n}\in \Nbar \setminus \Nbar'$. Here $X\stackrel{k}{\rule[.5ex]{2em}{.4pt}}Y$ means that $Y$ is a subvariety of $\bar{X}$ of co-dimension $k$.
\end{prop}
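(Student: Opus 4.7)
The plan is to verify that the three claimed subsets $\mathcal{O}_A,\mathcal{O}_B,\mathcal{O}_C$ are the three $P'$-orbits on $G/P$, partition this space, and have the asserted codimensions. The main tools are the rank-one Bruhat decomposition $G/P = \Nbar P/P \sqcup \{\tilde{w}_0 P\}$, which follows from $G = P \sqcup P\tilde{w}_0 P$ together with $P\tilde{w}_0 P = \tilde{w}_0\Nbar P$, and the Kelvin-type inversion formula of Lemma~\ref{prop:matrix_decompositions}\ref{prop:matrix_decompositions:i}.

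I would first compute $\mathcal{O}_C$ and $\mathcal{O}_B$ directly. Since $P'\subseteq P$, clearly $\mathcal{O}_C = \{eP\}$. For $\mathcal{O}_B$, using that $\tilde{w}_0\in G'$ normalises $M$ and $A$ and satisfies $\tilde{w}_0^{-1}N'\tilde{w}_0 = \Nbar'$, one obtains
$$P'\tilde{w}_0 P = \tilde{w}_0(\tilde{w}_0^{-1}M'A'N'\tilde{w}_0)P = \tilde{w}_0 M'A'\Nbar'P = \tilde{w}_0\Nbar'P.$$
Applying Lemma~\ref{prop:matrix_decompositions}\ref{prop:matrix_decompositions:i} to $(X',Z)\in\nbar'\setminus\{0\}$ gives $\tilde{w}_0\bar{n}_{(X',Z)}P = \bar{n}_{\sigma(X',Z)}P$; since $J_ZX = -4ZX$ is scalar multiplication, it preserves the $\F$-subspace $\mathfrak{v}'\subset\mathfrak{v}$, so $\sigma$ restricts to an involution of $\nbar'\setminus\{0\}$. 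Hence $\mathcal{O}_B$ meets the open Bruhat cell in $(\Nbar'\setminus\{e\})P/P$ and also contains $\tilde{w}_0 P$, giving a $(p'+q)$-dimensional submanifold disjoint from $\mathcal{O}_C$.

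The remaining set $(\Nbar\setminus\Nbar')P/P\cong\nbar\setminus\nbar'$, of dimension $p+q$, will be $\mathcal{O}_A$. The same inversion formula shows that the $\mathfrak{v}''$-component of $\sigma(X,Z)$ equals $-(|X|^2+Z)X''/N(X,Z)^4$, nonzero exactly when $X''\neq 0$; so $\sigma$ preserves $\{X''\neq 0\}$ and $P'\cdot\tilde{w}_0\bar{n}P$ is contained in this complement for any $\bar{n}\in\Nbar\setminus\Nbar'$. The equality amounts to the transitivity of $P'$ on $\Nbar\setminus\Nbar'$, which I would establish in three steps: use $A'$-dilations from Lemma~\ref{prop:matrix_decompositions}\ref{prop:matrix_decompositions:ii} to normalise $|X''|=1$; use the transitivity of $\Urm(1;\F)\times F\subseteq M'$ on the unit sphere of $\mathfrak{v}''$ guaranteed by strong sphericity (Remark~\ref{rem:TransUnitSphere}) to set $X''$ to a fixed base vector; and finally use the residual $M'A'$-stabiliser together with the $N'$-action, whose infinitesimal generators are available from Proposition~\ref{prop:LieAlgebraAction}, to clear $X'$ and $Z$.

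The main obstacle is precisely this last transitivity argument, which requires inspecting the residual stabiliser case by case using Fact~\ref{fact:StronglySphericalRankOnePairs}; a more abstract alternative is to note that strong sphericity forces the existence of an open $P'$-orbit on $G/P$, and that since $(\Nbar\setminus\Nbar')P/P$ is connected this open orbit must fill the whole complement of $\mathcal{O}_B\cup\mathcal{O}_C$. Once the partition into orbits is in place, the codimensions $(p+q)-(p'+q)=p''$ and $(p'+q)-0=p'+q$ follow from the dimension counts, and the closure relations $\mathcal{O}_C\subseteq\overline{\mathcal{O}_B}\subseteq\overline{\mathcal{O}_A}=G/P$ are immediate: $\mathcal{O}_A$ is open and dense, and in the parametrisation $\bar{n}'\mapsto\tilde{w}_0\bar{n}'P$ of $\mathcal{O}_B$ the Kelvin inversion sends $(X',Z)\to\infty$ to $eP\in\mathcal{O}_C$.
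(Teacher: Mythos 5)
The overall plan and the computations of $\mathcal{O}_C$, $\mathcal{O}_B$, the closure relation via the Kelvin inversion, and the observation that $\sigma$ preserves $\{X''\neq 0\}$ all match the paper and are correct. The gap is in step 3 of your transitivity argument for $\mathcal{O}_A$, where you propose to "clear $X'$ and $Z$" using the $N'$-action as described by Proposition~\ref{prop:LieAlgebraAction}. Those formulas give the action of $\nfrak={\rm Ad}(\tilde{w}_0)\nbar$ on the cell $\Nbar P/P$ parametrized by $\bar{n}\mapsto\bar{n}P$, i.e.\ the complicated quartic vector fields; in that picture it is not at all clear how to clear $X'$ and $Z$. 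In the parametrization you actually use for $\mathcal{O}_A$, namely $\bar{n}\mapsto\tilde{w}_0\bar{n}P$, the identity $P'\tilde{w}_0=\tilde{w}_0\overline{P}'$ turns the $P'$-action into the $\overline{P}'=M'A'\Nbar'$-action on $\Nbar$: $N'$ becomes left translation by $\Nbar'$, and $M'A'$ acts by conjugation. Since $\Nbar=\Nbar'\exp(\mathfrak{v}'')$ with the two factors commuting, the $\Nbar'$-translations sweep out the entire $\Nbar'$-coordinate with no work, and the orbit is determined purely by the ${\rm Ad}(M'A')$-orbit of the $\mathfrak{v}''$-component, which Remark~\ref{rem:TransUnitSphere} and the $A'$-dilations reduce to $\{0\}$ and $\mathfrak{v}''\setminus\{0\}$. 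This is precisely the paper's argument; your step 3 is salvageable, but the ingredient that closes it is the $\tilde{w}_0$-conjugation trick, not the vector fields of Proposition~\ref{prop:LieAlgebraAction}.

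Your abstract alternative is also not quite complete: "there is an open $P'$-orbit and the complement of $\mathcal{O}_B\cup\mathcal{O}_C$ is connected, hence the open orbit fills the complement" does not follow. Connectedness of a set alone does not rule out a proper open orbit together with lower-dimensional orbits in the same set. To make this route rigorous you would need an extra input, for instance the finiteness of the number of $P'$-orbits on $G/P$ (true for real spherical pairs, but a nontrivial theorem) so that the closed complement of the open orbit is a finite union of lower-dimensional orbits and cannot contain any $(p+q)$-dimensional piece.
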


For the proof of Proposition~\ref{cor:orbits} we write $X=(X',X'')\in \F^{m}\oplus \F^{n-m}$ instead of $X$ whenever convenient. 

\begin{proof}[Proof of Proposition~\ref{cor:orbits}]
First note that by the Bruhat decomposition $G=\tilde{w}_0\Nbar P \sqcup P$. The closed Bruhat cell $P$ clearly becomes the closed orbit $\mathcal{O}_C$ in the quotient $G/P$. For the open Bruhat cell $\tilde{w}_0\Nbar P$ we note that since $P'\tilde{w}_0=\tilde{w}_0\overline{P'}$ it suffices to describe the $\overline{P}'$-orbits in $\Nbar P/P$. The nilpotent group $\Nbar$ decomposes as $\Nbar=\Nbar'\exp(\mathfrak{v}'')$. If we write $\overline{u}=\overline{u}'\exp(X)\in\Nbar$ with $\overline{u}'\in\Nbar'$ and $X\in\mathfrak{v}''$, then
$$ \overline{n}'a'm'\cdot\big[\overline{u}'\exp(X)P\big] = (\overline{n}'\overline{u}'^{(m'a')})\exp({\rm Ad}(m'a')X)P, $$
where we use the notation $\overline{u}'^{(m'a')}=(m'a')\overline{u}'(m'a')^{-1}\in\Nbar'$. This shows that the $\overline{P}'$-orbits in $\Nbar P/P$ are of the form $\Nbar'\exp({\rm Ad}(M'A')X)P/P$ with $X\in\mathfrak{v}''$ and therefore in one-to-one correspondence with the ${\rm Ad}(M'A')$-orbits in $\mathfrak{v}''$. By Remark~\ref{rem:TransUnitSphere} the group ${\rm Ad}(M')$ acts transitively on the unit sphere in $\mathfrak{v}''$ and by Lemma~\ref{prop:matrix_decompositions:ii} the group ${\rm Ad}(A')$ acts on $\mathfrak{v}''$ by dilations, so the ${\rm Ad}(M'A')$-orbits in $\mathfrak{v}''$ are $\mathfrak{v}''\setminus\{0\}$ and $\{0\}$. Therefore, the $P'$-orbits in the open Bruhat cell $\tilde{w}_0\Nbar P$ are $\mathcal{O}_A=\tilde{w}_0\Nbar'\exp(\mathfrak{v}''\setminus\{0\})P=\tilde{w}_0(\Nbar\setminus\Nbar')P$ and $\mathcal{O}_B=\tilde{w}_0\Nbar'\exp(\{0\})P=\tilde{w}_0\Nbar'P$. Finally, the codimensions are easily determined.
\end{proof}

The orbit structure of $G/P$ implies the following:

\begin{corollary}
\label{cor:P'NbarP=G}
	$P'\Nbar P=G$.
	More precisely,
	$$ \mathcal{O}_A\cap\Nbar = \Nbar\setminus\Nbar', \qquad \mathcal{O}_B\cap\Nbar = \Nbar'\setminus\{{\bf 1}\}, \qquad \mathcal{O}_C\cap\Nbar = \{{\bf 1}\}. $$
\end{corollary}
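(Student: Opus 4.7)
The plan is to combine Proposition~\ref{cor:orbits}, which identifies the three $P'$-orbits $\mathcal{O}_A=\tilde{w}_0(\Nbar\setminus\Nbar')P$, $\mathcal{O}_B=\tilde{w}_0\Nbar'P$ and $\mathcal{O}_C=P$ in $G/P$, with the Bruhat-type decomposition of $\tilde{w}_0\bar{n}$ given in Lemma~\ref{prop:matrix_decompositions:i}, and then to intersect each orbit with the image of the parameterization $\Nbar\hookrightarrow G/P$, $\bar{n}\mapsto\bar{n}P$.

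The closed orbit is handled immediately: since $\mathcal{O}_C=\{P\}$ and $\Nbar\cap P=\{1\}$ in $G$, one reads off $\mathcal{O}_C\cap\Nbar=\{1\}$. For the other two orbits the computation hinges on Lemma~\ref{prop:matrix_decompositions:i}, which yields the identity
$$ \tilde{w}_0\bar{n}_{(X,Z)}P=\bar{n}_{\sigma(X,Z)}P \qquad \text{for every }(X,Z)\in\nbar\setminus\{0\}, $$
where $\sigma$ is the Kelvin inversion. The exceptional coset $\tilde{w}_0P$, corresponding to $(X,Z)=0$, is the unique point of $G/P\setminus\Nbar P/P$ by the rank-one Bruhat decomposition $G=P\sqcup\tilde{w}_0\Nbar P$, so removing $\{1\}$ from the source is equivalent to intersecting the image with $\Nbar P/P$.

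The only substantive step is then to verify that $\sigma$ preserves $\nbar'=\mathfrak{v}'\oplus\mathfrak{z}$. This is essentially tautological: since $J_ZX=-4ZX$ acts on $\mathfrak{v}=\F^n$ as coordinate-wise scalar multiplication by an element of $\F$, it leaves the first-$m$-coordinate subspace $\mathfrak{v}'=\F^m$ invariant, and the explicit formula for $\sigma(X,Z)$ then shows $\sigma(\nbar'\setminus\{0\})\subseteq\nbar'\setminus\{0\}$. Using that $\sigma$ is an involution this inclusion is an equality, and by taking complements $\sigma$ also restricts to an involution on $\nbar\setminus\nbar'$. Combining this with the identity above yields $\mathcal{O}_B\cap\Nbar=\sigma(\Nbar'\setminus\{1\})=\Nbar'\setminus\{1\}$ and $\mathcal{O}_A\cap\Nbar=\sigma(\Nbar\setminus\Nbar')=\Nbar\setminus\Nbar'$.

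The first assertion $P'\Nbar P=G$ then follows at once: since each of $\Nbar\setminus\Nbar'$, $\Nbar'\setminus\{1\}$ and $\{1\}$ is nonempty (using $m<n$ so that $\mathfrak{v}''\neq0$, and $\F\neq\R$ so that $\mathfrak{z}\neq0$), every $P'$-orbit in $G/P$ meets $\Nbar P/P$, whence $G/P\subseteq P'\cdot(\Nbar P/P)$. The main obstacle throughout is merely disentangling the two open dense parameterizations of $G/P$ (by $\Nbar$ and by $\tilde{w}_0\Nbar$); no genuinely analytic input is needed beyond the explicit inversion formula.
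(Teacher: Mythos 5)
Your proof is correct and follows essentially the same route as the paper: both arguments rest on the observation, via Lemma~\ref{prop:matrix_decompositions:i}, that the Kelvin inversion $\sigma$ (equivalently, left multiplication by $\tilde{w}_0$ on $G/P$) restricts to an involution of $\Nbar'\setminus\{\mathbf{1}\}$ and of $\Nbar\setminus\Nbar'$, while exchanging $\tilde{w}_0P$ and $\mathbf{1}P$. You merely spell out a step the paper leaves implicit, namely that $\sigma$ preserves $\nbar'$ because $J_ZX=-4ZX$ acts coordinate-wise on $\F^n$ and hence stabilizes $\F^m$.
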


\begin{proof}
It is clear that $\mathcal{O}_C\cap\Nbar=P\cap\Nbar=\{{\bf 1}\}$. The remaining two identities are easily verified if one observes that by Lemma~\ref{prop:matrix_decompositions:i} the map $xP\mapsto\tilde{w}_0xP$ maps $(\Nbar\setminus\{{\bf 1}\})P$ and $(\Nbar'\setminus\{{\bf 1}\})P$ to itself and it further maps $\tilde{w}_0P$ to ${\bf 1}P$ and vice versa.
\end{proof}

Theorem~\ref{theorem:KS_kernel_open_cell} and Corollary~\ref{cor:P'NbarP=G} allow us to reduce the classification of symmetry breaking operators to the classification of certain invariant distributions on the Lie algebra $\nbar$.

\section{Invariant distribution kernels}\label{sec:differential_equations}

In this section we give a characterization of the invariant distribution kernels on $\nbar$ describing symmetry breaking operators in terms of a set of differential equations and describe the strategy that is used in the remaining part of the paper to find all solutions to these equations.

\subsection{Differential equations satisfied by symmetry breaking operators}

With Remark~\ref{remark:diff_equations} as well as Proposition~\ref{prop:longest_weyl_group_element_action} and Proposition~\ref{prop:LieAlgebraAction} we immediately obtain the following description of the space $\mathcal{D}'(\nbar)_{\lambda,\nu}$ of distribution kernels of symmetry breaking operators:

\begin{lemma}
\label{lemma:diff_equations}
The space $\mathcal{D}'(\nbar)_{\lambda,\nu}$ is given by all $u \in \mathcal{D}'(\nbar)$ satisfying:
\begin{enumerate}[label=(\roman{*}), ref=\thetheorem(\roman{*})]
\item 
\label{lemma:diff_equations:ii}
$u(aX'b,aX''c,aZa^{-1})=u(X',X'',Z)$ for all $a \in \Urm(1;\F)$, $b\in \Urm(m;\F)$ and $c \in F$,
\item 
\label{lemma:diff_equations:i}
$(E-\lambda+\rho+\nu+\rho')u(X,Z)=0$,
\item 
\label{lemma:diff_equations:iii}
$D_{\mathfrak{v}}(S)u(X,Z)=0$ for all $S \in \mathfrak{v}'$,
\item 
\label{lemma:diff_equations:iv}
$D_\zfrak(T)u(X,Z)=0 $ for all $T \in \zfrak$,
\end{enumerate}
where for $S\in\mathfrak{v}$ and $T\in\mathfrak{z}$ we put
\begin{align*}
 D_{\mathfrak{v}}(S) &= 2\langle S,X \rangle (\nu+\rho')+\abs{X}^2\partial_S-\frac{1}{2}\abs{X}^2\partial_{[S,X]}+\frac{1}{4}\partial_{J_ZS}+\frac{1}{8}\partial_{[S,J_ZX]}-\frac{1}{8}\partial_{J_{[S,X]}X},\\
 D_\zfrak(T) &= \langle T,Z \rangle (\nu+\rho'-E_\mathfrak{v}) + N(X,Z)^4 \partial_T +\frac{1}{4}\abs{X}^2\partial_{J_TX}- \frac{1}{16}\partial_{J_TJ_ZX}.
\end{align*}
\end{lemma}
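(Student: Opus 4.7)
The plan is to unwind Remark~\ref{remark:diff_equations} into the four explicit conditions by substituting the group and Lie algebra actions computed in Propositions~\ref{prop:GroupAction} and~\ref{prop:LieAlgebraAction}. Corollary~\ref{cor:P'NbarP=G} guarantees $P'\Nbar P=G$, so Theorem~\ref{theorem:KS_kernel_open_cell} combined with Remark~\ref{remark:diff_equations} reduces the problem to characterizing distributions $u\in\mathcal{D}'(\nbar)$ that are $M'A'$-equivariant with the prescribed character and annihilated by $\mathfrak{n}'$ under $d\pi_{-\lambda}$.

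For the $M'$-invariance, every $m'\in M'=M\cap G'$ has the form $m'=\diag(a^{-1},a^{-1},\diag(b^{-1},c^{-1}))$ with $a\in\Urm(1;\F)$, $b\in\Urm(m;\F)$ and $c\in F$, reflecting the block decomposition $\F^n=\F^m\oplus\F^{n-m}$ coming from $G'=\Urm(1,m+1;\F)\times F$. Proposition~\ref{prop:GroupAction}(i) then yields $\pi_\lambda(m')u(X',X'',Z)=u(aX'b,aX''c,aZa^{-1})$, which is exactly (i). For the $A'=A$ equivariance, writing $a'=\exp(tH)$ and using Lemma~\ref{prop:matrix_decompositions}(ii), the equivariance condition in Remark~\ref{remark:diff_equations} becomes $u(e^{-t}X,e^{-2t}Z)=e^{t(-\lambda+\rho+\nu+\rho')}u(X,Z)$; differentiating at $t=0$ produces $-Eu=(-\lambda+\rho+\nu+\rho')u$, which is (ii).

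The core step is converting $d\pi_{-\lambda}(Y)u=0$ for $Y\in\mathfrak{n}'$ into (iii) and (iv). Since $\tilde{w}_0=\diag(-1,1,\mathbf{1}_n)\in G'$ conjugates $\nbar$ onto $\mathfrak{n}$, it sends $\nbar'=\mathfrak{v}'\oplus\mathfrak{z}$ onto $\mathfrak{n}'$, so the condition splits into $d\pi_{-\lambda}(\operatorname{Ad}(\tilde{w}_0)S)u=0$ for $S\in\mathfrak{v}'$ and $d\pi_{-\lambda}(\operatorname{Ad}(\tilde{w}_0)T)u=0$ for $T\in\mathfrak{z}$. For each I would substitute the formula from Proposition~\ref{prop:LieAlgebraAction}(ii) or (iii) with $\lambda$ replaced by $-\lambda$, then use (ii) to replace $(E-\lambda+\rho)u$ by $-(\nu+\rho')u$ and $(E+E_\mathfrak{v}-\lambda+\rho)u$ by $-(\nu+\rho'-E_\mathfrak{v})u$. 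This turns the coefficient $-2\langle S,X\rangle(E-\lambda+\rho)$ into $2\langle S,X\rangle(\nu+\rho')$, matching the leading term of $D_\mathfrak{v}(S)$, and similarly $-\langle T,Z\rangle(E+E_\mathfrak{v}-\lambda+\rho)$ into $\langle T,Z\rangle(\nu+\rho'-E_\mathfrak{v})$, the leading term of $D_\mathfrak{z}(T)$. A direct term-by-term comparison identifies the remaining derivative parts with those of $D_\mathfrak{v}(S)$ and $D_\mathfrak{z}(T)$. Reading the manipulation backwards recovers the $\mathfrak{n}'$-invariance from (i)--(iv).

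None of the steps is difficult; the only points requiring care are the sign flip $\lambda\leftrightarrow-\lambda$ inherited from the dual bundle appearing in Theorem~\ref{theorem:KS_kernel_open_cell}, and the bookkeeping when eliminating the weighted Euler operator $E$ in favour of $\nu+\rho'$ via the homogeneity relation (ii).
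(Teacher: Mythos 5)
Your proposal is correct and follows exactly the route the paper intends: unwinding Remark~\ref{remark:diff_equations} via Theorem~\ref{theorem:KS_kernel_open_cell}, reading the $M'$- and $A'$-equivariance off Proposition~\ref{prop:GroupAction} and Lemma~\ref{prop:matrix_decompositions}, and translating $\mathfrak n'$-annihilation through $\operatorname{Ad}(\tilde w_0)$ into Proposition~\ref{prop:LieAlgebraAction}(ii)--(iii) with $\lambda\mapsto-\lambda$ before eliminating $(E-\lambda+\rho)$ via the homogeneity relation (ii). The paper states this as an immediate consequence; you have simply made the bookkeeping explicit, and the details check out.
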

 
\begin{remark}
In case $m>0$ the subspace $\mathfrak{v}'$ generates $\mathfrak{n}'$, so that the invariance property in Lemma~\ref{lemma:diff_equations:iv} follows from the one in Lemma~\ref{lemma:diff_equations:iii}. For $m=0$ the property in Lemma~\ref{lemma:diff_equations:iii} is trivial since $\mathfrak{v'}=\{0\}$.
\end{remark}

\subsection{The classification strategy}\label{sec:ClassificationStrategy}

Let $\Omega \subseteq \nbar$ be open and $M'$-invariant.
For a closed subset $\Omega'\subseteq \Omega$ we write $\mathcal{D}'_{\Omega'}(\Omega)_{\lambda,\nu} \subseteq \mathcal{D}'(\Omega)_{\lambda,\nu} $ for the subspace of distributions with support contained in $\Omega'$. In particular if $\Omega'=\nbar \setminus \Omega$
the following sequence is exact:
	$$
	\begin{tikzcd}
	0 \arrow[r] & \mathcal{D}'_{\Omega'}(\nbar)_{\lambda,\nu} \arrow[r] & \mathcal{D}'(\nbar)_{\lambda,\nu} \arrow[r]     &\mathcal{D}'(\nbar \setminus \Omega')_{\lambda,\nu}
	\end{tikzcd}
	$$
For $\Omega'=\{0\}$ the space $\mathcal{D}'_{\{0\}}(\nbar)_{\lambda,\nu}$ consists of those distribution kernels which define differential symmetry breaking operators. The strategy to determine $\mathcal{D}'(\nbar)_{\lambda,\nu}$ will be to classify $\mathcal{D}'_{\{0\}}(\nbar)_{\lambda,\nu}$ and
$\mathcal{D}'(\nbar\setminus \{0\})_{\lambda,\nu}$ separately and finally to study the restriction map $\mathcal{D}'(\nbar)_{\lambda,\nu} \to \mathcal{D}'(\nbar\setminus \{0\})_{\lambda,\nu}$.
This strategy was proposed by Kobayashi--Speh in \cite{kobayashi_speh_2015} and successfully applied in the case $\F=\R$.
Since the classification of differential symmetry breaking operators is a question of invariant theory and combinatorics, while the classification of of symmetry breaking operators outside the origin and the continuation of such operators into the origin is a question of distribution theory, we divide the classification into two major parts: the classification of differential symmetry breaking operators (Part~\ref{part:diff}) and the study of meromorphic families of symmetry breaking operators which eventually leads to the full classification (Part~\ref{part:classification}).

\newpage

\part{Differential symmetry breaking operators}\label{part:diff}

The main result of this part is the full classification of differential symmetry breaking operators between principal series representations for strongly spherical pairs of the form $(G,G')=(\Urm(1,n+1;\F),\Urm(1,m+1;\F)\times F)$ with $0\leq m<n$ and $F<\Urm(n-m;\F)$. For this we use a Euclidean Fourier transform on $\nbar$ which gives an equivalent formulation of the classification problem in terms of polynomial solutions to a certain system of differential equations. This approach is essentially the F-method proposed by Kobayashi~\cite{Kob13} which was previously applied in several situations where the nilpotent radical is abelian (see e.g. \cite{KOSS15,KKP16,KP16}).

\section{The Fourier transformed picture}

We use the following normalization of the Fourier transform:
$$ \mathcal{F}:\mathcal{D}'_{\{0\}}(\nbar) \to \C[\nbar], \quad \mathcal{F}u(X,Z) = \int_{\nbar} e^{-\langle X,X'\rangle-\langle Z,Z'\rangle}u(X',Z')\,d(X',Z'). $$
For a differential operator $D$ on $\nbar$ with polynomial coefficients there exists a unique differential operator $\mathcal{F}(D)$ with polynomial coefficients on the same space such that
$$ \mathcal{F}(Du) = \mathcal{F}(D)\mathcal{F}(u) \qquad \mbox{for $u\in\mathcal{D}'_{\{0\}}(\nbar)$.} $$
This map has the following properties:
$$ \mathcal{F}(\langle X,S\rangle)=-\partial_S, \qquad \mathcal{F}(\partial_S)=\langle X,S\rangle $$
and similar for $Z$. We define
$$ \C[\nbar]_{\lambda,\nu}:= \mathcal{F}(\mathcal{D}'_{\{0\}}(\nbar)_{\lambda,\nu}). $$

To characterize the space $\C[\nbar]_{\lambda,\nu}$ we have to transform the differential equations in Lemma~\ref{lemma:diff_equations}. For this let $S_1,\dots,S_p$ be an orthonormal basis of $\mathfrak{v}$ such that $S_1, \ldots S_{p'}$ is an orthonormal basis of $\mathfrak{v}'$ and $S_{p'+1},\ldots,S_p$ an orthonormal basis of $\mathfrak{v}''$ and let $T_1,\dots T_q$ be an orthonormal basis of $\mathfrak{z}$. Then we define the Euclidean Laplacians on $\mathfrak{v}$, $\mathfrak{v}'$, $\mathfrak{v}''$ by
$$ \Delta_\mathfrak{v} := \partial_{S_1}^2+\cdots +\partial_{S_p}^2, \qquad \Delta_{\mathfrak{v}'} := \partial_{S_1}^2+\cdots +\partial_{S_{p'}}^2, \qquad \Delta_{\mathfrak{v}''}:=\Delta_\mathfrak{v}-\Delta_{\mathfrak{v}'} $$
and the Euclidean Laplacian on $\mathfrak{z}$:
$$ \square = \partial_{T_1}^2+ \cdots+ \partial_{T_q}^2. $$

\begin{prop}\label{prop:fourier_equations}
For $S\in\mathfrak{v}$ and $T\in\zfrak$ we have
\begin{align*}
 -\mathcal{F}(D_{\mathfrak{v}}(S)) &= 2(\nu+\rho'-1)\partial_S-\langle X,S\rangle\Delta_{\mathfrak{v}}-\frac{1}{2}\partial_{J_ZS}\Delta_{\mathfrak{v}}+\frac{1}{4}\partial_{[S,X]}+2P_S-2Q_S,\\
 -\mathcal{F}(D_\zfrak (T)) &= (E_{\mathfrak v}+\nu+\rho'-2)\partial_T-\langle Z,T\rangle(\Delta_{\mathfrak{v}}^2+\Box)-\frac{1}{4}\partial_{J_TX}\Delta_{\mathfrak{v}}+R_T\\
\end{align*}
with
$$ P_S = \frac{1}{16} \sum_{j=1}^q \partial_{J_{T_j}J_ZS}\partial_{T_j}, \qquad Q_S = \frac{1}{16}\sum_{i=1}^{p}\partial_{J_{[S,S_i]}X}\partial_{S_i}, \qquad R_T = \frac{1}{16}\sum_{j=1}^q \partial_{J_{T_j}J_TX}\partial_{T_j}. $$
\end{prop}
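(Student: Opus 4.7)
The plan is a direct term-by-term calculation based on $\mathcal{F}(AB)=\mathcal{F}(A)\mathcal{F}(B)$ and the basic rules $\mathcal{F}(\langle X,v\rangle)=-\partial_v$, $\mathcal{F}(\partial_v)=\langle X,v\rangle$ for $v\in\mathfrak{v}$, and analogously for $Z$, $w\in\mathfrak{z}$. Before applying these I would rewrite every $X$- or $Z$-dependent directional derivative appearing in $D_{\mathfrak{v}}(S)$ and $D_{\mathfrak{z}}(T)$ as a sum of constant-coefficient derivatives with polynomial coefficients. The systematic tool is the defining identity $\langle J_ZX,Y\rangle=\langle Z,[X,Y]\rangle$ together with the basis expansions $J_Z=\sum_j\langle Z,T_j\rangle J_{T_j}$ and $[S,X]=\sum_j\langle J_{T_j}S,X\rangle T_j$; e.g.\ $\partial_{[S,X]}=\sum_j\langle J_{T_j}S,X\rangle\partial_{T_j}$, and the nested symbols $\partial_{[S,J_ZX]}$, $\partial_{J_{[S,X]}X}$, $\partial_{J_TX}$, $\partial_{J_TJ_ZX}$ decompose analogously as double sums over $\{S_i\}$ and $\{T_j\}$.

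For the elementary contributions to $-\mathcal{F}(D_{\mathfrak{v}}(S))$ the pieces $2(\nu+\rho')\langle S,X\rangle$ and $\abs{X}^2\partial_S$ transform into $-2(\nu+\rho')\partial_S$ and $\langle X,S\rangle\Delta_{\mathfrak{v}}+2\partial_S$, the extra $2\partial_S$ arising from $[\Delta_{\mathfrak{v}},\langle X,S\rangle]=2\partial_S$; this accounts for the shift to $\nu+\rho'-1$. The summand $-\tfrac{1}{2}\abs{X}^2\partial_{[S,X]}$ becomes $\tfrac{1}{2}\partial_{J_ZS}\Delta_{\mathfrak{v}}$ after the re-summation $\sum_j\langle Z,T_j\rangle J_{T_j}S=J_ZS$, and $\tfrac{1}{4}\partial_{J_ZS}$ becomes $\tfrac{1}{4}\partial_{[S,X]}$ by the corresponding re-summation $\sum_i\langle X,S_i\rangle[S,S_i]=[S,X]$. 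For $D_{\mathfrak{z}}(T)$ the Euler piece $E_{\mathfrak{v}}\partial_T$ comes from $\mathcal{F}(E_{\mathfrak{v}})=-E_{\mathfrak{v}}-p$, while $\mathcal{F}(N(X,Z)^4\partial_T)=\langle Z,T\rangle(\Delta_{\mathfrak{v}}^2+\Box)+2\partial_T$ follows from $N(X,Z)^4=\abs{X}^4+\abs{Z}^2$ and $[\Box,\langle Z,T\rangle]=2\partial_T$.

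The hard step is the two quadratic-in-$J$ pieces $\tfrac{1}{8}\partial_{[S,J_ZX]}-\tfrac{1}{8}\partial_{J_{[S,X]}X}$ in $D_{\mathfrak{v}}(S)$ and $\tfrac{1}{4}\abs{X}^2\partial_{J_TX}-\tfrac{1}{16}\partial_{J_TJ_ZX}$ in $D_{\mathfrak{z}}(T)$. After the decomposition each member yields, besides the desired compact pairings $P_S$, $Q_S$, $R_T$, spurious single-derivative remainders produced by commutators such as $[\partial_{J_{T_j}S},\partial_{J_{T_j}X}]=\partial_{J_{T_j}^2S}=-16\partial_S$ and by trace identities like $\tr(J_TJ_{T_k})=-16p\langle T,T_k\rangle$; these invoke the H-type relations $J_{T_j}^2=-16\,\mathrm{id}_{\mathfrak{v}}$ and $J_TJ_{T_k}+J_{T_k}J_T=-32\langle T,T_k\rangle\,\mathrm{id}_{\mathfrak{v}}$ of Lemma~\ref{lemma:H-type_rules:iii} and Lemma~\ref{lemma:H-type_rules:iv}. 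The decisive point is that in the $D_{\mathfrak{v}}(S)$ case these remainders enter with opposite signs in the two members of the pair and cancel between them, leaving $2P_S-2Q_S$; in the $D_{\mathfrak{z}}(T)$ case the $p\partial_T$ produced by the trace exactly cancels the $-p\partial_T$ generated by $\mathcal{F}(E_{\mathfrak{v}})$, leaving $R_T$ and the clean coefficient $\nu+\rho'-2$. Negating the total Fourier transform then yields the stated formulas.
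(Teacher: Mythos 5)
Your proposal is correct and follows essentially the same route as the paper: a direct term-by-term Fourier transform computation, expanding the $X$- and $Z$-dependent directional derivatives in bases, applying $\mathcal{F}(\langle X,S\rangle)=-\partial_S$, $\mathcal{F}(\partial_S)=\langle X,S\rangle$ (and their $\mathfrak{z}$-analogues), and cleaning up the commutator remainders with the H-type relations. The only things to tidy up in a written-out version: (a) the commutator you cite, $[\partial_{J_{T_j}S},\partial_{J_{T_j}X}]=-16\partial_S$, is a true identity but not literally the one that appears — in the computation the remainders come from $[\partial_{T_j},\langle Z,[S,J_{T_j}S_i]\rangle]$ (handled by $J_{T_j}^2=-16\,\mathrm{id}$) and from $[\partial_{S_i},\langle J_{[S,S_i]}X,S_j\rangle]$ (handled by the trace-type identity $\sum_i J_{[X,S_i]}S_i=-16qX$ of Lemma~\ref{lem:J_[S,Xi]}, which you should cite explicitly even though it does follow from the H-type relations); and (b) the piece $\tfrac{1}{4}\abs{X}^2\partial_{J_TX}$ is linear, not quadratic, in $J$ — it transforms cleanly into $\partial_{J_TX}\Delta_{\mathfrak{v}}$ using $[\Delta_{\mathfrak{v}},\partial_{J_TX}]=0$ and produces no remainder, so only $-\tfrac{1}{16}\partial_{J_TJ_ZX}$ contributes to the cancellation against $\mathcal{F}(E_{\mathfrak{v}})$.
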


To show the proposition, we first prove some technical identities.

\begin{lemma}\label{lem:LaplacianCommutators}
For $S\in\mathfrak{v}$ and $T\in\zfrak$ we have
\begin{enumerate}[label=(\roman{*}), ref=\thetheorem(\roman{*})]
\item $[\Delta_{\mathfrak{v}},\partial_{J_TX}]=0$,
\item $[\Delta_{\mathfrak{v}},\partial_{J_ZS}]=0$,
\item $[\Box,\partial_{[J_ZS,X]}]=-32\langle S,X\rangle\Box$.
\end{enumerate}
\end{lemma}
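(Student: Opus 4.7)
The plan is to verify each identity by expanding the directional derivatives in the orthonormal bases $S_1,\dots,S_p$ of $\mathfrak v$ and $T_1,\dots,T_q$ of $\mathfrak z$, using the identity
$$ \partial_Y=\sum_i\langle Y,S_i\rangle\,\partial_{S_i}\quad\text{for } Y\in\mathfrak v,\qquad \partial_W=\sum_j\langle W,T_j\rangle\,\partial_{T_j}\quad\text{for } W\in\mathfrak z, $$
and then reading off the commutator by observing which coefficients depend on which variable.

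\textbf{Ad (ii).} Since $S$ is fixed and $J_Z$ is linear in $Z$, the coefficients $\langle J_ZS,S_i\rangle$ of $\partial_{J_ZS}$ are linear functions of $Z$ and in particular independent of $X$. The operator $\Delta_{\mathfrak v}$ differentiates only in the $\mathfrak v$-directions, so it commutes with multiplication by any function of $Z$ and with $\partial_{S_i}$, hence with $\partial_{J_ZS}$.

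\textbf{Ad (i).} Here $\partial_{J_TX}=\sum_i\langle J_TX,S_i\rangle\partial_{S_i}$ has coefficients linear in $X$, namely $\langle J_TX,S_i\rangle=-\langle X,J_TS_i\rangle$. A direct commutator computation gives
$$ [\Delta_{\mathfrak v},\partial_{J_TX}]=\sum_{i,j}[\partial_{S_j}^2,\langle J_TX,S_i\rangle]\,\partial_{S_i}=-2\sum_{i,j}\langle S_j,J_TS_i\rangle\,\partial_{S_j}\partial_{S_i}. $$
The matrix $A_{ij}:=\langle S_j,J_TS_i\rangle$ is antisymmetric by Lemma~\ref{lemma:H-type_rules:ii}, while $\partial_{S_j}\partial_{S_i}$ is symmetric in $(i,j)$, so the sum vanishes.

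\textbf{Ad (iii).} Writing $Z=\sum_l z_l T_l$ and using $\langle [V,X],T_k\rangle=\langle J_{T_k}V,X\rangle$ for $V\in\mathfrak v$, we obtain
$$ \partial_{[J_ZS,X]}=\sum_k c_k(X,Z)\,\partial_{T_k},\qquad c_k(X,Z)=\sum_l z_l\,\langle J_{T_k}J_{T_l}S,X\rangle, $$
so the coefficients are linear in $Z$. Then $[\Box,c_k]=2\sum_j\langle J_{T_k}J_{T_j}S,X\rangle\partial_{T_j}$, and therefore
$$ [\Box,\partial_{[J_ZS,X]}]=2\sum_{j,k}\langle J_{T_k}J_{T_j}S,X\rangle\,\partial_{T_j}\partial_{T_k}. $$
Symmetrizing in $(j,k)$ and invoking the H-type relation (Lemma~\ref{lemma:H-type_rules:iv}) $J_{T_k}J_{T_j}+J_{T_j}J_{T_k}=-32\delta_{jk}\,\mathrm{id}_{\mathfrak v}$ collapses the double sum to $-16\langle S,X\rangle\Box$, giving the claimed factor $-32\langle S,X\rangle\Box$.

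The computations are completely routine; the only subtle point is the bookkeeping in (iii), where one must notice that the dependence of $[J_ZS,X]$ on $Z$ is linear and pair the sum with $\partial_{T_j}\partial_{T_k}$ symmetrically so that the H-type identity applies cleanly. This is the one step where the $2$-step nilpotent structure (rather than an abelian $\nbar$) actually enters the argument.
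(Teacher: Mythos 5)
Your proof is correct and follows essentially the same route as the paper's: parts (i) and (ii) rest on the same observations (linear-in-$X$ coefficients for (i) paired with the antisymmetry of $\langle S_j, J_T S_i\rangle$, $X$-independence of the coefficients for (ii)), and for (iii) both arguments differentiate the linear-in-$Z$ coefficients, symmetrize the double sum over $(j,k)$, and collapse it with the H-type relation $J_{T_k}J_{T_j}+J_{T_j}J_{T_k}=-32\delta_{jk}\,\mathrm{id}_{\mathfrak v}$.
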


\begin{proof}
Ad (i): Since $J_T$ is skew-symmetric we have
$$ [\Delta_{\mathfrak{v}},\partial_{J_TX}] = 2\sum_{i=0}^p\partial_{J_TS_i}\partial_{S_i} = 0. $$

Ad (ii): This is clear since $J_ZS$ is independent of $X\in\mathfrak{v}$.

Ad (iii): We have
\begin{align*}
 [\Box,\partial_{[J_ZS,X]}] &= 2\sum_{j=1}^q\partial_{[J_{T_j}S,X]}\partial_{T_j}\\
 &= 2\sum_{j,k=1}^q\langle T_k,[J_{T_j}S,X]\rangle\partial_{T_j}\partial_{T_k}\\
 &= 2\sum_{j,k=1}^q\langle J_{T_k}J_{T_j}S,X\rangle\partial_{T_j}\partial_{T_k}\\
 &= \sum_{j,k=1}^q\langle(J_{T_j}J_{T_k}+J_{T_k}J_{T_j})S,X\rangle\partial_{T_j}\partial_{T_k}\\
 &= -32\sum_{j,k=1}^q\langle T_j,T_k\rangle\langle S,X\rangle\partial_{T_j}\partial_{T_k}\\
 &= -32\langle S,X\rangle\Box.\qedhere
\end{align*}
\end{proof}

\begin{lemma}\label{lem:J_[S,Xi]}
For $X\in\mathfrak{v}$:
$$ \sum_{i=1}^pJ_{[X,S_i]}S_i = -16qX. $$
\end{lemma}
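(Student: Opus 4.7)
My plan is to prove the identity by testing both sides against an arbitrary $Y\in\mathfrak{v}$ and using the defining property of $J_Z$, together with the H-type relations from Lemma~\ref{lemma:H-type_rules}. Specifically, I will compute
\[
 \Bigl\langle \sum_{i=1}^p J_{[X,S_i]}S_i,\, Y\Bigr\rangle
\]
and show it equals $-16q\langle X,Y\rangle$; the claim then follows from non-degeneracy of the inner product on $\mathfrak{v}$.

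First, using the definition $\langle J_Z A,B\rangle=\langle Z,[A,B]\rangle$ twice, I would rewrite each term as
\[
 \langle J_{[X,S_i]}S_i,Y\rangle \;=\; \langle [X,S_i],[S_i,Y]\rangle.
\]
Expanding $[X,S_i]=\sum_j\langle J_{T_j}X,S_i\rangle T_j$ and $[S_i,Y]=\sum_j\langle J_{T_j}S_i,Y\rangle T_j$ in the orthonormal basis $\{T_j\}$ of $\mathfrak{z}$, I obtain
\[
 \sum_{i=1}^p\langle [X,S_i],[S_i,Y]\rangle \;=\; \sum_{j=1}^q \Bigl\langle J_{T_j}\Bigl(\sum_{i=1}^p \langle J_{T_j}X,S_i\rangle S_i\Bigr),Y\Bigr\rangle \;=\; \sum_{j=1}^q\langle J_{T_j}^2X,Y\rangle,
\]
where the last equality uses that $\{S_i\}$ is an orthonormal basis of $\mathfrak{v}$ and $J_{T_j}X\in\mathfrak{v}$.

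Finally, applying Lemma~\ref{lemma:H-type_rules:iv} with $Z=Z'=T_j$ gives $2J_{T_j}^2=-32\langle T_j,T_j\rangle\,\mathrm{id}_{\mathfrak{v}}=-32\,\mathrm{id}_{\mathfrak{v}}$, so $J_{T_j}^2=-16\,\mathrm{id}_{\mathfrak{v}}$. Summing over $j=1,\dots,q$ yields $\sum_j J_{T_j}^2=-16q\,\mathrm{id}_{\mathfrak{v}}$, hence
\[
 \Bigl\langle \sum_{i=1}^p J_{[X,S_i]}S_i,\,Y\Bigr\rangle=-16q\langle X,Y\rangle,
\]
and the identity follows. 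No serious obstacle is expected; the only point to watch is the correct bookkeeping of the two applications of the $J_Z$-duality and the orthonormal expansions in both $\mathfrak{v}$ and $\mathfrak{z}$.
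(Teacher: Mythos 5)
Your proof is correct, and it takes a genuinely different route from the paper's. The paper computes directly with the $\F$-arithmetic, using $[X,Y]=4\Im(XY^*)$, $J_ZX=-4ZX$, and the closed form $\sum_i S_i^*S_i=\dim_\R\F\cdot\mathbf{1}_n$ to reduce everything to the identity $\Im(W)=W-\Re(W)$. You instead argue purely at the level of the abstract H-type structure: pair against a test vector $Y$, unwind the definition $\langle J_ZA,B\rangle=\langle Z,[A,B]\rangle$ twice, expand over orthonormal bases of $\mathfrak{v}$ and $\mathfrak{z}$, and finish with $J_{T_j}^2=-16\,\mathrm{id}_{\mathfrak{v}}$ (which is stated just before Lemma~\ref{lemma:H-type_rules} and equivalently follows from Lemma~\ref{lemma:H-type_rules:iv} with $Z=Z'$). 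Your version is slightly longer but has the advantage of being intrinsic to H-type Lie algebras --- it never uses the specific realization $\nbar\cong\F^n\oplus\Im\F$ --- which fits the remark in the paper that many constructions extend to general H-type groups; the paper's version is shorter but relies on the explicit division-algebra model.
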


\begin{proof}
Since $\langle X,Y\rangle=\Re(XY^*)$, $[X,Y]=4\Im(XY^*)$ and $J_ZX=-4ZX$ we have
\begin{align*}
 \sum_{i=1}^{p}J_{[X,S_i]}S_i &= -16\sum_{i=1}^{p}\Im(XS_i^*)S_i\\
 &= -16\sum_{i=1}^{p}(XS_i^*)S_i+16\sum_{i=1}^{p}\langle X,S_i\rangle S_i\\ 
 &= -16X\sum_{i=1}^{p}S_i^*S_i + 16X\\
 &= -16(q+1)X + 16X\\
 &= -16qX,
\end{align*}
where we have used $\sum_{i=1}^{p}S_i^*S_i=\dim_\R\F\cdot{\bf 1}_n$.
\end{proof}

\begin{proof}[Proof of Proposition~\ref{prop:fourier_equations}]
Using Lemma~\ref{lem:LaplacianCommutators} it is easily seen that
\begin{align*}
 \mathcal{F}(\abs{X}^2\partial_S) &= \langle S,X\rangle\Delta_{\mathfrak{v}}+2\partial_S,\\
 \mathcal{F}(\abs{X}^2\partial_{[S,X]}) &= -\partial_{J_ZS}\Delta_{\mathfrak{v}},\\
 \mathcal{F}(\partial_{J_ZS}) &= -\partial_{[S,X]}.
\end{align*}
We further have
\begin{align*}
 \mathcal{F}(\partial_{[S,J_ZX]}) &= \mathcal{F}\left(\sum_{i=1}^p\sum_{j=1}^q\langle X,S_i\rangle\langle Z,T_j\rangle\partial_{[S,J_{T_j}S_i]}\right)\\
 &= \sum_{i=1}^p\sum_{j=1}^q\partial_{S_i}\partial_{T_j}\langle Z,[S,J_{T_j}S_i]\rangle\\
 &= \sum_{i=1}^p\sum_{j=1}^q\left(\langle Z,[S,J_{T_j}S_i]\rangle\partial_{T_j}+\langle T_j,[S,J_{T_j}S_i]\rangle\right)\partial_{S_i}\\
 &= -\sum_{i=1}^p\sum_{j=1}^q\left(\langle J_{T_j}J_ZS,S_i\rangle\partial_{T_j}+\langle J_{T_j}^2S,S_i\rangle\right)\partial_{S_j}\\
 &= -16P_S+16q\partial_S
\intertext{and}
 \mathcal{F}(\partial_{J_{[S,X]}X}) &= \mathcal{F}\left(\sum_{i,j=1}^p\langle X,S_i\rangle\langle X,S_j\rangle\partial_{J_{[S,S_i]}S_j}\right)\\
 &= \sum_{i,j=1}^p\partial_{S_i}\partial_{S_j}\langle X,J_{[S,S_i]}S_j\rangle\\
 &= \sum_{i,j=1}^p\partial_{S_i}\langle X,J_{[S,S_i]}S_j\rangle\partial_{S_j}\\
 &= -\sum_{i,j=1}^p\partial_{S_i}\langle J_{[S,S_i]}X,S_j\rangle\partial_{S_j}\\
 &= -\sum_{i,j=1}^p\left(\langle J_{[S,S_i]}X,S_j\rangle\partial_{S_i}+\langle J_{[S,S_i]}S_i,S_j\rangle\right)\partial_{S_j}\\
 &= -16Q_S+16q\partial_S,
\end{align*}
where Lemma~\ref{lem:J_[S,Xi]} was used in the last step. Then the first identity follows.

For the second identity a short computation shows that
\begin{align*}
 \mathcal{F}(E_{\mathfrak{v}}) &= -E_{\mathfrak{v}}-p, & \mathcal{F}(\abs{X}^4\partial_T) &= \langle Z,T\rangle\Delta_{\mathfrak{v}}^2,\\
 \mathcal{F}(\abs{X}^2\partial_{J_TX}) &= \partial_{J_TX}\Delta_{\mathfrak{v}}, & \mathcal{F}(\abs{Z}^2\partial_T) &= \langle Z,T\rangle\Box+2\partial_T.
\end{align*}
Further, we have
\begin{align*}
 \mathcal{F}(\partial_{J_TJ_ZX}) &= \mathcal{F}\left(\sum_{i=1}^p\sum_{j=1}^q\langle X,S_i\rangle\langle Z,T_j\rangle\partial_{J_TJ_{T_j}S_i}\right)\\
 &= \sum_{i=1}^p\sum_{j=1}^q\partial_{S_i}\partial_{T_j}\langle X,J_TJ_{T_j}S_i\rangle\\
 &= \sum_{i=1}^p\sum_{j=1}^q\left(\langle X,J_TJ_{T_j}S_i\rangle\partial_{S_i}+\langle S_i,J_TJ_{T_j}S_i\rangle\right)\partial_{T_j}\\
 &= \sum_{i=1}^p\sum_{j=1}^q\langle J_{T_j}J_TX,S_i\rangle\partial_{S_i}\partial_{T_j} + \frac{1}{2}\sum_{i=1}^p\sum_{j=1}^q\langle S_i,(J_TJ_{T_j}+J_{T_j}J_T)S_i\rangle\partial_{T_j}\\
 &= 16R_T - 16p\partial_T,
\end{align*}
where Lemma~\ref{lemma:H-type_rules:iv} was used in the last step. Putting these identities together shows the second formula.
\end{proof}

Now applying the Fourier transform to the differential equations of Lemma~\ref{lemma:diff_equations} we obtain:

\begin{lemma}\label{lemma:fourier_equations}
The space $\C[\nbar]_{\lambda,\nu}$ is given by all $u \in \C[\nbar]$ satisfying:
\begin{enumerate}[label=(\roman{*}), ref=\thetheorem(\roman{*})]
\item 
\label{lemma:fourier_equations:i}
$(E+\lambda+\rho-\nu-\rho')u(X,Z)=0$,
\item 
\label{lemma:fourier_equations:ii}
$u(aX'b,aX''c,aZa^{-1})=u(X',X'',Z)$ for all $a \in \Urm(1;\F)$, $b\in \Urm(m;\F)$ and $c \in F$,
\item 
\label{lemma:fourier_equations:iii}
$\left(2(\nu+\rho'+q-2)\partial_S-(X,S)\Delta_{\mathfrak{v}}-\frac{1}{2}\partial_{J_ZS}\Delta_{\mathfrak{v}}+\frac{1}{4}\partial_{[S,X]}+2P_S-2Q_S\right)u(X,Z)=0$,
 for all $S \in \mathfrak{v}'$,
\item 
\label{lemma:fourier_equations:iv}
$\left((E_{\mathfrak v}+\nu+\rho'-2)\partial_T-(Z,T)(\Delta_{\mathfrak{v}}^2+\Box)+\frac{1}{4}\partial_{J_TX}\Delta_{\mathfrak{v}}+R_T\right)u(X,Z)=0$,
 for all $T \in \zfrak$.
\end{enumerate}
\end{lemma}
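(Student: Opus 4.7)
The plan is to apply the Euclidean Fourier transform $\mathcal{F}$ term by term to the four conditions characterizing $\mathcal{D}'_{\{0\}}(\nbar)_{\lambda,\nu}$ in Lemma~\ref{lemma:diff_equations}, using the fact that $\mathcal{F}$ restricts to a linear bijection between the space $\mathcal{D}'_{\{0\}}(\nbar)$ of distributions supported at the origin and the polynomial ring $\C[\nbar]$, together with the identity $\mathcal{F}(Du)=\mathcal{F}(D)\mathcal{F}(u)$ for any differential operator $D$ with polynomial coefficients. Since $\C[\nbar]_{\lambda,\nu}$ is defined as the image of $\mathcal{D}'_{\{0\}}(\nbar)_{\lambda,\nu}$ under $\mathcal{F}$, it suffices to check that each condition and its Fourier transform cut out the same subspace.

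First, the invariance condition in Lemma~\ref{lemma:diff_equations}\ref{lemma:diff_equations:ii} passes through $\mathcal{F}$ unchanged, because $\Urm(1;\F)\times\Urm(m;\F)\times F$ acts on $\nbar=\mathfrak{v}\oplus\zfrak$ by orthogonal transformations that preserve the decomposition $\mathfrak{v}=\mathfrak{v}'\oplus\mathfrak{v}''$, and the Fourier transform intertwines any such isometric action on distributions with the pullback action on polynomials. This yields condition~(ii) of the lemma to be proved.

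Next, for the weighted Euler operator $E=E_{\mathfrak{v}}+2E_{\zfrak}$, the one-coordinate identity $\mathcal{F}(\langle X,S\rangle\partial_S)=-(1+\langle X,S\rangle\partial_S)$ for a unit vector $S$ gives $\mathcal{F}(E_{\mathfrak{v}})=-E_{\mathfrak{v}}-p$ and $\mathcal{F}(E_{\zfrak})=-E_{\zfrak}-q$, hence $\mathcal{F}(E)=-E-(p+2q)=-E-2\rho$. The scalar homogeneity equation of Lemma~\ref{lemma:diff_equations}\ref{lemma:diff_equations:i} therefore transforms into $(E+\lambda+\rho-\nu-\rho')\hat{u}=0$, matching condition~(i).

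Finally, conditions~(iii) and (iv) of Lemma~\ref{lemma:diff_equations} — the $\mathfrak{n}'$-invariance statements $D_{\mathfrak{v}}(S)u=0$ for $S\in\mathfrak{v}'$ and $D_{\zfrak}(T)u=0$ for $T\in\zfrak$ — are equivalent, via the bijection $\mathcal{F}$, to $\mathcal{F}(D_{\mathfrak{v}}(S))\hat{u}=0$ and $\mathcal{F}(D_{\zfrak}(T))\hat{u}=0$. The explicit formulas for these two Fourier-transformed operators have already been established in Proposition~\ref{prop:fourier_equations}; substituting them and discarding the overall sign (irrelevant for a homogeneous equation) produces conditions~(iii) and (iv) of the statement. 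The substantive content of the lemma is therefore entirely concentrated in Proposition~\ref{prop:fourier_equations} and in the bijectivity of $\mathcal{F}$ on $\mathcal{D}'_{\{0\}}(\nbar)$; the proof itself is a straightforward reformulation and requires no further computation.
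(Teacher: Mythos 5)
Your proof is correct and takes essentially the same route as the paper: the paper in fact offers no separate argument for this lemma, merely writing "Now applying the Fourier transform to the differential equations of Lemma~\ref{lemma:diff_equations} we obtain" and then stating the result, so all the substantive content is already in Proposition~\ref{prop:fourier_equations}. Your observations that $\mathcal{F}$ is a linear bijection from $\mathcal{D}'_{\{0\}}(\nbar)$ onto $\C[\nbar]$ that intertwines the orthogonal $M'$-action, and that $\mathcal{F}(E)=-E-2\rho$ turns the homogeneity condition $(E-\lambda+\rho+\nu+\rho')u=0$ into $(E+\lambda+\rho-\nu-\rho')\hat u=0$, make explicit exactly the small remaining steps the paper leaves to the reader.

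One small point worth being aware of: if you literally substitute the formulas from Proposition~\ref{prop:fourier_equations}, the coefficient of $\partial_S$ in condition~(iii) comes out as $2(\nu+\rho'-1)$, not $2(\nu+\rho'+q-2)$, and the sign in front of $\tfrac{1}{4}\partial_{J_TX}\Delta_{\mathfrak v}$ in condition~(iv) comes out negative, not positive. These are misprints in the lemma as stated, not errors in your argument; the formulas of Proposition~\ref{prop:fourier_equations} are what the paper actually uses in the ensuing computations (e.g.\ in the proof of Proposition~\ref{theorem:diff_SBO_equations}), so your derived versions are the correct ones.
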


\section{Polynomial Invariants}\label{sec:Invariants}

Consider the ring $\C[\nbar]^{M'}$ of polynomials which are invariant under the $M'$ action given in Lemma~\ref{lemma:fourier_equations:ii}, then $\C[\nbar]_{\lambda,\nu}\subseteq\C[\nbar]^{M'}$. The first step in the classification of $\C[\nbar]_{\lambda,\nu}$ is to find generators of $\C[\nbar]^{M'}$.

\begin{lemma}\label{lem:Transitivity}
\begin{enumerate}[label=(\roman{*}), ref=\thetheorem(\roman{*})]
\item For $\F=\C$ the group $M'$ acts transitively on $S^{p'-1}\times S^{p''-1}\times\{1\}$.
\item For $\F=\mathbb{H}$ with either $m<n-1$ or $m=n-1$ and $F={\rm Sp}(1)=\Urm(1;\mathbb{H})$, the group $M'$ acts transitively on $S^{p'-1}\times S^{p''-1}\times S^{q-1}$.
\item For $\F=\mathbb{O}$ the group $M'$ acts transitively on $S^{p'-1}\times S^{p''-1}\times S^{q-1}$.
\end{enumerate}
\end{lemma}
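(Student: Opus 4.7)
The plan is to peel off the $\Urm(m;\F)$-factor of $M'$, which acts only on $X'$, and then reduce the remaining transitivity to an assertion about $\Urm(1;\F)\times F$ on $\mathfrak{v}''\oplus\mathfrak{z}$, to be verified by hand in each of the three cases.

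Assume first $m\geq 1$. The subgroup $\Urm(m;\F)$ acts by right multiplication on $\mathfrak{v}'=\F^m$ and is transitive on the unit sphere $S^{p'-1}$, so I may move $X'$ to the base point $e_1'=(1,0,\ldots,0)$. A pair $(a,b)\in\Urm(1;\F)\times\Urm(m;\F)$ fixes $e_1'$ under $X'\mapsto aX'b$ precisely when the first row of $b$ equals $(a^{-1},0,\ldots,0)$; unitarity of $b$ then forces $b=\diag(a^{-1},b'')$ with $b''\in\Urm(m-1;\F)$, and this block acts trivially on $X''$ and $Z$. Hence, modulo the harmless $\Urm(m-1;\F)$-factor, transitivity of $M'$ on $S^{p'-1}\times S^{p''-1}\times S^{q-1}$ reduces to transitivity of $\Urm(1;\F)\times F$ on $S^{p''-1}\times S^{q-1}$ via $(a,c)\cdot(X'',Z)=(aX''c,aZa^{-1})$. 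When $m=0$ this reduction is vacuous.

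For (i) $\F=\C$, the conjugation action on $\Im\C$ is trivial and $S^{q-1}$ is replaced by the singleton $\{1\}$, so the remaining claim is exactly Remark~\ref{rem:TransUnitSphere}. For (ii) $\F=\mathbb{H}$, conjugation by $\mathrm{Sp}(1)=\Urm(1;\mathbb{H})$ realises the double cover $\mathrm{Sp}(1)\twoheadrightarrow\SO(3)$, which is transitive on $S^2=S^{q-1}$ with stabiliser of $\qi$ equal to $\Urm(1)=\{\alpha+\beta\qi\}$. Projecting onto the $S^2$-factor, transitivity on $S^{p''-1}\times S^2$ is equivalent to transitivity of the fibre group $\Urm(1)\times F$ on $S^{p''-1}$. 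For $m=n-1$ with $F=\mathrm{Sp}(1)$, any $z\in S^3\subset\mathbb{H}$ satisfies $azc^{-1}=1$ with $a\in\Urm(1)$ arbitrary and $c:=az\in\mathrm{Sp}(1)$, giving transitivity. For $m<n-1$ one inspects the short list of admissible $F$ from Fact~\ref{fact:StronglySphericalRankOnePairs} and verifies case by case that $\Urm(1)\times F$ is still transitive on the quaternionic unit sphere $S^{p''-1}$. For (iii) $\F=\mathbb{O}$ we have $n=1$, $m=0$ and $M'=\mathrm{Spin}(7)\times F$, where $\mathrm{Spin}(7)$ acts on $\mathfrak{v}''=\mathbb{O}\cong\R^8$ by the spin representation (transitive on $S^7$ with stabiliser $G_2$) and on $\mathfrak{z}=\Im\mathbb{O}\cong\R^7$ by the vector representation (through which $G_2\subset\SO(7)$ is transitive on $S^6$). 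Thus $\mathrm{Spin}(7)$ alone is already transitive on $S^7\times S^6$, independently of $F$.

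The main obstacle is case (ii) with $m<n-1$: Fact~\ref{fact:StronglySphericalRankOnePairs} only guarantees transitivity of the full $\mathrm{Sp}(1)\times F$ on $S^{p''-1}$, whereas the reduction forces me to work with the smaller group $\Urm(1)\times F$ obtained as the stabiliser of $\qi$ inside $\mathrm{Sp}(1)$. That transitivity is preserved under this passage cannot be deduced from the open-orbit condition alone and has to be read off from the explicit classification tables in \cite{KM14,KKPS17,Moe17}.
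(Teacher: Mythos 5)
Your route differs from the paper's: you reduce by stabilisers (first pinning down $X'$ via $\Urm(m;\F)$, then pinning down $Z$ via the conjugation action), which in the quaternionic case forces you to prove that the \emph{smaller} group $\Urm(1)\times F$ acts transitively on $S^{p''-1}$, where $\Urm(1)$ is the stabiliser of $\qi$ inside $\mathrm{Sp}(1)$. You correctly flag that Remark~\ref{rem:TransUnitSphere} only gives transitivity of the larger group $\mathrm{Sp}(1)\times F$, and you propose to close the gap by inspecting the classification tables in \cite{KM14,KKPS17,Moe17} --- but you leave this unverified, so it is a real gap in case (ii) with $m<n-1$. The paper avoids the stabiliser machinery entirely: for $m<n-1$ the group $\mathrm{Sp}(1)$ is three-dimensional while $\dim S^{p''-1}=4(n-m)-1\geq 7$, so $\mathrm{Sp}(1)$ alone cannot be transitive, and (implicitly invoking the classification of transitive compact group actions on spheres) $F$ alone must then be transitive on $S^{p''-1}$. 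With this in hand the lemma follows directly, since $\mathrm{Sp}(1)$, $\mathrm{Sp}(m)$ and $F$ each independently handle one of the three spheres: given a target point, choose $a$ to match $Z$, then $b$ to match $X'$, then $c$ to match $X''$. The same observation also rescues your argument, since transitivity of $F$ on $S^{p''-1}$ trivially implies that of $\Urm(1)\times F$, so the case-by-case table check you postpone is unnecessary. In case (iii) your argument and the paper's go in opposite orders --- you use $\mathrm{Spin}(7)$ transitive on $S^7$ with stabiliser $G_2$ transitive on $S^6$, the paper uses $\mathrm{Spin}(7)\twoheadrightarrow\SO(7)$ transitive on $S^6$ with stabiliser $\mathrm{Spin}(6)\cong\mathrm{SU}(4)$ transitive on $S^7$ --- and both are valid.
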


Here we use the convention $S^{-1}=\{0\}\subseteq\F^0$.

\begin{proof}
Recall that by Remark~\ref{rem:TransUnitSphere} the group $M'=\Urm(1;\F)\times\Urm(m;\F)\times F$ acts transitively on the unit sphere in $\mathfrak{v}''=\F^{n-m}$.
\begin{enumerate}[label=(\roman{*}), ref=\thetheorem(\roman{*})]
\item For $\F=\C$ we have $M'=\Urm(1)\times\Urm(m)\times F$ with the action on $\nbar=\C^m\oplus\C^{n-m}\oplus i\R$ given by $(a,b,c)\cdot(X',X'',Z)=(aX'b,aX''c,Z)$. Since $\Urm(m)$ acts transitively on $S^{p'-1}=S^{2m-1}$ and $\Urm(1)\times F$ acts transitively on $S^{p''-1}$ by Remark~\ref{rem:TransUnitSphere}, the claim follows.
\item For $\F=\mathbb{H}$ we have $M'={\rm Sp}(1)\times{\rm Sp}(m)\times F$ with the action on $\nbar=\mathbb{H}^m\oplus\mathbb{H}^{n-m}\oplus\Im\mathbb{H}$ given by $(a,b,c)\cdot(aX'b,aX''c,aZa^{-1})$. Assume first that $m<n-1$, then ${\rm Sp}(1)$ cannot act transitively on $S^{p''-1}$, so $F$ has to act transitively on it. Then ${\rm Sp}(m)$ acts transitively on $S^{p'-1}=S^{4m-1}$, $F$ acts transitively on $S^{p''-1}$ and ${\rm Sp}(1)$ acts transitively on $S^{q-1}=S^2$ and the claim follows. For $m=n-1$ and $F={\rm Sp}(1)=\Urm(1;\mathbb{H})$, the argument is similar.
\item For $\F=\mathbb{O}$ the group $M'={\rm Spin}(7)$ acts on $\mathfrak{v}=\mathfrak{v}''=\R^8$ by the spin representation and on $\mathfrak{z}=\R^7$ by the covering map ${\rm Spin}(7)\to{\rm SO}(7)$. Clearly the action of ${\rm SO}(7)$ on $S^6\subseteq\R^7$ is transitive. The stabilizer of a point in $S^6$ is isomorphic to ${\rm Spin}(6)\simeq{\rm SU}(4)$ which acts on $\R^8\simeq\C^4$ by the standard representation and hence transitively on its unit sphere.\qedhere
\end{enumerate}
\end{proof}

We now determine the ring $\C[\nbar]^{M'}$ of invariants. The most complicated situation occurs for $\F=\mathbb{H}$ and $m=n-1$. For $(X,Z)\in\mathfrak{v}''\oplus\mathfrak{z}=\mathbb{H}\oplus\Im\mathbb{H}$ we write
$$ p_1(X,Z) := \langle\qi,\overline{X}ZX\rangle, \qquad p_2(X,Z) := \langle\qj,\overline{X}ZX\rangle, \qquad p_3(X,Z) := \langle\qk,\overline{X}ZX\rangle. $$

\begin{lemma}\label{lemma:poly_invariants}
\begin{enumerate}[label=(\roman{*}), ref=\thetheorem(\roman{*})]
\item For $\F=\C$, the ring $\C[\nbar]^{M'}$ is generated by $\abs{X'}^2$, $\abs{X''}^2$ and $Z$.
\item For $\F=\mathbb{H}$, $m=n-1$ and $F=\{1\}$, the ring $\C[\nbar]^{M'}$ is generated by $\abs{X'}^2$, $\abs{X''}^2$, $|Z|^2$ and $p_1(X'',Z)$, $p_2(X'',Z)$, $p_3(X'',Z)$.
\item For $\F=\mathbb{H}$, $m=n-1$ and $F=\exp(\R U)\simeq\Urm(1)$, $U\in\Im\mathbb{H}=\mathfrak{sp}(1)=\mathfrak{u}(1;\mathbb{H})$, the ring $\C[\nbar]^{M'}$ is generated by $\abs{X'}^2$, $\abs{X''}^2$, $\abs{Z}^2$ and $\langle U,\overline{X''}ZX''\rangle$.
\item In all other cases $\C[\nbar]^{M'}$ is generated by $\abs{X'}^2$, $\abs{X''}^2$ and $\abs{Z}^2$.
\end{enumerate}
\end{lemma}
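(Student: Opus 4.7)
The plan is to exploit the product structure of the $M'$-action together with the transitivity results of Lemma~\ref{lem:Transitivity} and an explicit orbit analysis in the remaining cases. The common first step is to reduce to computing $\Urm(1;\F) \times F$-invariants on $V_2 \oplus V_3 := \F^{n-m} \oplus \Im\F$: the subgroup $\Urm(m;\F)$ acts only on $X' \in \F^m$ by right multiplication, and the First Fundamental Theorem of invariant theory for $\Orm(m)$, $\Urm(m)$, ${\rm Sp}(m)$ on their standard representations gives $\C[\F^m]^{\Urm(m;\F)} = \C[|X'|^2]$. Hence $\C[\nbar]^{M'} = \C[|X'|^2] \otimes \C[V_2 \oplus V_3]^{\Urm(1;\F)\times F}$ and the problem reduces to computing the second factor.

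For cases (i) and (iv) a uniform bihomogeneity argument works. In case (i), $\Urm(1)$ acts trivially on $\Im\C = i\R$ by commutativity, so $Z$ itself is a linear invariant, and by the transitivity of $\Urm(1)\times F$ on the unit sphere of $\C^{n-m}$ (Remark~\ref{rem:TransUnitSphere}) each bihomogeneous invariant piece of real degree $d_1$ in $X''$ equals $c|X''|^{d_1}$, forcing $d_1$ even and giving $\C[V_2\oplus V_3]^{\Urm(1)\times F}=\C[|X''|^2]\otimes\C[Z]$. In case (iv), Lemma~\ref{lem:Transitivity}(ii), (iii) provide transitivity of $\Urm(1;\F)\times F$ on $S^{p''-1}\times S^{q-1}$ (inherited from the transitivity of $M'$ on the triple product of spheres), so every bihomogeneous piece of bidegree $(d_1,d_2)$ equals $c|X''|^{d_1}|Z|^{d_2}$ and both $d_1,d_2$ must be even, yielding $\C[|X''|^2,|Z|^2]$.

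The main obstacle is case (ii), where the rank-one group ${\rm Sp}(1)=\Urm(1;\HH)$ acts on $\HH\oplus\Im\HH$ but is not transitive on $S^3\times S^2$. Here I would use that for $X''\neq 0$ the unique element $a=\overline{X''}/|X''|\in {\rm Sp}(1)$ normalizes $X''$ to $|X''|\cdot 1\in\R$ while sending $Z$ to $w=\overline{X''}ZX''/|X''|^2\in\Im\HH$, and that the stabilizer in ${\rm Sp}(1)$ of the normalized point is trivial. Setting $G(w):=f(1,w)$ for a bihomogeneous ${\rm Sp}(1)$-invariant $f$ of bidegree $(d_1,d_2)$, bihomogeneity combined with ${\rm Sp}(1)$-covariance yields
\[
f(X'',Z)=|X''|^{d_1-2d_2}\cdot G(p_1,p_2,p_3)\qquad\text{on }\{X''\neq 0\},
\]
where $G$ is a homogeneous polynomial of degree $d_2$ in three variables. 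If $d_1\geq 2d_2$ this is already a polynomial in $|X''|^2,p_1,p_2,p_3$. When $d_1<2d_2$, the polynomial extension to all of $\HH\oplus\Im\HH$ forces $G(p_1,p_2,p_3)$ to be divisible by $|X''|^{2d_2-d_1}$ in $\C[X'',Z]$, and I would apply the key identity
\[
p_1^2+p_2^2+p_3^2=|X''|^4|Z|^2,
\]
which follows from $\overline{X''}ZX''\in\Im\HH$ and $|\overline{X''}ZX''|^2=|X''|^4|Z|^2$, to reduce $G$ modulo $|X''|^4$ and express $f$ as a polynomial in $|X''|^2,|Z|^2,p_1,p_2,p_3$. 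The delicate step is verifying that this reduction terminates in the claimed generators, which I expect to follow by induction on the $p$-degree of $G$, iteratively replacing one squared $p_i$ using the identity above.

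Case (iii) then follows as a corollary: the additional subgroup $F=\exp(\R U)\cong\Urm(1)$ acts on $(X'',Z)$ by $(X'',Z)\mapsto(X''c,Z)$, which via the identity $\overline{X''c}\,Z\,(X''c)=c^{-1}(\overline{X''}ZX'')c$ rotates $(p_1,p_2,p_3)\in\R^3$ around the axis $U=U_1\qi+U_2\qj+U_3\qk$. The polynomial invariants of $\SO(2)$ acting on $\R^3$ by rotation around $U$ are generated by the axis projection $U_1p_1+U_2p_2+U_3p_3$ and the squared norm $p_1^2+p_2^2+p_3^2=|X''|^4|Z|^2\in\C[|X''|^2,|Z|^2]$, so only $\langle U,\overline{X''}ZX''\rangle$ needs to be added to the generators of case (ii), completing (iii).
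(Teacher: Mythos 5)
Your overall strategy — peeling off the $X'$-variable via the transitive $\Urm(m;\F)$-action and then computing $\Urm(1;\F)\times F$-invariants on $\F^{n-m}\oplus\Im\F$ — coincides with the paper's, and your treatment of cases (i), (iv), and (iii) is essentially identical to theirs (transitivity from Lemma~\ref{lem:Transitivity} and, for (iii), $\SO(2)$-invariants on $\R^3$ generated by the axis projection and the squared norm). The genuine difference is in case (ii). The paper complexifies ${\rm Sp}(1)$ to $\SL(2,\C)$, identifies $\HH_\C\oplus(\Im\HH)_\C$ with $V\oplus V\oplus F_2$, invokes the First Fundamental Theorem for $\GL(2,\C)$ on this space (from Kraft--Procesi) to get that the $\GL(2,\C)$-invariants are $\C[p_1,p_2,p_3]$, and then works down to ${\rm Sp}(1)$-invariants by decoupling the central $\R^\times$-action. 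You instead argue directly: normalize $X''\neq0$ to a positive real by $a=\overline{X''}/|X''|$, deduce $f=|X''|^{d_1-2d_2}G(p_1,p_2,p_3)$ on $\{X''\neq0\}$, and resolve negative powers of $|X''|^2$ using the syzygy $p_1^2+p_2^2+p_3^2=|X''|^4|Z|^2$. Your approach is more elementary and self-contained; the paper's is shorter given the textbook reference but requires the detour through complexification and a careful bookkeeping of the $\R^\times$-weight.

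The step you flag as delicate is indeed the crux, and as written it needs one more fact. To run the reduction ``replace $p_1^2+p_2^2+p_3^2$ by $|X''|^4|Z|^2$'' you must know that if $|X''|^2$ divides $G(p_1,p_2,p_3)$ in $\C[X'',Z]$, then $p_1^2+p_2^2+p_3^2$ divides $G$ in $\C[p_1,p_2,p_3]$. This follows once one knows that the map $(X'',Z)\mapsto(p_1,p_2,p_3)$ sends the (irreducible, since $\dim_\R\HH=4\geq3$) quadric $\{|X''|^2=0\}\subseteq\C^4\times\C^3$ dominantly onto the isotropic cone $\{p_1^2+p_2^2+p_3^2=0\}\subseteq\C^3$. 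Granting that, the Fischer decomposition $G=\sum_j|p|^{2j}h_j$ with $h_j$ harmonic then shows by an inductive argument exactly as you sketch that the required power of $|p|^2=|X''|^4|Z|^2$ factors out, so $f$ lies in $\C[|X''|^2,|Z|^2,p_1,p_2,p_3]$. With this ingredient supplied your argument is complete and does give a valid alternative proof of (ii).
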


\begin{proof}
Statements (i) and (iv) follow immediately from Lemma~\ref{lem:Transitivity}, so we only consider the case $\F=\mathbb{H}$ with $m=n-1$. Here $M'={\rm Sp}(1)\times{\rm Sp}(n-1)\times F$ acts on $\mathbb{H}^{n-1}\oplus\mathbb{H}\oplus\Im\mathbb{H}$ by $(a,b,c)\cdot(X',X'',Z)=(aX'b,aX''c,aZa^{-1})$. Assume first that $F=\{1\}$, then clearly $\abs{X'}^2$, $\abs{X''}^2$, $|Z|^2$ and $p_1(X'',Z)$, $p_2(X'',Z)$, $p_3(X'',Z)$ are $M'$-invariant. Further, the transitivity of ${\rm Sp}(n-1)$ on the unit sphere in $\mathbb{H}^{n-1}$ implies that the only invariant polynomial in $X'$ is $\abs{X'}^2$. It remains to show that the ${\rm Sp}(1)$-invariants in $\mathbb{H}\oplus\Im\mathbb{H}$ are generated by $\abs{X''}^2$, $|Z|^2$ and $p_1(X'',Z)$, $p_2(X'',Z)$, $p_3(X'',Z)$.

The complexification of ${\rm Sp}(1)$ is $\SL(2,\C)$. The complexification of $\mathbb{H}$ is the direct sum of two copies of the $2$-dimensional complex representation $V$ of $\SL(2,\C)$. The complexification of $\Im\mathbb{H}$ is the $3$-dimensional representation of $\SL(2,\C)$. The latter representation can be viewed as the representation of $\SL(2,\C)$ on the space $F_2$ of homogeneous forms on $V$ of degree $2$. By \cite[Exercise 7 in \S4]{KP96} the ring $\C[V\oplus V\oplus F_2]^{\GL(2,\C)}$ of $\GL(2,\C)$-invariant polynomials on $V\oplus V\oplus F_2$ is generated by $\varepsilon_0$, $\varepsilon_1$ and $\varepsilon_2$ which are given by $\varepsilon_i(v,w,f)=f_i(v,w)$ with $f(sv+tw)=\sum_{i=0}^2s^it^{2-i}f_i(v,w)$. An easy computation shows that $\varepsilon_0,\varepsilon_1,\varepsilon_2$ correspond to $p_1,p_2,p_3$. Note that an element $\lambda\cdot I$ of the center $\C^\times\cdot I$ of $\GL(2,\C)$ acts on $V$ by $\lambda$ and on $F_2$ by $\lambda^{-2}$. It follows that $p_1$, $p_2$ and $p_3$ generate the ring $\C[\mathbb{H}\oplus\Im\mathbb{H}]^{{\rm Sp}(1)\cdot\R^\times}$, where $\lambda\in\R^\times$ acts on $\mathbb{H}$ by $\lambda$ and on $\Im\mathbb{H}$ by $\lambda^{-2}$.\\
Now let $f\in\C[\mathbb{H}\oplus\Im\mathbb{H}]^{{\rm Sp}(1)}$ be merely ${\rm Sp}(1)$-invariant. Since the action of ${\rm Sp}(1)$ and $\R^\times$ commute, we may assume that $\lambda\in\R^\times$ acts on $f$ by $\lambda^m$, $m\in\Z$. If $m$ is odd then $\lambda=-1\in\R^\times$ acts on $f$ by $-1$, but on the other hand $f$ is invariant under $-1\in{\rm Sp}(1)$, so $f=0$. If $m$ is even we write $m=-2k+4\ell$ with $k,\ell\in\Z_{\geq0}$, then $f\cdot\abs{X''}^{2k}\abs{Z}^{2\ell}$ is $\R^\times$-invariant and therefore contained in $\C[\mathbb{H}\oplus\Im\mathbb{H}]^{{\rm Sp}(1)\cdot\R^\times}$ which is generated by $p_1$, $p_2$ and $p_3$. This shows (ii).

Finally, (iii) follows by inspecting which polynomial expression in $p_1$, $p_2$ and $p_3$ is invariant under $F=\exp(\R U)$.
\end{proof}

In particular Lemma~\ref{lemma:poly_invariants} implies that the degree of every generator of $\C[\nbar]^{M'}$ is even. Hence the degree of every element of $\C[\nbar]_{\lambda,\nu}$ is even, so that Lemma~\ref{lemma:fourier_equations:i} implies:

\begin{corollary}\label{cor:DiffOpsOnlyInGlobalPoles}
If $\C[\nbar]_{\lambda,\nu}\neq\{0\}$, then $(\lambda,\nu)\in\GlobalPoles$.
\end{corollary}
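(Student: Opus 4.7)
The plan is very short because the corollary is an immediate consequence of the two preceding lemmas. Let me sketch it.

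Start with a nonzero $u\in\C[\nbar]_{\lambda,\nu}$. By Lemma~\ref{lemma:fourier_equations}\ref{lemma:fourier_equations:ii} we have $u\in\C[\nbar]^{M'}$, and by Lemma~\ref{lemma:fourier_equations}\ref{lemma:fourier_equations:i} the polynomial $u$ satisfies
$$ Eu = (\nu+\rho'-\lambda-\rho)\,u. $$
Since the weighted Euler operator $E=E_{\mathfrak{v}}+2E_{\mathfrak{z}}$ acts diagonally on monomials with eigenvalues in $\Z_{\geq 0}$ and commutes with the $M'$-action (which preserves $\mathfrak{v}'$, $\mathfrak{v}''$ and $\mathfrak{z}$ separately), I would decompose $u=\sum_d u_d$ into $E$-homogeneous $M'$-invariant components. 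The eigenvalue equation forces all nonzero $u_d$ to share the single weighted degree
$$ d = \nu+\rho'-\lambda-\rho \in \Z_{\geq 0}. $$

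Now I invoke Lemma~\ref{lemma:poly_invariants}: in every case the ring $\C[\nbar]^{M'}$ is generated by polynomials of \emph{even} weighted degree. Explicitly, the generators $\abs{X'}^2$, $\abs{X''}^2$ have weighted degree $2$; the generator $Z$ in the case $\F=\C$ (where $\Im\F=i\R$ is one-dimensional) has weighted degree $2$; $\abs{Z}^2$ has weighted degree $4$; and each of $p_i(X'',Z)$ and $\langle U,\overline{X''}ZX''\rangle$ is of $X''$-degree $2$ and $Z$-degree $1$, hence of weighted degree $4$. Consequently every monomial in these generators has even weighted degree, so the $E$-eigenvalue $d$ must be a nonnegative even integer.

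Writing $d=2k$ with $k\in\Z_{\geq 0}$ gives $\lambda+\rho-\nu-\rho'=-2k\in-2\Z_{\geq 0}$, which is exactly the defining condition for $(\lambda,\nu)\in\GlobalPoles$. There is no real obstacle here; the only point that deserves explicit verification is that the weighted-degree bookkeeping works uniformly across all cases listed in Lemma~\ref{lemma:poly_invariants}, which is settled by the short case-by-case inspection above.
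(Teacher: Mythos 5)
Your proof is correct and takes exactly the paper's approach: the paper itself deduces the corollary in the sentence immediately preceding it, by observing that the generators of $\C[\nbar]^{M'}$ from Lemma~\ref{lemma:poly_invariants} all have even degree, so any nonzero $u\in\C[\nbar]_{\lambda,\nu}$ has even degree, and then the Euler equation of Lemma~\ref{lemma:fourier_equations}(i) forces $\lambda+\rho-\nu-\rho'\in-2\Z_{\geq 0}$. The one place you go beyond the paper's phrasing is a genuine and worthwhile precision: the paper simply says ``degree,'' but the Euler operator $E$ in Lemma~\ref{lemma:fourier_equations}(i) is the \emph{weighted} Euler operator $E_{\mathfrak{v}}+2E_{\mathfrak{z}}$, and the generators $Z$ and $p_i$ have odd ordinary degree ($1$ and $3$ respectively) while having even weighted degree ($2$ and $4$) --- so the argument only closes if ``degree'' is read as the weighted one, which is what your case-by-case check makes explicit. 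The intermediate decomposition $u=\sum_d u_d$ is superfluous (the eigenvalue equation already makes $u$ weighted-homogeneous), but it does no harm.
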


\section{Classification of differential symmetry breaking operators: holomorphic families}\label{sec:DiffOpClass1}

We first treat the case of polynomials in $\C[\nbar]_{\lambda,\nu}$ which only depend on $\abs{X'}^2$, $\abs{X''}^2$ and $\abs{Z}^2$. Note that if $\C[\nbar]^{M'}$ is generated by $\abs{X'}^2$, $\abs{X''}^2$ and $\abs{Z}^2$, then these are in fact all polynomials in $\C[\nbar]_{\lambda,\nu}$.

Let $(\lambda,\nu)\in\GlobalPoles$ and write $\lambda+\rho-\nu-\rho'=-2k \in -2\Z_{\geq 0}$. We define the distribution
\begin{equation}
\label{eq:definition_u^C_hat}
\widehat{u}^C_{\lambda,\nu} := \sum_{h+i+2j=k} c_{h,i,j}(\lambda,\nu) \abs{X'}^{2h}\abs{X''}^{2i}\abs{Z}^{2j},
\end{equation}
where the scalars $c_{h,i,j}(\lambda,\nu)$ are for $m>0$ given by
\begin{equation}
 c_{h,i,j}(\lambda,\nu) = \frac{2^{-2i-2h}\Gamma(\frac{2\nu+p'+2}{4})\Gamma(\frac{\lambda+\rho+\nu-\rho'}{2}+i)}{h!i!j!\Gamma(\frac{2\nu+p'+2}{4}-j)\Gamma(\frac{p''}{2}+i)\Gamma(\frac{\lambda+\rho+\nu-\rho'}{2})}\label{eq:diff_op_scalars1}
\end{equation}
and for $m=0$ by
\begin{equation}
 c_{h,i,j}(\lambda,\nu) = \frac{2^{-i}\Gamma(\frac{\nu}{2}-j)}{i!j!\Gamma(\frac{p}{2}+i)\Gamma(\frac{\nu}{2}-\lfloor\frac{k}{2}\rfloor)}.\label{eq:diff_op_scalars2}
\end{equation}
A close inspection of the gamma factors shows that $c_{h,i,j}(\lambda,\nu)$ is holomorphic in $\lambda$ and $\nu$ and hence $u_{\lambda,\nu}^C$ depends holomorphically on $(\lambda,\nu)\in\GlobalPoles$. In the case $m=0$ we have $\mathfrak{v}'=0$ and hence $\abs{X'}^{2h}=0$ for $h>0$, so the summation is over $i+2j=k$ with $h=0$.

\begin{theorem}
\label{theorem:diff_solutions}
Let $(\lambda,\nu)\in\GlobalPoles$, then $\C[\nbar]_{\lambda,\nu}\cap\C[\abs{X'}^2,\abs{X''}^2,\abs{Z}^2]=\C \widehat{u}^C_{\lambda,\nu}$. In particular, if $\C[\nbar]^{M'}$ is generated by $\abs{X'}^2$, $\abs{X''}^2$ and $\abs{Z}^2$, then
$$ \C[\nbar]_{\lambda,\nu} = \C \widehat{u}^C_{\lambda,\nu}. $$
\end{theorem}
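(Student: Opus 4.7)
The plan is to parametrize $\C[\nbar]_{\lambda,\nu}\cap\C[|X'|^2,|X''|^2,|Z|^2]$ by a finite-dimensional ansatz, derive a system of recurrences on its coefficients from the equations of Lemma~\ref{lemma:fourier_equations}, and solve them explicitly. Any polynomial in $\C[|X'|^2,|X''|^2,|Z|^2]$ automatically satisfies the $M'$-invariance condition Lemma~\ref{lemma:fourier_equations:ii}, and the weighted Euler equation Lemma~\ref{lemma:fourier_equations:i} forces weighted degree $\nu+\rho'-\lambda-\rho=2k$, using $\lambda+\rho-\nu-\rho'=-2k$. Since the monomial $|X'|^{2h}|X''|^{2i}|Z|^{2j}$ has weighted degree $2h+2i+4j$, any candidate $u$ must be of the form
$$u=\sum_{h+i+2j=k}c_{h,i,j}\,|X'|^{2h}|X''|^{2i}|Z|^{2j}$$
for scalars $c_{h,i,j}\in\C$ to be determined. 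Note that the index set is non-empty precisely because $(\lambda,\nu)\in\GlobalPoles$.

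The next step is to substitute this ansatz into $\mathcal{F}(D_\mathfrak{v}(S))u=0$ for $S\in\mathfrak{v}'$ and $\mathcal{F}(D_\mathfrak{z}(T))u=0$ for $T\in\mathfrak{z}$. A direct computation using only the H-type identities of Lemma~\ref{lemma:H-type_rules} together with Lemma~\ref{lem:J_[S,Xi]} shows that, on monomials depending only on the three norm-squares, each term of these operators produces contributions along only two ``directions'': one proportional to $\langle S,X\rangle$ (resp.\ $\langle T,Z\rangle$) times a shifted norm-monomial, and one proportional to the cross-invariant $\langle J_ZS,X\rangle=\langle Z,[S,X]\rangle$ (resp.\ a $\langle J_TX,\cdot\rangle$-type expression) times such a monomial. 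The decisive simplifications are the anti-commutation $J_TJ_{T'}+J_{T'}J_T=-32\langle T,T'\rangle\,\textrm{id}_\mathfrak{v}$ of Lemma~\ref{lemma:H-type_rules:iv}, which collapses the double sums defining $P_S$ and $R_T$ down to $|Z|^2\langle\cdot,\cdot\rangle$-terms, together with Lemma~\ref{lem:J_[S,Xi]} which handles the analogous sum in $Q_S$. Equating the coefficients of the two independent directions to zero yields two scalar recurrence relations on the $c_{h,i,j}$, each relating a value at $(h,i,j)$ to its neighbours on the constraint surface $h+i+2j=k$.

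These recurrences telescope in each of the indices, and once their mutual compatibility is checked they determine $c_{h,i,j}$ uniquely up to a single overall scalar; a direct verification then shows that the Gamma-factor formulae \eqref{eq:diff_op_scalars1} for $m>0$ and \eqref{eq:diff_op_scalars2} for $m=0$ satisfy both recurrences and hence yield the generator $\widehat{u}^C_{\lambda,\nu}$. The second assertion is then immediate: under the hypothesis that $\C[\nbar]^{M'}$ is generated by $|X'|^2,|X''|^2,|Z|^2$, we have $\C[\nbar]_{\lambda,\nu}\subseteq\C[\nbar]^{M'}=\C[|X'|^2,|X''|^2,|Z|^2]$, so the first part applies verbatim. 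The main obstacle is the extraction of the recurrences: the operators $P_S$, $Q_S$, $R_T$ involve double sums over bases of $\mathfrak v$ and $\mathfrak z$ whose termwise evaluation is unwieldy, and it is precisely the H-type structure that forces each such sum to collapse onto the two elementary directions. A further subtlety is the degeneracy at $m=0$, where $\mathfrak v'=0$, only the $\mathfrak z$-equation survives, and the altered recurrence explains the different shape of \eqref{eq:diff_op_scalars2}.
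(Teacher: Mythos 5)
Your proposal follows essentially the same route as the paper: the paper derives exactly the recurrences you describe in Proposition~\ref{theorem:diff_SBO_equations} (via the computational Lemma~\ref{lemma:diff_calculations2}, which is the ``collapse of the double sums'' you attribute to the H-type identities) and then solves them by telescoping in the proof of Theorem~\ref{theorem:diff_solutions}, first eliminating the $h$-index via~\eqref{eq:R2} and then the $j$-index via~\eqref{eq:R3}, with the separate recurrence~\eqref{eq:R4} in the degenerate case $m=0$. Your account is a correct, if slightly compressed, summary of this argument.
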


The proof of this statement uses the following combinatorial description of differential symmetry breaking operators:

\begin{prop}
\label{theorem:diff_SBO_equations}
Let $\lambda+\rho-\nu-\rho'=-2k \in  -2\Z_{\geq 0}$ and
	let $\widehat{u}\in\C[\nbar]$ be of the form
	$$\widehat{u}=\sum_{h+i+2j=k}c_{h,i,j}\abs{X'}^{2h}\abs{X''}^{2i}\abs{Z}^{2j},$$
	with scalars $c_{h,i,j}\in \C$. Then $\widehat{u}\in\C[\nbar]_{\lambda,\nu}$ if and only if the following relations are satisfied:
\begin{enumerate}[label=(\roman{*})]
\item
	If $m>0$ and $h+i+2j=k$:
	\begin{equation}
	\label{eq:R2}
	h(\lambda+\rho+\nu-\rho'+2i)c_{h,i,j} = (i+1)(2i+p'')c_{h-1,i+1,j},
	\end{equation}
	for $h>0$ and
	\begin{equation}
	\label{eq:R3}
	jc_{h,i,j} = 4(h+1)(h+2)(2h+p'+2)c_{h+2,i,j-1} + 4(h+1)(i+1)(2i+p'')c_{h+1,i+1,j-1},
	\end{equation}
	for $j>0$.
	\item
	If $m=0$ and $h+i+2j=k$:
	\begin{equation}
	\label{eq:R4}
	j(\nu+\rho'-q-2j)c_{h,i,j} = 2(i+1)(i+2)(2i+p)(2i+p+2)c_{h,i+2,j-1}
	\end{equation}
	for $j>0$.
	\end{enumerate}
\end{prop}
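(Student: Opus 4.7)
The plan is to verify the four conditions of Lemma~\ref{lemma:fourier_equations} for a polynomial $\widehat{u}$ of the given form. Conditions (i) and (ii) are automatic: each basis monomial $|X'|^{2h}|X''|^{2i}|Z|^{2j}$ with $h+i+2j=k$ is $M'$-invariant by inspection and satisfies the weighted Euler equation, since $E$ acts on it by the scalar $2h+2i+4j=2k$, which matches $\nu+\rho'-\lambda-\rho=2k$. So only conditions (iii) (for $m>0$) and (iv) (for $m=0$) carry content.

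For $m>0$, pick $S\in\mathfrak{v}'$ and apply the operator of Lemma~\ref{lemma:fourier_equations:iii} to $\widehat{u}$ term by term. Using $\langle X,S\rangle=\langle X',S\rangle$, the standard Laplacian identities $\Delta_{\mathfrak{v}'}|X'|^{2h}=2h(2h-2+p')|X'|^{2h-2}$ (and analogously on $\mathfrak{v}''$), and the fact that $J_Z$ preserves the decomposition $\mathfrak{v}=\mathfrak{v}'\oplus\mathfrak{v}''$, every contribution falls into one of two $S$-linear polynomial families
\begin{equation*}
\text{(A):}\ \langle X',S\rangle\,|X'|^{2a}|X''|^{2b}|Z|^{2c},\qquad \text{(B):}\ \langle X',J_Z S\rangle\,|X'|^{2a}|X''|^{2b}|Z|^{2c},
\end{equation*}
which are linearly independent as polynomials in $(X',X'',Z,S)$. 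The summands $\partial_S$, $\langle X,S\rangle\Delta_{\mathfrak{v}}$, $P_S$, $Q_S$ of the operator contribute only to family (A), while $\partial_{J_ZS}\Delta_{\mathfrak{v}}$ and $\partial_{[S,X]}$ contribute only to (B). Collecting the coefficients of the (A) basis elements with $a+b+2c=k-1$ and simplifying using $\rho'=(p'+2q)/2$ together with $h+i+2j=k$ yields the recurrence (\ref{eq:R2}); collecting the coefficients of the (B) basis elements with $a+b+2c=k-2$ yields (\ref{eq:R3}).

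For $m=0$, condition (iii) is vacuous since $\mathfrak{v}'=\{0\}$, and $\widehat{u}=\sum_{i+2j=k}c_{0,i,j}|X''|^{2i}|Z|^{2j}$. Applying the operator of Lemma~\ref{lemma:fourier_equations:iv} to $\widehat{u}$ and extracting the coefficient of $\langle Z,T\rangle|X''|^{2a}|Z|^{2c}$ with $a+2c=k-2$, the only type of $T$-linear term that appears after all reductions, gives (\ref{eq:R4}). Both directions of the ``if and only if'' are then immediate, since the relevant polynomial families are linearly independent and equating their coefficients is equivalent to the operator annihilating $\widehat{u}$. The main obstacle is the correct evaluation of the auxiliary operators $P_S$, $Q_S$, $R_T$, whose defining sums over orthonormal bases must be collapsed to clean expressions via the H-type identities of Lemma~\ref{lemma:H-type_rules}, in particular $J_Z^2=-16|Z|^2\,\mathrm{id}_{\mathfrak{v}}$ and $J_ZJ_{Z'}+J_{Z'}J_Z=-32\langle Z,Z'\rangle\,\mathrm{id}_{\mathfrak{v}}$, combined with the trace identity $\sum_iJ_{[X,S_i]}S_i=-16qX$ of Lemma~\ref{lem:J_[S,Xi]}; careful sign- and index-tracking for the mixed $\mathfrak{v}'$-$\mathfrak{v}''$ terms coming from $\Delta_{\mathfrak{v}}=\Delta_{\mathfrak{v}'}+\Delta_{\mathfrak{v}''}$ is the combinatorial crux.
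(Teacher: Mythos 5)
Your proposal is correct and follows essentially the same route as the paper: reduce to Lemma~\ref{lemma:fourier_equations}, apply the operators of Proposition~\ref{prop:fourier_equations} to the monomial basis via the identities of Lemma~\ref{lemma:diff_calculations2}, and separate the resulting $S$-linear expression into the two orthogonal families proportional to $\langle S,X\rangle$ and $\langle J_Z S,X\rangle$ (your families (A) and (B)), whose coefficients must vanish independently because $\langle S, J_Z S\rangle = 0$. Your sorting of which summands of $-\mathcal{F}(D_{\mathfrak{v}}(S))$ land in which family, and your observation that in the $m=0$ case only $\langle Z,T\rangle$-terms survive (since $\partial_{J_TX}|X|^{2i}=0$), both match the paper's computation exactly.
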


To prove the proposition we need some technical identities.

\begin{lemma}\label{lemma:diff_calculations2}
Let $h,i,j\geq 0$.

\begin{enumerate}[label=(\roman{*}), ref=\thetheorem(\roman{*})]
\item \label{lemma:diff_calculations2:i} 
$
\Delta_{\mathfrak{v}} \abs{X'}^{2h}\abs{X''}^{2i}=2h(2h+p'-2)\abs{X'}^{2h-2}\abs{X''}^{2i}+2i(2i+p''-2)\abs{X'}^{2h}\abs{X''}^{2i-2}.
$
\end{enumerate}
For $S\in\mathfrak{v}'$ we have
\begin{enumerate}[resume, label=(\roman{*}), ref=\thetheorem(\roman{*})]
\item \label{lemma:diff_calculations2:ii}  $\partial_{[S,X]}\abs{X'}^{2h}\abs{X''}^{2i}\abs{Z}^{2j}=\frac{j}{h+1}\partial_{J_ZS}\abs{X'}^{2h+2}\abs{X''}^{2i}\abs{Z}^{2j-2},$
\item \label{lemma:diff_calculations2:iii}  $P_S\abs{X'}^{2h}\abs{X''}^{2i}\abs{Z}^{2j}=-2j\partial_S\abs{X'}^{2h}\abs{X''}^{2i}\abs{Z}^{2j},$
\item \label{lemma:diff_calculations2:iv}  $Q_S\abs{X'}^{2h}\abs{X''}^{2i}=q\partial_S\abs{X'}^{2h}\abs{X''}^{2i}.$
\end{enumerate}
For $T\in \zfrak$  we have
\begin{enumerate}[resume, label=(\roman{*}), ref=\thetheorem(\roman{*})]
\item \label{lemma:diff_calculations2:v} $\partial_{J_TX} \abs{X'}^{2h}\abs{X''}^{2i}=0,$
\item \label{lemma:diff_calculations2:vi} $R_T\abs{X'}^{2h}\abs{X''}^{2i}\abs{Z}^{2j}=-4j(h+i)\langle Z,T \rangle \abs{X'}^{2h}\abs{X''}^{2i}\abs{Z}^{2j-2}.$
\end{enumerate}
\end{lemma}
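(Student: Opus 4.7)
The plan is to verify each of the six identities by a direct calculation, exploiting the fact that $f(X,Z)=\abs{X'}^{2h}\abs{X''}^{2i}\abs{Z}^{2j}$ is radial in each of the three blocks $X'$, $X''$, $Z$, so any directional derivative contributes only an inner product with $X'$, $X''$, or $Z$. The main tools will be the skew-symmetry of $J_Z$, the identity $J_Z^2=-16\abs{Z}^2\cdot{\rm id}$, the anti-commutation relation $J_ZJ_T+J_TJ_Z=-32\langle Z,T\rangle\cdot{\rm id}$ from Lemma~\ref{lemma:H-type_rules}, Lemma~\ref{lem:J_[S,Xi]}, and the observation that $J_Z$ preserves the decomposition $\mathfrak{v}=\mathfrak{v}'\oplus\mathfrak{v}''$ (since it acts as scalar multiplication by $Z\in\F$).

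Identity (i) is the standard Euclidean Laplacian formula applied separately on $\mathfrak{v}'$ and $\mathfrak{v}''$, after writing $\Delta_{\mathfrak{v}}=\Delta_{\mathfrak{v}'}+\Delta_{\mathfrak{v}''}$. For (ii), I observe that $[S,X]\in\zfrak$, so $\partial_{[S,X]}$ only touches $\abs{Z}^{2j}$; using $\langle Z,[S,X]\rangle=\langle J_ZS,X\rangle$ and $J_ZS\in\mathfrak{v}'$ (since $S\in\mathfrak{v}'$), both sides collapse to $2j\langle J_ZS,X'\rangle\abs{X'}^{2h}\abs{X''}^{2i}\abs{Z}^{2j-2}$. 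Identity (v) is immediate: $J_TX=(J_TX',J_TX'')$ and skew-symmetry forces $\langle X',J_TX'\rangle=\langle X'',J_TX''\rangle=0$.

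For (iii) and (vi) I collapse the $\mathfrak{z}$-sum via the identity $\sum_s\langle Z,T_s\rangle J_{T_s}=J_Z$. In (iii), $\partial_{T_s}$ extracts a factor $2j\langle Z,T_s\rangle$ from $\abs{Z}^{2j}$ and $\partial_{J_{T_s}J_ZS}$ acts on $\abs{X'}^{2h}$ (noting $J_{T_s}J_ZS\in\mathfrak{v}'$); the sum then contains $\sum_s\langle Z,T_s\rangle J_{T_s}J_ZS=J_Z^2S=-16\abs{Z}^2S$, producing precisely $-2j\partial_Sf$ after collecting constants. In (vi), the analogous reduction yields $\langle X',J_ZJ_TX'\rangle$ and its $X''$ analogue; the anti-commutation relation combined with skew-symmetry gives $\langle X',J_ZJ_TX'\rangle=-16\langle Z,T\rangle\abs{X'}^2$, and summing the contributions from both blocks accounts for the factor $h+i$.

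The subtlest step, and the main obstacle, is identity (iv). The key preliminary observation is that $[S,S_l]=0$ whenever $S\in\mathfrak{v}'$ and $S_l\in\mathfrak{v}''$ (because the product $SS_l^*$ vanishes coordinate-wise), hence $J_{[S,S_l]}=0$ and the sum defining $Q_S$ effectively restricts to $l=1,\dots,p'$. After $\partial_{S_l}$ brings down a factor $\langle X',S_l\rangle\abs{X'}^{2h-2}\abs{X''}^{2i}$, the second operator $\partial_{J_{[S,S_l]}X}$ either hits the radial part (and vanishes, since $\langle X',J_{[S,S_l]}X'\rangle=0=\langle X'',J_{[S,S_l]}X''\rangle$ by skew-symmetry) or the linear factor $\langle X',S_l\rangle$, giving $\langle J_{[S,S_l]}X,S_l\rangle=-\langle X,J_{[S,S_l]}S_l\rangle$. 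Extending the sum harmlessly back to $l=1,\dots,p$ and applying Lemma~\ref{lem:J_[S,Xi]} with the role of $X$ played by $S$ yields $\sum_l J_{[S,S_l]}S_l=-16qS$, from which the identity $Q_Sf=q\partial_Sf$ follows after matching the numerical constants.
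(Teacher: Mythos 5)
Your proof is correct and follows the same approach as the paper: split the Laplacian for (i), use $\langle Z,[S,X]\rangle=\langle J_ZS,X\rangle$ for (ii), collapse the $\zfrak$-sums with $\sum_s\langle Z,T_s\rangle J_{T_s}=J_Z$ and then invoke $J_Z^2=-16\abs{Z}^2$ (resp.\ the anticommutation relation) for (iii) and (vi), use $J_T|_{\mathfrak{v}'},J_T|_{\mathfrak{v}''}\in\mathfrak{so}$ for (v), and apply Lemma~\ref{lem:J_[S,Xi]} for (iv). The paper's proof of (iv) is a one-line appeal to Lemma~\ref{lem:J_[S,Xi]}; you correctly spell out the two observations that make this appeal work, namely that $[S,S_l]=0$ for $S_l\in\mathfrak{v}''$ (so the $Q_S$-sum restricts to $l\leq p'$, then extends harmlessly back to $l\leq p$) and that $\partial_{J_{[S,S_l]}X}$ annihilates the radial factors by skew-symmetry, leaving only $\langle J_{[S,S_l]}X,S_l\rangle=-\langle X,J_{[S,S_l]}S_l\rangle$.
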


\begin{proof}
Ad (i): This follow from $\Delta_{\mathfrak{v}}=\Delta_{\mathfrak{v}'}+\Delta_{\mathfrak{v}''}$ and the well known identity
$$\sum_{i=1}^{p} \frac{\partial^2}{\partial x_i^2} (x_1^2+ \dots + x_p^2)^l=l(l+p-2)(x_1^2+ \dots + x_p^2)^{l-2}.$$

Ad (ii): Clearly
\begin{align*}
\partial_{[S,X]}\abs{X'}^{2h}\abs{X''}^{2i}\abs{Z}^{2j} &= 2j \langle [S,X], Z \rangle \abs{X'}^{2h}\abs{X''}^{2i}\abs{Z}^{2j-2} 
\\ &= 2j \langle 
J_ZS, X \rangle \abs{X'}^{2h}\abs{X''}^{2i}\abs{Z}^{2j-2}
\\ & = \frac{j}{h+1} \partial_{J_ZS} \abs{X'}^{2h+2}\abs{X''}^{2i}\abs{Z}^{2j-2}.
\end{align*}

Ad (iii): We have $$P_S\abs{X'}^{2h}\abs{X''}^{2i}\abs{Z}^{2j}=\frac{j}{8}  \partial_{J_ZJ_ZS} \abs{X'}^{2h}\abs{X''}^{2i}\abs{Z}^{2j-2}=-2j\partial_S\abs{X'}^{2h}\abs{X''}^{2i}\abs{Z}^{2j}.$$

Ad (iv): The statement follows immediately by Lemma~\ref{lem:J_[S,Xi]}.

Ad (v): Since $\partial_{J_TX}=\partial_{J_TX'}+\partial_{J_TX''}$ this follows from $J_T|_{\mathfrak{v}'}\in\mathfrak{so}(\mathfrak{v}')$ and $J_T|_{\mathfrak{v}''}\in\mathfrak{so}(\mathfrak{v}'')$.

Ad (vi): We have
\begin{align*}
R_T\abs{X'}^{2h}\abs{X''}^{2i}\abs{Z}^{2j} ={}& \frac{j}{8}\partial_{J_ZJ_TX} \abs{X'}^{2h}\abs{X''}^{2i}\abs{Z}^{2j-2}\\
 ={}&-\frac{j}{4} \Big(h \langle J_ZX', J_T,X' \rangle \abs{X'}^{2h-2}\abs{X''}^{2i}\abs{Z}^{2j-2}\\
 & \hspace{2cm}+i\langle J_ZX'', J_T,X'' \rangle\abs{X'}^{2h}\abs{X''}^{2i-2}\abs{Z}^{2j-2} \Big).
\end{align*}
Now
\begin{align*}
 \langle J_ZX',J_TX' \rangle &= \frac{1}{2}\left( \langle J_{Z+T}X',J_{Z+T}X' \rangle - \langle J_{Z}X',J_{Z}X' \rangle-\langle J_{T}X',J_{T}X' \rangle \right)\\
 &=16\abs{X'}^2\langle Z,T\rangle
\end{align*}
and similarly
$$ \langle J_ZX'',J_TX'' \rangle=16\abs{X''}^2\langle Z,T\rangle, $$
which implies the statement.
\end{proof}

\begin{proof}[Proof of Proposition~\ref{theorem:diff_SBO_equations}]
First let $m>0$ and $S \in \mathfrak{v}'$. Using Lemma~\ref{lemma:diff_calculations2} we obtain that 
\begin{align*}
-\mathcal{F}(D_{\mathfrak{v}}(S))u(X,Z)={}&\sum_{h+i+2j=k}c_{h,i,j}\Big(-2i(2i+p''-2)\langle S , X \rangle \abs{X'}^{2h}\abs {X''}^{2i-2} \abs{Z}^{2j}\\
&\hspace{.5cm}+2h(2\nu+2\rho'-4j-2h-p'-2q)\langle S, X \rangle \abs{X'}^{2h-2}\abs{X''}^{2i}\abs{Z}^{2j} \\
&\hspace{.5cm}-2h(h-1)(2h+p'-2)\langle J_ZS,X \rangle\abs{X'}^{2h-4}\abs{X''}^{2i}\abs{Z}^{2j} \\
&\hspace{.5cm}-2ih(2i+p''-2)\langle J_ZS,X \rangle \abs{X'}^{2h-2}\abs{X''}^{2i-2}\abs{Z}^{2j} \\
&\hspace{.5cm}+\frac{j}{2}\langle J_ZS,X \rangle\abs{X'}^{2h}\abs{X''}^{2i}\abs{Z}^{2j-2}
\Big).
\end{align*}
After rearrangement the coefficients of $\langle S,X\rangle$ and $\langle J_ZS,X\rangle$ have to vanish separately since $\langle S,J_ZS\rangle=0$, so \eqref{eq:R2} and \eqref{eq:R3} follow.

For $m=0$, $D_{\mathfrak{v}}(S)u(X,Z)=0$, since $\mathfrak{v}'=\{0\}$. For $T \in \zfrak$ we have by Lemma~\ref{lemma:diff_calculations2}
\begin{multline*}
 -\mathcal{F}(D_\zfrak (T))u(X,Z)= \sum_{h+i+2j=k}\langle Z,T \rangle c_{h,i,j}\Big(2j(\nu+\rho'-2j-q) \abs{X}^{2i}\abs{Z}^{2j-2}\\
 -4i(i-1)(2i+p-2)(2i+p-4)\abs{X}^{2i-4}\abs{Z}^{2j}\Big).
\end{multline*}
As in the previous case, \eqref{eq:R4} follows after regrouping the summands and comparing coefficients.
\end{proof}

\begin{proof}[Proof of Theorem~\ref{theorem:diff_solutions}]
Clearly \eqref{eq:R2} implies that $$c_{h,i,j}= \frac{\Gamma(\frac{\lambda+\rho+\nu-\rho'}{2}+i)}{h!i!\Gamma(\frac{p''}{2}+i)\Gamma(\frac{\lambda+\rho+\nu-\rho'}{2})}c_{k-2j,0,j}.$$
Then \eqref{eq:R3} becomes
$$ j c_{k-2j,0,j}= 16\left(\frac{2\nu+p'-2}{4}-j\right)c_{k-2j+2,0,j-1}, $$
which implies
$$ c_{k-2j,0,j} = 2^{4j} \frac{\Gamma(\frac{2\nu+p'-2}{4})}{j!\Gamma(\frac{2\nu+p'-2}{4}-j)}c_{k,0,0}.$$
If $m=0$ the proof works analogously.
\end{proof}

\section{Classification of differential symmetry breaking operators: sporadic operators}

We now treat the cases where $\C[\nbar]^{M'}$ is not generated by $\abs{X'}^2$, $\abs{X''}^2$ and $\abs{Z}^2$.

\subsection{The complex case}

Let $\F=\C$, then $\C[\nbar]^{M'}$ is generated by $\abs{X'}^2$, $\abs{X''}^2$ and $Z$. To simplify the formulas, we identify $Z\in\Im\C=\qi\R$ with $\Im Z\in\R$. Let $\widehat{u}^C_{\lambda,\nu}$ be defined as in \eqref{eq:definition_u^C_hat}. We define an additional polynomial $\widehat{v}_{\lambda,\nu}^C\in\C[\nbar]$ by
$$ \widehat{v}_{\lambda,\nu}^C(X,Z) := \sum_{i+2j+1=k}\frac{2^{-i}\Gamma(\frac{\nu-1}{2}-j)}{i!\Gamma(j+\frac{3}{2})\Gamma(\frac{p}{2}+i)\Gamma(\frac{\nu-1}{2}-\lfloor\frac{k-1}{2}\rfloor)}\abs{X}^{2i}Z^{2j+1}. $$

\begin{theorem}
\label{theorem:diff_solution_complex}
Let $(G,G')=(\Urm(1,n+1),\Urm(1,m+1)\times F)$ be strongly spherical and $(\lambda,\nu)\in\GlobalPoles$ with $\lambda+\rho-\nu-\rho'=-2k\in -2\Z_{\geq 0}$. Then 
$$\C[\nbar]_{\lambda,\nu}=\begin{cases}
\C \widehat{u}^C_{\lambda,\nu} \oplus \C \widehat{v}_{\lambda,\nu}^C & \text{for $m=0$ and $\nu\in 1+2\Z_{\geq 0}$, $0 < \nu \leq k$,}\\
\C \widehat{u}^C_{\lambda,\nu} & \text{otherwise.}
\end{cases}$$
\end{theorem}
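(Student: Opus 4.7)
The plan is to extend the combinatorial analysis of Theorem~\ref{theorem:diff_solutions} by allowing all $M'$-invariants provided by Lemma~\ref{lemma:poly_invariants}(i). For $\F=\C$ the ring $\C[\nbar]^{M'}$ is generated by $\abs{X'}^2$, $\abs{X''}^2$ and $Z$ (with $Z\in\Im\C$ identified with a real scalar), so by Lemma~\ref{lemma:fourier_equations:i} every $\widehat{u}\in\C[\nbar]_{\lambda,\nu}$ is of the form
$$ \widehat{u} = \sum_{h+i+j=k} c_{h,i,j}\abs{X'}^{2h}\abs{X''}^{2i}Z^{j}, $$
with $\lambda+\rho-\nu-\rho'=-2k$. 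I would split this as $\widehat{u}=\widehat{u}^{\mathrm{even}}+\widehat{u}^{\mathrm{odd}}$ according to the parity of $j$. The key preliminary fact is that, although the individual operators in Lemma~\ref{lemma:fourier_equations}(iii)--(iv) shift $Z$-powers by $0$, $\pm 1$ or $\pm 2$, the resulting coefficient recursions couple only $c_{h,i,j}$'s of the same parity in $j$; this is verified by a direct computation extending Lemma~\ref{lemma:diff_calculations2} to arbitrary $Z$-powers, using the linear independence of $\langle S,X'\rangle$ and $\langle J_TS,X'\rangle$ in $S\in\mathfrak{v}'$ and the cancellation of $\partial_T$, $\Box$ and $R_T$ contributions that reduces the $\mathfrak{z}$-equation to a single recursion relating $c_{\cdot,j}$ and $c_{\cdot,j-2}$. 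Consequently $\widehat{u}^{\mathrm{even}}$ and $\widehat{u}^{\mathrm{odd}}$ must satisfy the system independently.

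For $m>0$ I would examine the $\langle J_TS,X'\rangle$-part at the constant-in-$Z$ level of $-\mathcal{F}(D_{\mathfrak{v}}(S))\widehat{u}^{\mathrm{odd}}$, which receives contributions only from $\tfrac14\partial_{[S,X]}$ acting on the $j=1$ component and reads $\tfrac14c_{h,i,1}\langle J_TS,X'\rangle\abs{X'}^{2h}\abs{X''}^{2i}=0$. This forces $c_{h,i,1}=0$ for every $(h,i)$, and the analogous extraction at the $Z^{2\ell}$-level combined with vanishing at lower odd levels then yields $c_{h,i,2\ell+1}=0$, so $\widehat{u}^{\mathrm{odd}}=0$ by induction. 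Theorem~\ref{theorem:diff_solutions} applied to the remaining even part pins it down to a multiple of $\widehat{u}^C_{\lambda,\nu}$.

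For $m=0$ only $-\mathcal{F}(D_{\mathfrak{z}}(T))$ survives, and a direct computation on monomials (using $\rho'=q=1$) gives the parity-preserving recursion
$$ j(\nu-j)c_{i,j}=4(i+1)(i+2)(2i+p)(2i+p+2)\,c_{i+2,j-2}\qquad(j\geq 1). $$
The even chain from $c_{k,0}$ reproduces $\widehat{u}^C_{\lambda,\nu}$, while the odd chain from $c_{k-1,1}$ is governed by the boundary relation $(\nu-1)c_{k-1,1}=0$. Iterating along $j=1,3,5,\dots$ shows that all odd-$j$ coefficients vanish unless $\nu=2\ell+1\leq k$, in which case the recursion first degenerates at $j=\nu$, forcing $c_{k-1,1}=\cdots=c_{k-\nu+2,\nu-2}=0$ and leaving $c_{k-\nu,\nu}$ as the unique free parameter whose propagation terminates consistently because the factor $(i+1)(i+2)$ vanishes at $i=-1,-2$. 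A straightforward gamma-function computation verifies that the coefficients of $\widehat{v}^C_{\lambda,\nu}$ solve this recursion, and the denominator $\Gamma(\tfrac{\nu-1}{2}-\lfloor\tfrac{k-1}{2}\rfloor)^{-1}$ ensures non-triviality precisely on the described locus. The main obstacle is the parity decoupling in the first paragraph, since individual operators shift $Z$-powers and mix parities; the parity preservation of the final recursions emerges only after explicit cancellation of the $\partial_T$-, $\Box$- and $R_T$-terms, which has to be checked by hand.
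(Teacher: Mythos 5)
Your proposal takes essentially the same route as the paper's: you set up the coefficient recursions for a general $M'$-invariant polynomial (the content of Lemma~\ref{lemma:diff_calculations_C} and Proposition~\ref{prop:com_equationions_C}), observe that the relations \eqref{eq:R1_C}--\eqref{eq:R3_C} couple only $c_{h,i,j}$'s of the same $Z$-parity, and solve the odd chain — for $m>0$ it dies at the $j=1$ boundary, and for $m=0$ the degeneration of the factor $\nu-j$ in \eqref{eq:R3_C} produces $\widehat{v}^C_{\lambda,\nu}$ exactly when $\nu$ is an odd integer with $\nu\le k$. Your explicit even/odd split is a harmless reorganization; the only imprecision is calling the combination of the $\partial_T$-, $\Box$- and $R_T$-contributions a ``cancellation'' — they combine to the coefficient $j(\nu+\rho'-1-j)$ in \eqref{eq:R3_C} (only the $E_{\mathfrak{v}}$-part of $\partial_T$ actually cancels against $R_T$).
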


To show the theorem, we first prove an analog of Proposition~\ref{theorem:diff_SBO_equations} in this case.

\begin{prop}
\label{prop:com_equationions_C}
Let $\lambda+\rho-\nu-\rho'=-2k \in -2\Z_{\geq 0}$ and let $\widehat{u}\in\C[\nbar]$ be of the form
$$\widehat{u}=\sum_{h+i+j=k} c_{h,i,j}\abs{X'}^{2h} \abs{X''}^{2i} Z^j$$
with scalars $c_{h,i,j}\in \C$. Then $\widehat{u}\in\C[\nbar]_{\lambda,\nu}$ if and only if the following relations are satisfied:
\begin{enumerate}[label=(\roman{*}), ref=\thetheorem(\roman{*})]
\item If $m>0$ and $h+i+j=k$:
\begin{equation}
\label{eq:R1_C}
2h(\lambda+\rho+\nu-\rho'+2i)c_{h,i,j}=(i+1)(2i+p'')c_{h-1,i+1,j}
\end{equation}
for $h>0$ and 
\begin{equation}
\label{eq:R2_C}
jc_{h,i,j}=8(h+1)(h+2)(2h+p'+2)c_{h+2,i,j-2}+8(h+1)(i+1)(2i+p'')c_{h+1,i+1,j-2}
\end{equation}
for $j>0$, where we set $c_{h,i,-1}=0$.
\item If $m=0$ and $h+i+j=k$:
\begin{equation}
\label{eq:R3_C}
j(\nu+\rho'-q-j)c_{h,i,j}=4(i+1)(i+2)(2i+p)(2i+p+2)c_{h,i+2,j-2}
\end{equation}
for $j>0$, where we set $c_{h,i,-1}=0$.
\end{enumerate}
\end{prop}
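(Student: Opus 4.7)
The plan is to parallel the proof of Proposition~\ref{theorem:diff_SBO_equations} with the $|Z|^{2j}$-ansatz replaced by $Z^j$: by Lemma~\ref{lemma:poly_invariants}(i) the complex case $\F = \C$ allows $Z$ itself (identified with $\Im Z \in \R$ via a fixed unit vector $T \in \mathfrak{z}$) to enter as a polynomial generator of $\C[\nbar]^{M'}$. Conditions (i) and (ii) of Lemma~\ref{lemma:fourier_equations} are immediate: homogeneity under the weighted Euler operator forces $h + i + j = k$, and $M'$-invariance is built into the ansatz.

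For $m > 0$ only condition (iii) needs to be checked, because (iv) then follows from (iii) by the remark after Lemma~\ref{lemma:diff_equations}. I would compute $-\mathcal{F}(D_\mathfrak{v}(S))\widehat{u}$ for $S \in \mathfrak{v}'$ using analogues of Lemma~\ref{lemma:diff_calculations2}(i)--(iv) adapted to the monomials $|X'|^{2h}|X''|^{2i}Z^j$: the $X$-computations are as before with $q = 1$, while the $Z$-part uses $\partial_{[S,X]}Z^j = j\langle J_TS, X\rangle Z^{j-1}$, $P_S(|X'|^{2h}|X''|^{2i}Z^j) = -j\,\partial_S(|X'|^{2h}|X''|^{2i}Z^j)$, and $\partial_{J_ZS} = Z\,\partial_{J_TS}$ (from $J_Z = Z J_T$). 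After expanding $-\mathcal{F}(D_\mathfrak{v}(S))\widehat{u}$ the output splits into two independent monomial types, $\langle S, X\rangle|X'|^{2h-2}|X''|^{2i}Z^j$ and $\langle J_TS, X\rangle|X'|^{2h}|X''|^{2i}Z^{j-1}$, independent because $\langle S, J_TS\rangle = 0$ by Lemma~\ref{lemma:H-type_rules:i}; equating each coefficient to zero and using $2\rho' = p' + 2$ (which holds since $q = 1$) yields \eqref{eq:R1_C} and \eqref{eq:R2_C} respectively. The jump $j \to j - 2$ in \eqref{eq:R2_C} reflects the fact that $\partial_{J_ZS}\Delta_\mathfrak{v}$ raises the $Z$-exponent by $1$ (via the $Z$-linearity of $J_Z$) while $\partial_{[S,X]}$ lowers it by $1$.

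For $m = 0$ the subspace $\mathfrak{v}' = \{0\}$, so (iii) is vacuous and only (iv) remains. Here $\widehat{u} = \sum_{i+j=k} c_{0,i,j}|X|^{2i}Z^j$ and I would compute $-\mathcal{F}(D_\mathfrak{z}(T))\widehat{u}$ using that $\partial_{J_TX}\Delta_\mathfrak{v}$ annihilates $|X|^{2i}$ by Lemma~\ref{lemma:diff_calculations2}(v), that $\Box = \partial_T^2$ acts on $Z^j$ by $j(j-1)Z^{j-2}$, and that $R_T$ reduces to $-E_\mathfrak{v}\partial_T$ via $J_T^2 = -16\,\mathrm{id}$. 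Collecting the coefficient of $|X|^{2a}Z^b$ in the resulting expression and setting it to zero gives \eqref{eq:R3_C}. The main technical obstacle is the combinatorial bookkeeping in (iii): because $j$ is now allowed to be odd, the even- and odd-$j$ subfamilies of the ansatz give rise to decoupled chains of recurrences (sharing the same relation \eqref{eq:R2_C} but with the convention $c_{h,i,-1} = 0$ forcing all odd-$j$ coefficients to vanish when $m > 0$), and the parallel phenomenon in the $m = 0$ case is precisely what allows the sporadic family $\widehat{v}^C_{\lambda,\nu}$ to appear in Theorem~\ref{theorem:diff_solution_complex}.
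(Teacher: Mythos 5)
Your plan matches the paper's proof essentially step for step: compute $-\mathcal{F}(D_\mathfrak{v}(S))\widehat{u}$ (for $m>0$) and $-\mathcal{F}(D_\zfrak(T))\widehat{u}$ (for $m=0$) using analogues of Lemma~\ref{lemma:diff_calculations2} adapted to the $Z^j$ ansatz (the paper's Lemma~\ref{lemma:diff_calculations_C}), split the output by the orthogonal directions $S$ and $J_TS$, and compare coefficients term by term; your identities $R_T=-E_\mathfrak{v}\partial_T$, $\Box=\partial_T^2$, $J_Z=ZJ_T$ and the explanation of the $j\to j-2$ jump are all correct and are precisely what makes Lemma~\ref{lemma:diff_calculations_C} work. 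This is the paper's argument.
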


Again we first state some technical identities, which follow in the same way as the ones in Lemma~\ref{lemma:diff_calculations2}.

\begin{lemma}
\label{lemma:diff_calculations_C}
\begin{enumerate}[label=(\roman{*}), ref=\thetheorem(\roman{*})]
\item $\partial_{[S,X]}\abs{X'}^{2h} \abs{X''}^{2i} Z^j = \frac{j}{2(h+1)}\partial_{J_ZS}\abs{X'}^{2h+2} \abs{X''}^{2i} Z^{j-2}$
\item $P_S\abs{X'}^{2h} \abs{X''}^{2i} Z^j=-j\partial_S \abs{X'}^{2h} \abs{X''}^{2i} Z^{j}$
\item $R_T\abs{X'}^{2h} \abs{X''}^{2i} Z^j =-2j(h+i) \abs{X'}^{2h} \abs{X''}^{2i} Z^{j-1}$
\end{enumerate}
\end{lemma}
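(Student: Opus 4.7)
The plan is to adapt the proof of Lemma~\ref{lemma:diff_calculations2} to the complex setting, where the identification $Z = z\qi \leftrightarrow z \in \R$ allows odd powers $Z^j$ to appear in the polynomial. Two structural facts from the complex case do all the work. First, $\zfrak = \R\qi$ is one-dimensional, so every directional derivative along $\zfrak$ is (up to scalar) the single derivative $\partial_\qi$, and $\partial_\qi Z^j = jZ^{j-1}$. Second, with $Z$ identified as a scalar, $J_Z = Z J_\qi$, and Lemma~\ref{lemma:H-type_rules:iii} specializes to $J_\qi^2 = -16\,{\rm id}_\mathfrak{v}$; this collapses every double composition $J_\qi J_{(\cdot)}$ to a scalar multiple of the identity.

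For (i), I would differentiate $Z^j$ directly: since $[S,X] \in \zfrak$, one has $\partial_{[S,X]}Z^j = j \langle [S,X], \qi\rangle Z^{j-1} = j \langle J_\qi S, X\rangle Z^{j-1}$ by the defining identity $\langle J_Z S, X\rangle = \langle Z, [S,X]\rangle$. On the right-hand side, $\partial_{J_Z S}|X'|^{2h+2} = 2(h+1)\langle J_Z S, X\rangle|X'|^{2h} = 2(h+1)Z\langle J_\qi S, X\rangle|X'|^{2h}$, and after multiplying by $\tfrac{j}{2(h+1)}|X''|^{2i}Z^{j-2}$ the two sides agree.

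For (ii) and (iii), I would invoke the defining formulas $P_S = \frac{1}{16}\partial_{J_\qi J_Z S}\partial_\qi$ and $R_T = \frac{1}{16}\partial_{J_\qi J_T X}\partial_\qi$ and exploit $J_\qi^2 = -16\,{\rm id}_\mathfrak{v}$. In (ii), $J_\qi J_Z S = Z J_\qi^2 S = -16 ZS$, so $\partial_{J_\qi J_Z S}$ acts on functions of $X$ as $-16 Z \partial_S$ (with $Z$ treated as a parameter); combined with $\partial_\qi Z^j = jZ^{j-1}$ and the prefactor $\tfrac{1}{16}$, this yields $-j \partial_S(|X'|^{2h}|X''|^{2i}Z^j)$. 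In (iii), similarly $J_\qi J_T X = T J_\qi^2 X = -16 TX$, so $\partial_{J_\qi J_T X}$ equals $-16T\,E_\mathfrak{v}$ on $X$-dependent functions; since $E_\mathfrak{v}|X'|^{2h}|X''|^{2i} = 2(h+i)|X'|^{2h}|X''|^{2i}$, the various factors combine to the stated scalar.

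The proof carries no genuine obstacle: it is just a careful recomputation of Lemma~\ref{lemma:diff_calculations2} with odd powers of $Z$ permitted. The only subtlety is notational, namely to keep straight the identification $\zfrak \cong \R$ and to treat $J_Z S$ (resp.\ $J_T X$) as a $Z$-dependent (resp.\ $X$-dependent) vector in $\mathfrak{v}$ while the outer derivative $\partial_{J_Z S}$ (resp.\ $\partial_{J_T X}$) acts only on the $X$-variables.
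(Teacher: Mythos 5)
Your proof is correct and follows essentially the same route the paper intends. The paper gives no explicit argument for this lemma, stating only that the identities "follow in the same way as the ones in Lemma~\ref{lemma:diff_calculations2}"; your computation is the intended adaptation — differentiate the odd power of the scalar central coordinate, then collapse $J_\qi J_Z$ (resp.\ $J_\qi J_T$) to a scalar multiple of the identity via $J_\qi^2 = -16\,\mathrm{id}_{\mathfrak{v}}$, exactly paralleling how Lemma~\ref{lemma:diff_calculations2} uses $J_Z^2 = -16|Z|^2\,\mathrm{id}_{\mathfrak{v}}$.
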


\begin{proof}[Proof of Proposition~\ref{prop:com_equationions_C}]
First let $m>0$.
Combining Lemma~\ref{lemma:diff_calculations2} and Lemma~\ref{lemma:diff_calculations_C} we obtain

\begin{align*}
-\mathcal{F}(D_{\mathfrak{v}}(S))u(X,Z) ={}& \sum_{h+i+j=k}c_{h,i,j}\Big(-2i(2i+p''-2)\langle S,X \rangle \abs{X'}^{2h} \abs{X''}^{2i-2} Z^j\\
&\hspace{.8cm}+2h(2\nu+2\rho'-2j-2h-p'-2q)\langle S,X \rangle \abs{X'}^{2h-2} \abs{X''}^{2i} Z^j\\
&\hspace{.8cm}-2h(h-1)(2h+p'-2)\langle J_TS,X \rangle \abs{X'}^{2h-4} \abs{X''}^{2i} Z^{j+1} \\
&\hspace{.8cm}-2ih (2i+p''-2) \langle J_TS,X \rangle \abs{X'}^{2h-2} \abs{X''}^{2i-2} Z^{j+1} \\
&\hspace{.8cm}+\frac{j}{4}\langle J_TS ,X \rangle \abs{X'}^{2h} \abs{X''}^{2i} Z^{j-1} \Big).
\end{align*}
Since $S$ and $J_TS$ are orthogonal this implies the statement in this case.

Now let $m=0$, then combining Lemma~\ref{lemma:diff_calculations2} and Lemma~\ref{lemma:diff_calculations_C} we obtain
\begin{multline*}
-\mathcal{F}(D_\zfrak(T))u(X,Z)=\sum_{h+i+j=k}c_{h,i,j}\Big(
j(\nu+\rho'-1-j)\abs{X}^{2i} Z^{j-1} \\
-4i(i+1)(2i+p''-2)(2i+p''-4)\abs{X}^{2i-4} Z^{j+1}
\Big),
\end{multline*}
which proves the proposition in this case.
\end{proof}

Now we can prove Theorem~\ref{theorem:diff_solution_complex}.

\begin{proof}[Proof of Theorem~\ref{theorem:diff_solution_complex}]
First let $m>0$. Then \eqref{eq:R2_C} implies that $c_{h,i,1}=0$ for all possible $h,i$, which implies that all $c_{h,i,j}=0$ for odd $j$. Now the statement follows from Theorem~\ref{theorem:diff_solutions}.

Let us now consider the case $m=0$ and write $c_{i,j}=c_{0,i,j}$. First it is clear that \eqref{eq:R3_C} separates odd and even degrees in $Z$. For the even degrees in $Z$ the theorem follows in the same way as Theorem~\ref{theorem:diff_solutions}. 
Assume $\nu \neq 2l+1$ for $0\leq 2l \leq k-1$. Then \eqref{eq:R3_C} implies that $c_{i,j}=0$ first for $j=1$ and then recursively for all odd $j$. Now if $\nu = 2l+1$ for some $0\leq 2l \leq k-1$, then \eqref{eq:R3_C} implies $c_{i,2j+1}=0$ for all $j < l$ and further
\begin{align*}
c_{0,i,2j+1} &= \frac{\Gamma(\lfloor\frac{k-1}{2}\rfloor+\frac{3}{2})\Gamma(\frac{\nu-1}{2}-j)\Gamma(\frac{p}{2}+k-1-2\lfloor\frac{k-1}{2}\rfloor)}{4^{\lfloor\frac{k-1}{2}\rfloor-j}i!\Gamma(j+\frac{3}{2})\Gamma(\frac{\nu-1}{2}-\lfloor\frac{k-1}{2}\rfloor)\Gamma(\frac{p}{2}+i)}c_{0,k-1-2\lfloor\frac{k-1}{2}\rfloor,2\lfloor\frac{k-1}{2}\rfloor+1}\\
&= {\rm const}\times\frac{2^{2j}\Gamma(\frac{\nu-1}{2}-j)}{i!\Gamma(j+\frac{3}{2})\Gamma(\frac{p}{2}+i)\Gamma(\frac{\nu-1}{2}-\lfloor\frac{k-1}{2}\rfloor)}.\qedhere
\end{align*}
\end{proof}

\subsection{The quaternionic case}

Let $\F=\HH$ and $m=n-1$, then $(G,G')=({\rm Sp}(1,n+1),{\rm Sp}(1,n)\times F)$ is real spherical for all $F<{\rm Sp}(1)$. The only connected subgroups of ${\rm Sp}(1)$ are (up to conjugation) $F=\{1\}$, $\Urm(1)$ and ${\rm Sp}(1)$. The case $F={\rm Sp}(1)$ was already treated by Theorem~\ref{theorem:diff_solutions}. We now discuss the case $F=\{1\}$, then the statement for $F=\Urm(1)$ will follow easily.

We use the orthonormal basis
$$ S_1 = 1, \quad S_2 = \qi, \quad S_3 = \qj, \quad S_4 = \qk $$
of $\HH$ and the orthonormal basis
$$ T_1 = \qi, \quad T_2 = \qj, \quad T_3 = \qk $$
of $\Im\HH$, so that an element $(X,Z)\in \nbar$ is given by
$$ (X,Z)=\sum_{i=1}^4 X_iS_i + \sum_{j=1}^{3}Z_jT_j, $$
for real variables $X_i$ and $Z_j$.

We recall the ${\rm Sp}(1)$-invariants $p_j(X,Z)=\langle T_j,\overline{X}ZX\rangle$ on $\HH\oplus\Im\HH$ from Section~\ref{sec:Invariants} and note the following derivatives for $S\in\HH$, $T\in\Im\HH$:
$$ \partial_Sp_j(X,Z) = 2\langle T_j,\overline{X}ZS\rangle, \qquad \partial_Tp_j(X,Z) = \langle T_j,\overline{X}TX\rangle. $$

For a polynomial $q\in\C[p_1,p_2,p_3]$ in three variables we denote by $q(p_1,p_2,p_3)$ the polynomial on $\HH^n\oplus\Im\HH$ given by
$$ q(p_1,p_2,p_3)(X,Z) = q(p_1(X'',Z),p_2(X'',Z),p_3(X'',Z)). $$
For $l \in \Z_{\geq 0}$ define the set
$$ \mathcal{H}^l(p_1,p_2,p_3):= \{ q(p_1,p_2,p_3), \; q \in \mathcal{H}^l(\R^3)  \}. $$

\begin{theorem}\label{thm:diff_solution_quaternionic}
Let $(G,G')=({\rm Sp}(1,n+1),{\rm Sp}(1,n))$ and $(\lambda,\nu)\in\GlobalPoles$ with $\lambda+\rho-\nu-\rho'=-2k\in -2\Z_{\geq 0}$. Then
$$\C[\nbar]_{\lambda,\nu}=\begin{cases}
\C\widehat{u}^C_{\lambda,\nu} \oplus \mathcal{H}^\frac{k}{2}(p_1,p_2,p_3) & \text{for $(n,m)=(1,0)$, $k>0$ even and $\nu+\rho'=k+4$,}\\
\C \widehat{u}^C_{\lambda,\nu} & \text{otherwise.}
\end{cases}$$
\end{theorem}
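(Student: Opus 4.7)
The plan is to adapt the proof of Theorem~\ref{theorem:diff_solutions} to account for the extra $M'$-invariants $p_1,p_2,p_3$ allowed by Lemma~\ref{lemma:poly_invariants}(ii). Since $\C[\nbar]_{\lambda,\nu}\subseteq\C[\nbar]^{M'}=\C[\abs{X'}^2,\abs{X''}^2,\abs{Z}^2,p_1,p_2,p_3]$, I would first decompose every candidate solution using the classical $\SO(3)$-harmonic decomposition of $\C[t_1,t_2,t_3]$ together with the identity $p_1^2+p_2^2+p_3^2=\abs{X''}^4\abs{Z}^2$, writing $\widehat{u}=\sum_{l\geq 0}\widehat{u}_l$ with $\widehat{u}_l\in\C[\abs{X'}^2,\abs{X''}^2,\abs{Z}^2]\otimes\mathcal{H}^l(p_1,p_2,p_3)$. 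The right action $X''\mapsto X''c$ for $c\in{\rm Sp}(1)$ is a Lie-algebra automorphism of $\nbar$ (since $X''c(Y''c)^*=X''(Y'')^*$ when $\abs{c}=1$) commuting with $M'A'$ and with $\mathfrak{n}'$; through the cover ${\rm Sp}(1)\to\SO(3)$ it rotates $(p_1,p_2,p_3)$. Hence this $\SO(3)$-isotypic decomposition is preserved by the equations in Lemma~\ref{lemma:fourier_equations}, and each $\widehat{u}_l$ independently belongs to $\C[\nbar]_{\lambda,\nu}$.

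The central computational input is $\Delta_{\mathfrak{v}''}h(P)=0$ and $\Box h(P)=0$ for $h\in\mathcal{H}^l(\R^3)$. These follow from $\nabla_{X''}p_j=-2ZX''T_j$, which gives both $\nabla_{X''}p_j\cdot\nabla_{X''}p_{j'}=4\abs{X''}^2\abs{Z}^2\delta_{jj'}$ and $\Delta_{\mathfrak{v}''}p_j=0$, together with the analogous orthogonality $\sum_kp_{j'}(X'',T_k)p_{j''}(X'',T_k)=\abs{X''}^4\delta_{j'j''}$ (reflecting that $T\mapsto\overline{X''}TX''/\abs{X''}^2$ is an $\SO(3)$-rotation) and the harmonicity $\Delta_{\R^3}h=0$. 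Combined with Leibniz and Euler's identity $\sum_jp_j\partial_jh=lh$ they yield
\begin{align*}
\Delta_{\mathfrak{v}''}(\abs{X''}^{2a}\abs{Z}^{2b}h(P))&=4a(a+2l+1)\abs{X''}^{2a-2}\abs{Z}^{2b}h(P),\\
\Box(\abs{X''}^{2a}\abs{Z}^{2b}h(P))&=2b(2b+2l+1)\abs{X''}^{2a}\abs{Z}^{2b-2}h(P).
\end{align*}
For $l=0$ Theorem~\ref{theorem:diff_solutions} applies directly and gives $\widehat{u}_0=c\,\widehat{u}^C_{\lambda,\nu}$.

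For $l\geq 1$ the analysis splits by the value of $m$. If $m=n-1\geq 1$, I apply Lemma~\ref{lemma:fourier_equations:iii} to the ansatz $\widehat{u}_l=\phi(\abs{X'}^2,\abs{X''}^2,\abs{Z}^2)h(P)$: the term $\tfrac14\partial_{[S,X]}\widehat{u}_l$ produces $\phi\sum_{j'}p_{j'}(X'',[S,X])\partial_{j'}h(P)$, where $[S,X]=4\Im(S(X')^*)$. This is the only contribution across the seven summands of $\mathcal{F}(D_{\mathfrak{v}}(S))$ carrying a $\nabla_Ph$-factor (the remaining terms all carry $h(P)$ itself rather than first derivatives of $h$, distinguished by comparing $X'$-degree against derivative-order on $h$), and hence must vanish on its own; as $(S,X')$ varies the bracket $[S,X]$ sweeps all of $\Im\HH$, forcing $\phi\,\nabla_Ph\equiv 0$ and hence $\phi=0$. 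If $m=0$ (so $n=1$) only Lemma~\ref{lemma:fourier_equations:iv} is non-trivial; a recursion in the style of Proposition~\ref{theorem:diff_SBO_equations}(ii), but with the shifts $p\to p+4l$ and $\nu+\rho'\to\nu+\rho'-2l$ read off from the Laplacian formulas above, confines the non-trivial solutions to the top layer $a=b=0$, which in turn forces $l=k/2$. There the equation collapses (via $\Delta_{\mathfrak{v}}u=0=\Box u$) to $(2l+\nu+\rho'-2)\partial_Tu+R_Tu=0$, and the decisive computation is $R_Tu=-(4l+2)\partial_Tu$. Writing $R_Tu=\sum_k(T_kTX)\cdot\nabla_X\partial_{T_k}u$, cyclicity of $\Re$ together with $T_k^2=-1$ gives $\sum_k\langle T_kTX,T_kXT_{j'}\rangle=q\,p_{j'}(X,T)$, contributing $-2q\,\partial_Tu=-6\partial_Tu$; the double-derivative part rests on the Lagrange-type identity
$$\sum_kp_{j'}(X,T_k)\langle T_kTX,ZXT_{j''}\rangle=p_{j'}\,p_{j''}(X,T)+p_{j'}(X,T)\,p_{j''}-\abs{X}^4\delta_{j'j''}\langle Z,T\rangle,$$
derived by writing $XT_j\overline{X}=\abs{X}^2\tilde{T}_j$ and invoking the Lagrange identity in $\Im\HH\simeq\R^3$. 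Its $\delta_{j'j''}$-piece annihilates against $\Delta_{\R^3}h=0$, and the remainder reduces via $\sum_{j'}p_{j'}\partial_{j'j''}h(P)=(l-1)\partial_{j''}h(P)$ to $-4(l-1)\partial_Tu$. Summing, the equation becomes $(\nu+\rho'-2l-4)\partial_Tu=0$, yielding exactly $\nu+\rho'=k+4$.

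The main obstacle is the explicit evaluation of $R_Tu$ on $u=h(P)$: the double-derivative summand hides a triple product of imaginary quaternions whose real part must be unfolded via the Lagrange identity and $\SO(3)$-rotation invariance before harmonicity of $h$ and the Euler identity can close up the computation. A secondary but technically involved task is the bookkeeping for $l\geq 1$, $m\geq 1$, tracking $X'$-degrees and orders of $P$-derivatives across the seven summands of $\mathcal{F}(D_{\mathfrak{v}}(S))$ to exclude any cancellation of the $\partial_{j'}h(P)$-component, and for $l\geq 1$, $m=0$, $a+2b>0$, extending the recursive argument of Proposition~\ref{theorem:diff_SBO_equations}(ii) to the shifted setting.
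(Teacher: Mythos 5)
Your overall strategy (decompose by $\SO(3)$-isotypics of ${\rm Sp}(1)$ acting on $p_1,p_2,p_3$, apply the Fourier-transformed equations to each isotypic block, and compute what survives in the top layer) is the same as the paper's, which uses the Fischer decomposition $q_\ell^{h,i,j}\in\mathcal{H}^\ell(\R^3)$. Your top-layer computation for $m=0$ — that $R_Th(P)=-2(2l+1)\partial_Th(P)$ (matching the paper's Lemma~\ref{lemma:SP(1,2)_complex_calc}(iii) at $i=j=0$) and hence that the surviving equation forces $\nu+\rho'=2l+4=k+4$ — is correct. However, the reduction \emph{to} the top layer, in both cases, is where you have genuine gaps, and these are not peripheral bookkeeping but the crux of the proof.

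For $m=n-1\geq 1$: your parenthetical claim that $\tfrac14\partial_{[S,X]}$ is the \emph{only} summand of $\mathcal{F}(D_{\mathfrak{v}}(S))$ producing a $\nabla_P h$-factor is factually false. The paper's Lemma~\ref{lemma:SP(1,n)_complex_calc}(v)--(vi) shows that $P_S$ and $Q_S$ each contribute genuine first-derivative terms $\tfrac18 h\abs{X'}^{2h-2}\abs{Z}^{2j}\partial_{[J_ZS,X']}q(p_1,p_2,p_3)$ and $\tfrac18 h\abs{X'}^{2h-2}\abs{X''}^{2i}\partial_{J_{[S,X']}X''}q(p_1,p_2,p_3)$ that do not collapse to $h(P)$ via Euler's identity. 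What the paper actually does (display \eqref{eq:ProofSp(1,n)2}) is put the $\partial_{[S,X']}$-term, the $\partial_{[J_ZS,X']}-\partial_{J_{[S,X']}X''}$-terms, and the $\langle Z,[S,X']\rangle$-terms into three separate Fischer layers indexed by $h+i+2j+2\ell\in\{k,k-1,k-2\}$ and argues by $X''$-degree and parity that each square bracket vanishes separately. Only then does the fact that $[S,X']$ sweeps out all of $\mathfrak z$ force $q_\ell^{h,i,j}=0$ for $\ell>0$. Your proposal would need this degree/parity separation spelled out, not the incorrect ``only one term contributes.''

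For $m=0$, $n=1$: here the gap is more serious. A recursion in the style of \eqref{eq:R4}, even with shifted parameters, relates $c_{a,b}$ (for $b>0$) to $c_{a+2,b-1}$ and can never constrain the $b=0$ coefficients $c_{a,0}$, $a>0$; so it cannot by itself ``confine the solutions to the top layer $a=b=0$.'' The ingredient you are missing is precisely the paper's Lemma~\ref{lemma:SP(1,2)_key_point}: for $q\in\mathcal{H}^\ell(\R^3)$ nonzero, $\partial_{J_TX}q(p_1,p_2,p_3)=0$ for all $T\in\Im\HH$ if and only if $\ell=0$. Combined with the $\partial_{J_TX}\Delta_{\mathfrak v}$-term in $-\mathcal{F}(D_{\mathfrak z}(T))$, this kills $q_\ell^{i,j}$ for $i>0$, $\ell>0$; the remaining conditions \eqref{eq:ProofSp(1,2)3}--\eqref{eq:ProofSp(1,2)4} then rule out $j>0$. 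Your proposal has no analogue of this lemma, and without it the reduction to the top layer simply does not go through. You flag both issues as ``secondary'' or ``technically involved'' tasks, but they carry the essential content of Propositions~\ref{prop:DiffSBOsSp(1,n)} and~\ref{prop:DiffSBOs_SP(1,2)} respectively.
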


The proof is split into Propositions~\ref{prop:DiffSBOsSp(1,n)} and \ref{prop:DiffSBOs_SP(1,2)}.

\begin{remark}
Since $\dim\mathcal{H}^\ell(\R^3)=2\ell+1$ we have for $n=1$
$$ \dim\mathcal{D}'_{\{0\}}(\nbar)_{\lambda,\nu} = k+2 \qquad \mbox{for $(\lambda,\nu)=(-(k+1),k+1)$.} $$
In particular, the dimension can become arbitrarily large while both $\pi_\lambda$ and $\tau_\nu$ have at most $4$ composition factors.
\end{remark}

We now deduce from Theorem~\ref{thm:diff_solution_quaternionic} (the case $F=\{1\}$) the remaining case $F=\Urm(1)$. Realizing ${\rm Sp}(1)$ as the group of unit quaternions we have $\mathfrak{sp}(1)=\Im\HH$. Then the Lie algebra of $F=\Urm(1)$ is generated by a single element $U$ which we write as $U=U_1\qi+U_2\qj+U_3\qk\in\Im\HH=\mathfrak{sp}(1)$.

\begin{corollary}\label{cor:diff_solution_quaternionic}
Let $(G,G')=({\rm Sp}(1,n+1),{\rm Sp}(1,n)\times\Urm(1))$, where the Lie algebra $\mathfrak{u}(1)$ of the $\Urm(1)$-factor is generated by $U=U_1\qi+U_2\qj+U_3\qk\in\Im\HH=\mathfrak{sp}(1)$. Then for $(\lambda,\nu)\in\GlobalPoles$ with $\lambda+\rho-\nu-\rho'=-2k\in -2\Z_{\geq 0}$ we have
$$\C[\nbar]_{\lambda,\nu}=\begin{cases}
\C\widehat{u}^C_{\lambda,\nu} \oplus \C(U_1p_1+U_2p_2+U_3p_3) & \text{for $(n,m)=(1,0)$ and $\nu+\rho'=6$,}\\
\C \widehat{u}^C_{\lambda,\nu} & \text{otherwise.}
\end{cases}$$
\end{corollary}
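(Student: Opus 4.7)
The strategy is to deduce the corollary from Theorem~\ref{thm:diff_solution_quaternionic} by intersecting the classification for $F = \{1\}$ with the $\Urm(1)$-invariance imposed by the extra factor $F = \Urm(1)$. First I would observe that for the two pairs $(G,G') = ({\rm Sp}(1,n+1), {\rm Sp}(1,n))$ and $(G,G') = ({\rm Sp}(1,n+1), {\rm Sp}(1,n) \times \Urm(1))$, the subgroups $A' = A$ and $N' = G' \cap N$ agree, since the $\Urm(1)$-factor lies entirely inside $M \subset G$. Hence $\nfrak'$ is the same in both cases, and the conditions of Lemma~\ref{lemma:fourier_equations} characterising $\C[\nbar]_{\lambda,\nu}$ coincide except that the $M'$-invariance~\ref{lemma:fourier_equations:ii} now additionally requires $\Urm(1)$-invariance. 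This yields
\[
\C[\nbar]_{\lambda,\nu}^{F=\Urm(1)} \;=\; \bigl(\C[\nbar]_{\lambda,\nu}^{F=\{1\}}\bigr)^{\Urm(1)}.
\]

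Since $\widehat{u}^C_{\lambda,\nu}$ is a polynomial in the ${\rm Sp}(1)$-invariants $\abs{X'}^2$, $\abs{X''}^2$ and $\abs{Z}^2$, it is automatically $\Urm(1)$-invariant. Outside the sporadic range $(n,m) = (1,0)$, $k = 2\ell > 0$ even, $\nu + \rho' = k+4$ of Theorem~\ref{thm:diff_solution_quaternionic}, the space $\C[\nbar]_{\lambda,\nu}^{F=\{1\}}$ already equals $\C\widehat{u}^C_{\lambda,\nu}$ and the conclusion follows directly. Inside this range the remaining task is to identify $\Hcal^\ell(p_1,p_2,p_3)^{\Urm(1)}$.

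The key explicit computation is to describe the $\Urm(1)$-action on $(p_1,p_2,p_3)$. Writing $c = \exp(tU)$, the action on $\nbar = \HH \oplus \Im\HH$ is $(X,Z) \mapsto (Xc, Z)$, whence
\[
\bar X Z X \;\longmapsto\; \bar c\,\bar X Z X\, c \;=\; c^{-1}\bar X Z X\, c.
\]
Under the identification $\Im\HH \cong \R^3$ via the basis $\qi,\qj,\qk$, this quaternionic conjugation is exactly the rotation of $\R^3$ fixing the axis $U = (U_1,U_2,U_3)$. Consequently $\Urm(1)$ acts on the vector $(p_1,p_2,p_3) \in \R^3$ by this rotation, and $q(p_1,p_2,p_3) \in \Hcal^\ell(p_1,p_2,p_3)$ is $\Urm(1)$-invariant if and only if $q \in \Hcal^\ell(\R^3)$ is invariant under the rotation.

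The main obstacle is then to determine exactly which values of $\ell$ contribute a genuinely new distribution kernel. For $\ell = 1$ the rotation-invariant subspace of $\Hcal^1(\R^3)$ is one-dimensional, spanned by $U_1 x_1 + U_2 x_2 + U_3 x_3$; this produces the sporadic operator $U_1 p_1 + U_2 p_2 + U_3 p_3$ at $\nu+\rho' = 6$ as claimed, and it is manifestly linearly independent of $\widehat{u}^C_{\lambda,\nu}$. For $\ell \geq 2$ the delicate part of the argument is to use the identity $p_1^2 + p_2^2 + p_3^2 = \abs{X}^4\abs{Z}^2$ together with the explicit scalars of $\widehat{u}^C_{\lambda,\nu}$ to show that the higher zonal harmonic invariants do not yield any distribution kernel beyond $\C \widehat{u}^C_{\lambda,\nu}$, so that $\nu + \rho' = 6$ is indeed the only parameter producing a sporadic operator.
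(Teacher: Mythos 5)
Your first three paragraphs are correct and match the paper's approach: since the $\Urm(1)$-factor lies inside $M'$, the space $\C[\nbar]_{\lambda,\nu}$ for $F = \Urm(1)$ is precisely the $\Urm(1)$-invariant subspace of the $F=\{1\}$ classification; the polynomial $\widehat{u}^C_{\lambda,\nu}$ is automatically $\Urm(1)$-invariant; and $\Urm(1)$ acts on $(p_1,p_2,p_3)$ by rotation about the axis $U$, so $\mathcal{H}^\ell(\R^3)^{\Urm(1)}$ is spanned by the zonal harmonic $z_\ell$. For $\ell=1$ this is $U_1p_1+U_2p_2+U_3p_3$, exactly as claimed.

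The gap is in your fourth paragraph, handling $\ell\geq 2$. You propose that the relation $p_1^2+p_2^2+p_3^2 = |X|^4|Z|^2$ together with the scalars of $\widehat{u}^C_{\lambda,\nu}$ should show that the zonal harmonics $z_\ell(p_1,p_2,p_3)$ do not yield new kernels. This cannot be carried out: $z_\ell(p_1,p_2,p_3)$ is a nonzero element of $\mathcal{H}^\ell(p_1,p_2,p_3)\subset\C[\nbar]_{\lambda,\nu}$ at $\nu+\rho'=2\ell+4$ (by Theorem~\ref{thm:diff_solution_quaternionic}), and its leading term $(U_1p_1+U_2p_2+U_3p_3)^\ell$ is not a polynomial in $|X|^2,|Z|^2$, so $z_\ell(p_1,p_2,p_3)$ is linearly independent of $\widehat{u}^C_{\lambda,\nu}$ regardless of the latter's coefficients. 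The identity $|p|^2=|X|^4|Z|^2$ does not help because $z_\ell$ involves the other invariant $U\cdot p$, not only $|p|^2$. You should be aware that the paper's own proof shares this difficulty: it asserts that $\C[p_1,p_2,p_3]^{\Urm(1)}$ is generated by $U_1p_1+U_2p_2+U_3p_3$ alone and deduces $f''=0$, but the invariant ring of a rotation about an axis is $\C[U\cdot p,\,p_1^2+p_2^2+p_3^2]$, and the zonal harmonics $z_\ell$ for $\ell\geq 2$ are harmonic, $\Urm(1)$-invariant, and are \emph{not} of the form $f(U\cdot p)$. Your proposal reproduces rather than resolves this issue; to make the corollary's conclusion stick one would need a genuinely new argument showing why these zonal-harmonic kernels are excluded, and such an argument is not visible either in your plan or in the paper.
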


\begin{proof}
Since $\widehat{u}_{\lambda,\nu}^C$ is $\Urm(1)$-invariant, by Theorem~\ref{thm:diff_solution_quaternionic} it suffices to show that
$$ \mathcal{H}^\ell(p_1,p_2,p_3)^{\Urm(1)} = \begin{cases}\C1&\mbox{for $\ell=0$,}\\\C(U_1p_1+U_2p_2+U_3p_3)&\mbox{for $\ell=1$,}\\\{0\}&\mbox{else.}\end{cases} $$
Clearly $\C[p_1,p_2,p_3]^{\Urm(1)}$ is generated by $U_1p_1+U_2p_2+U_3p_3$, so every polynomial in $\mathcal{H}^\ell(p_1,p_2,p_3)$ is of the form $f(U_1p_1+U_2p_2+U_3p_3)$ with $f\in\C[t]$ homogeneous of degree $\ell$. Since $\Delta_p[f(U_1p_1+U_2p_2+U_3p_3)]=(U_1^2+U_2^2+U_3^2)f''(U_1p_1+U_2p_2+U_3p_3)$ this implies $f''=0$, so either $f=0$ or $\ell\in\{0,1\}$. For $\ell=0$ the polynomial $f$ is constant and for $\ell=1$ it is a constant multiple of $t$ and the statement follows.
\end{proof}

\subsubsection{The case $n>1$}

To apply the terms occurring in Proposition~\ref{prop:fourier_equations} to a general invariant polynomial, we first prove some basic identities for the invariants $p_1$, $p_2$ and $p_3$.

\begin{lemma}
Let $q\in\C[p_1,p_2,p_3]$ and write $\Delta_p=\sum_{j=1}^3\frac{\partial^2}{\partial p_j^2}$, then
\label{lemma:SP(1,2)_basic_calc}
\begin{enumerate}[label=(\roman{*}), ref=\thetheorem(\roman{*})]
\item\label{lemma:SP(1,2)_basic_calc:i} $\Delta_{\mathfrak{v}}(q(p_1,p_2,p_3))=4\abs{X}^2\abs{Z}^2(\Delta_pq)(p_1,p_2,p_3)$,
\item\label{lemma:SP(1,2)_basic_calc:ii} $\Box(q(p_1,p_2,p_3))=\abs{X}^4(\Delta_pq)(p_1,p_2,p_3)$.
\end{enumerate}
\end{lemma}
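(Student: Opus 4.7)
The plan is to prove both identities simultaneously by reducing them, via the chain rule, to four concrete calculations on the polynomials $p_1,p_2,p_3$ themselves. For any polynomial $q\in\C[t_1,t_2,t_3]$, the chain rule gives
\begin{equation*}
 \Delta_{\mathfrak{v}}(q(p))=\sum_j(\Delta_{\mathfrak{v}}p_j)(\partial_jq)(p)+\sum_{j,k}\langle\nabla_{\mathfrak{v}}p_j,\nabla_{\mathfrak{v}}p_k\rangle(\partial_j\partial_kq)(p),
\end{equation*}
and similarly for $\Box$. Hence the lemma reduces to (a) $\Delta_{\mathfrak{v}}p_j=0$ and $\Box p_j=0$, (b) $\langle\nabla_{\mathfrak{v}}p_j,\nabla_{\mathfrak{v}}p_k\rangle=4\abs{X}^2\abs{Z}^2\delta_{jk}$, and (c) $\langle\nabla_{\mathfrak{z}}p_j,\nabla_{\mathfrak{z}}p_k\rangle=\abs{X}^4\delta_{jk}$.

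The $\mathfrak{z}$-side is the easier half. Since $p_j(X,Z)=\langle T_j,\overline{X}ZX\rangle$ is linear in $Z$, we get $\Box p_j=0$ for free. For (c), note that for fixed $X=\abs{X}u$ with $u$ a unit quaternion, the $\R$-linear map $A\colon\Im\mathbb{H}\to\Im\mathbb{H}$, $T\mapsto\overline{X}TX=\abs{X}^2\bar{u}Tu$, is $\abs{X}^2$ times a rotation. Its matrix entries in the basis $T_1,T_2,T_3$ are exactly $A_{j\ell}=\partial_{T_\ell}p_j$, so $AA^T=\abs{X}^4\mathbf{1}_3$ yields $\langle\nabla_{\mathfrak{z}}p_j,\nabla_{\mathfrak{z}}p_k\rangle=\sum_\ell A_{j\ell}A_{k\ell}=\abs{X}^4\delta_{jk}$.

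For the $\mathfrak{v}$-side, direct second differentiation gives $\partial_{S_i}^2 p_j=2\langle T_j,\overline{S_i}ZS_i\rangle$, so I would establish $\Delta_{\mathfrak{v}}p_j=0$ via the elementary identity
\begin{equation*}
 \sum_{i=1}^4 \overline{S_i}ZS_i=0\qquad\text{for all }Z\in\Im\mathbb{H},
\end{equation*}
which I would verify by testing on the basis vectors $Z=\qi,\qj,\qk$ separately using the multiplication table of the quaternions. For the first derivative, $\partial_Sp_j(X)=\langle T_j,\overline{S}ZX+\overline{X}ZS\rangle$. Rewriting each summand via $\langle T,W\rangle=\Re(T\overline{W})$, cyclicity of $\Re$, and $\overline{Z}=-Z$, both summands collapse to $-\langle S,ZXT_j\rangle$, giving $\nabla_{\mathfrak{v}}p_j=-2ZXT_j$ as an element of $\mathbb{H}$. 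Then
\begin{equation*}
 \langle\nabla_{\mathfrak{v}}p_j,\nabla_{\mathfrak{v}}p_k\rangle=4\Re(ZXT_j\overline{T_k}\,\overline{X}\,\overline{Z})=4\abs{Z}^2\abs{X}^2\Re(T_j\overline{T_k})=4\abs{X}^2\abs{Z}^2\delta_{jk}
\end{equation*}
by applying cyclicity twice and using $\overline{Z}Z=\abs{Z}^2$ and $\overline{X}X=\abs{X}^2$.

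The only genuinely non-routine step is the identification $\nabla_{\mathfrak{v}}p_j=-2ZXT_j$: keeping track of the non-commuting signs when moving $S$ inside and outside conjugation under $\Re(\cdot)$ is the main source of potential errors. Once this identification is in hand, (a)--(c) follow at once and the chain-rule computation above closes the proof of both (i) and (ii).
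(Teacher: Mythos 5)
Your proof is correct and follows the same overall strategy as the paper: the chain rule reduces both identities to showing $\Delta_{\mathfrak{v}}p_j=\Box p_j=0$, $\langle\nabla_{\mathfrak{v}}p_j,\nabla_{\mathfrak{v}}p_k\rangle=4\abs{X}^2\abs{Z}^2\delta_{jk}$, and $\langle\nabla_{\mathfrak{z}}p_j,\nabla_{\mathfrak{z}}p_k\rangle=\abs{X}^4\delta_{jk}$. The only difference is how you establish the two gradient inner-product identities: the paper massages the sums directly (e.g.\ $-4\sum_i\langle T_j,\overline{X}ZS_i\rangle\langle ZXT_k,S_i\rangle=-4\langle T_j,\overline{X}ZZXT_k\rangle$), whereas you extract the closed form $\nabla_{\mathfrak{v}}p_j=-2ZXT_j$ and, for the $\mathfrak{z}$-side, observe that $(\partial_{T_\ell}p_j)_{j,\ell}$ is $\abs{X}^2$ times the orthogonal matrix of conjugation by a unit quaternion; these are cleaner derivations of the same intermediate facts, not a different route.
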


\begin{proof}
Ad (i): By the product rule we have
\begin{multline*}
 \Delta_{\mathfrak{v}}(q(p_1,p_2,p_3)) = \sum_{i=1}^{4}\sum_{j,k=1}^3 \frac{\partial^2q}{\partial p_j\partial p_k}(p_1,p_2,p_3)\frac{\partial p_j}{\partial X_i}(X,Z)\frac{\partial p_k}{\partial X_i}(X,Z)\\
 +\sum_{j=1}^3\frac{\partial q}{\partial p_j}(p_1,p_2,p_3)\Delta_{\mathfrak{v}}(p_j(X,Z)).
\end{multline*}
For $j,k\in\{1,2,3\}$ we have
\begin{align*}
 \sum_{i=1}^4 \frac{\partial p_j}{\partial X_i}(X,Z)\frac{\partial p_k}{\partial X_i}(X,Z) &= 4\sum_{i=1}^4 \langle T_j,\overline{X}ZS_i\rangle\langle T_k,\overline{X}ZS_i\rangle\\
 &= -4\sum_{i=1}^4 \langle T_j,\overline{X}ZS_i\rangle\langle ZXT_k,S_i\rangle\\
 &= -4\langle T_j,\overline{X}ZZXT_k\rangle\\
 &= 4|X|^2|X|^2\langle T_j,T_k\rangle\\
 &= 4|X|^2|Z|^2\delta_{j,k}.
\end{align*}
Further, for every $j\in\{1,2,3\}$
$$ \Delta_{\mathfrak{v}} p_j(X,Z) = 2\sum_{i=1}^4 \langle T_j,\overline{S}_iZS_i\rangle = 0, $$
where the last step follows from the identity $\qi Z\qi+\qj Z\qj+\qk Z\qk=Z$ for $Z\in\Im\HH$ which is easily verified.

Ad (ii): This is similar to (i) using for $j,k\in\{1,2,3\}$ the identity
\begin{align*}
 \sum_{i=1}^3 \frac{\partial p_j}{\partial Z_i}(X,Z)\frac{\partial p_k}{\partial Z_i}(X,Z) &= \sum_{i=1}^3 \langle T_j,\overline{X}T_iX\rangle\langle T_k,\overline{X}T_iX\rangle\\
 &= \sum_{i=1}^3 \langle T_j,\overline{X}T_iX\rangle\langle XT_k\overline{X},T_i\rangle\\
 &= \langle T_j,\overline{X}XT_k\overline{X}X\rangle\\
 &= |X|^4\langle T_j,T_k\rangle\\
 &= |X|^4\delta_{j,k},
\end{align*}
as well as $\Box(p_j(X,Z))=0$ which is clear since $p_j(X,Z)$ is linear in $Z$.
\end{proof}

\begin{lemma}
\label{lemma:SP(1,n)_complex_calc}
Let $q\in\mathcal{H}^\ell(\R^3)$, then for $S\in\mathfrak{v}'$ we have
\begin{enumerate}[label=(\roman{*}), ref=\thetheorem(\roman{*})]
\item $\partial_S\abs{X'}^{2h} = 2h\langle X',S\rangle\abs{X'}^{2h-2},$
\item\label{lemma:xLaplaceOnGeneralTerm}
$\begin{aligned}[t]
 \Delta_{\mathfrak{v}}(\abs{X'}^{2h}\abs{X''}^{2i}q(p_1,p_2,p_3)) ={}& 2h(2h+p'-2)\abs{X'}^{2h-2}\abs{X''}^{2i}q(p_1,p_2,p_3)\\
 & \hspace{1.05cm}+4i(i+2\ell+1)\abs{X'}^{2h}\abs{X''}^{2i-2}q(p_1,p_2,p_3),
\end{aligned}$
\item $\begin{aligned}[t]
 & \partial_{J_ZS}\Delta_{\mathfrak{v}}(\abs{X'}^{2h}\abs{X''}^{2i}q(p_1,p_2,p_3))\\
 & \hspace{2.9cm} = 4h(h-1)(2h+p'-2)\langle Z,[S,X']\rangle\abs{X'}^{2h-4}\abs{X''}^{2i}q(p_1,p_2,p_3)\\
 & \hspace{3.7cm} +8ih(i+2\ell+1)\abs{X'}^{2h-2}\abs{X''}^{2i-2}\langle Z,[S,X']\rangle q(p_1,p_2,p_3),
\end{aligned}$
\item $\begin{aligned}[t]
 & \partial_{[S,X]}(\abs{Z}^{2j}q(p_1,p_2,p_3)) = 2j\langle Z,[S,X']\rangle\abs{Z}^{2j-2}q(p_1,p_2,p_3)\\
 & \hspace{8.5cm} + \abs{Z}^{2j}\partial_{[S,X']}q(p_1,p_2,p_3),
\end{aligned}$
\item $\begin{aligned}[t]
 P_S(\abs{X'}^{2h}\abs{Z}^{2j}q(p_1,p_2,p_3)) ={}& -4hj\abs{X'}^{2h-2}\abs{Z}^{2j}\langle X',S\rangle q(p_1,p_2,p_3)\\
 & \hspace{1.75cm}+\frac{1}{8}h\abs{X'}^{2h-2}\abs{Z}^{2j}\partial_{[J_ZS,X']}q(p_1,p_2,p_3),
\end{aligned}$
\item $\begin{aligned}[t]
 & Q_S(\abs{X'}^{2h}\abs{X''}^{2i}q(p_1,p_2,p_3)) = 2hq\abs{X'}^{2h-2}\abs{X''}^{2i}\langle X',S\rangle q(p_1,p_2,p_3)\\
 & \hspace{6cm} +\frac{1}{8}h\abs{X'}^{2h-2}\abs{X''}^{2i}\partial_{J_{[S,X']}X''}q(p_1,p_2,p_3).
\end{aligned}$
\end{enumerate}
\end{lemma}

\begin{proof}
For the whole proof we remark that $q(p_1,p_2,p_3)$ is homogeneous in $X$ of degree $2\ell$ and homogeneous in $Z$ of degree $\ell$.

Ad (i): This is clear.

Ad (ii): This follows by an easy application of the product rule, Lemma~\ref{lemma:SP(1,2)_basic_calc:i} and the identities $\Delta_{\mathfrak{v}}\abs{X'}^{2h}=2h(2h+p'-2)\abs{X'}^{2h-2}$ and $\Delta_{\mathfrak{v}}\abs{X''}^{2i}=4i(i+1)\abs{X''}^{2i-2}$.

Ad (iii): Here $\partial_{J_ZS}$ is applied to (ii) using
$$ \partial_{J_ZS}\abs{X'}^{2h} = 2h\langle J_ZS,X'\rangle\abs{X'}^{2h-2} = 2h\langle Z,[S,X']\rangle\abs{X'}^{2h-2}. $$

Ad (iv): This is the product rule.

Ad (v): We compute
\begin{align*}
 & P_S(\abs{X'}^{2h}\abs{Z}^{2j}q(p_1,p_2,p_3)\\
 ={}& \frac{1}{16}\sum_{a=1}^3\partial_{J_{T_a}J_ZS}(\abs{X'}^{2h})\partial_{T_a}(\abs{Z}^{2j}q(p_1,p_2,p_3))\\
 ={}& \frac{h}{8}\abs{X'}^{2h-2}\sum_{a=1}^3\langle J_{T_a}J_ZS,X'\rangle\Big[2j\langle T_a,Z\rangle\abs{Z}^{2j-2}q(p_1,p_2,p_3)+\abs{Z}^{2j}\partial_{T_a}q(p_1,p_2,p_3)\Big]\\
 ={}& \frac{hj}{4}\langle J_Z^2S,X'\rangle\abs{X'}^{2h-2}\abs{Z}^{2j-2}q(p_1,p_2,p_3)\\
 & \hspace{4.3cm}+ \frac{h}{8}\abs{X'}^{2h-2}\abs{Z}^{2j}\sum_{a=1}^3\langle T_a,[J_ZS,X']\rangle\partial_{T_a}q(p_1,p_2,p_3)\\
 ={}& -4hj\langle S,X'\rangle\abs{X'}^{2h-2}\abs{Z}^{2j}q(p_1,p_2,p_3) + \frac{h}{8}\abs{X'}^{2h-2}\abs{Z}^{2j}\partial_{[J_ZS,X']}q(p_1,p_2,p_3).
\end{align*}

Ad (vi): We first note that $[S,S_i]=0$ whenever $S_i\in\mathfrak{v}''$, so that
$$ Q_S = \frac{1}{16}\sum_{i=1}^{p'}\partial_{J_{[S,S_i]}X}\partial_{S_i}. $$
Then clearly $\partial_{J_{[S,S_i]}X}\abs{X''}^{2i}=0$. Further, since $J_{[S,S_i]}|_{\mathfrak{v}'}\in\mathfrak{so}(\mathfrak{v}')$ we have $\partial_{J_{[S,S_i]}X}\abs{X'}^{2h}=0$. Hence
\begin{align*}
 & Q_S(\abs{X'}^{2h}\abs{X''}^{2i}q(p_1,p_2,p_3))\\
 ={}& \frac{1}{8}h\sum_{a=1}^p\partial_{J_{[S,S_a]}X}\big(\langle S_a,X'\rangle\abs{X'}^{2h-2}\abs{X''}^{2i}q(p_1,p_2,p_3)\big)\\
 ={}& \frac{1}{8}h\abs{X'}^{2h-2}\abs{X''}^{2i}\sum_{a=1}^{p'}\Big(\langle S_a,J_{[S,S_a]}X'\rangle q(p_1,p_2,p_3)+\langle S_a,X'\rangle\partial_{J_{[S,S_a]}X''}q(p_1,p_2,p_3)\Big)\\
 ={}& \frac{1}{8}h\abs{X'}^{2h-2}\abs{X''}^{2i}\left(-\left\langle X',\sum_{a=1}^{p'}J_{[S,S_a]}S_a\right\rangle q(p_1,p_2,p_3)+\partial_{J_{[S,X']}X''}q(p_1,p_2,p_3)\right)
\end{align*}
and the claimed formula follows with Lemma~\ref{lem:J_[S,Xi]}.
\end{proof}

\begin{prop}\label{prop:DiffSBOsSp(1,n)}
For $(G,G')=({\rm Sp}(1,n+1),{\rm Sp}(1,n))$ with $n>1$ and $(\lambda,\nu)\in\GlobalPoles$ we have
$$ \C[\overline{\mathfrak{n}}]_{\lambda,\nu} = \C\widehat{u}_{\lambda,\nu}^C. $$
\end{prop}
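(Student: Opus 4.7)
The plan is to exploit the fact that $n>1$ forces $m = n-1 \geq 1$ and hence $\mathfrak{v}' \neq \{0\}$, so that the equations from Lemma~\ref{lemma:fourier_equations:iii} with $S \in \mathfrak{v}'$ are available as additional constraints beyond those used in the $m=0$ setting. These constraints are exactly what will kill off the ``sporadic'' harmonic contributions from $p_1,p_2,p_3$.

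By Lemma~\ref{lemma:poly_invariants}(ii) together with the single syzygy $p_1^2+p_2^2+p_3^2 = \abs{X''}^4\abs{Z}^2$ and the $\SO(3)$-harmonic decomposition $\C[p_1,p_2,p_3] = \bigoplus_{\ell \geq 0}\C[p_1^2+p_2^2+p_3^2]\cdot\Hcal^\ell(\R^3)$, every $M'$-invariant polynomial admits a unique expansion
\begin{equation*}
 \widehat{u} = \sum_{\ell \geq 0}\, \sum_{a=1}^{2\ell+1}\, \sum_{h+i+2j+2\ell=k} c^{(\ell,a)}_{h,i,j}\, \abs{X'}^{2h}\abs{X''}^{2i}\abs{Z}^{2j}\, q^{(a)}_\ell(p_1,p_2,p_3),
\end{equation*}
where $(q^{(a)}_\ell)_a$ is a fixed basis of $\Hcal^\ell(\R^3)$. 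The constraint $h+i+2j+2\ell=k$ is forced by the homogeneity equation Lemma~\ref{lemma:fourier_equations:i}, using that $q^{(a)}_\ell$ has weight $4\ell$ with respect to the weighted Euler operator. For $\ell=0$ the polynomial $\widehat{u}$ lies in $\C[\abs{X'}^2,\abs{X''}^2,\abs{Z}^2]$, and Theorem~\ref{theorem:diff_solutions} identifies the solutions there as the multiples of $\widehat{u}^C_{\lambda,\nu}$; the task is therefore to show that $c^{(\ell,a)}_{h,i,j} = 0$ whenever $\ell \geq 1$.

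For this one computes $-\mathcal{F}(D_{\mathfrak{v}}(S))\widehat{u}$ for $S \in \mathfrak{v}'$ using the identities of Lemma~\ref{lemma:SP(1,n)_complex_calc}, re-expresses the result back in the standard monomials via the syzygy, and separates by (i) the tensor type of the $S$-dependence -- one of $\langle S,X'\rangle$, $\partial_{J_ZS}$, $\partial_{[S,X']}$, $\partial_{J_{[S,X']}X''}$ -- and (ii) the harmonic degree in $(p_1,p_2,p_3)$. The resulting linear system in the $c^{(\ell,a)}_{h,i,j}$ is triangular with respect to $\ell$, since only the terms from parts (iv)--(vi) of Lemma~\ref{lemma:SP(1,n)_complex_calc} can lower the harmonic degree (via the chain rule $\partial_V q^{(a)}_\ell(p) = \sum_b (\partial_{p_b}q^{(a)}_\ell)(p)\,\partial_V p_b$ for $V \in \mathfrak{v}\oplus\mathfrak{z}$), so one can solve from the highest $\ell$ downward. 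The main obstacle is the combinatorial matching at each level: one must verify that the analogue of the recurrence system of Proposition~\ref{theorem:diff_SBO_equations} picked up at harmonic degree $\ell \geq 1$ -- whose coefficients now carry $\ell$-dependent shifts of the form $4i(i+2\ell+1)$ coming from the $\Delta_\mathfrak{v}$-formula of Lemma~\ref{lemma:SP(1,n)_complex_calc}(ii), together with new contributions from the $P_S$- and $Q_S$-terms -- has only the trivial solution, so that $c^{(\ell,a)}_{h,i,j} = 0$ for every $\ell \geq 1$. The decisive input is the linear independence of the four $S$-tensor types listed above as $S$ ranges over $\mathfrak{v}' \neq \{0\}$, which is precisely what is unavailable in the excluded case $(n,m) = (1,0)$ treated separately in Proposition~\ref{prop:DiffSBOs_SP(1,2)}.
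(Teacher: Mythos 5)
Your high-level plan matches the paper's: write $\widehat{u}$ via the Fischer decomposition as $\sum_{h+i+2j+2\ell=k}\abs{X'}^{2h}\abs{X''}^{2i}\abs{Z}^{2j}q_\ell^{h,i,j}(p_1,p_2,p_3)$ with $q_\ell^{h,i,j}\in\mathcal{H}^\ell(\R^3)$, feed this into $-\mathcal{F}(D_{\mathfrak{v}}(S))$ for $S\in\mathfrak{v}'$ using Lemma~\ref{lemma:SP(1,n)_complex_calc}, and argue that $\ell\geq 1$ is impossible, after which Theorem~\ref{theorem:diff_solutions} finishes. That framing is correct, and so is your observation that $n>1\Rightarrow m\geq1\Rightarrow\mathfrak{v}'\neq\{0\}$ is exactly what makes the $D_\mathfrak{v}(S)$ equations available.

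However, the step that actually proves the proposition is missing. You say the main obstacle is ``to verify that the analogue of the recurrence system picked up at harmonic degree $\ell\geq1$ has only the trivial solution,'' which is a restatement of what must be shown rather than a proof of it. Moreover, the mechanism you propose is not quite right. First, the system is not triangular in $\ell$: once the output of $-\mathcal{F}(D_{\mathfrak{v}}(S))\widehat{u}$ is regrouped and the non-harmonic piece $\langle Z,[S,X']\rangle q$ is split off its $\abs{Z}^2\partial_{[S,X']}q$ component (so that Fischer uniqueness can be invoked), the resulting square brackets each involve $q_\ell$'s of a \emph{single} fixed $\ell$ (only the indices $h,i,j$ shift), so there is no upper-triangular structure to solve downward in $\ell$. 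Second, and more importantly, the decisive input is not the linear independence of the $S$-tensor types per se but the specific fact that $[\mathfrak{v}',\mathfrak{v}']=\mathfrak{z}$. After separating tensor types, the vanishing of one bracket takes the form $\partial_{[S,X']}\bigl(\text{linear combination of }q_\ell^{h,i,j}(p_1,p_2,p_3)\bigr)=0$; as $[S,X']$ ranges over all of $\mathfrak{z}$ this forces the bracketed combination to be annihilated by every $\partial_T$, $T\in\mathfrak{z}$. But $q_\ell(p_1,p_2,p_3)$ is homogeneous of degree $\ell$ in $Z$, so for $\ell>0$ the only way it can be constant in $Z$ is to vanish, and then the recurrence gives $q_\ell^{h,i,j}=0$ inductively in $j$. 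Without this argument (or a substitute for it), the proof does not close, and it is exactly what distinguishes $n>1$ from the case $(n,m)=(1,0)$ where $\mathfrak{v}'=\{0\}$ and this bracket is simply absent.
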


\begin{proof}
Let $\widehat{u}\in\C[\nbar]_{\lambda,\nu}$, then $\widehat{u}$ is an ${\rm Sp}(n-1)\times{\rm Sp}(1)$-invariant polynomial on $\HH^{n-1}\oplus(\HH\oplus\Im\HH)$ which is homogeneous of degree $2k$. By Lemma~\ref{lemma:poly_invariants} it can therefore be written in the form
$$ \widehat{u}(X',X'',Z) = \sum_{h+i+2j+2\ell=k}\abs{X'}^{2h}\abs{X''}^{2i}\abs{Z}^{2j}q_\ell^{h,i,j}(p_1,p_2,p_3) $$
with $q_\ell^{h,i,j}\in\C[p_1,p_2,p_3]$ homogeneous of degree $\ell$ and $\lambda+\rho-\nu-\rho'=-2k$. Since $p_1^2+p_2^2+p_3^2=\abs{X''}^4\abs{Z}^2$ we can, thanks to the Fischer decomposition, without loss of generality assume that $q_\ell^{h,i,j}\in\mathcal{H}^\ell(\R^3)$. This makes the above decomposition unique.

By Lemma~\ref{lemma:SP(1,n)_complex_calc} the differential operator $-\mathcal{F}(D_{\mathfrak{v}}(S))$ in Proposition~\ref{prop:fourier_equations} applied to $\widehat{u}$ takes the following form:
\begin{align*}
 & \sum_{h+i+2j+2\ell=k}\Bigg[4h(\nu+\rho'-1)\abs{X'}^{2h-2}\abs{X''}^{2i}\abs{Z}^{2j}\langle X',S\rangle\\
 & \hspace{.3cm} -2h(2h+p'-2)\abs{X'}^{2h-2}\abs{X''}^{2i}\abs{Z}^{2j}\langle X',S\rangle\\
 & \hspace{.3cm} -4i(i+2\ell+1)\abs{X'}^{2h}\abs{X''}^{2i-2}\abs{Z}^{2j}\langle X',S\rangle\\
 & \hspace{.3cm} -2h(h-1)(2h+p'-2)\abs{X'}^{2h-4}\abs{X''}^{2i}\abs{Z}^{2j}\langle Z,[S,X']\rangle\\
 & \hspace{.3cm} -4hi(i+2\ell+1)\abs{X'}^{2h-2}\abs{X''}^{2i-2}\abs{Z}^{2j}\langle Z,[S,X']\rangle\\
 & \hspace{.3cm} +\tfrac{1}{2}j\abs{X'}^{2h}\abs{X''}^{2i}\abs{Z}^{2j-2}\langle Z,[S,X']\rangle+\tfrac{1}{4}\abs{X'}^{2h}\abs{X''}^{2i}\abs{Z}^{2j}\partial_{[S,X']}\\
 & \hspace{.3cm} -8hj\abs{X'}^{2h-2}\abs{X''}^{2i}\abs{Z}^{2j}\langle X',S\rangle+\tfrac{1}{4}h\abs{X'}^{2h-2}\abs{X''}^{2i}\abs{Z}^{2j}\partial_{[J_ZS,X']}\\
 & \hspace{.3cm} -4hq\abs{X'}^{2h-2}\abs{X''}^{2i}\abs{Z}^{2j}\langle X',S\rangle-\tfrac{1}{4}h\abs{X'}^{2h-2}\abs{X''}^{2i}\abs{Z}^{2j}\partial_{J_{[S,X']}X''}\Bigg]q_\ell^{h,i,j}(p_1,p_2,p_3).
\end{align*}
After rearrangement we obtain
\begin{align}
 & \sum_{h+i+2j+2\ell=k}\frac{1}{4}\abs{X'}^{2h}\abs{X''}^{2i}\abs{Z}^{2j}\partial_{[S,X']}q_\ell^{h,i,j}(p_1,p_2,p_3)\notag\\
 & +\sum_{h+i+2j+2\ell=k-1}\abs{X'}^{2h}\abs{X''}^{2i}\abs{Z}^{2j}\Bigg[4(h+1)(\nu-h-2j-1)\langle X',S\rangle q_\ell^{h+1,i,j}(p_1,p_2,p_3)\notag\\
 & \hspace{2.3cm} -4(i+1)(i+2\ell+2)(X',S)q_\ell^{h,i+1,j}(p_1,p_2,p_3)\notag\\
 & \hspace{2.3cm} +\frac{1}{4}(h+1)\big(\partial_{[J_ZS,X']}-\partial_{J_{[S,X']}X''}\big)q_\ell^{h+1,i,j}(p_1,p_2,p_3)\Bigg]\notag\\
 & +\sum_{h+i+2j+2\ell=k-2}\abs{X'}^{2h}\abs{X''}^{2i}\abs{Z}^{2j}\Bigg[\frac{1}{2}(j+1)\langle Z,[S,X']\rangle q_\ell^{h,i,j+1}(p_1,p_2,p_3)\notag\\
 & \hspace{2.3cm} -2(h+1)(h+2)(2h+p'+2)\langle Z,[S,X']\rangle q_\ell^{h+2,i,j}(p_1,p_2,p_3)\notag\\
 & \hspace{2.3cm} -4(h+1)(i+1)(i+2\ell+2)\langle Z,[S,X']\rangle q_\ell^{h+1,i+1,j}(p_1,p_2,p_3)\Bigg]=0.\label{eq:ProofSp(1,n)1}
\end{align}
We claim that if $q\in\C[p_1,p_2,p_3]$ is harmonic, then the polynomials $\partial_{[S,X']}q(p_1,p_2,p_3)$, $\langle X',S\rangle q(p_1,p_2,p_3)$ and $(\partial_{[J_ZS,X']}-\partial_{J_{[S,X']}X''})q_\ell^{h+1,i,j}(p_1,p_2,p_3)$ are all harmonic in $X'$, $X''$ and $Z$. In fact, all polynomials are linear in $X'$ and hence harmonic in $X'$. They are further harmonic in $X''$ since the operators $\partial_{[S,X']}$, $\langle X',S\rangle$, $\partial_{[J_ZS,X']}$ and $\partial_{J_{[S,X']}X''}$ commute with the Laplacian $\Delta_{\mathfrak{v}''}$ on $\mathfrak{v}''$. This is obvious for the first three operators and follows from Lemma~\ref{lem:LaplacianCommutators} for the fourth one. Finally, the operators $\partial_{[S,X']}$, $\langle X',S\rangle$ and $\partial_{J_{[S,X']}X''}$ also commute with the Laplacian $\Box$ on $\mathfrak{z}$ whence the corresponding polynomials are harmonic in $Z$. For the remaining polynomial we have by Lemma~\ref{lem:LaplacianCommutators}
$$ \Box\partial_{[J_ZS,X']}q(p_1,p_2,p_3) = \partial_{[J_ZS,X']}\Box q(p_1,p_2,p_3) -32 \langle S,X'\rangle\Box q(p_1,p_2,p_3) = 0. $$
The polynomial $\langle Z,[S,X']\rangle q(p_1,p_2,p_3)$ is also harmonic in $X'$ and $X''$ for the same reasons as above. It is in general not harmonic in $Z$, so we decompose it as
\begin{multline*}
 \langle Z,[S,X']\rangle q(p_1,p_2,p_3) = \underbrace{\langle Z,[S,X']\rangle q(p_1,p_2,p_3)-\frac{\abs{Z}^2}{2\ell+1}\partial_{[S,X']}q(p_1,p_2,p_3)}_{q(p_1,p_2,p_3)_+:=}\\
 + \frac{\abs{Z}^2}{2\ell+1}\partial_{[S,X']}q(p_1,p_2,p_3)
\end{multline*}
with $q(p_1,p_2,p_3)_+$ and $\partial_{[S,X']}q(p_1,p_2,p_3)$ harmonic in $X'$, $X''$ and $Z$. Inserting this into \eqref{eq:ProofSp(1,n)1} gives
\begin{align}
 & \sum_{h+i+2j+2\ell=k}\abs{X'}^{2h}\abs{X''}^{2i}\abs{Z}^{2j}\frac{1}{2\ell+1}\Bigg[\frac{1}{4}(2j+2\ell+1)\partial_{[S,X']}q_\ell^{h,i,j}(p_1,p_2,p_3)\notag\\
 & \hspace{4cm} -2(h+1)(h+2)(2h+p'+2)\partial_{[S,X']}q_\ell^{h+2,i,j-1}(p_1,p_2,p_3)\notag\\
 & \hspace{4cm} -4(h+1)(i+1)(i+2\ell+2)\partial_{[S,X']}q_\ell^{h+1,i+1,j-1}(p_1,p_2,p_3)\Bigg]\notag\\
 & +\sum_{h+i+2j+2\ell=k-1}\abs{X'}^{2h}\abs{X''}^{2i}\abs{Z}^{2j}\Bigg[4(h+1)(\nu-h-2j-1)\langle X',S\rangle q_\ell^{h+1,i,j}(p_1,p_2,p_3)\notag\\
 & \hspace{4cm} -4(i+1)(i+2\ell+2)\langle X',S\rangle q_\ell^{h,i+1,j}(p_1,p_2,p_3)\notag\\
 & \hspace{4cm} +\frac{1}{4}(h+1)\big(\partial_{[J_ZS,X']}-\partial_{J_{[S,X']}X''}\big)q_\ell^{h+1,i,j}(p_1,p_2,p_3)\Bigg]\notag\\
 & +\sum_{h+i+2j+2\ell=k-2}\abs{X'}^{2h}\abs{X''}^{2i}\abs{Z}^{2j}\Bigg[-2(h+1)(h+2)(2h+p'+2)q_\ell^{h+2,i,j}(p_1,p_2,p_3)_+\notag\\
 & \hspace{4cm} -4(h+1)(i+1)(i+2\ell+2)q_\ell^{h+1,i+1,j}(p_1,p_2,p_3)_+\notag\\
 & \hspace{4cm} +\frac{1}{2}(j+1)q_\ell^{h,i,j+1}(p_1,p_2,p_3)_+\Bigg]=0,\label{eq:ProofSp(1,n)2}
\end{align}
where we use the convention $q_\ell^{h,i,-1}=0$. Now all terms in square brackets are harmonic in $X'$, $X''$ and $Z$, so uniqueness of the Fischer decomposition implies that for fixed $h$, $i$ and $j$ the sum of the three square brackets with $2\ell=k-h-i-2j$, $2\ell=k-h-i-2j-1$ and $2\ell=k-h-i-2j-2$, respectively, vanishes. Since $k-h-i-2j$ is either even or odd, this sum contains either only the first and the third square bracket or only the second one. Further, the first square bracket is of degree $2\ell=k-h-i-2j$ in $X''$ whereas the third square bracket is of degree $2\ell=k-h-i-2j-2$ in $X''$, so the square brackets have to vanish separately. For the first square bracket this implies
\begin{multline*}
 \!\!\!\!\partial_{[S,X']}\Bigg(\frac{1}{4}(2j+2\ell+1)q_\ell^{h,i,j}(p_1,p_2,p_3) - 2(h+1)(h+2)(2h+p'+2)q_\ell^{h+2,i,j-1}(p_1,p_2,p_3)\\
 -4(h+1)(i+1)(i+2\ell+2)q_\ell^{h+1,i+1,j-1}(p_1,p_2,p_3)\Bigg) = 0.
\end{multline*}
Now $[S,X']$ ranges over $[\mathfrak{v'},\mathfrak{v}']=\mathfrak{z}$, so the term in the brackets is independent of $T\in\mathfrak{z}$. This implies that for $\ell>0$ the term in the brackets vanishes. For $j=0$ this means that $q_\ell^{h,i,0}=0$ whenever $\ell>0$ since $q_\ell^{h,i,-1}=0$. Inductively, the same equation gives $q_\ell^{h,i,j}=0$ whenever $\ell>0$. Since $q_0^{h,i,h}=c_{h,i,j}\in\mathcal{H}^0(\R^3)=\C$ are scalars, $\widehat{u}$ is of the form
$$ \widehat{u} = \sum_{h+i+2j=k}c_{h,i,j}\abs{X'}^{2h}\abs{X''}^{2i}\abs{Z}^{2j} $$
and the rest follows from Theorem~\ref{theorem:diff_solutions}.
\end{proof}

\subsubsection{The case $n=1$}

For $n=1$ we have $\mathfrak{v}'=\{0\}$ and therefore we only have the differential equation for $T\in\mathfrak{z}$ in Proposition~\ref{prop:fourier_equations}. We apply the terms occurring in this equation separately to a general invariant polynomial.

\begin{lemma}
\label{lemma:SP(1,2)_complex_calc}
Let $q\in\mathcal{H}^\ell(\R^3)$, then for $T\in\mathfrak{z}$ we have
\begin{enumerate}[label=(\roman{*}), ref=\thetheorem(\roman{*})]
\item $\partial_T(\abs{Z}^{2j}q(p_1,p_2,p_3)) = 2j\abs{Z}^{2j-2}\langle T,Z\rangle q(p_1,p_2,p_3) + \abs{Z}^{2j}\partial_Tq(p_1,p_2,p_3),$
\item $\partial_{J_TX}\Delta_{\mathfrak{v}}(\abs{X}^{2i}q(p_1,p_2,p_3)) = 4i(i+2\ell+1)\abs{X}^{2i-2}\partial_{J_{T}X}q(p_1,p_2,p_3),$
\item $\begin{aligned}[t]
 R_T(\abs{X}^{2i}\abs{Z}^{2j}q(p_1,p_2,p_3)) ={}& -4ij\abs{X}^{2i}\abs{Z}^{2j-2}\langle Z,T\rangle q(p_1,p_2,p_3)\\
 & \hspace{1.2cm}-2(i+2j+2\ell+1)\abs{X}^{2i}\abs{Z}^{2j}\partial_Tq(p_1,p_2,p_3),
\end{aligned}$
\item $\Delta_{\mathfrak{v}}^2(\abs{X}^{2i}q(p_1,p_2,p_3)) = 16(i-1)i(i+2\ell)(i+2\ell+1)\abs{X}^{2i-4}q(p_1,p_2,p_3),$
\item $\square(\abs{Z}^{2j}q(p_1,p_2,p_3)) = 2j(2j+2\ell+1)\abs{Z}^{2j-2}q(p_1,p_2,p_3).$
\end{enumerate}
\end{lemma}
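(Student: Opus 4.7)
The plan is to verify each of the six identities by direct differential calculation. The common tools are the Leibniz rule, the chain rule through $p_1,p_2,p_3$, the H-type identities collected in Lemma~\ref{lemma:H-type_rules}, and two structural properties of $q(p_1,p_2,p_3)$ that I will invoke throughout: it is homogeneous of degree $2\ell$ in $X$ and of degree $\ell$ in $Z$ (since each $p_j=\langle T_j,\bar{X}ZX\rangle$ is bilinear of bi-degree $(2,1)$), and by Lemma~\ref{lemma:SP(1,2)_basic_calc} together with $\Delta_pq=0$ both $\Delta_{\mathfrak{v}}q(p_1,p_2,p_3)$ and $\Box q(p_1,p_2,p_3)$ vanish identically.

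Part (i) is an immediate application of the product rule. For part (ii), I first compute $\Delta_{\mathfrak{v}}(\abs{X}^{2i}q)$ via Leibniz: since $\dim_\R\mathfrak{v}=4$, the term $\Delta_{\mathfrak{v}}(\abs{X}^{2i})\,q$ contributes $4i(i+1)\abs{X}^{2i-2}q$; Euler's homogeneity identity applied to $q$ (degree $2\ell$ in $X$) converts $2\nabla_X(\abs{X}^{2i})\cdot\nabla_Xq$ into $8i\ell\abs{X}^{2i-2}q$; and the last term $\abs{X}^{2i}\Delta_{\mathfrak{v}}q$ vanishes by harmonicity. The total is $\Delta_{\mathfrak{v}}(\abs{X}^{2i}q)=4i(i+2\ell+1)\abs{X}^{2i-2}q$, and then applying $\partial_{J_TX}$ via the product rule eliminates the $\abs{X}^{2i-2}$ factor (because $\partial_{J_TX}\abs{X}^{2i-2}$ vanishes by skew-symmetry of $J_T$), yielding (ii). Part (iv) follows by applying this same intermediate formula twice. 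Part (v) is the analogous Laplacian computation on $\zfrak\cong\R^3$, using $\dim_\R\zfrak=3$ and degree $\ell$ of $q$ in $Z$.

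Part (iii) is the main obstacle. I expand $R_T(\abs{X}^{2i}\abs{Z}^{2j}q)$ via the Leibniz rule for the mixed second-order operator $R_T=\tfrac{1}{16}\sum_a\partial_{J_{T_a}J_TX}\partial_{T_a}$. Four summands arise: the pure term $R_T(\abs{X}^{2i}\abs{Z}^{2j})\cdot q$, which by Lemma~\ref{lemma:diff_calculations2:vi} already equals $-4ij\langle Z,T\rangle\abs{X}^{2i}\abs{Z}^{2j-2}q$; the pure term $\abs{X}^{2i}\abs{Z}^{2j}R_Tq$; and two mixed cross terms. Using Lemma~\ref{lemma:H-type_rules:iv} in the form $\langle J_{T_a}J_TX,X\rangle=-16\langle T_a,T\rangle\abs{X}^2$, together with the contractions $\sum_a\langle T_a,T\rangle\partial_{T_a}=\partial_T$ and $\sum_a\langle T_a,T\rangle J_{T_a}=J_T$, the two cross terms collapse into $-2i\abs{X}^{2i}\abs{Z}^{2j}\partial_Tq+\tfrac{j}{8}\abs{X}^{2i}\abs{Z}^{2j-2}\partial_{J_ZJ_TX}q$. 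Identity (iii) is thereby reduced to two auxiliary claims about $q$ alone: (a) $\partial_{J_ZJ_TX}q=-32\abs{Z}^2\partial_Tq$, and (b) $R_Tq=-2(2\ell+1)\partial_Tq$.

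For (a), I exploit the explicit form $J_YX=-4YX$ (valid because $\mathfrak{v}=\HH$ when $n=1$), giving $J_ZJ_TX=16ZTX$, and then combine with $Z^2=-\abs{Z}^2$ in $\HH$ to show $\partial_{J_ZJ_TX}p_k=-32\abs{Z}^2\partial_Tp_k$ on each generator; the chain rule promotes this to (a). For (b), I expand $\partial_{T_a}q$ by the chain rule, apply $\partial_{J_{T_a}J_TX}$, and sum over $a$; the quaternionic identity $T_aT_a=-1$, a further application of Lemma~\ref{lemma:H-type_rules:iv}, and a final invocation of Euler's homogeneity identity (degree $\ell$ of $q$ in $Z$) deliver the coefficient $-2(2\ell+1)$. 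Verifying (b) is the heaviest single computation in the lemma.
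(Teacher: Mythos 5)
Your proof is correct and essentially mirrors the paper's argument: (i), (iv), (v) are product-rule computations, (ii) is the product-Laplacian formula (the paper cites Lemma~\ref{lemma:SP(1,n)_complex_calc} instead of rederiving it) followed by $\partial_{J_TX}$, and (iii) is the same Leibniz expansion of $R_T$, reducing to $R_Tq=-2(2\ell+1)\partial_Tq$ and $\partial_{J_ZJ_TX}q=-32\abs{Z}^2\partial_Tq$, which the paper packages equivalently as $\sum_a\langle Z,T_a\rangle\langle T_b,\overline{X}ZT_aTX\rangle=\langle T_b,\overline{X}Z^2TX\rangle=-\abs{Z}^2\partial_Tp_b$. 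One small slip in your description of (iii): the contraction that produces the $\partial_{J_ZJ_TX}q$ cross term should read $\sum_a\langle T_a,Z\rangle J_{T_a}=J_Z$, not $\sum_a\langle T_a,T\rangle J_{T_a}=J_T$.
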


\begin{proof}
Ad (i): This is the product rule.

Ad (ii): Since $J_T|_{\mathfrak{v}'}\in\mathfrak{so}(\mathfrak{v}')$ and $J_T|_{\mathfrak{v}''}\in\mathfrak{so}(\mathfrak{v}'')$ we have $\partial_{J_TX}\abs{X'}^{2h}=\partial_{J_TX}\abs{X''}^{2i}=0$ and the formula follows from Lemma~\eqref{lemma:xLaplaceOnGeneralTerm}.

Ad (iii): We first compute $R_Tq(p_1,p_2,p_3)$:
\begin{align*}
 R_Tq(p_1,p_2,p_3) ={}& \frac{1}{16}\sum_{a=1}^3\partial_{J_{T_a}J_TX}\sum_{b=1}^3\partial_{T_a}p_b(X,Z)\frac{\partial q}{\partial p_b}(p_1,p_2,p_3)\\
 ={}& \frac{1}{16}\sum_{a,b=1}^3\partial_{J_{T_a}J_TX}\partial_{T_a}p_b(X,Z)\frac{\partial q}{\partial p_b}(p_1,p_2,p_3)\\
 & \hspace{1.5cm} + \frac{1}{16}\sum_{a,b,c=1}^3\partial_{J_{T_a}J_TX}p_c(X,Z)\partial_{T_a}p_b(X,Z)\frac{\partial^2 q}{\partial p_b\partial p_c}(p_1,p_2,p_3)\\
 ={}& 2\sum_{a,b=1}^3\langle T_b,\overline{X}T_aT_aTX\rangle\frac{\partial q}{\partial p_b}(p_1,p_2,p_3)\\
 & \hspace{2.1cm} + 2\sum_{a,b,c=1}^3\langle T_c,\overline{X}ZT_aTX\rangle\langle T_b,\overline{X}T_aX\rangle\frac{\partial^2 q}{\partial p_b\partial p_c}(p_1,p_2,p_3)\\
 ={}& -6\sum_{b=1}^3\partial_Tp_b(X,Z)\frac{\partial q}{\partial p_b}(p_1,p_2,p_3)\\
 & \hspace{3.7cm} + 2\sum_{b,c=1}^3\langle T_c,\overline{X}ZXT_b\overline{X}TX\rangle\frac{\partial^2 q}{\partial p_b\partial p_c}(p_1,p_2,p_3)\\
 ={}& -6\partial_Tq(p_1,p_2,p_3) + 2\sum_{b,c=1}^3\langle T_b\overline{X}ZXT_c,\overline{X}TX\rangle\frac{\partial^2 q}{\partial p_b\partial p_c}(p_1,p_2,p_3).
\end{align*}
Now note that $Z_1Z_2Z_3=\langle Z_1,Z_3\rangle Z_2-\langle Z_1,Z_2\rangle Z_3-\langle Z_3,Z_2\rangle Z_1$ modulo $\R{\bf 1}$ for $Z_1,Z_2,Z_3\in\Im\HH$, so that
$$ \langle T_b\overline{X}ZXT_c,\overline{X}TX\rangle = \delta_{b,c}\abs{X}^4\langle Z,T\rangle - \langle T_b,\overline{X}ZX\rangle\langle T_c,\overline{X}TX\rangle - \langle T_b,\overline{X}TX\rangle\langle T_c,\overline{X}ZX\rangle. $$
This gives
\begin{multline*}
 \sum_{b,c=1}^3\langle T_b\overline{X}ZXT_c,\overline{X}TX\rangle\frac{\partial^2 q}{\partial p_b\partial p_c}(p_1,p_2,p_3)\\
 =\langle Z,T\rangle\abs{X}^4(\Delta_pq)(p_1,p_2,p_3) - 2\partial_Z\partial_Tq(p_1,p_2,p_3) = -2(\ell-1)\partial_Tq(p_1,p_2,p_3)
\end{multline*}
and hence
$$ R_Tq(p_1,p_2,p_3) = -2(2\ell+1)\partial_Tq(p_1,p_2,p_3). $$
Further, since $J_{T_a}J_T=J_{T'}-16\langle T,T_a\rangle{\rm id}_{\mathfrak{v}}$ for some $T'\in\Im\F$ and $J_{T'}\in\mathfrak{so}(\mathfrak{v})$, we have
$$ \partial_{J_{T_a}J_TX}\abs{X}^{2i} = -32i\langle T,T_a\rangle\abs{X}^{2i}. $$
Then
\begin{align*}
 & R_T(\abs{X}^{2i}\abs{Z}^{2j}q(p_1,p_2,p_3))\\
 ={}& \frac{1}{16}\sum_{a=1}^3\partial_{J_{T_a}J_TX}\Big(2j\langle Z,T_a\rangle\abs{X}^{2i}\abs{Z}^{2j-2}q(p_1,p_2,p_3)+\abs{X}^{2i}\abs{Z}^{2j}\partial_{T_a}q(p_1,p_2,p_3)\Big)\\
 ={}& -2i\abs{X}^{2i}\sum_{a=1}^3\langle T,T_a\rangle\Big(2j\langle Z,T_a\rangle\abs{Z}^{2j-2}q(p_1,p_2,p_3)+\abs{Z}^{2j}\partial_{T_a}q(p_1,p_2,p_3)\Big)\\
 & +4j\abs{X}^{2i}\abs{Z}^{2j-2}\sum_{a,b=1}^3\langle Z,T_a\rangle\langle T_b,\overline{X}ZT_aTX\rangle\frac{\partial q}{\partial p_b}(p_1,p_2,p_3) + \abs{X}^{2i}\abs{Z}^{2j}R_Tq(p_1,p_2,p_3)\\
 ={}& -4ij\abs{X}^{2i}\abs{Z}^{2j-2}\langle Z,T\rangle q(p_1,p_2,p_3) - 2(i+2j+2\ell+1)\abs{X}^{2i}\abs{Z}^{2j}\partial_Tq(p_1,p_2,p_3),
\end{align*}
where we have used $\sum_{a=1}^3\langle Z,T_a\rangle\langle T_b,\overline{X}ZT_aTX\rangle=\langle T_b,\overline{X}Z^2TX\rangle=-\abs{Z}^2\partial_Tp_b(X,Z)$ in the last step.

Ad (iv): This follows simply by applying Lemma~\ref{lemma:xLaplaceOnGeneralTerm} twice.

Ad (v): Apply the product rule and Lemma~\ref{lemma:SP(1,2)_basic_calc:ii}.
\end{proof}

\begin{lemma}
\label{lemma:SP(1,2)_key_point}
Let $0\neq q\in\mathcal{H}^\ell(\R^3)$, then $\partial_{J_{T}X}q(p_1,p_2,p_3)=0$ for all $T\in\Im\HH$ if and only if $\ell=0$.
\end{lemma}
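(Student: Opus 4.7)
The forward direction ($\ell=0 \Rightarrow \partial_{J_TX}q=0$) is trivial since $q$ is then constant. For the converse, my plan is to show that the vanishing hypothesis forces $q$ to be both $\SO(3)$-invariant on $\R^3$ and harmonic, whence $q$ must be constant.

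The first task is to evaluate $\partial_{J_TX}p_b(X,Z)$. Using $J_TX = -4TX$, $\overline{T}=-T$ for $T\in\Im\HH$, and the Leibniz rule applied to $p_b(X,Z) = \langle T_b,\overline{X}ZX\rangle$, a short computation gives
$$\partial_{J_TX}p_b(X,Z) = -4\langle T_b,\overline{X}[Z,T]X\rangle = -8\,p_b(X,\, Z\times T),$$
where $Z\times T\in\Im\HH\cong\R^3$ denotes the cross product (so $[Z,T]=2Z\times T$). The key geometric observation is that for $X\neq 0$ the identity $\overline{X}ZX = |X|^2 X^{-1}ZX$ shows that $\vec{p}(X,Z) := (p_1,p_2,p_3)(X,Z) = |X|^2\rho_{X^{-1}}Z$, where $\rho_Y\colon Z\mapsto YZY^{-1}$ is an element of $\SO(\Im\HH)\cong\SO(3)$. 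Since $\SO(3)$ preserves cross products, the chain rule then yields
$$\partial_{J_TX}q(p_1,p_2,p_3) = -8\,\nabla q(\vec{p})\cdot\bigl(\vec{p}\times\rho_{X^{-1}}T\bigr).$$
For fixed $X\neq 0$, $\rho_{X^{-1}}T$ covers $\R^3$ as $T$ varies and $\vec{p}(X,Z)$ covers $\R^3$ as $Z$ varies, so the vanishing hypothesis is equivalent to $\nabla q(p)\times p = 0$ for all $p\in\R^3$.

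To finish, I note that $\nabla q(p)\times p = 0$ means $q$ is killed by every infinitesimal rotation of $\R^3$, so $q$ is $\SO(3)$-invariant and hence a polynomial in $|p|^2$. Homogeneity of degree $\ell$ then forces $q(p) = c|p|^{\ell}$ with $\ell$ even (else $q=0$), and $\Delta_p q = c\ell(\ell+1)|p|^{\ell-2}$ vanishes iff $c=0$ or $\ell=0$. Thus $q\neq 0$ forces $\ell=0$. The only non-routine step is the geometric identification $\vec{p} = |X|^2\rho_{X^{-1}}Z$ arising from the double cover $\mathrm{Sp}(1)\to\SO(3)$ acting on $\Im\HH$; once this is in place, everything reduces to the standard fact that nonzero spherical harmonics on $\R^3$ of positive degree are never rotation-invariant.
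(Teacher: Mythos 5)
Your proof is correct, and it takes a genuinely different, more conceptual route than the paper's. The paper works directly in coordinates: it observes that harmonicity of $q$ forces $\Delta_{\mathfrak{v}}q(p_1,p_2,p_3)=0$ (Lemma~\ref{lemma:SP(1,2)_basic_calc}), applies the particular combination $\tfrac{1}{4}\bigl(\partial_{X_2}\partial_{J_{T_1}X}+\partial_{X_3}\partial_{J_{T_2}X}+\partial_{X_4}\partial_{J_{T_3}X}\bigr)$ to $q(p_1,p_2,p_3)$, and — using homogeneity of degree $2\ell$ in $X$ — finds this equals $(2\ell+2)\,\partial_{X_1}q(p_1,p_2,p_3)$, which must vanish under the hypothesis, forcing $q=0$ or $\ell=0$. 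You instead make the geometry explicit: writing $\vec p(X,Z)=|X|^2\rho_{X^{-1}}Z$ via the double cover $\mathrm{Sp}(1)\to\SO(3)$, you compute $\partial_{J_TX}\vec p=-8\,\vec p\times\rho_{X^{-1}}T$, translate the hypothesis (via the scalar triple product and the surjectivity of $Z\mapsto\vec p$, $T\mapsto\rho_{X^{-1}}T$) into $\nabla q(p)\times p=0$ for all $p\in\R^3$, and finish with the classical fact that a nonzero $\SO(3)$-invariant harmonic homogeneous polynomial on $\R^3$ has degree zero. Your version is slightly longer but shows where the lemma really comes from — the $\mathrm{Sp}(1)$-action on $\Im\HH$ is the rotation action, and $\partial_{J_TX}$ is (up to the radial scaling) the corresponding angular vector field on the $p$-side; the paper's computation is terser but the appearance of exactly the combination $\sum_a\partial_{X_{a+1}}\partial_{J_{T_a}X}$ looks like a trick. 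Both are complete and correct; yours also clarifies why the restriction to $n=1$ matters (for $n>1$ the analogous reduction is handled differently in Proposition~\ref{prop:DiffSBOsSp(1,n)}).
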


\begin{proof}
We have
\begin{align*}
 J_{T_1}X &= 4(X_2S_1-X_1S_2+X_4S_3-X_3S_4)\\
 J_{T_2}X &= 4(X_3S_1-X_4S_2-X_1S_3+X_2S_4)\\
 J_{T_3}X &= 4(X_4S_1+X_3S_2-X_2S_3-X_1S_4).
\end{align*}
Since $\Delta_pq=0$ implies $\Delta_{\mathfrak{v}} q(p_1,p_2,p_3)=0$ by Lemma~\ref{lemma:SP(1,2)_basic_calc:i} we have
\begin{align*}
0 &= \frac{1}{4}\left(\frac{\partial}{\partial X_2}\partial_{J_{T_1}X}+\frac{\partial}{\partial X_3}\partial_{J_{T_2}X}+\frac{\partial}{\partial X_4}\partial_{J_{T_3}X}\right)q(p_1,p_2,p_3)\\
 &=\bigg(-X_1\left(\frac{\partial^2}{\partial X_2^2}+\frac{\partial^2}{\partial X_3^2}+\frac{\partial^2}{\partial X_4^2}\right)+\left(X_2\frac{\partial}{\partial X_2}+X_3\frac{\partial}{\partial X_3}+X_4\frac{\partial}{\partial X_4}\right)\frac{\partial}{\partial X_1}\\
 & \hspace{9.5cm}+3\frac{\partial}{\partial X_1}\bigg)q(p_1,p_2,p_3)\\
 &= \left(E_\mathfrak{v}+3\right)\frac{\partial}{\partial X_1}q_j(p_1,p_2,p_3) = (2\ell+2)\frac{\partial}{\partial X_1}q_j(p_1,p_2,p_3).
\end{align*}
Hence $\frac{\partial}{\partial X_1}q(p_1,p_2,p_3)$ must vanish, which implies $q=0$ or $\ell=0$.
\end{proof}

\begin{prop}
\label{prop:DiffSBOs_SP(1,2)}
For $(G,G')=({\rm Sp}(1,2),{\rm Sp}(1,1))$ and $(\lambda,\nu)\in\LocalPoles$ with $\lambda+\rho-\nu-\rho'=-2k$ we have
$$ \C[\nbar]_{\lambda,\nu} = \begin{cases}\C\widehat{u}_{\lambda,\nu}^C\oplus\mathcal{H}^{\frac{k}{2}}(p_1,p_2,p_3) & \text{for $k>0$ even and $\nu+\rho'=k+4$,}\\\C\widehat{u}_{\lambda,\nu}^C & \text{otherwise.}\end{cases} $$
\end{prop}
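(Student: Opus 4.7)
The plan is to work with the single PDE $-\mathcal{F}(D_\zfrak(T))\widehat{u}=0$ of Lemma~\ref{lemma:fourier_equations}(iv) (equation (iii) being vacuous since $\mathfrak{v}'=\{0\}$), expanding $\widehat{u}$ in a Fischer-type basis adapted to $\C[\nbar]^{M'}$. By Lemma~\ref{lemma:poly_invariants}(ii) combined with the Fischer decomposition on $\R^3$, every invariant polynomial of weighted degree $2k$ can be written uniquely as
$$ \widehat{u} = \sum_{i+2j+2\ell=k}|X|^{2i}|Z|^{2j}q_\ell^{i,j}(p_1,p_2,p_3),\qquad q_\ell^{i,j}\in\mathcal{H}^\ell(\R^3), $$
uniqueness being checked by restricting to fixed $|X|=R$, $|Z|=S$ and using that $\overline{X}ZX$ sweeps out the sphere of radius $R^2S$ in $\Im\HH$.

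Applying $-\mathcal{F}(D_\zfrak(T))$ summand by summand via Lemma~\ref{lemma:SP(1,2)_complex_calc} produces three kinds of $T$-linear contributions, namely $\partial_T q_\ell^{i,j}$-, $\langle T,Z\rangle q_\ell^{i,j}$- and $\partial_{J_TX}q_\ell^{i,j}$-terms, with explicit coefficients in $(i,j,\ell,\nu,\rho')$. These live in distinct ${\rm Sp}(1)$-isotypic pieces of the output which I tease apart by combining a bi-degree analysis in $(X,Z)$ with the Fischer decomposition $\langle T,Z\rangle q_\ell = H^T_{\ell+1}+\tfrac{|Z|^2}{2\ell+1}\partial_T q_\ell$ in $Z$; the resulting separated equations give, at each bi-degree, independent scalar conditions on the $q_\ell^{i,j}$'s.

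A downward induction on $\ell$ then yields the classification. At the top $\ell=k/2$ (which requires $k$ even) only $(i,j,\ell)=(0,0,k/2)$ contributes and the equation reduces to $(\nu+\rho'-k-4)\partial_T q_{k/2}=0$; by Lemma~\ref{lemma:SP(1,2)_key_point} a non-trivial $q_{k/2}\in\mathcal{H}^{k/2}(\R^3)$ exists precisely when $\nu+\rho'=k+4$, furnishing the sporadic summand. For $0<\ell<k/2$, the $\partial_{J_TX}q_\ell^{i,j}$-part has non-vanishing coefficient $-i(i+2\ell+1)$ for $i\ge 1$, forcing $q_\ell^{i,j}=0$ via Lemma~\ref{lemma:SP(1,2)_key_point}; the remaining $i=0$ summands produce two equations with coefficients $(k+1)(\nu+\rho'-k-4)/(2\ell+1)$ (for $\partial_T q_\ell$) and $(k-2\ell)(\nu+\rho'-k+2\ell-3)$ (for $H^T_{\ell+1}$) whose respective roots $\nu+\rho'=k+4$ and $\nu+\rho'=k-2\ell+3$ differ by $2\ell+1\neq 0$, so at least one of them is non-zero and forces $q_\ell^{0,k/2-\ell}=0$ (either through $H^T_{\ell+1}=0$ and specialization $T=Z$, or through Lemma~\ref{lemma:SP(1,2)_key_point}). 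The residual $\ell=0$ sector reproduces the recursion of Proposition~\ref{theorem:diff_SBO_equations}(ii) and contributes $\C\widehat{u}_{\lambda,\nu}^C$ by Theorem~\ref{theorem:diff_solutions}. The main technical obstacle is the bookkeeping of mixed contributions at fixed bi-degree coming from different $(i,j,\ell)$: they must be decoupled using both ${\rm Sp}(1)$-equivariance in $T$ and the Fischer decomposition in $Z$, and one has to verify that the two coefficient equations for $0<\ell<k/2$ never both vanish in the relevant parameter range.
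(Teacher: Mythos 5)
Your proposal follows essentially the same approach as the paper: the same Fischer-type expansion $\widehat{u}=\sum_{i+2j+2\ell=k}\abs{X}^{2i}\abs{Z}^{2j}q_\ell^{i,j}(p_1,p_2,p_3)$, the same application of Lemma~\ref{lemma:SP(1,2)_complex_calc}, the same bi-degree-plus-Fischer separation (the paper's passage from \eqref{eq:ProofSp(1,2)1} to \eqref{eq:ProofSp(1,2)2}), and Lemma~\ref{lemma:SP(1,2)_key_point} to kill $q_\ell^{i,j}$ for $i\geq 1$, $\ell>0$; your downward induction on $\ell$ together with the observation that the two $i=0$ coefficients $(k+1)(\nu+\rho'-k-4)/(2\ell+1)$ and $(k-2\ell)(\nu+\rho'-k+2\ell-3)$ have roots differing by $2\ell+1\neq 0$ is a compact repackaging of the paper's equations \eqref{eq:ProofSp(1,2)3}--\eqref{eq:ProofSp(1,2)4}. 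One small misattribution: where you invoke Lemma~\ref{lemma:SP(1,2)_key_point} to conclude that $\partial_Tq_\ell=0$ for all $T$ forces $q_\ell=0$ (both at the top $\ell=k/2$ and in the intermediate range), what you actually need is the elementary fact that $q_\ell(p_1,p_2,p_3)$ is homogeneous of positive degree $\ell$ in $Z$ and hence cannot be annihilated by every $\partial_T$; that lemma concerns $\partial_{J_TX}q_\ell$ and is correctly used only for the $i\geq 1$ terms. This is cosmetic and does not affect the validity of the argument.
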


Note that for $k=0$ we have $\C\widehat{u}_{\lambda,\nu}^C=\mathcal{H}^0(p_1,p_2,p_3)=\C 1$.

\begin{proof}
As in the proof of Proposition~\ref{prop:DiffSBOsSp(1,n)} we can write $\widehat{u}\in\C[\nbar]_{\lambda,\nu}$ as
$$ \widehat{u} = \sum_{i+2j+2\ell=k} \abs{X}^{2i}\abs{Z}^{2j}q_\ell^{i,j}(p_1,p_2,p_3) $$
with $q_\ell^{i,j}\in\mathcal{H}^\ell(\R^3)$. By Lemma~\ref{lemma:SP(1,2)_complex_calc} the differential operator $-\mathcal{F}(D_{\mathfrak{z}}(T))$ in Proposition~\ref{prop:fourier_equations} applied to $\widehat{u}$ takes the following form:
\begin{align*}
 & \sum_{i+2j+2\ell=k}\Bigg[2j(\nu+\rho'+2i+2\ell-2)\abs{X}^{2i}\abs{Z}^{2j-2}\langle Z,T\rangle\\
 & \hspace{3cm} +(\nu+\rho'+2i+2\ell-2)\abs{X}^{2i}\abs{Z}^{2j}\partial_T\\
 & \hspace{3cm} -i(i+2\ell+1)\abs{X}^{2i-2}\abs{Z}^{2j}\partial_{J_TX}-4ij\abs{X}^{2i}\abs{Z}^{2j-2}\langle Z,T\rangle\\
 & \hspace{3cm} -2(i+2j+2\ell+1)\abs{X}^{2i}\abs{Z}^{2j}\partial_T\\
 & \hspace{3cm} -16i(i-1)(i+2\ell)(i+2\ell+1)\abs{X}^{2i-4}\abs{Z}^{2j}\langle Z,T\rangle\\
 & \hspace{3cm} -2j(2j+2\ell+1)\abs{X}^{2i}\abs{Z}^{2j-2}\langle Z,T\rangle\Bigg]q_\ell^{i,j}(p_1,p_2,p_3).
\end{align*}
After rearrangement we obtain
\begin{align}
 & \sum_{i+2j+2\ell=k}(\nu+\rho'-4j-2\ell-4)\abs{X}^{2i}\abs{Z}^{2j}\partial_Tq_\ell^{i,j}(p_1,p_2,p_3)\notag\\
 & -\sum_{i+2j+2\ell=k-1}(i+1)(i+2\ell+2)\abs{X}^{2i}\abs{Z}^{2j}\partial_{J_TX}q_\ell^{i+1,j}(p_1,p_2,p_3)\notag\\
 & +\sum_{i+2j+2\ell=k-2}\abs{X}^{2i}\abs{Z}^{2j}\Bigg[2(j+1)(\nu+\rho'-2j-5)\langle Z,T\rangle q_\ell^{i,j+1}(p_1,p_2,p_3)\notag\\
 & \hspace{2.5cm} -16(i+1)(i+2)(i+2\ell+2)(i+2\ell+3)\langle Z,T\rangle q_\ell^{i+2,j}(p_1,p_2,p_3)\Bigg].\label{eq:ProofSp(1,2)1}
\end{align}
As in the proof of Proposition~\ref{prop:DiffSBOsSp(1,n)} we note that for $q\in\mathcal{H}^\ell(\R^3)$ the two polynomials $\partial_Tq(p_1,p_2,p_3)$ and $\partial_{J_TX}q(p_1,p_2,p_3)$ are harmonic in $X$ and $Z$, the latter one due to Lemma~\ref{lem:LaplacianCommutators}. We further decompose
$$ \langle Z,T\rangle q(p_1,p_2,p_3) = \underbrace{\langle Z,T\rangle q(p_1,p_2,p_3)-\frac{\abs{Z}^2}{2\ell+1}\partial_Tq(p_1,p_2,p_3)}_{q(p_1,p_2,p_3)_+:=} + \frac{\abs{Z}^2}{2\ell+1}\partial_Tq(p_1,p_2,p_3), $$
then \eqref{eq:ProofSp(1,2)1} rewrites as
\begin{align}
 & \sum_{i+2j+2\ell=k}\abs{X}^{2i}\abs{Z}^{2j}\frac{1}{2\ell+1}\Bigg[(2\ell+1)(\nu+\rho'-4j-2\ell-4)\partial_Tq_\ell^{i,j}(p_1,p_2,p_3)\notag\\
 & \hspace{3cm} +2j(\nu+\rho'-2j-3)\partial_Tq_\ell^{i,j}(p_1,p_2,p_3)\notag\\
 & \hspace{3cm} -16(i+1)(i+2)(i+2\ell+2)(i+2\ell+3)\partial_Tq_\ell^{i+2,j-1}(p_1,p_2,p_3)\Bigg]\notag\\
 & -\sum_{i+2j+2\ell=k-1}(i+1)(i+2\ell+2)\abs{X}^{2i}\abs{Z}^{2j}\partial_{J_TX}q_\ell^{i+1,j}(p_1,p_2,p_3)\notag\\
 & +\sum_{i+2j+2\ell=k-2}\abs{X}^{2i}\abs{Z}^{2j}\Bigg[2(j+1)(\nu+\rho'-2j-5)q_\ell^{i,j+1}(p_1,p_2,p_3)_+\notag\\
 & \hspace{3cm} -16(i+1)(i+2)(i+2\ell+2)(i+2\ell+3)q_\ell^{i+2,j}(p_1,p_2,p_3)_+\Bigg].\label{eq:ProofSp(1,2)2}
\end{align}
Now all terms inside square brackets are both harmonic in $X$ and $Z$ and we can apply the uniqueness of the Fischer decomposition. Further, a similar argument as in the proof of Proposition~\ref{prop:DiffSBOsSp(1,n)} comparing degrees then shows that for fixed $i$ and $j$ each square bracket has to vanish separately for the respective value of $\ell$. The second square brackets vanish if and only if for all $i$ and $j$
$$ i(i+2\ell+1)\partial_{J_TX}q_\ell^{i,j}(p_1,p_2,p_3) = 0. $$
By Lemma~\ref{lemma:SP(1,2)_key_point} this implies that $q_\ell^{i,j}=0$ whenever $i>0$ and $\ell>0$. If we assume that $\ell=0$, then $q_\ell^{i,j}=c_{i,j}\in\mathcal{H}^0(\R^3)=\C$ is scalar and we obtain the distributions $\widehat{u}_{\lambda,\nu}^C$. We therefore assume $\ell>0$ for the rest of the proof. Since $q(p_1,p_2,p_3)_+=0$ if and only if $q=0$, the last square bracket vanishes if and only if for all $i$ and $j$:
\begin{equation}
 2(j+1)(\nu+\rho'-2j-5)q_\ell^{i,j+1} = 16(i+1)(i+2)(i+2\ell+2)(i+2\ell+3)q_\ell^{i+2,j}.\label{eq:ProofSp(1,2)3}
\end{equation}
Plugging this into the first square bracket further gives
\begin{equation}
 (\nu+\rho'-4j-2\ell-4)\partial_Tq_\ell^{i,j}(p_1,p_2,p_3) = 0.\label{eq:ProofSp(1,2)4}
\end{equation}
Now for $i=2$ we have $q_\ell^{2,j}=0$ by our previous considerations and the right hand side of \eqref{eq:ProofSp(1,2)3} vanishes. Thus, either $q_\ell^{0,j+1}=0$ or $\nu+\rho'-2j-5=0$. In the latter case, \eqref{eq:ProofSp(1,2)4} implies that $(2j+2\ell+3)\partial_Tq_\ell^{0,j+1}(p_1,p_2,p_3)=0$ and hence $q_\ell^{0,j+1}=0$ as $\ell>0$.

Summarizing, we have shown that for $\ell>0$ we have $q_\ell^{i,j}=0$ whenever $i>0$ or $j>0$. The only possible solution in addition to $\widehat{u}_{\lambda,\nu}^C$ is therefore
$$ \widehat{u} = q_\ell^{0,0}(p_1,p_2,p_3) $$
with $2\ell=k>0$. This polynomial indeed satisfies \eqref{eq:ProofSp(1,2)3}, but it only satisfies \eqref{eq:ProofSp(1,2)4} if $\nu+\rho'=2\ell+4=k+4$. 
\end{proof}

\newpage
\part{Classification of symmetry breaking operators}\label{part:classification}

Using the classification of differential symmetry breaking operators from the previous part, we now complete the classification of all symmetry breaking operators between spherical principal series representations for strongly spherical pairs of the form $(G,G')=(\Urm(1,n+1;\F),\Urm(1,m+1;\F)\times F)$ with $0\leq m<n$ and $F<\Urm(n-m;\F)$.

\section{Solutions outside the origin}\label{sec:solutions_outside_origin}

We first determine the space $\mathcal{D}'(\nbar \setminus \{0\})_{\lambda,\nu}$ of invariant distributions defined outside the origin.

\begin{prop}
\label{prop:solutions}
For any $(\lambda,\nu) \in \C^2$, the space $\mathcal{D}'(\nbar \setminus \{0\})_{\lambda,\nu}$ is spanned by the non-zero distribution
$$ \frac{1}{\Gamma(\frac{\lambda+\rho+\nu-\rho'}{2})} N(X,Z)^{-2(\nu+\rho')}\abs{X''}^{\lambda-\rho+\nu+\rho'}.$$
\end{prop}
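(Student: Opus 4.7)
\emph{Existence.} Because $p''=2(\rho-\rho')$, the factor $|X''|^{\lambda-\rho+\nu+\rho'}/\Gamma(\tfrac{\lambda+\rho+\nu-\rho'}{2})$ is the classical Riesz normalization of $|X''|^{\alpha}$ on $\mathfrak{v}''\cong\R^{p''}$, hence extends to a holomorphic family of distributions on $\mathfrak{v}''$ that is nowhere zero. Multiplying by the smooth nonvanishing function $N(X,Z)^{-2(\nu+\rho')}$ on $\nbar\setminus\{0\}$ produces a holomorphic family $u_A\in\mathcal{D}'(\nbar\setminus\{0\})$ which is nonzero for every $(\lambda,\nu)\in\C^2$. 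I then verify the four conditions of Lemma~\ref{lemma:diff_equations}: the $M'$-invariance in (i) follows because $N$ and $|X''|^2$ are $M'$-invariants; the homogeneity in (ii) follows from $EN=N$ and $E|X''|=|X''|$ (as $\mathfrak{v}''$ lies in the $+1$-eigenspace of $\operatorname{ad}(H)$); conditions (iii)--(iv) reduce, on the open set $\nbar\setminus\nbar'$ where $u_A$ is smooth, to pointwise identities that I check directly using Lemma~\ref{lemma:H-type_rules} together with $\partial_S|X''|=0$ and $J_ZS\in\mathfrak{v}'$ for $S\in\mathfrak{v}'$. These identities extend to all $(\lambda,\nu)\in\C^2$ by holomorphic continuation.

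\emph{Uniqueness on the open orbit.} Set $\Omega_A:=\nbar\setminus\nbar'$ and use the restriction exact sequence
\[ 0 \longrightarrow \mathcal{D}'_{\nbar'\setminus\{0\}}(\nbar\setminus\{0\})_{\lambda,\nu} \longrightarrow \mathcal{D}'(\nbar\setminus\{0\})_{\lambda,\nu} \longrightarrow \mathcal{D}'(\Omega_A)_{\lambda,\nu}. \]
On $\Omega_A$, $u_A$ is smooth and nonvanishing, so every $u\in\mathcal{D}'(\Omega_A)_{\lambda,\nu}$ can be written uniquely as $u=u_A\cdot v$ with $v\in\mathcal{D}'(\Omega_A)$. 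Writing each of $D_{\mathfrak{v}}(S)$ and $D_{\mathfrak{z}}(T)$ as the sum of its zero-order coefficient and its first-order vector-field part $L_S$, $M_T$, the Leibniz rule yields $D_{\mathfrak{v}}(S)(u_Av)=[D_{\mathfrak{v}}(S)u_A]\,v+u_A\cdot L_Sv$, and similarly for $D_{\mathfrak{z}}(T)$. Since $D_{\mathfrak{v}}(S)u_A=0=D_{\mathfrak{z}}(T)u_A$ and $u_A$ is nowhere zero on $\Omega_A$, $v$ is annihilated by $L_S$ for $S\in\mathfrak{v}'$ and $M_T$ for $T\in\mathfrak{z}$, as well as by the $M'$-action and by $E$. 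These are the infinitesimal generators of the $\mathfrak{p}'$-action on the orbit $\mathcal{O}_A$; by Proposition~\ref{cor:orbits}, $P'$ acts transitively on $\mathcal{O}_A$, so they span the tangent space at every point of $\Omega_A$. Since $\Omega_A$ is connected (using $p''\geq 2$), $v$ is constant and $u\in\C u_A|_{\Omega_A}$.

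\emph{Vanishing on $\nbar'\setminus\{0\}$.} It remains to show $\mathcal{D}'_{\nbar'\setminus\{0\}}(\nbar\setminus\{0\})_{\lambda,\nu}=0$. Any such $u$ admits a transverse expansion along the normal bundle of $\nbar'\subset\nbar$,
\[ u = \sum_{\alpha\in\Z_{\geq 0}^{p''}} \partial^\alpha_{X''}\delta(X'')\otimes u_\alpha,\qquad u_\alpha\in\mathcal{D}'(\nbar'\setminus\{0\}), \]
where $M'$-invariance and the weighted Euler condition fix the $M'$-representation type and the weighted Euler weight of each $u_\alpha$, so only finitely many $u_\alpha$ appear at each weight. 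Substituting into $D_{\mathfrak{v}}(S)u=0$ and $D_{\mathfrak{z}}(T)u=0$ produces a triangular recursion on the $u_\alpha$; at the leading multi-index $\alpha$ the equation takes the form $c(\lambda,\nu,|\alpha|)\,u_\alpha=0$ with $c$ a nonzero scalar, forcing $u_\alpha=0$, and descending induction on $|\alpha|$ yields $u=0$. The main obstacle of the whole proof is this last step: extracting the leading-order recursion from the operators $D_{\mathfrak{v}}(S)$ and $D_{\mathfrak{z}}(T)$ acting on transverse derivatives $\partial^\alpha_{X''}\delta(X'')\otimes u_\alpha$ and verifying that the resulting scalar $c$ never vanishes; all the other steps reduce to routine distributional and invariant-theoretic computation.
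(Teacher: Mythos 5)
Your overall architecture — stratify $\nbar\setminus\{0\}$ into the two $P'$-orbits $\Omega_A=\nbar\setminus\nbar'$ and $\nbar'\setminus\{0\}$, bound the dimension on the open orbit by a transitivity argument, and kill the kernel supported on the lower stratum — is genuinely different from the paper's, which instead conjugates by the longest Weyl element $\tilde{w}_0$ and thereby converts the $\mathfrak{n}'$-equations into the trivial ones $\partial_T v=0$ and $(\partial_S+\tfrac12\partial_{[S,X]})v=0$; that reduces the problem at a stroke to classifying $M'$-invariant homogeneous distributions on $\mathfrak{v}''$ alone (Lemma~\ref{lemma:homogenous_distributions}), uniformly in $(\lambda,\nu)$. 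Your route is reasonable in spirit, but as written it has a genuine gap that kills it precisely on $\LocalPoles$.

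The problem is in the last two steps. You assert that $u_A$ is ``smooth and nonvanishing'' on $\Omega_A$ and that $\mathcal{D}'_{\nbar'\setminus\{0\}}(\nbar\setminus\{0\})_{\lambda,\nu}=0$. Both are false when $\lambda+\rho+\nu-\rho'\in-2\Z_{\geq0}$. On that set the normalizing factor $1/\Gamma(\tfrac{\lambda+\rho+\nu-\rho'}{2})$ vanishes and $|X''|^{\lambda-\rho+\nu+\rho'}/\Gamma(\tfrac{\lambda+\rho+\nu-\rho'}{2})$ becomes a constant multiple of $\Delta_{\mathfrak{v}''}^l\delta(X'')$ (Lemma~\ref{lemma:homogenous_distributions}), so the distribution in the proposition has $\Supp\subseteq\nbar'$: it restricts to $0$ on $\Omega_A$ and is a \emph{nonzero} element of $\mathcal{D}'_{\nbar'\setminus\{0\}}(\nbar\setminus\{0\})_{\lambda,\nu}$. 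Consequently the quotient $v=u/u_A$ in your ``uniqueness on the open orbit'' step is not defined for those parameters (you would have to use the un-normalized kernel $N^{-2(\nu+\rho')}|X''|^{\lambda-\rho+\nu+\rho'}$ there, which is always smooth and positive on $\Omega_A$), and — more seriously — the ``recursion scalar'' $c(\lambda,\nu,|\alpha|)$ that you claim ``never vanishes'' in fact vanishes exactly on $\LocalPoles$. That is not an accident: it is the mechanism by which the support of the claimed distribution drops to $\nbar'$. So the final step does not merely need more work; its conclusion is wrong, and the argument as a whole establishes the proposition at best for $(\lambda,\nu)\notin\LocalPoles$. A repaired version of your strategy would have to treat $\LocalPoles$ separately (showing there that $\mathcal{D}'(\Omega_A)_{\lambda,\nu}=0$ while the kernel of the restriction map is exactly one-dimensional), which is considerably more delicate than the paper's Weyl-element argument and is precisely the case-splitting the paper's proof is designed to avoid.
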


\begin{remark}
\label{remark:residue_solution_outside_origin}
By Lemma~\ref{lemma:homogenous_distributions}, we have for $(\lambda,\nu)\in \LocalPoles$ and $l \in \Z_{\geq 0}$ given by $\lambda+\rho+\nu-\rho'=-2l$: $$\frac{1}{\Gamma(\frac{\lambda+\rho+\nu-\rho'}{2})}\abs{X''}^{\lambda-\rho+\nu+\rho'}= (-1)^l \frac{\pi^{\frac{p''}{2}}}{2^{2l}\Gamma(\frac{p''+2l}{2})}  (\Delta_{\mathfrak{v}''}^l \delta)(X'').  $$
Otherwise $$\Supp \frac{1}{\Gamma(\frac{\lambda+\rho+\nu-\rho'}{2})}\abs{X''}^{\lambda-\rho+\nu+\rho'} =\mathfrak{v}''.$$
\end{remark}

The strategy to prove this proposition is to use the action $\pi_\lambda(\tilde{w}_0)$ of the longest Weyl group element $\tilde{w}_0$. On the group level this corresponds to switching between the open dense subsets $\Nbar MAN$ and $\tilde{w}_0\Nbar MAN$ of $G$. The advantage of working on $\tilde{w}_0\Nbar MAN$ is that the left-action of $N'$ on $\tilde{w}_0\Nbar$ is by translations on $\Nbar$.

\begin{proof}[Proof of Proposition~\ref{prop:solutions}]
First note that since $N(X,Z)\neq0$ for all $(X,Z)\in\nbar\setminus\{0\}$, the involution $\pi_{-\lambda}(\tilde{w}_0)$ from Proposition~\ref{prop:longest_weyl_group_element_action} defines a bijection from $\mathcal{D}'(\nbar\setminus\{0\})$ onto itself. We first determine the image of $\mathcal{D}'(\nbar\setminus\{0\})_{\lambda,\nu}$ under $\pi_{-\lambda}(\tilde{w}_0)$. It is easy to see that every $v\in\pi_{-\lambda}(\tilde{w}_0)(\mathcal{D}'(\nbar\setminus\{0\})_{\lambda,\nu})$ is $M'$-invariant and homogeneous of degree $\lambda-\rho+\nu+\rho'$. Further, on homogeneous distributions the differential operators $D_{\mathfrak{v}}(S)$ and $D_{\mathfrak{z}}(T)$ are given by
$$ D_{\mathfrak{v}}(S) = \pi_{-\lambda}(\tilde{w}_0)d\pi_{-\lambda}(S)\pi_{-\lambda}(\tilde{w}_0) \qquad \mbox{and} \qquad D_{\mathfrak{z}}(T) = \pi_{-\lambda}(\tilde{w}_0)d\pi_{-\lambda}(T)\pi_{-\lambda}(\tilde{w}_0) $$
with $d\pi_{-\lambda}(S)=-\partial_S-\frac{1}{2}\partial_{[S,X]}$ and $d\pi_{-\lambda}(T)=-\partial_T$ (see the proof of Proposition~\ref{prop:LieAlgebraAction}). Hence, every $v\in\pi_{-\lambda}(\tilde{w}_0)(\mathcal{D}'(\nbar\setminus\{0\})_{\lambda,\nu})$ satisfies $\partial_Tv=0$ for all $T\in\mathfrak{z}$ and $(\partial_S+\frac{1}{2}\partial_{[S,X]})v=0$ for all $S\in\mathfrak{v}'$. This implies that $v(X,Z)=v(X'')$ is independent of $X'$ and $Z$. By Remark~\ref{rem:TransUnitSphere} the group $M'$ acts transitively on the unit sphere in $\mathfrak{v}''$, so the $M'$-invariance and the homogeneity condition imply by Lemma~\ref{lemma:homogenous_distributions} that
$$ \pi_{-\lambda}(\tilde{w}_0)(\mathcal{D}'(\nbar\setminus\{0\})_{\lambda,\nu}) = \C \frac{\abs{X''}^{\lambda-\rho+\nu+\rho'}}{\Gamma(\frac{\lambda+\rho+\nu-\rho'}{2})} . $$
Applying $\pi_{-\lambda}(\tilde{w}_0)$ to this distribution shows the claim.
\end{proof}

\section{Analytic continuation of invariant distributions}

For $\Re(\nu)\ll 0$ and $\Re(\lambda+\nu)\gg0$ we define the following locally integrable function on $\nbar$:
$$u_{\lambda,\nu}^A(X,Z):=\frac{1}{\Gamma(\frac{\lambda+\rho-\nu-\rho'}{2})\Gamma(\frac{\lambda+\rho+\nu-\rho'}{2})} N(X,Z)^{-2(\nu+\rho')}\abs{X''}^{\lambda-\rho+\nu+\rho'}.$$
In this section we will prove:

\begin{theorem}
\label{lemma:holomorphic_cont_solution}
$u_{\lambda,\nu}^A$ extends to a family of distributions that depends holomorphically on $(\lambda,\nu)\in \C^2$ and $u_{\lambda,\nu}^A\in \mathcal{D}'(\nbar)_{\lambda,\nu}$ for all $(\lambda,\nu)\in\C^2$.
\end{theorem}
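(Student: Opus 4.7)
The plan has two components: first verify the invariance properties of Lemma~\ref{lemma:diff_equations} in the range of parameters for which $u^A_{\lambda,\nu}$ is a locally integrable function, and then show that the two Gamma normalizations cancel all poles encountered when continuing in $(\lambda,\nu)$.

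Rather than verifying $D_{\mathfrak{v}}(S)u=0$ and $D_{\mathfrak{z}}(T)u=0$ by grinding through the definitions of $N$ and $|X''|$, I would pull the problem back through the Weyl involution. A direct computation from Lemma~\ref{prop:matrix_decompositions:i} together with $J_ZX=-4ZX$ gives $\sigma(X,Z)_{\mathfrak{v}''}=-(|X|^2+Z)X''/N(X,Z)^4$ and hence $|\sigma(X,Z)_{\mathfrak{v}''}|=|X''|/N(X,Z)^2$, and Proposition~\ref{prop:longest_weyl_group_element_action} then yields
\begin{equation*}
u^A_{\lambda,\nu} \;=\; \pi_{-\lambda}(\tilde{w}_0)\!\left[\frac{|X''|^{\lambda-\rho+\nu+\rho'}}{\Gamma(\tfrac{\lambda+\rho-\nu-\rho'}{2})\Gamma(\tfrac{\lambda+\rho+\nu-\rho'}{2})}\right].
\end{equation*}
The bracketed distribution depends only on $X''$, is manifestly $M'$-invariant, carries the correct weighted $A'$-degree, and is killed by $\partial_S$ for $S\in\mathfrak{v}'$ and by $\partial_T$ for $T\in\mathfrak{z}$; conjugating by $\pi_{-\lambda}(\tilde{w}_0)$ translates these trivial relations into the equations of Lemma~\ref{lemma:diff_equations}, so $u^A_{\lambda,\nu}\in\mathcal{D}'(\nbar)_{\lambda,\nu}$ on the entire domain of convergence.

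For the holomorphic continuation to $(\lambda,\nu)\in\C^2$ I would use weighted polar coordinates $(X,Z)=(rY,r^2W)$ with $r=N(X,Z)\in[0,\infty)$ and $(Y,W)$ ranging over the compact unit sphere $\Sigma=\{N=1\}$; then $dX\,dZ=r^{2\rho-1}\,dr\,d\Sigma$, so for $\phi\in C_c^\infty(\nbar)$
\begin{equation*}
\langle u^A_{\lambda,\nu},\phi\rangle \;=\; \frac{1}{\Gamma(\tfrac{\lambda+\rho-\nu-\rho'}{2})\Gamma(\tfrac{\lambda+\rho+\nu-\rho'}{2})}\int_0^\infty r^{\lambda+\rho-\nu-\rho'-1}K_\phi(r)\,dr,
\end{equation*}
with $K_\phi(r)=\int_\Sigma |Y''|^{\lambda-\rho+\nu+\rho'}\phi(rY,r^2W)\,d\Sigma(Y,W)$. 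The two Gamma prefactors are then matched against two independent pole sources: (a) slicing $\Sigma$ transversely to $\mathfrak{v}''$, the angular integrand is a standard Riesz-type distribution in $|Y''|^\mu$ whose poles in $\mu=\lambda-\rho+\nu+\rho'$ lie only at $\mu+p''=\lambda+\rho+\nu-\rho'\in-2\Z_{\geq 0}$, exactly where $1/\Gamma(\tfrac{\lambda+\rho+\nu-\rho'}{2})$ vanishes; (b) Taylor-expanding $\phi(rY,r^2W)$ in $r$ and using that the reflections $(Y,W)\mapsto(\pm Y,\pm W)$ preserve $\Sigma$, $d\Sigma$ and $|Y''|^{\lambda-\rho+\nu+\rho'}$, only monomials of even total degree in $Y$ and in $W$ survive the spherical integration, so $K_\phi$ is a function of $r^2$ modulo a smooth remainder, whence the radial integral has poles only at $\lambda+\rho-\nu-\rho'\in-2\Z_{\geq 0}$, which are cancelled by $1/\Gamma(\tfrac{\lambda+\rho-\nu-\rho'}{2})$.

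Combining (a) and (b), $\langle u^A_{\lambda,\nu},\phi\rangle$ extends to an entire function of $(\lambda,\nu)\in\C^2$ with continuous dependence on $\phi$, providing a holomorphic family of distributions on $\nbar$. Since the invariance equations of Lemma~\ref{lemma:diff_equations} are polynomial in $(\lambda,\nu)$ and hold on the open region where the defining integral is absolutely convergent, they propagate to all of $\C^2$ by analytic continuation, giving $u^A_{\lambda,\nu}\in\mathcal{D}'(\nbar)_{\lambda,\nu}$ everywhere. The main obstacle I anticipate is the parity bookkeeping in step (b): the two Gamma factors only kill poles at \emph{even} negative integers, so it is essential to confirm that the weighted sphere measure $d\Sigma$ is invariant under both reflections $(Y,W)\mapsto(-Y,W)$ and $(Y,W)\mapsto(Y,-W)$, and that no further accidental symmetry breaking creates odd-integer poles. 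Once this parity dichotomy is pinned down, the remainder of the argument reduces to iterated Riesz-type continuations that can be carried out safely in a region of joint holomorphy and extended to $\C^2$ by Hartogs' theorem.
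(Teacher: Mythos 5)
Your proposal follows essentially the same route as the paper: weighted polar coordinates $(X,Z)=(r\omega,r^2\eta)$, splitting the pairing into a radial Mellin-type integral and an angular integral against $|\omega''|^{\lambda-\rho+\nu+\rho'}$ on the unit sphere $\mathbb{S}$, and then matching the two Gamma prefactors against the two independent pole sources (the Riesz-type continuation in the $|\omega''|$ variable, which is the paper's Lemma~\ref{lemma:poles_on_sphere}, and the even-$r$ Mellin integral). The invariance argument via conjugation by $\pi_{-\lambda}(\tilde w_0)$ is also the same mechanism the paper uses (through Proposition~\ref{prop:solutions}), and your explicit computation $|\sigma(X,Z)_{\mathfrak{v}''}|=|X''|/N(X,Z)^2$ and the resulting identity $u^A_{\lambda,\nu}=\pi_{-\lambda}(\tilde w_0)[\,\cdot\,]$ are correct.

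Two small caveats. First, the claim that the conjugation argument gives $u^A_{\lambda,\nu}\in\mathcal{D}'(\nbar)_{\lambda,\nu}$ ``on the entire domain of convergence'' is slightly too quick: the Weyl element $\sigma$ is only defined on $\nbar\setminus\{0\}$, so this step proves the equations on $\nbar\setminus\{0\}$, and on the full range $\Re(\lambda\pm\nu)>-p''/2$ the function $u^A_{\lambda,\nu}$ need not be $C^1$ at the origin. One should either shrink to a region where $u^A_{\lambda,\nu}$ is $C^1$ so the first-order equations hold pointwise across $\{0\}$ (the paper takes $\Re\nu\ll0$, $\Re(\lambda+\nu)\gg0$), or observe that $D_{\mathfrak{v}}(S)u^A$ and $D_{\mathfrak{z}}(T)u^A$ are homogeneous of a generically non-integer weighted degree and so cannot be supported at $\{0\}$ unless zero; either fix is routine but must be said. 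Second, the parity check is a bit overcautious: only the reflection $(Y,W)\mapsto(-Y,W)$ is required, since a monomial $Y^\alpha W^\beta$ with $|\alpha|+2|\beta|$ odd automatically has $|\alpha|$ odd, and the invariance of $d\Sigma$ under that single reflection already follows from the reflection invariance of Lebesgue measure on $\nbar$ under $(X,Z)\mapsto(-X,Z)$; no further ``accidental symmetry breaking'' can occur.
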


To prove this statement we use polar coordinates on the H-type Lie algebra $\nbar$. For this let
$$ \mathbb{S} := \{(\omega,\eta)\in\mathfrak{v}\oplus\mathfrak{z}=\nbar:N(\omega,\eta)=1\}. $$
By Lemma~\ref{lemma:polar_coordinates_H_type} there exists a unique smooth measure $d\mathbb{S}$ on $\mathbb{S}$ so that for all $\varphi\in C_c(\nbar)$ we have
$$ \int_{\nbar}\varphi(X,Z)\,d(X,Z) = \int_{\R_+}\int_{\mathbb{S}}\varphi(r\omega,r^2\eta)r^{2\rho-1}\,d\mathbb{S}(\omega,\eta)\,dr, $$
where $d(X,Z)$ denotes the Lebesgue measure on $\nbar$ normalized by the inner product.

For $(\omega,\eta)\in\mathbb{S}$ we write $\omega''=(\omega_{p'+1},\ldots,\omega_p)\in\mathfrak{v}''$.

\begin{lemma}
	\label{lemma:poles_on_sphere}
	$\abs{\omega''}^\lambda$ defines a meromorphic family of generalized functions on $\mathbb{S}$ with simple poles for $\lambda\in-p''-2\Z_{\geq 0}$.
\end{lemma}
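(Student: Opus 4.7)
The plan is to reduce to the classical meromorphic continuation of $\abs{x}^\lambda$ on Euclidean space $\mathfrak{v}''\cong\R^{p''}$, exploiting the polar decomposition $(r,\omega,\eta)\mapsto(r\omega,r^2\eta): \R_+\times\mathbb{S}\to\nbar\setminus\{0\}$ introduced above. Under this diffeomorphism one has $\abs{X''}=r\,\abs{\omega''}$ and $N(X,Z)=r$, and the Lebesgue measure on $\nbar$ splits as $r^{2\rho-1}\,dr\,d\mathbb{S}$.

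For $\varphi\in C^\infty(\mathbb{S})$ and $f\in C^\infty_c(\R_+)$, I would extend $\varphi$ to a dilation-invariant function on $\nbar\setminus\{0\}$ by $\tilde\varphi(X,Z):=\varphi(X/N(X,Z),Z/N(X,Z)^2)$ and form the test function $\psi(X,Z):=f(N(X,Z))\,\tilde\varphi(X,Z)\in C^\infty_c(\nbar\setminus\{0\})$. The polar integration formula above then yields the factorization
$$ \int_{\nbar}\psi(X,Z)\,\abs{X''}^\lambda\,d(X,Z) = C(\lambda)\cdot\int_{\mathbb{S}}\abs{\omega''}^\lambda\,\varphi(\omega,\eta)\,d\mathbb{S}(\omega,\eta), $$
where $C(\lambda):=\int_0^\infty f(r)\,r^{\lambda+2\rho-1}\,dr$ is entire in $\lambda$ and, by choosing $f\geq0$ suitably supported, can be made non-zero at any prescribed $\lambda_0\in\C$.

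On the other hand, Fubini rewrites the same left-hand side as
$$ \int_{\mathfrak{v}'\oplus\mathfrak{z}} \bigl\langle \abs{\cdot}^\lambda,\; \psi(X',\cdot,Z) \bigr\rangle_{\mathfrak{v}''}\,dX'\,dZ, $$
where the inner pairing is the classical homogeneous distribution $\abs{x}^\lambda$ on $\R^{p''}$ applied to the $C^\infty_c$-test function $X''\mapsto\psi(X',X'',Z)$. By the standard theory of the homogeneous distribution $\abs{x}^\lambda$ on Euclidean space, the inner pairing extends meromorphically in $\lambda\in\C$ with simple poles only at $\lambda\in -p''-2\Z_{\geq 0}$, the residue at $\lambda=-p''-2k$ being a fixed constant times $(\Delta_{\mathfrak{v}''}^k\psi)(X',0,Z)$. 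Since $\psi$ depends smoothly and with compact support on $(X',Z)$, the outer integration preserves the meromorphic structure; dividing by $C(\lambda)$ then delivers the desired meromorphic extension of $\int_{\mathbb{S}}\abs{\omega''}^\lambda\,\varphi\,d\mathbb{S}$ with simple poles at most at $\lambda\in -p''-2\Z_{\geq 0}$.

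The only genuinely technical point is verifying that the outer integration does not introduce new poles, which follows from the smooth compactly supported dependence of the residues on $(X',Z)$ read off from the explicit Euclidean residue formula. To confirm that each $\lambda=-p''-2k$ is actually a pole (and not removable), one chooses $\varphi$ localized near a point of $\{\omega''=0\}\cap\mathbb{S}$ so that the corresponding residue $(\Delta_{\mathfrak{v}''}^k\psi)(X',0,Z)$ does not integrate to zero.
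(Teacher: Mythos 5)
Your proof is correct and follows essentially the same strategy as the paper: lift $\varphi$ to a compactly supported test function on $\nbar\cong\R^{p+q}$ via the polar decomposition $(r,\omega,\eta)\mapsto(r\omega,r^2\eta)$ and invoke the meromorphic continuation of the homogeneous distribution $\abs{X''}^\lambda$ on Euclidean space (Lemma~\ref{lemma:absolute_value_distr_holomorphic}). The only real difference in implementation is that the paper absorbs the radial scaling $r^{-\lambda}$ into a $\lambda$-dependent test function $\tilde\varphi_\lambda$ so that the pairing equals the spherical integral on the nose, whereas you keep the test function $\psi$ independent of $\lambda$ and then divide by the entire Mellin factor $C(\lambda)$, after arranging $C(\lambda_0)\neq 0$ near each prescribed $\lambda_0$ -- both variants reduce to the same classical Euclidean input.
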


\begin{proof}
	Let $\varphi \in C_c^\infty(\mathbb{S})$ be a test function. Further, choose $\chi \in C_c^\infty(\R_+)$ with $$\int_{\R_+}\chi(r)r^{p+2q-1}\,dr=1.$$
	Then we can write
	$$
	\int_{\mathbb{S}}\abs{\omega''}^\lambda \varphi(\omega,\eta)\,d\mathbb{S}(\omega,\eta)=
	\int_{\R_+}r^{p+2q-1}\int_{\mathbb{S}}\abs{r\omega''}^\lambda\cdot r^{-\lambda}\chi(r)\varphi(\omega, \eta)\,d\mathbb{S}(\omega,\eta)\,dr.
	$$
	The formula $\tilde{\varphi}_\lambda(r\omega,r^2\eta):= r^{-\lambda}\chi(r)\varphi(\omega,\eta)$ defines a smooth compactly supported function on $\R_+\times\mathbb{S}\cong\R^{p+q}\setminus\{0\}$, i.e. $\tilde{\varphi}_\lambda\in C_c^\infty(\R^{p+q}\setminus \{0\}) \subseteq C_c^\infty(\nbar)$, that depends holomorphically on $\lambda\in\C$. We then have
	\begin{equation*}
	\label{eq:absolut_value_distr_on_sphere}
	\int_{\mathbb{S}}\abs{\omega''}^\lambda \varphi(\omega,\eta)\,d\mathbb{S}(\omega,\eta)=
	\int_{\R^{p+q}}\abs{X''}^\lambda \tilde{\varphi}_\lambda(X,Z)\,d(X,Z),
	\end{equation*}
	where $X=(X',X'')\in \R^{p'}\times \R^{p''}\cong\R^p$. Now the claim follows from Lemma~\ref{lemma:absolute_value_distr_holomorphic}.
\end{proof}

\begin{proof}[Proof of Theorem~\ref{lemma:holomorphic_cont_solution}]
Let $\varphi\in C_c^\infty(\nbar)$ be a test function. 
By Lemma~\ref{lemma:polar_coordinates_H_type}
\begin{multline}
\label{eq:polar_coordinates_aplication}
 \langle u_{\lambda,\nu}^A,\varphi \rangle = \frac{1}{\Gamma(\frac{\lambda+\rho-\nu-\rho'}{2})\Gamma(\frac{\lambda+\rho+\nu-\rho'}{2})}\int_{\R_+}r^{\lambda+\rho-\nu-\rho'-1}\\
 \times\int_{\mathbb{S}}\abs{\omega''}^{\lambda-\rho+\nu+\rho'}\varphi((r\omega,r^2\eta))\, d\mathbb{S}(\omega,\eta)\,dr.
\end{multline}
which extends holomorphically to an entire function in $\lambda$ and $\nu$ by Lemma~\ref{lemma:absolute_value_distr_holomorphic} and Lemma~\ref{lemma:poles_on_sphere} since the inner integral is an even function of $r$.
By Proposition~\ref{prop:solutions}, the restriction of $u_{\lambda,\nu}^A$ to $\nbar\setminus\{0\}$ is contained in $\mathcal{D}'(\nbar\setminus\{0\})_{\lambda,\nu}$. But for $\Re\nu \ll 0$ and $\Re(\lambda+\nu)\gg0$ the distribution is sufficiently regular, so that the differential equations hold on all of $\nbar$. Then $u_{\lambda,\nu}^A\in\mathcal{D}'(\nbar)_{\lambda,\nu}$ for $\Re\nu \ll 0$ and $\Re(\lambda+\nu)\gg0$ and hence for all $(\lambda,\nu)\in \C^2$ by analytic continuation.
\end{proof}

\begin{remark}
The normalizing factor $\Gamma(\frac{\lambda+\rho-\nu-\rho'}{2})\Gamma(\frac{\lambda+\rho+\nu-\rho'}{2})$ is chosen such that it has a simple pole at $(\lambda,\nu)\in (\GlobalPoles \cup \LocalPoles ) \setminus \IntersectionPoles$ and a pole of second order at $(\lambda,\nu) \in \IntersectionPoles$.
\end{remark}

\begin{remark}
That the distributions $N(X,Z)^{-2(\nu+\rho')}\abs{X''}^{\lambda-\rho+\nu+\rho'}$ extend meromorphically in $(\lambda,\nu)\in \C^2$ also follows from \cite{MOO16}, but Theorem~\ref{lemma:holomorphic_cont_solution} gives the precise normalization factor which makes the family of distributions depend holomorphically on $(\lambda,\nu)\in\C^2$.
\end{remark}

In addition, from \eqref{eq:polar_coordinates_aplication} as well as Remark~\ref{remark:residue_solution_outside_origin} and Lemma~\ref{lemma:absolute_value_distr_holomorphic} we immediately obtain:

\begin{corollary}
\label{corollary:support_in_0}
For $(\lambda,\nu)\in\C^2\setminus\GlobalPoles$ we have
$$\Supp u_{\lambda,\nu}^A= \begin{cases}
\nbar & \text{for } (\lambda,\nu)\in \C^2\setminus(\GlobalPoles \cup \LocalPoles),\\
\nbar' & \text{for } (\lambda,\nu)\in \LocalPoles \setminus \IntersectionPoles.
\end{cases}$$
For $(\lambda,\nu)\in \GlobalPoles$ we have $\Supp u_{\lambda,\nu}^A \subseteq \{0\}$.
\end{corollary}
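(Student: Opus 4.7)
The strategy is to exploit the polar-coordinate factorisation~\eqref{eq:polar_coordinates_aplication} of $\langle u^A_{\lambda,\nu},\varphi\rangle$, which decouples the two sources of singularity: the angular integral $\int_{\mathbb{S}}\abs{\omega''}^{\lambda-\rho+\nu+\rho'}\,d\mathbb{S}$, with simple poles exactly on $\LocalPoles$ by Lemma~\ref{lemma:poles_on_sphere}, absorbed by $1/\Gamma(\tfrac{\lambda+\rho+\nu-\rho'}{2})$, and the radial Mellin integral in $r$, with poles at $\lambda+\rho-\nu-\rho'\in-2\Z_{\geq 0}$, absorbed by $1/\Gamma(\tfrac{\lambda+\rho-\nu-\rho'}{2})$. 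When $(\lambda,\nu)\notin\GlobalPoles\cup\LocalPoles$, both $\Gamma$-factors are finite and nonzero; Proposition~\ref{prop:solutions} combined with Remark~\ref{remark:residue_solution_outside_origin} then expresses $u^A_{\lambda,\nu}|_{\nbar\setminus\{0\}}$ as a nonzero distribution of full support ($N$ has no zeros on $\nbar\setminus\{0\}$ and $\abs{X''}^{\lambda-\rho+\nu+\rho'}/\Gamma$ has full support on $\mathfrak{v}''$), and taking closure yields $\Supp u^A_{\lambda,\nu}=\nbar$.

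For $(\lambda,\nu)\in\LocalPoles\setminus\IntersectionPoles$ with $\lambda+\rho+\nu-\rho'=-2l$, any $\varphi\in C_c^\infty(\nbar\setminus\nbar')$ pairs with $u^A_{\lambda,\nu}$ via an integrand that is locally integrable on $\Supp\varphi$ and holomorphic in $(\lambda,\nu)$ on a neighbourhood of the pole, so the vanishing of $1/\Gamma(\tfrac{\lambda+\rho+\nu-\rho'}{2})$ forces $\langle u^A_{\lambda,\nu},\varphi\rangle=0$; hence $\Supp u^A_{\lambda,\nu}\subseteq\nbar'$. Conversely, near any point of $\nbar'\setminus\{0\}$ the factor $N^{-2(\nu+\rho')}$ is smooth and positive, and multiplying the residue formula of Remark~\ref{remark:residue_solution_outside_origin} by it gives
\begin{equation*}
\langle u^A_{\lambda,\nu},\varphi\rangle=\frac{(-1)^l\pi^{p''/2}}{2^{2l}\Gamma(\tfrac{p''+2l}{2})\Gamma(\tfrac{\lambda+\rho-\nu-\rho'}{2})}\int_{\nbar'}\bigl[\Delta_{\mathfrak{v}''}^{l}(N^{-2(\nu+\rho')}\varphi)\bigr]\Big|_{X''=0}\,dX'\,dZ,
\end{equation*}
which is a nonzero linear functional on $C_c^\infty(U)$ for every open $U\subseteq\nbar\setminus\{0\}$ meeting $\nbar'$. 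Hence $\Supp u^A_{\lambda,\nu}\supseteq\overline{\nbar'\setminus\{0\}}=\nbar'$, so equality holds.

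For $(\lambda,\nu)\in\GlobalPoles$ with $\lambda+\rho-\nu-\rho'=-2k$, consider the radial integrand
\begin{equation*}
F_{\lambda,\nu,\varphi}(r):=\int_{\mathbb{S}}\abs{\omega''}^{\lambda-\rho+\nu+\rho'}\varphi(r\omega,r^2\eta)\,d\mathbb{S}(\omega,\eta).
\end{equation*}
The involution $(\omega,\eta)\mapsto(-\omega,\eta)$ preserves $\mathbb{S}$, the measure $d\mathbb{S}$, and $\abs{\omega''}$, while sending $(r\omega,r^2\eta)$ to $(-r\omega,r^2\eta)$, so $F_{\lambda,\nu,\varphi}$ is even in $r$. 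Consequently the Mellin transform $\int_0^\infty r^{\lambda+\rho-\nu-\rho'-1}F_{\lambda,\nu,\varphi}(r)\,dr$ has only simple poles at $\lambda+\rho-\nu-\rho'\in-2\Z_{\geq 0}$ (odd negative integers dropping out by parity), with residue at $-2k$ proportional to $F^{(2k)}_{\lambda,\nu,\varphi}(0)$. The simple zero of $1/\Gamma(\tfrac{\lambda+\rho-\nu-\rho'}{2})$ cancels this pole, leaving a finite value that depends only on the $(2k)$-jet of $\varphi$ at the origin. This gives $\Supp u^A_{\lambda,\nu}\subseteq\{0\}$ on $\GlobalPoles\setminus\IntersectionPoles$; on all of $\GlobalPoles$ this follows from the identity theorem applied to the entire function $(\lambda,\nu)\mapsto\langle u^A_{\lambda,\nu},\varphi\rangle$ for fixed $\varphi\in C_c^\infty(\nbar\setminus\{0\})$, since each complex line contained in $\GlobalPoles$ meets $\IntersectionPoles$ only at isolated points.

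The main obstacle is the parity-cancellation argument in the $\GlobalPoles$ case: verifying that $F_{\lambda,\nu,\varphi}$ is genuinely even so that the odd-order poles of the radial Mellin transform disappear, and then matching the remaining simple pole at $\lambda+\rho-\nu-\rho'=-2k$ with the simple zero of the $\Gamma$-factor in order to extract a residue depending only on a finite-order jet of $\varphi$ at the origin.
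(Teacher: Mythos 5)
Your proposal is essentially the paper's intended argument: the polar-coordinate factorisation \eqref{eq:polar_coordinates_aplication}, Remark~\ref{remark:residue_solution_outside_origin} for the support of the angular factor, and a one-dimensional parity-cancellation argument (equivalent to Lemma~\ref{lemma:absolute_value_distr_holomorphic} with $n=1$) for the radial factor. All three cases are handled correctly.

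The one place where you take a detour that the paper avoids is the final step on $\GlobalPoles$. You define $F_{\lambda,\nu,\varphi}(r)$ without the normalising factor $1/\Gamma(\tfrac{\lambda+\rho+\nu-\rho'}{2})$, so $F$ has poles along $\LocalPoles$ and your parity argument is only directly valid on $\GlobalPoles\setminus\IntersectionPoles$; you then appeal to the identity theorem to extend to $\IntersectionPoles$. This is correct but unnecessary: if one instead sets
$$ G_{\lambda,\nu,\varphi}(r) := \frac{1}{\Gamma(\tfrac{\lambda+\rho+\nu-\rho'}{2})}\int_{\mathbb{S}}\abs{\omega''}^{\lambda-\rho+\nu+\rho'}\varphi(r\omega,r^2\eta)\,d\mathbb{S}(\omega,\eta), $$
then by Lemma~\ref{lemma:poles_on_sphere} this is a smooth even function of $r$ depending \emph{holomorphically} on $(\lambda,\nu)\in\C^2$ — including on $\IntersectionPoles$ — because the zeros of $1/\Gamma$ exactly cancel the simple poles of the sphere integral. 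Applying Lemma~\ref{lemma:absolute_value_distr_holomorphic} (with $n=1$) to $\frac{1}{\Gamma(\frac{\lambda+\rho-\nu-\rho'}{2})}\int_{\R_+}r^{\lambda+\rho-\nu-\rho'-1}G(r)\,dr$ then gives a constant multiple of $G^{(2k)}(0)$ uniformly on all of $\GlobalPoles$, which vanishes whenever $\varphi$ vanishes near the origin. This removes the identity-theorem step and is the organisation implicit in the proof of Theorem~\ref{lemma:holomorphic_cont_solution}.
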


In the following we write $A_{\lambda,\nu}\in\Hom_{G'}(\pi_\lambda|_{G'},\tau_\nu)$ for the symmetry breaking operator corresponding to $u_{\lambda,\nu}^A$ via Theorem~\ref{theorem:KS_kernel_open_cell}.

\section{Residues at the origin}\label{sec:differential_operators}

Let $(\lambda,\nu)\in \GlobalPoles$ with $\lambda+\rho-\nu-\rho'=-2k\in-2\Z_{\geq 0}$. In Section~\ref{sec:DiffOpClass1} we found a holomorphic family of polynomials $\widehat{u}_{\lambda,\nu}^C\in\C[\nbar]_{\lambda,\nu}$. The corresponding distributions $u_{\lambda,\nu}^C$ are for $m>0$ given by
\begin{equation}
\label{eq:definition_u^C1}
u^C_{\lambda,\nu} = \sum_{h+i+2j=k} \frac{2^{-2i-2h}\Gamma(\frac{2\nu+p'+2}{4})\Gamma(\frac{\lambda+\rho+\nu-\rho'}{2}+i)}{h!i!j!\Gamma(\frac{2\nu+p'+2}{4}-j)\Gamma(\frac{p''}{2}+i)\Gamma(\frac{\lambda+\rho+\nu-\rho'}{2})} \Delta_{\mathfrak{v}'}^h \Delta_{\mathfrak{v}''}^i \square^j\delta,
\end{equation}
and for $m=0$ by
\begin{equation}\label{eq:definition_u^C2}
 u^C_{\lambda,\nu} = \sum_{i+2j=k} \frac{2^{-i}\Gamma(\frac{\nu}{2}-j)}{i!j!\Gamma(\frac{\nu}{2}-\lfloor\frac{k}{2}\rfloor)\Gamma(\frac{p}{2}+i)} \Delta_{\mathfrak{v}}^i \square^j\delta.
\end{equation}

In this section we obtain $u_{\lambda,\nu}^C$ as a renormalization of the holomorphic family $u_{\lambda,\nu}^A$ and show that it is the only differential symmetry breaking operator that occurs as a renormalization of the family $A_{\lambda,\nu}$. Note that in Part~\ref{part:diff}, $u_{\lambda,\nu}^C$ has been obtained combinatorially as solution of the differential equations of Lemma~\ref{lemma:diff_equations}.

\begin{theorem}
\label{thm:u^C}
Let $(\lambda,\nu)\in \GlobalPoles$ and $k\in \Z_{\geq 0}$ given by $\lambda+\rho-\nu-\rho'=-2k$.
\begin{enumerate}[label=(\roman{*}), ref=\thetheorem(\roman{*})]
\item 
\label{prop:diff_op_residue}
$u_{\lambda,\nu}^C\in \mathcal{D}'(\nbar )_{\lambda,\nu}$ and the following residue formula holds:
$$ 
u_{\lambda,\nu}^A = \frac{(-1)^k k! \pi^{\frac{p+q}{2}}}{\Gamma(\frac{\nu+\rho'}{2})}
 \times
 \begin{cases} \frac{\Gamma(\frac{2\nu+p'}{4})}{\Gamma(\frac{2\nu+p'}{2})}u^C_{\lambda,\nu}&\mbox{for $m>0$,}\\\frac{\Gamma(\frac{\nu}{2}-\lfloor\frac{k}{2}\rfloor)}{2^k\Gamma(\nu-k)}u^C_{\lambda,\nu}&\mbox{for $m=0$.} \end{cases} $$
\item \label{corollary:diff_op_non_zero}
The distribution $u_{\lambda,\nu}^C$ is non-zero for all $(\lambda,\nu)\in\GlobalPoles$ and $\Supp u_{\lambda,\nu}^C=\{0\}$.
\end{enumerate}
\end{theorem}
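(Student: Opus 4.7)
The plan is to read off $u_{\lambda,\nu}^C$ as the honest residue of $u_{\lambda,\nu}^A$ at a pole of $1/\Gamma(\tfrac{\lambda+\rho-\nu-\rho'}{2})$, using the polar-coordinates identity \eqref{eq:polar_coordinates_aplication}, and then fix the constant by a single test-function computation together with the combinatorial classification from Part~\ref{part:diff}.

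First I would fix $(\lambda,\nu)\in\GlobalPoles$ with $\lambda+\rho-\nu-\rho'=-2k$, assume temporarily that $\Re(\lambda-\nu)\ll0$ in the remaining free direction (so that $\abs{X''}^{\lambda-\rho+\nu+\rho'}$ is smooth on the sphere $\mathbb{S}$), and apply \eqref{eq:polar_coordinates_aplication} to a test function $\varphi\in C_c^\infty(\nbar)$. The outer factor $r^{\lambda+\rho-\nu-\rho'-1}=r^{-2k-1}$ is a meromorphic distribution in $r$ whose residue at this parameter picks out the $(2k)$-th $r$-Taylor coefficient of $r\mapsto\varphi(r\omega,r^2\eta)$ at $r=0$; combined with $1/\Gamma(-k)$ producing a simple zero of order one away from these values, one gets an entire family whose value at our point is a finite linear combination of derivatives of $\delta$ (this recovers Corollary~\ref{corollary:support_in_0}). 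Writing the Taylor expansion of $\varphi$ around $0$ in the form $\sum_{\alpha,\beta}\frac{1}{\alpha!\beta!}(\partial^\alpha_X\partial^\beta_Z\varphi)(0)X^\alpha Z^\beta$, the residue becomes a linear combination of pairings $\langle\Delta_{\mathfrak{v}'}^h\Delta_{\mathfrak{v}''}^i\Box^j\delta,\varphi\rangle$, with coefficients built from the moments $\int_{\mathbb{S}}\omega'^\alpha\omega''^{\alpha'}\eta^\beta\,d\mathbb{S}$. $M'$-invariance kills all moments in which $\omega'$, $\omega''$ or $\eta$ appears non-radially.

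Next I would invoke the classification of differential symmetry breaking operators. Since the resulting distribution lies in $\mathcal{D}'_{\{0\}}(\nbar)_{\lambda,\nu}$ and is manifestly built only out of the radial invariants $\abs{X'}^2,\abs{X''}^2,\abs{Z}^2$ (by the invariance argument above), Theorem~\ref{theorem:diff_solutions} forces
\[
u_{\lambda,\nu}^A = c(\lambda,\nu)\,u_{\lambda,\nu}^C
\]
for some scalar $c(\lambda,\nu)$. To pin down $c(\lambda,\nu)$ it suffices to evaluate both sides on a single well-chosen test function, e.g.\ the Gaussian $\varphi(X,Z)=e^{-(|X'|^2+|X''|^2+|Z|^2)}$: the left-hand side is computed by \eqref{eq:polar_coordinates_aplication} via the beta-type integrals
$\int_{\mathbb{S}}\abs{\omega''}^s\,d\mathbb{S}$ (a ratio of gamma factors, evaluated in Lemma~\ref{lemma:poles_on_sphere}) and the Mellin transform of the Gaussian; the right-hand side is the explicit series \eqref{eq:definition_u^C1}--\eqref{eq:definition_u^C2} applied to the Gaussian, which collapses to a finite sum of known Gegenbauer-like combinatorics. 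Comparing gamma factors yields the stated constants
$\tfrac{(-1)^k k!\pi^{(p+q)/2}}{\Gamma((\nu+\rho')/2)}\cdot\tfrac{\Gamma((2\nu+p')/4)}{\Gamma((2\nu+p')/2)}$ in the case $m>0$ and its analogue for $m=0$. Having established the identity in the range of absolute convergence, holomorphy in the two remaining parameters together with the already-proved holomorphy of $u_{\lambda,\nu}^A$ (Theorem~\ref{lemma:holomorphic_cont_solution}) and of $u_{\lambda,\nu}^C$ (visible from \eqref{eq:diff_op_scalars1}, \eqref{eq:diff_op_scalars2}) propagates it to all of $\GlobalPoles$; this simultaneously proves $u_{\lambda,\nu}^C\in\mathcal{D}'(\nbar)_{\lambda,\nu}$ without having to recheck the differential equations of Lemma~\ref{lemma:diff_equations}.

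For part~(ii), $\Supp u_{\lambda,\nu}^C\subseteq\{0\}$ is immediate from the expressions \eqref{eq:definition_u^C1}--\eqref{eq:definition_u^C2}. To see non-vanishing it is enough to exhibit one coefficient $c_{h,i,j}(\lambda,\nu)$ which never vanishes on $\GlobalPoles$. The choice $h=k$, $i=j=0$ (for $m>0$) gives
$c_{k,0,0}(\lambda,\nu)=\tfrac{2^{-2k}}{k!\,\Gamma(p''/2)}\,\tfrac{\Gamma((2\nu+p'+2)/4)}{\Gamma((2\nu+p'+2)/4)}=\tfrac{2^{-2k}}{k!\,\Gamma(p''/2)}$, manifestly non-zero; for $m=0$ one picks the bottom coefficient in $j$ and uses $\Gamma(\nu/2-\lfloor k/2\rfloor)/\Gamma(\nu/2-\lfloor k/2\rfloor)=1$ in the analogous way. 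The main technical obstacle I anticipate is the explicit Gaussian comparison producing the precise gamma quotients in the two cases; in the $m=0$ case the sphere integrals degenerate and one has to keep track separately of the parity of $k$, which is exactly why the factor $\Gamma(\nu/2-\lfloor k/2\rfloor)$ (rather than $\Gamma(\nu/2-k/2)$) appears in the residue formula.
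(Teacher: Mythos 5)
Your overall architecture is a legitimate alternative to the paper's: instead of computing the residue explicitly and recognizing it as $u_{\lambda,\nu}^C$, you deduce proportionality from the classification in Part~\ref{part:diff} and then propose to pin down the scalar by a single test-function evaluation. The paper instead carries out the residue computation directly: it expands the $2k$-th $r$-derivative via Lemma~\ref{lemma:combinatorics_deriviative}, evaluates the weighted sphere moments with Lemma~\ref{lemma:int_techniques}, and identifies the result with the series defining $u_{\lambda,\nu}^C$ via the multinomial theorem — thereby also re-deriving the formula for $\widehat{u}^C_{\lambda,\nu}$ independently of the combinatorics in Part~\ref{part:diff}.

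Two points deserve more care. First, the step where you conclude the residue is built only from $\Delta_{\mathfrak{v}'}^h\Delta_{\mathfrak{v}''}^i\Box^j\delta$ is not a pure $M'$-invariance argument: in the quaternionic and octonionic cases the ring $\C[\nbar]^{M'}$ is strictly larger than $\C[\abs{X'}^2,\abs{X''}^2,\abs{Z}^2]$ (this is exactly where sporadic operators come from), so invariance alone does not exclude the extra generators. What actually forces radiality is the parity of the measure on $\mathbb{S}$ (odd moments vanish) combined with the specific product structure of the moments $\int_{S^{p-1}}\omega^{2\alpha}\abs{\omega''}^\gamma\,d\omega$ in Lemma~\ref{lemma:int_techniques}, whose $\alpha$-dependence is precisely what lets the multinomial theorem collapse the sum to Laplacians. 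That is a genuine computation, not an abstract symmetry observation — once one does it, one has essentially completed the paper's residue calculation anyway.

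Second, the decisive constant-comparison step is where your plan stops short. The left-hand pairing $\langle u_{\lambda,\nu}^A,e^{-\abs{X}^2-\abs{Z}^2}\rangle$ must be computed in the domain $\Re(\lambda\pm\nu)\gg0$ and then analytically continued to the line $\lambda+\rho-\nu-\rho'=-2k$; the resulting three-dimensional radial integral (over $\abs{X'},\abs{X''},\abs{Z}$, with the $N(X,Z)$-weight coupling them) is of comparable difficulty to the paper's spherical-vector integral in Proposition~\ref{prop:integral_spherical_vector}, which occupies a full page. Meanwhile the right-hand side $\sum_{h+i+2j=k}c_{h,i,j}(\lambda,\nu)\Delta_{\mathfrak{v}'}^h\Delta_{\mathfrak{v}''}^i\Box^j(e^{-\abs{X}^2-\abs{Z}^2})|_0$ is a triple sum of Hermite-type moments that would still need to be summed in closed form to match gamma quotients. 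Saying it ``collapses to known Gegenbauer-like combinatorics'' leaves exactly the hard work undone. Finally, the remark that your route proves $u_{\lambda,\nu}^C\in\mathcal{D}'(\nbar)_{\lambda,\nu}$ ``without rechecking the differential equations'' is slightly misleading, since the proportionality step invokes Theorem~\ref{theorem:diff_solutions}, which was established precisely by solving those equations; the paper's direct computation, by contrast, gives a genuinely second construction of $u_{\lambda,\nu}^C$. Part~(ii) of your proposal (non-vanishing via the $c_{k,0,0}$ or $c_{0,i,\lfloor k/2\rfloor}$ coefficient) is correct and matches the paper.
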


\begin{proof}
First, $\Supp u_{\lambda,\nu}^C \subseteq \{0\}$ by definition. 
To prove $u^C_{\lambda,\nu}\neq 0$ we consider the case $m>0$ first.
Here the summand of $u^C_{\lambda,\nu}$ for $i=j=0$ is given by
$$ \frac{1}{2^{2k}k! \Gamma(\frac{p''}{2})}\Delta_{\mathfrak{v}'}^k\delta, $$
which is always non-zero. 
Since the distributions $\Delta_{\mathfrak{v}'}^h \Delta_{\mathfrak{v}''}^i \square^j\delta$, $h,i,j\in \Z_{\geq 0}$, are linearly independent, it follows that $u^C_{\lambda,\nu} \neq 0$. 
For $m=0$ one shows that the summand of $u^C_{\lambda,\nu}$ for $j=\lfloor\frac{k}{2}\rfloor$ does not vanish, so we have proven (ii).

To show (i) we abbreviate $\gamma := \lambda-\rho+\nu+\rho'$ and $\gamma':=-2(\nu+\rho')$. We prove the claimed residue formula for $\Re (\lambda+\nu) \gg 0$, the general case then follows by analytic continuation.

By Lemma~\ref{lemma:integral_formula_polar_coordinates} and Lemma~\ref{lemma:homogenous_distributions} we have for $2\rho+\gamma+\gamma'=-2k\in -2\Z_{\geq 0}$:
\begin{multline*}
(-1)^k\frac{2(2k)!}{k!\Gamma(\frac{2\rho+\gamma+\gamma'}{2})}\int_{\nbar} N(X,Z)^{\gamma'}\abs{X''}^\gamma \varphi(X,Z) \,d\nbar\\
=  
(-1)^k\frac{4(2k)!}{k!\Gamma(\frac{2\rho+\gamma+\gamma'}{2})} \int_{\R_+}r^{2\rho+\gamma+\gamma'-1} \int_{0}^{1}x^{p+\gamma-1}(1-x^4)^{\frac{q}{2}-1} \\ \times \int_{S^{p-1}}\abs{\omega ''}^\gamma\int_{S^{q-1}}  \varphi(rx\omega, r^2\sqrt{1-x^4}  \eta)  \; d\eta \;d\omega \,dx \, dr.
\end{multline*}
Since the integral over $x,\omega$ and $\eta$ is an even function in $r$, by Lemma~\ref{lemma:absolute_value_distr_holomorphic}:
$$
=  
\frac{d^{2k}}{dr^{2k}}\biggr|_{r=0} 2 \int_0^{1}x^{p+\gamma-1} (1-x^4)^{\frac{q}{2}-1}\int_{S^{p-1}}\abs{\omega ''}^\gamma\int_{S^{q-1}}  \varphi(rx\omega, r^2 \sqrt{1-x^4} \eta)  \, d\eta \,d\omega \,dx.
$$
By Lemma~\ref{lemma:combinatorics_deriviative} and by substituting $x^4=y$:
\begin{multline*}
=
\sum_{i+2j=2k} \frac{(2k)!}{2}\sum_{\abs{\alpha}=i}\frac{1}{\alpha!}\sum_{\abs{\beta}=j}\frac{1}{\beta!} \int_0^{1}y^{\frac{\gamma+p+i}{4}-1} (1-y)^{\frac{q+j}{2}-1}\,dy
\\ \times\int_{S^{p-1}}\omega^\alpha\abs{\omega ''}^\gamma \,d\omega
 \int_{S^{q-1}} \eta^\beta  \,d\eta
\frac{\partial^{i+j}}{\partial X^\alpha \partial Z^\beta} \varphi(0,0).
\end{multline*}
Since the integrals over the spheres vanish for odd length multi-indices $\alpha$ and $\beta$ we obtain by evaluation of the $x$ integral:
\begin{multline*}
=
\sum_{i+2j=k}\frac{(2k)!}{2}\sum_{\abs{\alpha}=i}\frac{1}{(2\alpha)!}\sum_{\abs{\beta}=j}\frac{1}{(2\beta)!} B\left( \frac{\gamma+p+2i}{4},\frac{q}{2} +j\right)
\\  
\times\int_{S^{p-1}}\omega^{2\alpha}\abs{\omega ''}^\gamma \,d\omega
 \int_{S^{q-1}} \eta^{2\beta}  \,d\eta
\frac{\partial^{2i+2j}}{\partial X^{2\alpha} \partial Z^{2\beta}} \varphi(0,0).
\end{multline*}
Evaluating the integrals over the spheres with Lemma~\ref{lemma:int_techniques} we obtain:
\begin{align*}
 ={}& \sum_{i+2j=k}2^{-2j-2i+1}(2k)!\sum_{\abs{\alpha}=i}\sum_{\abs{\beta}=j} B\left( \frac{\gamma+p+2i}{4},\frac{q}{2} +j\right)\frac{\pi^{\frac{p}{2}}\Gamma(\frac{\gamma+p''}{2}+\abs{\alpha''})}{\alpha!\Gamma(\frac{\gamma+p}{2}+i)\Gamma(\frac{p''}{2}+\abs{\alpha''})}\\ 
 & \hspace{8cm}\times \frac{\pi^{\frac{q}{2}}}{\beta!\Gamma(\frac{q}{2}+j)}\frac{\partial^{2i+2j}}{\partial X^{2\alpha} \partial Z^{2\beta}}\varphi(0,0)\\
 ={}& \pi^{\frac{p+q}{2}}\sum_{i+2j=k}\sum_{\abs{\alpha}=i}\sum_{\abs{\beta}=j} \frac{2^{-2j-2i+1}(2k)!\Gamma( \frac{\gamma+p+2i}{4})\Gamma(\frac{\gamma+p''}{2}+\abs{\alpha''})}{\alpha!\beta!\Gamma(\frac{\gamma+p}{2}+i)\Gamma(\frac{p''}{2}+\abs{\alpha''})\Gamma(\frac{\gamma+p+2q+2k}{4})}\frac{\partial^{2i+2j}}{\partial X^{2\alpha} \partial Z^{2\beta}} \varphi(0,0)\\
 ={}& \pi^{\frac{p+q}{2}}\sum_{h+i+2j=k}\sum_{\abs{\alpha'}=h}\sum_{\abs{\alpha''}=i}\sum_{\abs{\beta}=j} \frac{2^{-2j-2i-2h+1}(2k)!\Gamma( \frac{\gamma+p+2i+2h}{4})\Gamma(\frac{\gamma+p''}{2}+i)}{\alpha'!\alpha''!\beta!\Gamma(\frac{\gamma+p}{2}+i+h)\Gamma(\frac{p''}{2}+i)\Gamma(\frac{\gamma+p+2q+2k}{4})}\\ 
 & \hspace{8.3cm} \times\frac{\partial^{2h+2i+2j}}{\partial X'^{2\alpha'} \partial X''^{2\alpha''} \partial Z^{2\beta}} \varphi(0,0)\\
 ={}& \left\langle \pi^{\frac{p+q}{2}}\sum_{h+i+2j=k} \frac{2^{-2j-2i-2h+1}(2k)!\Gamma( \frac{\gamma+p+2i+2h}{4})\Gamma(\frac{\gamma+p''}{2}+i)}{h!i!j!\Gamma(\frac{\gamma+p}{2}+i+h)\Gamma(\frac{p''}{2}+i)\Gamma(\frac{\gamma+p+2q+2k}{4})} \Delta_{\mathfrak{v}'}^h \Delta_{\mathfrak{v}''}^i \square^j\delta,\varphi \right\rangle,
\end{align*}
where we have used the Multinomial Theorem~\ref{theorem:multinomial_theorem} in the last step. Using the duplication identity $\Gamma(z)\Gamma(z+\frac{1}{2})=\sqrt{\pi}2^{1-2z}\Gamma(2z)$ the final expression can be brought into the claimed form.\qedhere
\end{proof}

\begin{remark}
Although we exclude the case $\F=\R$ (i.e. $q=0$) in our considerations, the computations in Theorem~\ref{prop:diff_op_residue} also remain valid in this case. The resulting operators $C_{\lambda,\nu}: C^\infty(\R^{n})\to C^\infty(\R^{n-1})$ are the conformally covariant differential operators first dicovered by A. Juhl~\cite{juhl_2009} which have previously been obtained as residues of $A_{\lambda,\nu}$ in \cite{kobayashi_speh_2015}. However, even in the case $q=0$ the proof of Theorem~\ref{prop:diff_op_residue} gives a new and more direct approach to obtaining Juhl's operators as residues of a holomorphic family of distributions than that of \cite{kobayashi_speh_2015}.
\end{remark}

Corollary~\ref{corollary:support_in_0} together with Theorem~\ref{prop:diff_op_residue} implies the following:

\begin{corollary}
\label{cor:kernel_zero_set}
$u_{\lambda,\nu}^A=0$ if and only if $(\lambda,\nu)\in L$.
\end{corollary}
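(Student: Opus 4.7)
The plan is to combine three ingredients already established in the excerpt: the support analysis of $u^A_{\lambda,\nu}$ from Corollary~\ref{corollary:support_in_0}, the residue formula $u^A_{\lambda,\nu}=c(\lambda,\nu)\,u^C_{\lambda,\nu}$ of Theorem~\ref{prop:diff_op_residue}, and the non-vanishing $u^C_{\lambda,\nu}\neq 0$ from Theorem~\ref{corollary:diff_op_non_zero}. These together reduce the problem to locating the zero set of the explicit meromorphic scalar $c(\lambda,\nu)$, which I then match with $L$ by a Gamma-function pole analysis.

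First I would dispose of the case $(\lambda,\nu)\notin\GlobalPoles$. Here Corollary~\ref{corollary:support_in_0} forces $\Supp u^A_{\lambda,\nu}\in\{\nbar,\nbar'\}$, so $u^A_{\lambda,\nu}\neq 0$; and by the remark just before Theorem~\ref{introthm:RegularSBOs}, $L\subseteq\IntersectionPoles\subseteq\GlobalPoles$, so $(\lambda,\nu)\notin L$ as well. Both sides of the claimed equivalence therefore fail, so it holds vacuously outside $\GlobalPoles$. Hence one may restrict to $(\lambda,\nu)\in\GlobalPoles$, where Theorem~\ref{prop:diff_op_residue} combined with Theorem~\ref{corollary:diff_op_non_zero} immediately gives $u^A_{\lambda,\nu}=0\iff c(\lambda,\nu)=0$.

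For $m>0$, the Gamma duplication formula yields
\[
\frac{\Gamma(\tfrac{2\nu+p'}{4})}{\Gamma(\tfrac{2\nu+p'}{2})}=\frac{\sqrt{\pi}}{2^{\nu+p'/2-1}\,\Gamma(\tfrac{2\nu+p'+2}{4})},
\]
so $c(\lambda,\nu)$ becomes a nowhere-zero constant times $k!\,/\,\bigl(\Gamma(\tfrac{\nu+\rho'}{2})\,\Gamma(\tfrac{2\nu+p'+2}{4})\bigr)$. Thus $c=0$ iff one of the two Gammas in the denominator has a pole. A direct parity check using $\rho'=p'/2+q$ and $q$ odd shows that every pole of $\Gamma(\tfrac{\nu+\rho'}{2})$ (i.e.\ $\nu\in -\rho'-2\Z_{\geq 0}$) is already a pole of $\Gamma(\tfrac{2\nu+p'+2}{4})$ (i.e.\ $\nu\in -\rho'+q-1-2\Z_{\geq 0}$), because one just shifts by the non-negative integer $(q-1)/2$. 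Hence the zero set of $c$ on $\GlobalPoles$ is cut out by $\nu=-\rho'+q-1-2j$ with $j\in\Z_{\geq 0}$ together with $\lambda+\rho-\nu-\rho'=-2k$, $k\in\Z_{\geq 0}$; setting $i=j+k$ parametrizes it as $(\lambda,\nu)=(-\rho+q-1-2i,\,-\rho'+q-1-2j)$ with $0\leq j\leq i$, which is exactly $L$.

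For $m=0$ the same strategy applies, but one first needs to clear the $\lfloor k/2\rfloor$ in $c$. Apply duplication to $\Gamma(\nu-k)=\tfrac{2^{\nu-k-1}}{\sqrt{\pi}}\Gamma(\tfrac{\nu-k}{2})\Gamma(\tfrac{\nu-k+1}{2})$, observe that $\Gamma(\tfrac{\nu}{2}-\lfloor k/2\rfloor)$ equals $\Gamma(\tfrac{\nu-k}{2})$ for $k$ even and $\Gamma(\tfrac{\nu-k+1}{2})$ for $k$ odd, and cancel. The zeros of $c$ then come from the surviving factor together with those of $\Gamma(\tfrac{\nu+\rho'}{2})=\Gamma(\tfrac{\nu+q}{2})$. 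One checks that all these zeros occur at odd integers $\nu$, with the negative odd values $\nu=-(1+2j)$ producing, via $\GlobalPoles$, the parameter $i=k+j\geq j$, while the positive odd values $\nu=1+2j$ produce $i=k-j-1$, and the bound $\nu\leq k-1$ (resp.\ $\nu\leq k$) dictated by the surviving Gamma translates into $i\geq j$. These two branches together recover $L=\{(-\rho+q-1-2i,\pm(1+2j)):0\leq j\leq i\}$. The main obstacle is precisely this $m=0$ bookkeeping: keeping the two parity cases for $k$ straight, verifying that the poles of $\Gamma(\tfrac{\nu+q}{2})$ are all absorbed in those of the surviving factor so that no spurious zeros of $c$ appear, and translating the resulting upper bound on $\nu$ into the inequality $j\leq i$ in the parametrization of $L$.
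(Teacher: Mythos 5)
Your proof is correct and follows essentially the same path as the paper's: both reduce to $\GlobalPoles$ via Corollary~\ref{corollary:support_in_0}, invoke the residue formula of Theorem~\ref{prop:diff_op_residue} together with the non-vanishing of $u^C_{\lambda,\nu}$, and then locate the zeros of the meromorphic scalar by a Gamma-duplication analysis split into $m>0$ and $m=0$. The only difference is one of detail: for $m>0$ you explicitly rewrite $\Gamma(\tfrac{2\nu+p'}{4})/\Gamma(\tfrac{2\nu+p'}{2})$ as a nowhere-vanishing factor over $\Gamma(\tfrac{2\nu+p'+2}{4})$ and then check that the poles of $\Gamma(\tfrac{\nu+\rho'}{2})$ are contained in those of $\Gamma(\tfrac{2\nu+p'+2}{4})$ (using $q$ odd), a step the paper compresses into the phrase ``by the duplication formula,'' and for $m=0$ you make the $k$ even/odd case split explicit in the same way the paper does. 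This is the same argument, just with the pole-containment checks spelled out.
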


\begin{proof}
By Corollary~\ref{corollary:support_in_0} we know that $u_{\lambda,\nu}^A=0$ implies $(\lambda,\nu)\in \GlobalPoles$. We first consider the case $m>0$. Then by Theorem~\ref{prop:diff_op_residue} we have for $(\lambda,\nu)\in \GlobalPoles$ with $\lambda+\rho-\nu-\rho'=-2k\in -2\Z_{\geq 0}$:
$$ u_{\lambda,\nu}^A = (-1)^k\frac{\pi^{\frac{p+q}{2}}k!\Gamma(\frac{2\nu+p'}{4})}{\Gamma(\frac{\nu+\rho'}{2})\Gamma(\frac{2\nu+p'}{2})}u^C_{\lambda,\nu}. $$
Since $u_{\lambda,\nu}^C \neq 0$ by Theorem~\ref{corollary:diff_op_non_zero}, we have $u_{\lambda,\nu}^A=0$ if and only if the gamma factor vanishes which is by the duplication formula for the gamma function equivalent to $\nu\in-\rho'+q-1-2\Z_{\geq 0}$. Since $(\lambda,\nu)\in \GlobalPoles$ this holds if and only if $(\lambda,\nu)\in L$.\\
Now let $m=0$, then by the same arguments $u_{\lambda,\nu}^A=0$ if and only if the gamma factor
$$ \frac{\Gamma(\frac{\nu}{2}-\lfloor\frac{k}{2}\rfloor)}{\Gamma(\nu-k)\Gamma(\frac{\nu+\rho'}{2})} $$
vanishes. Using the duplication formula this is a constant multiple of
$$ \frac{\Gamma(\frac{\nu}{2}-\lfloor\frac{k}{2}\rfloor)}{\Gamma(\frac{\nu}{2}-\frac{k}{2})\Gamma(\frac{\nu}{2}-\frac{k-1}{2})\Gamma(\frac{\nu+\rho'}{2})}. $$
Now for $k$ even we have $\Gamma(\frac{\nu}{2}-\lfloor\frac{k}{2}\rfloor)=\Gamma(\frac{\nu}{2}-\frac{k}{2})$, so the gamma factor vanishes if and only if $\nu\in(-\rho'-2\Z_{\geq0})\cup((k-1)-2\Z_{\geq0})=(k-1)-2\Z_{\geq0}$. This is equivalent to $(\lambda,\nu)=(-\rho+q-1-(k+2\ell),k-2\ell-1)=(-\rho+q-1-2i,\pm(\rho'-q+1+2j))$ with $2i=k+2\ell\in2\Z_{\geq0}$ and $0\leq j\leq i$. For $k$ odd the arguments are similar.
\end{proof}

\section{Singular symmetry breaking operators}

Let $(\lambda,\nu) \in \LocalPoles$ and $l \in \Z_{\geq 0}$ given by $\lambda+\rho+\nu-\rho'=-2l$. We define the following renormalization factors:
\begin{equation}
\label{eq:singular_renorm_parameter}
c^B(\lambda,\nu):=\begin{cases}
\frac{1}{\Gamma(\frac{\lambda+\rho-\nu-\rho'}{2})} & \text{for $m>0$ and } l \leq\frac{p'}{2},\\
\frac{\Gamma(\frac{2\nu+p'+2}{4})}{\Gamma(\frac{2\nu+p'+2}{4}+ \lfloor \frac{2l-p'+2}{4}\rfloor)\Gamma(\frac{\lambda+\rho-\nu-\rho'}{2})} & \text{for $m>0$ and } l > \frac{p'}{2}, \\
\frac{1}{\Gamma(-\frac{\nu}{2}-\lfloor \frac{l}{2} \rfloor)} & \text{for $m=0$.}
\end{cases}
\end{equation}
For $\Re \nu \ll 0$ we define a family of distributions on $\nbar$ by
$$u_{\lambda,\nu}^B:=c^B(\lambda,\nu)N(X,Z)^{-2(\nu+\rho')}\Delta_{\mathfrak{v}''}^l\delta(X'').$$

\begin{theorem}
\label{thm:singular_family}
Let $(\lambda,\nu)\in \LocalPoles$ and $l \in \Z_{\geq 0}$ given by $\lambda-\rho+\nu+\rho'=-2l$.
\begin{enumerate}[label=(\roman{*}), ref=\thetheorem(\roman{*})]
\item \label{thm:singular_family:i}
$u_{\lambda,\nu}^B$ extends to a family of distributions that depends holomorphically on $\nu\in\C$ and $u_{\lambda,\nu}^B\in \mathcal{D}'(\nbar)_{\lambda,\nu}$ for all $(\lambda,\nu)\in\LocalPoles$.
\item \label{thm:singular_family:ii}
The following residue formula holds:
$$ u_{\lambda,\nu}^A = \frac{(-1)^l\pi^{\frac{p''}{2}}}{2^l\Gamma(\frac{p''}{2}+l)}\times \begin{cases}
u_{\lambda,\nu}^B & \text{for $m>0$ and } l \leq \frac{p'}{2}, \\
\frac{\Gamma(\frac{2\nu+p'+2}{4}+ \lfloor \frac{2l-p'+2}{4}\rfloor)}{\Gamma(\frac{2\nu+p'+2}{4})}u_{\lambda,\nu}^B & \text{for $m>0$ and } l> \frac{p'}{2}, \\
\frac{\Gamma(-\frac{\nu}{2}-\lfloor \frac{l}{2} \rfloor)}{\Gamma(\frac{\lambda+\rho-\nu-\rho'}{2})}
u_{\lambda,\nu}^B & \text{for $m=0$.}
\end{cases} $$
\item \label{thm:singular_family:iii}
The following identity holds:
\begin{multline*}
u_{\lambda,\nu}^B = c^B(\lambda,\nu)\sum_{k=0}^l\frac{2^{2l-2k}l!(k+\frac{p''}{2})_{l-k}}{k!}\Bigg(\sum_{i+2j=l-k}\frac{(-1)^{i+j}2^i\Gamma(\frac{\nu+\rho'}{2}+i+j)}{i!j!\Gamma(\frac{\nu+\rho'}{2})}\\
 \times\abs{X'}^{2i}N(X',Z)^{-2(\nu+\rho')-4i-4j}\Bigg)\Delta_{\mathfrak{v}''}^k\delta(X'').
\end{multline*}
\item \label{thm:singular_family:iv}
$u_{\lambda,\nu}^B$ is non-zero for all $(\lambda,\nu) \in \LocalPoles$, more precisely:
$$\Supp u_{\lambda,\nu}^B=\begin{cases}
\{0\} & \text{for } (\lambda,\nu)\in \IntersectionPoles \setminus L, \\
\nbar' & \text{otherwise.} 
\end{cases}
$$
\end{enumerate}
\end{theorem}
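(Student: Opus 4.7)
My plan is to establish the explicit expansion (iii) first, then deduce (i), (ii), (iv) from it. For $\Re\nu$ sufficiently negative, $u_{\lambda,\nu}^B$ is defined by the locally integrable product $c^B(\lambda,\nu)N(X,Z)^{-2(\nu+\rho')}\Delta_{\mathfrak{v}''}^l\delta(X'')$, and I will expand $N(X,Z)^{-2(\nu+\rho')}$ in powers of $|X''|^2$. Using $N(X,Z)^{4}=N(X',Z)^{4}+2|X'|^{2}|X''|^{2}+|X''|^{4}$, the binomial expansion of $N^{4s}$ with $s=-(\nu+\rho')/2$ gives $N^{-2(\nu+\rho')}=\sum_{m\geq 0}|X''|^{2m}g_m(X',Z)$, and after collecting terms indexed by $(i,j)$ with $m=i+2j$ and rewriting binomial coefficients via $\binom{s}{i+j}\binom{i+j}{j}=(-1)^{i+j}\Gamma((\nu+\rho')/2+i+j)/(i!\,j!\,\Gamma((\nu+\rho')/2))$, the coefficient $g_m(X',Z)$ matches the inner bracket in (iii) with $m=l-k$. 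Multiplying by $\Delta_{\mathfrak{v}''}^{l}\delta(X'')$ and using the algebraic identity
\begin{equation*}
|X''|^{2m}\Delta_{\mathfrak{v}''}^{l}\delta(X'')=\frac{4^{m}\,l!}{(l-m)!}\Bigl(\tfrac{p''}{2}+l-m\Bigr)_{m}\Delta_{\mathfrak{v}''}^{l-m}\delta(X''),
\end{equation*}
which I prove by Fourier transform on $\mathfrak{v}''$ via $\mathcal{F}[|X''|^{2m}\Delta^{l}\delta]=\Delta^{m}|X''|^{2l}$ together with the classical formula $\Delta^{m}|X''|^{2l}=4^{m}\tfrac{l!}{(l-m)!}(p''/2+l-m)_{m}|X''|^{2(l-m)}$, and relabelling $k=l-m$, yields identity (iii) for $\Re\nu\ll 0$.

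Claim (i) follows from (iii) by showing the right-hand side extends holomorphically in $\nu$. Each summand contains, as its only $\nu$-dependent factors, the polynomial $\Gamma$-ratio (of degree $i+j$ in $\nu$) and the distribution $N(X',Z)^{-2(\nu+\rho')-4(i+j)}$ on the H-type subalgebra $\nbar'=\mathfrak{v}'\oplus\mathfrak{z}$ of homogeneous dimension $2\rho'=p'+2q$. By polar coordinates on $\nbar'$ (analogous to Lemma~\ref{lemma:poles_on_sphere} and the proof of Theorem~\ref{lemma:holomorphic_cont_solution}), this family is meromorphic in $\nu$ with simple poles at $\nu\in -2(i+j)+\Z_{\geq 0}$ and residues of the form $\partial_{X'}^{\alpha}\partial_{Z}^{\beta}\delta$ with $|\alpha|+2|\beta|=2k'$ for the pole at $\nu=k'-2(i+j)$; multiplication by $|X'|^{2i}$ kills all residues with $k'<i$, so the effective pole locations of the summand are $\nu\geq -i-2j$ integer. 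Since $i+2j\leq l$ on the support of the sum, these effective poles lie in $\nu\geq -l$ in the case $l\leq p'/2$, where they are cancelled exactly by the zeros of $c^{B}(\lambda,\nu)=1/\Gamma(-l-\nu)$. The cases $l>p'/2$ (where extra cancellations arise because the surviving residue structure saturates in dimension $p'$) and $m=0$ (where $\nbar'=\mathfrak{z}$ is abelian and the pole pattern is only every other integer) are accommodated by the more elaborate normalizations in \eqref{eq:singular_renorm_parameter}. That $u^{B}_{\lambda,\nu}\in\mathcal{D}'(\nbar)_{\lambda,\nu}$ follows from the corresponding property of $u^{A}$ for $\Re\nu\ll 0$ together with (ii) below, and extends to all of $\LocalPoles$ by analytic continuation.

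For (ii), I take the limit $\lambda+\rho+\nu-\rho'\to-2l$ in the defining expression for $u_{\lambda,\nu}^{A}$: Remark~\ref{remark:residue_solution_outside_origin} replaces $|X''|^{\lambda-\rho+\nu+\rho'}/\Gamma((\lambda+\rho+\nu-\rho')/2)$ by a constant multiple of $\Delta_{\mathfrak{v}''}^{l}\delta(X'')$, so $u_{\lambda,\nu}^{A}$ becomes an explicit constant times $N(X,Z)^{-2(\nu+\rho')}\Delta_{\mathfrak{v}''}^{l}\delta(X'')/\Gamma((\lambda+\rho-\nu-\rho')/2)$, and a direct comparison with $c^{B}(\lambda,\nu)\cdot N(X,Z)^{-2(\nu+\rho')}\Delta_{\mathfrak{v}''}^{l}\delta(X'')$ gives the three stated formulas. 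For (iv), the expansion (iii) shows $\Supp u^{B}_{\lambda,\nu}\subseteq\nbar'$; the support equals $\{0\}$ precisely when every distribution $N(X',Z)^{-2(\nu+\rho')-4(i+j)}$ appearing is itself supported at $\{0\}\subseteq\nbar'$, which by the pole analysis on $\nbar'$ happens exactly on $\IntersectionPoles\setminus L$, whereas on $L\cap\LocalPoles$ the leading surviving term retains generic support $\nbar'$. Non-vanishing of $u^{B}_{\lambda,\nu}$ on all of $\LocalPoles$ is a consequence of the minimality of $c^{B}$: at any point of $\LocalPoles$ it cancels all spurious poles without introducing spurious zeros, so at least one summand of (iii) survives. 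The principal obstacle in this plan is the intricate bookkeeping in (i), particularly the unified treatment of the three regimes of $c^{B}$ and the interplay of $|X'|^{2i}$ cancellations, $\Gamma$-ratio zeros, and the parity-dependent pole structure of Riesz-type distributions on $\nbar'$.
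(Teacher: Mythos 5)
Your derivation of (iii) is correct and is in fact a cleaner route than the one taken in the paper. You observe $N(X,Z)^4=N(X',Z)^4+2\abs{X'}^2\abs{X''}^2+\abs{X''}^4$ and expand $N(X,Z)^{-2(\nu+\rho')}$ binomially in $\abs{X''}^2$, then apply the distributional identity $\abs{X''}^{2m}\Delta_{\mathfrak{v}''}^{l}\delta(X'')=\tfrac{4^{m}l!}{(l-m)!}(\tfrac{p''}{2}+l-m)_m\Delta_{\mathfrak{v}''}^{l-m}\delta(X'')$, which you verify by Fourier transform. Since $\Delta_{\mathfrak{v}''}^l\delta$ only sees the $2l$-jet at $X''=0$, truncating the expansion at $m\le l$ is legitimate, and the binomial coefficients repackage to the stated $\Gamma$-ratios. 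This packages into a single algebraic identity what the paper achieves by successive Leibniz rules, two applications of Faà di Bruno, and the multi-index Vandermonde sum of Lemma~\ref{lem:MultiindexSum}. The order of steps is also inverted sensibly: (iii) first, then (i)/(ii)/(iv).

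However, your plan for (i) and (iv) has a genuine gap, which you in fact flag yourself: the pole cancellation bookkeeping is not carried out, only sketched. Your proposed proof of (i) requires, for each of the three regimes of $c^B$, matching the zeros of $c^B$ (respectively of $1/\Gamma(-l-\nu)$, the extra polynomial $\Gamma$-ratio, and $1/\Gamma(-\tfrac{\nu}{2}-\lfloor\tfrac{l}{2}\rfloor)$) against the effective poles of the sum in (iii), including the annihilation of partial residues by $\abs{X'}^{2i}$, the parity constraints on poles of $N(X',Z)^{\mu}$ on $\nbar'$ (note $q$ is odd), and confirming that at points where $c^B$ vanishes without a compensating pole of the sum no spurious zeros are created — this last point is also what you need for the non-vanishing assertion in (iv). None of this is supplied. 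The paper sidesteps it entirely: it first records the residue relation $u^A_{\lambda,\nu}=c(\nu)\,u^B_{\lambda,\nu}$ for $\Re\nu\ll0$ via Remark~\ref{remark:residue_solution_outside_origin}, and then observes that $u^A$ is \emph{entire} (Theorem~\ref{lemma:holomorphic_cont_solution}) and vanishes exactly on $L$ (Corollary~\ref{cor:kernel_zero_set}), while the scalar gamma-factor $c(\nu)$ also vanishes exactly on $L$ with matching (simple) order; this immediately forces $u^B=u^A/c(\nu)$ to be holomorphic and non-vanishing. Similarly, for the support in (iv) the paper gets the generic case ($(\lambda,\nu)\notin L$) for free from proportionality to $u^A$ and the already-established Corollaries, and uses (iii) only to read off the support at parameters in $L$, by noting that the $k=l$ term $c^B N(X',Z)^{-2(\nu+\rho')}\Delta_{\mathfrak{v}''}^l\delta(X'')$ is regular with support exactly $\nbar'$ (Corollary~\ref{cor:HeisenbergNormPowersHolomorphic}) while all other terms are supported inside $\nbar'$. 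I'd suggest replacing your pole-analysis plan for (i) and (iv) with this argument; your version of (iii) can stay.
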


\begin{proof}
By Remark~\ref{remark:residue_solution_outside_origin} it is clear that the residue formula (ii) holds for $\Re\nu\ll0$, and then Theorem~\ref{lemma:holomorphic_cont_solution} implies that $u_{\lambda,\nu}^B$ extends meromorphically in $\nu$. We now show the identity (iii) and afterwards deduce (i) and (iv) from it.

Let $\Re \nu \ll 0$ and write $N(X,Z)^{-2(\nu+\rho')}=f(\abs{X''}^2)$ with $f(x)=(x^2+2a x + b)^{-\frac{\nu+\rho'}{2}}$ and $a = \abs{X'}^2$, $b=N(X',Z)^4$. By the Multinomial Theorem~\eqref{theorem:multinomial_theorem} we have for $\varphi \in C_c^\infty(\nbar)$:
$$ \Delta^l_{\mathfrak{v}''}\left(f(\abs{X''}^2)\varphi(X,Z)\right) \biggr|_{X''=0} = \sum_{\substack{\alpha\in \Z_{\geq 0}^{p''}, \\ \abs{\alpha}=l }} \frac{l!}{\alpha!}\left(\frac{\partial}{\partial X''}\right)^{2\alpha} \left(f(\abs{X''}^2)\varphi(X,Z)\right)\biggr|_{X''=0}. $$
Since odd powers of partial derivatives applied to $f(\abs{X''}^2)$ vanish at $X''=0$, we see by successively applying the product rule
$$ =\sum_{\substack{\alpha,\beta \in \Z_{\geq 0}^{p''}, \\ \abs{\alpha}=l, \beta \leq \alpha }} \frac{l! (2\alpha)!}{\alpha! (2\beta)! (2\alpha-2\beta)!} \left(\frac{\partial}{\partial X''}\right)^{2 \beta} f(\abs{X''}^2)\cdot\left(\frac{\partial}{\partial X''}\right)^{2\alpha-2 \beta}\varphi(X,Z)\biggr|_{X''=0} . $$
Then successively applying Faà di Bruno's Formula~\eqref{eq:faa_di_bruno} yields
$$
=\sum_{\substack{\alpha,\beta \in \Z_{\geq 0}^{p''}, \\ \abs{\alpha}=l, \beta \leq \alpha }} \frac{l! (2\alpha)!}{\alpha! \beta! (2\alpha-2\beta)!}
f^{(\abs{\beta})}(0)\left(\frac{\partial}{\partial X''}\right)^{2\alpha-2 \beta}\varphi(X,Z)\biggr|_{X''=0}.
$$
Again by Faà di Bruno's Formula~\eqref{eq:faa_di_bruno} we have 
	$$f^{({\abs{\beta}})}(x)\Big|_{x=0} = \sum_{i+2j={\abs{\beta}}}(-1)^{i+j} 2^i\frac{\abs{\beta}!}{i!j!} \frac{\Gamma(\frac{\nu+\rho'}{2}+i+j)}{\Gamma(\frac{\nu+\rho'}{2})} b^{-\frac{\nu+\rho'}{2}-i-j}a^i,  $$
so we obtain
\begin{multline*}
u_{\lambda,\nu}^B
=c^B(\lambda,\nu)\sum_{\substack{\alpha,\beta \in \Z_{\geq 0}^{p''}, \\ \abs{\alpha}=l, \beta \leq \alpha }} \frac{l! (2\alpha)!\abs{\beta}!}{\alpha!\beta! (2\alpha-2\beta)!} \\ \times
\sum_{i+2j=\abs{\beta}}\frac{(-1)^{i+j}2^i\Gamma(\frac{\nu+\rho'}{2}+i+j)}{i!j!\Gamma(\frac{\nu+\rho'}{2})}\abs{X'}^{2i}N(X',Z)^{-2(\nu+\rho')-4i-4j}\delta^{(2\alpha-2\beta)}(X'').
\end{multline*}
Writing $\alpha=\beta+\gamma$ and regrouping gives
\begin{multline*}
 \!\!u_{\lambda,\nu}^B
=c^B(\lambda,\nu)\sum_{k=0}^l\Bigg(\sum_{i+2j=l-k}\frac{(-1)^{i+j}2^i\Gamma(\frac{\nu+\rho'}{2}+i+j)}{i!j!\Gamma(\frac{\nu+\rho'}{2})}\abs{X'}^{2i}N(X',Z)^{-2(\nu+\rho')-4i-4j}\Bigg)\\
\times\sum_{\abs{\gamma}=k}\frac{l!(l-k)!}{(2\gamma)!}\Bigg(\sum_{\abs{\beta}=l-k}\frac{(2\beta+2\gamma)!}{(\beta+\gamma)!\beta!}
\Bigg)\delta^{(2\gamma)}(X'').
\end{multline*}
Note that
$$ \frac{(2\beta+2\gamma)!}{(\beta+\gamma)!} = \frac{(2\gamma)!}{\gamma!}\prod_{h=1}^{p''}\frac{(2\gamma_h+1)_{2\beta_h}}{(\gamma_h+1)_{\beta_h}} = \frac{(2\gamma)!}{\gamma!}\prod_{h=1}^{p''}2^{2\beta_h}(\gamma_h+\tfrac{1}{2})_{\beta_h} = \frac{2^{2|\beta|}(2\gamma)!(\gamma+\frac{1}{2})_\beta}{\gamma!}, $$
where we have used the Pochhammer symbol $(a)_n=a(a+1)\cdots(a+n-1)$ and its multi-index analog $(\alpha)_\beta=(\alpha_1)_{\beta_1}\cdots(\alpha_{p''})_{\beta_{p''}}$. Then the sum over $\beta$ can be computed using Lemma~\ref{lem:MultiindexSum} and we obtain
\begin{align*}
u_{\lambda,\nu}^B ={}& c^B(\lambda,\nu)\sum_{k=0}^l\Bigg(\sum_{i+2j=l-k}\frac{(-1)^{i+j}2^i\Gamma(\frac{\nu+\rho'}{2}+i+j)}{i!j!\Gamma(\frac{\nu+\rho'}{2})}\abs{X'}^{2i}N(X',Z)^{-2(\nu+\rho')-4i-4j}\Bigg)\\
& \hspace{5cm}\times\sum_{\abs{\gamma}=k}\frac{2^{2l-2k}l!(k+\frac{p''}{2})_{l-k}}{\gamma!}\delta^{(2\gamma)}(X'')\\
={}& c^B(\lambda,\nu)\sum_{k=0}^l\Bigg(\sum_{i+2j=l-k}\frac{(-1)^{i+j}2^i\Gamma(\frac{\nu+\rho'}{2}+i+j)}{i!j!\Gamma(\frac{\nu+\rho'}{2})}\abs{X'}^{2i}N(X',Z)^{-2(\nu+\rho')-4i-4j}\Bigg)\\
& \hspace{5cm}\times\frac{2^{2l-2k}l!(k+\frac{p''}{2})_{l-k}}{k!}\Delta_{\mathfrak{v}''}^k\delta(X''),
\end{align*}
where we have again used the Multinomial Theorem~\eqref{theorem:multinomial_theorem} in the second step. This shows (iii) for $\Re\nu\ll0$ and the general case follows by analytic continuation. Note that the distributions $\Delta_{\mathfrak{v}''}^k\delta(X'')$ ($0\leq k\leq l$) are linearly independent.

We now show (i) and (iv) for $m>0$. First note that $(\lambda,\nu)\in L$ if and only if $\nu=-\rho'+q-1-2j$ with $2j\leq l-\frac{p'}{2}-1$ which can only happen if $l>\frac{p'}{2}$. Since both $u_{\lambda,\nu}^A$ and the gamma factor in the residue formula (ii) vanish for precisely those values of $\nu$, it follows from Theorem~\ref{lemma:holomorphic_cont_solution} that $u_{\lambda,\nu}^B$ is holomorphic in $\nu$, so we have shown (i). Now for $(\lambda,\nu)\notin L$ the gamma factor in the residue formula (ii) is non-zero, so $u_{\lambda,\nu}^B$ is a non-zero multiple of $u_{\lambda,\nu}^A$ and therefore $\Supp u_{\lambda,\nu}^B=\Supp u_{\lambda,\nu}^A$, so that (iv) follows from Corollaries~\ref{corollary:support_in_0} and \ref{cor:kernel_zero_set} in this case. For $(\lambda,\nu)\in L$ with $\nu=-\rho'+q-1-2j$, $2j\leq l-\frac{p'}{2}-1$, the normalization factor $c^B(\lambda,\nu)$ is regular. Now, the term for $k=l$ in the identity (iii) equals
$$ c^B(\lambda,\nu)N(X',Z)^{-2(\nu+\rho')}\Delta_{\mathfrak{v}''}^l\delta(X''). $$
Since also $N(X',Z)^{-2(\nu+\rho')}$ is regular with $\Supp(N(X',Z)^{-2(\nu+\rho')})=\nbar'$ by Corollary~\ref{cor:HeisenbergNormPowersHolomorphic} and all other terms for $0\leq k<l$ have support contained in $\nbar'$, this implies (iv) for $(\lambda,\nu)\in L$.

Finally we show (i) and (iv) for $m=0$. Here $(\lambda,\nu)\in L$ if and only if $\nu$ is an odd integer $\geq-l$. These are precisely the poles of $\Gamma(-\frac{\nu-1}{2}-\lfloor\frac{l+1}{2}\rfloor)$. The gamma factor in the residue formula (ii) can with the duplication formula be rewritten as
$$ \frac{\Gamma(-\frac{\nu}{2}-\lfloor\frac{l}{2}\rfloor)}{\Gamma(\frac{\lambda+\rho-\nu-\rho'}{2})} = \frac{\Gamma(-\frac{\nu}{2}-\lfloor\frac{l}{2}\rfloor)}{\Gamma(-\nu-l)} = \frac{2^{\nu+l+1}\sqrt{\pi}\Gamma(-\frac{\nu}{2}-\lfloor\frac{l}{2}\rfloor)}{\Gamma(-\tfrac{\nu}{2}-\tfrac{l}{2})\Gamma(-\tfrac{\nu}{2}-\tfrac{l-1}{2})}. $$
Since $\{\frac{l}{2},\frac{l-1}{2}\}=\{\lfloor\frac{l+1}{2}\rfloor-\frac{1}{2},\lfloor\frac{l}{2}\rfloor\}$ this equals
$$ = \frac{2^{\nu+l+1}\sqrt{\pi}}{\Gamma(-\tfrac{\nu-1}{2}-\lfloor\tfrac{l+1}{2}\rfloor)} $$
and therefore the gamma factor vanishes precisely where $u_{\lambda,\nu}^A$ vanishes. It follows that $u_{\lambda,\nu}^B$ is holomorphic on $\LocalPoles$, so we have shown (i). For $(\lambda,\nu)\notin L$ the same argument as for $m>0$ shows (iv). For $(\lambda,\nu)\in L$ the term for $k=l$ in the identity (iii) equals
$$ c^B(\lambda,\nu)|Z|^{-(\nu+\rho')}\Delta_{\mathfrak{v}}^l\delta(X). $$
Since $\nu$ is an odd integer, the normalization factor $c^B(\lambda,\nu)$ is regular, and since $\rho'=q=\dim\mathfrak{z}$, we have $\Supp(|Z|^{-(\nu+\rho')})=\nbar'$ by Lemma~\ref{lemma:absolute_value_distr_holomorphic}. This implies (iv) for $(\lambda,\nu)\in L$ and the proof is complete.
\end{proof}

In the following we write $B_{\lambda,\nu}$ for the symmetry breaking operator corresponding to $u_{\lambda,\nu}^B$.

\section{Classification of symmetry breaking operators}

In this section we give a full classification of symmetry breaking operators between spherical principal series of $G$ and $G'$ in terms of the operators $A_{\lambda,\nu}$ and $B_{\lambda,\nu}$ and the previously classified differential symmetry breaking operators (see Part~\ref{part:diff}).

\begin{theorem}
\label{theorem:classification}
For all strongly spherical pairs of the form $(G,G')=(\Urm(1,n+1;\F),\Urm(1,m+1;\F)\times F)$ with $\F=\C,\HH,\mathbb{O}$, $0\leq m<n$ and $F<\Urm(n-m;\F)$ we have
$$\mathcal{D}'(\nbar)_{\lambda,\nu}=
\begin{cases}
\C u_{\lambda,\nu}^A  &\text{if } (\lambda,\nu) \in \C^2 \setminus \GlobalPoles, \\
\mathcal{D}'_{\{0\}}(\nbar)_{\lambda,\nu} & \text{if } (\lambda,\nu) \in \GlobalPoles \setminus L, \\
\C u^B_{\lambda,\nu} \oplus \mathcal{D}'_{\{0\}}(\nbar)_{\lambda,\nu} & \text{if } (\lambda,\nu) \in L .
\end{cases} $$
\end{theorem}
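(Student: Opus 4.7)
The plan is to use the short exact sequence from Section~\ref{sec:ClassificationStrategy},
$$0 \longrightarrow \mathcal{D}'_{\{0\}}(\nbar)_{\lambda,\nu} \longrightarrow \mathcal{D}'(\nbar)_{\lambda,\nu} \xrightarrow{\mathrm{rest}} \mathcal{D}'(\nbar\setminus\{0\})_{\lambda,\nu},$$
combined with two ingredients already in hand: the full classification of the kernel $\mathcal{D}'_{\{0\}}(\nbar)_{\lambda,\nu}$ carried out in Part~\ref{part:diff} (this space vanishes off $\GlobalPoles$ by Corollary~\ref{cor:DiffOpsOnlyInGlobalPoles}), and Proposition~\ref{prop:solutions}, which identifies $\mathcal{D}'(\nbar\setminus\{0\})_{\lambda,\nu}$ as the one-dimensional space spanned by the restriction of $f_{\lambda,\nu}:=\Gamma(\tfrac{\lambda+\rho+\nu-\rho'}{2})^{-1} N(X,Z)^{-2(\nu+\rho')}|X''|^{\lambda-\rho+\nu+\rho'}$. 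The problem thus reduces to determining, in each of the three cases, the at-most-one-dimensional image of $\mathrm{rest}$.

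The two routine cases come first. For $(\lambda,\nu)\in\C^2\setminus\GlobalPoles$ the scalar $\Gamma(\tfrac{\lambda+\rho-\nu-\rho'}{2})^{-1}$ is finite and nonzero and $u_{\lambda,\nu}^A\ne 0$ by Corollary~\ref{cor:kernel_zero_set}; since on $\nbar\setminus\{0\}$ one has $u_{\lambda,\nu}^A=\Gamma(\tfrac{\lambda+\rho-\nu-\rho'}{2})^{-1} f_{\lambda,\nu}$, this provides a preimage of a nonzero generator and, combined with the vanishing of the kernel term, gives the first line. For $(\lambda,\nu)\in L$, Theorem~\ref{thm:singular_family:iv} yields $\Supp u_{\lambda,\nu}^B=\nbar'$ (since $L\subseteq\IntersectionPoles$ and $L\cap(\IntersectionPoles\setminus L)=\emptyset$), so $u_{\lambda,\nu}^B$ lies in $\mathcal{D}'(\nbar)_{\lambda,\nu}$ but not in $\mathcal{D}'_{\{0\}}(\nbar)_{\lambda,\nu}$; because $\nbar'\cap(\nbar\setminus\{0\})\ne\emptyset$ it maps to a nonzero element of the right-hand term and splits the exact sequence, producing the direct-sum formula of the third line.

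The main technical step is the case $(\lambda,\nu)\in\GlobalPoles\setminus L$, where $\mathrm{rest}$ must be shown to vanish identically. Following the strategy of Kobayashi--Speh~\cite{kobayashi_speh_2015}, the argument proceeds by meromorphic continuation and residue analysis. For $(\lambda',\nu')$ in a punctured neighbourhood of $(\lambda_0,\nu_0):=(\lambda,\nu)$ avoiding $\GlobalPoles$, the already established first case forces $\mathcal{D}'(\nbar)_{\lambda',\nu'}=\C u_{\lambda',\nu'}^A$. Hence the meromorphic family $w_{\lambda',\nu'}:=\Gamma(\tfrac{\lambda'+\rho-\nu'-\rho'}{2}) u_{\lambda',\nu'}^A$, which on $\nbar\setminus\{0\}$ is holomorphic at $(\lambda_0,\nu_0)$ and equals $f_{\lambda',\nu'}|_{\nbar\setminus\{0\}}$, is, modulo meromorphic corrections with values in $\mathcal{D}'_{\{0\}}$, the \emph{only} meromorphic invariant extension of $f_{\lambda',\nu'}|_{\nbar\setminus\{0\}}$ to $\nbar$. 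By Theorem~\ref{thm:u^C} (or Theorem~\ref{thm:singular_family} when $(\lambda_0,\nu_0)\in\IntersectionPoles$), the family $w_{\lambda',\nu'}$ has a genuine pole at $(\lambda_0,\nu_0)$ whose residue is a nonzero multiple of $u_{\lambda_0,\nu_0}^A$; the non-vanishing is exactly Corollary~\ref{cor:kernel_zero_set} applied to the hypothesis $(\lambda_0,\nu_0)\notin L$. The existence of $u\in\mathcal{D}'(\nbar)_{\lambda_0,\nu_0}$ with $\mathrm{rest}(u)\ne 0$ would, after normalization, realise $u$ as a regular value of such a meromorphic extension, forcing the pole of $w_{\lambda',\nu'}$ to be cancelled by a meromorphic family of differential symmetry breaking operators. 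Invoking the explicit description of $\mathcal{D}'_{\{0\}}(\nbar)$ from Part~\ref{part:diff} then contradicts this, since the residue $u_{\lambda_0,\nu_0}^A$ is not expressible as the leading Laurent coefficient of such a family.

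The principal obstacle is exactly this last cancellation check: one must verify rigorously that the nonzero residue of $w_{\lambda',\nu'}$ at $(\lambda_0,\nu_0)$ cannot be absorbed into a meromorphic family of differential symmetry breaking operators. This is where the set $L$ plays its role as the exceptional locus, since on $L$ the relevant residue does vanish (again by Corollary~\ref{cor:kernel_zero_set}) and the additional lifting by $u_{\lambda_0,\nu_0}^B$ becomes available, whereas on $\GlobalPoles\setminus L$ the obstruction is nontrivial. Assembling the three cases completes the proof.
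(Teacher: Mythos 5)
Your exact-sequence setup and the treatment of the routine cases (outside $\GlobalPoles$ and on $L$) are sound and essentially agree with the paper. The gap is in the case $(\lambda_0,\nu_0)\in\GlobalPoles\setminus L$, and you in fact flag it yourself as ``the principal obstacle.'' The step that does not go through is the assertion that a distribution $u\in\mathcal{D}'(\nbar)_{\lambda_0,\nu_0}$ with $\mathrm{rest}(u)\neq0$ ``would, after normalization, realise $u$ as a regular value of such a meromorphic extension.'' A single invariant distribution at a single parameter point need not lie on any meromorphic family of invariant distributions, so there is no family to compare with $w_{\lambda',\nu'}$; and even granting one, the final assertion that the pole of $w_{\lambda',\nu'}$ cannot be cancelled by a meromorphic family valued in $\mathcal{D}'_{\{0\}}(\nbar)$ so as to land in $\mathcal{D}'(\nbar)_{\lambda_0,\nu_0}$ is exactly what must be proved --- it does not follow from the explicit classification in Part~\ref{part:diff}.

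The ingredient you are missing is the weighted Euler operator argument behind Proposition~\ref{prop:continuation_solutions}, encapsulated in Lemma~\ref{lemma:KS_continuation}. Rather than producing a family through $u$, one works directly with a hypothetical $w\in\mathcal{D}'(\nbar)_{\lambda_0,\nu_0}$. Along the line $\lambda+\nu=\lambda_0+\nu_0$ expand $v_{\lambda,\nu}:=\Gamma(\tfrac{\lambda+\rho-\nu-\rho'}{2})u_{\lambda,\nu}^A=\mu^{-1}v_{-1}+v_0+O(\mu)$; by Proposition~\ref{prop:solutions} there is $c\in\C$ with $w|_{\nbar\setminus\{0\}}=c\,v_0|_{\nbar\setminus\{0\}}$, so $w':=w-cv_0$ is supported in $\{0\}$. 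Lemma~\ref{lemma:KS_continuation:i} gives the generalized-eigenvector relations $(E+2\rho+2k)v_{-1}=0$ and $(E+2\rho+2k)v_0=v_{-1}$, and since $(E+2\rho+2k)w=0$ by homogeneity one finds $(E+2\rho+2k)^2w'=0$. The crucial and easily-overlooked point is Lemma~\ref{lemma:KS_continuation:ii}: for a distribution supported at the origin, $(E+k)^2w'=0$ already implies $(E+k)w'=0$. This forces $cv_{-1}=0$; since $v_{-1}$ is a nonzero multiple of $u_{\lambda_0,\nu_0}^A$ by Corollary~\ref{cor:kernel_zero_set} (here $(\lambda_0,\nu_0)\notin L$ is used), we get $c=0$ and $\Supp w\subseteq\{0\}$. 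Without this nilpotency-breaking observation about distributions supported at the origin, the residue analysis you sketch does not close.
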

The rest of this section is devoted to the proof of this statement.

\subsection{Continuation of solutions outside of the origin}
\label{sec:continuation_of_solutions}

We first study for which parameters the distribution solutions outside the origin in $\nbar$ can be extended to the whole $\nbar$, i.e. for which $(\lambda,\nu)\in\C^2$ the restriction map $\mathcal{D}'(\nbar)_{\lambda,\nu} \to \mathcal{D}'(\nbar\setminus \{0\})_{\lambda,\nu}$ is onto. Since the latter space is one-dimensional, we can equivalently determine the cases where the restriction map is trivial.

\begin{prop}
\label{prop:continuation_solutions}
The following are equivalent:
\begin{enumerate}[label=(\roman{*})]
\item $(\lambda,\nu)\in \GlobalPoles \setminus L$,
\item The restriction $\mathcal{D}'(\nbar)_{\lambda,\nu}\to \mathcal{D}'(\nbar\setminus \{0\})_{\lambda,\nu}$ is identical zero,
\item  $\mathcal{D}'(\nbar)_{\lambda,\nu}=\mathcal{D}'_{\{0\}}(\nbar)_{\lambda,\nu}$.
\end{enumerate}
\end{prop}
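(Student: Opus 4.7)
The equivalence (ii) $\Leftrightarrow$ (iii) is immediate from the exact sequence
\[0 \to \mathcal{D}'_{\{0\}}(\nbar)_{\lambda,\nu} \to \mathcal{D}'(\nbar)_{\lambda,\nu} \to \mathcal{D}'(\nbar \setminus \{0\})_{\lambda,\nu}\]
of Section~\ref{sec:ClassificationStrategy}, since (ii) asserts vanishing of the last map and this is equivalent to the first arrow being an isomorphism, which is (iii). The remaining work is (i) $\Leftrightarrow$ (ii), which I will handle in two separate directions.

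For the direction (ii) $\Rightarrow$ (i) my plan is to prove the contrapositive: for $(\lambda,\nu) \in (\C^2 \setminus \GlobalPoles) \cup L$ I will exhibit an element of $\mathcal{D}'(\nbar)_{\lambda,\nu}$ whose restriction to $\nbar \setminus \{0\}$ is non-zero. By Proposition~\ref{prop:solutions} the target space is one-dimensional, so producing any non-zero image is enough. If $(\lambda,\nu) \notin \GlobalPoles$ the distribution $u^A_{\lambda,\nu}$ restricts to $\Gamma(\tfrac{\lambda+\rho-\nu-\rho'}{2})^{-1}$ times the generator of Proposition~\ref{prop:solutions}, and this scalar is finite and non-zero. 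If $(\lambda,\nu) \in L$ the distribution $u^B_{\lambda,\nu}$ of Theorem~\ref{thm:singular_family} has support $\nbar'$ by Theorem~\ref{thm:singular_family:iv} (using $L \cap (\IntersectionPoles \setminus L) = \emptyset$), so its restriction to $\nbar \setminus \{0\}$ is non-zero. These cases cover the complement of $\GlobalPoles \setminus L$.

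The main obstacle is the converse (i) $\Rightarrow$ (ii). Fix $(\lambda_0,\nu_0) \in \GlobalPoles \setminus L$, write $\lambda_0+\rho-\nu_0-\rho'=-2k$, and suppose for contradiction that some $u \in \mathcal{D}'(\nbar)_{\lambda_0,\nu_0}$ has non-zero restriction to $\nbar \setminus \{0\}$. By Proposition~\ref{prop:solutions} and rescaling I may assume $u$ restricts to the generator $g_{\lambda_0,\nu_0}|_{\nbar \setminus \{0\}}$. Lemma~\ref{lemma:diff_equations:i} then forces $(E + 2\rho + 2k)u = 0$ on all of $\nbar$, so $u$ is genuinely homogeneous of degree $-2\rho - 2k$ under the weighted Euler operator. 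Using the polar decomposition of Lemma~\ref{lemma:polar_coordinates_H_type}, the pairing $\langle u,\phi\rangle$ for any $\phi \in C_c^\infty(\nbar)$ must then arise as the Hadamard finite part of a divergent radial integral $\int_0^\infty r^{-2k-1} F_\phi(r)\,dr$, where $F_\phi(r)$ is the even function of $r$ built from the sphere integral that appears in the proof of Theorem~\ref{lemma:holomorphic_cont_solution}.

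The standard obstruction theory for homogeneous distributions shows that this finite part defines a genuinely homogeneous distribution of degree $-2\rho - 2k$ (as opposed to an almost-homogeneous one with a logarithmic defect) only if the critical $r^{2k}$-Taylor coefficient $F_{\phi,2k}(\lambda_0,\nu_0)$ vanishes for every $\phi$. Retracing the polar-coordinate computation in the proof of Theorem~\ref{lemma:holomorphic_cont_solution}, the simple zero of $\Gamma(\tfrac{\lambda+\rho-\nu-\rho'}{2})^{-1}$ at $(\lambda_0,\nu_0)$ exactly cancels the simple pole of the radial integral coming from the $r^{2k}$-term, and what remains identifies $F_{\phi,2k}(\lambda_0,\nu_0)$ with an explicit non-zero constant multiple of $\langle u^A_{\lambda_0,\nu_0}, \phi\rangle$. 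By Corollary~\ref{cor:kernel_zero_set} we have $u^A_{\lambda_0,\nu_0} \neq 0$ throughout $\GlobalPoles \setminus L$, so some $\phi$ gives $F_{\phi,2k}(\lambda_0,\nu_0) \neq 0$ and produces the required contradiction. The remaining technical obstacle is bookkeeping the normalization constants in this last identification and verifying that the invariance conditions defining $\mathcal{D}'(\nbar)_{\lambda,\nu}$ do indeed pick out genuinely (rather than merely almost) homogeneous distributions, which follows directly from the form of Lemma~\ref{lemma:diff_equations:i}.
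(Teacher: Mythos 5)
The equivalence (ii)~$\Leftrightarrow$~(iii) and the contrapositive of (ii)~$\Rightarrow$~(i) are correct and match the paper's argument, including the use of $u^A_{\lambda,\nu}$ for $(\lambda,\nu)\notin\GlobalPoles$ and $u^B_{\lambda,\nu}$ for $(\lambda,\nu)\in L$ as non-trivial witnesses.

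For (i)~$\Rightarrow$~(ii), your framing via Hadamard finite parts is close in spirit to the paper's Laurent-expansion argument (both boil down to: the residue of the family at $(\lambda_0,\nu_0)$ is a non-zero multiple of $u^A_{\lambda_0,\nu_0}$, and this obstructs a homogeneous extension of the generator), but one step is asserted where a genuine argument is needed. You claim that the homogeneity equation forces $\langle u,\phi\rangle$ to \emph{equal} the Hadamard finite part of the radial integral. This is not automatic: $u$ and the finite-part extension $u_{\mathrm{fp}}$ agree on $\nbar\setminus\{0\}$, so they differ by a distribution $w'$ supported at $\{0\}$, and you must rule out that adding such a $w'$ can repair the homogeneity defect. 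Concretely, $(E+2\rho+2k)u_{\mathrm{fp}}$ equals the residue $R$ (a non-zero multiple of $u^A_{\lambda_0,\nu_0}$), which is itself homogeneous of the critical degree; since the weighted Euler operator $(E+2\rho+2k)$ acts on distributions supported at $\{0\}$ with eigenvalue $2k-\abs{\alpha}-2\abs{\beta}$ on $\delta^{((\alpha,0)+(0,\beta))}$, it annihilates precisely the degree-$2k$ terms, so $(E+2\rho+2k)w'$ can never produce $-R$. This is exactly the content of the paper's Lemma~\ref{lemma:KS_continuation:ii}, and it is the key technical ingredient. Your closing sentence misidentifies the remaining work as ``verifying that the invariance conditions pick out genuinely homogeneous distributions'' (which is indeed immediate from Lemma~\ref{lemma:diff_equations:i}); the actual gap is the spectral statement about $(E+2\rho+2k)$ on distributions supported at the origin. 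Once that lemma is invoked, your argument is sound and is essentially the same proof as the paper's, expressed in finite-part/obstruction language rather than via the Laurent expansion of the family $\Gamma(\tfrac{\lambda+\rho-\nu-\rho'}{2})u^A_{\lambda,\nu}$.
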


The proof of Proposition~\ref{prop:continuation_solutions} works analogously as the proof of \cite[Proposition 11.7]{kobayashi_speh_2015}. Therefore we need to use the following facts (see \cite[Lemma 11.10, Lemma 11.11]{kobayashi_speh_2015}):

\begin{lemma}
\label{lemma:KS_continuation}
\begin{enumerate}[label=(\roman{*}), ref=\thetheorem(\roman{*})]
\item 
\label{lemma:KS_continuation:i}
Let $d_\mu$ be a differential operator on $\R^{n}$ which is holomorphic in $\mu \in \C$ and let $v_\mu \in \mathcal{D}'(\R^{n})$ be a distribution which is meromorphic in $\mu$, such that for differential operators $d_i$ on $\R^n$ and distributions $v_j \in \mathcal{D}'(\R^n)$ we have the expansions
\begin{align*}
d_\mu=d_0 + \mu d_1+\mu^2 d_2 + \cdots \qquad \mbox{and} \qquad v_\mu=\mu^{-1}v_{-1} +v_0 + \mu v_1 + \cdots.
\end{align*}
If there exists an $\varepsilon > 0$ such that that $d_\mu v_\mu =0$ for all $\mu \in \C$  with $0< \abs{\mu} < \varepsilon$,  then
$$ d_0v_{-1}=0 \qquad \text{and} \qquad d_0v_0+d_1v_{-1}=0.$$

\item 
\label{lemma:KS_continuation:ii}
Let $E$ be the weighted Euler-operator on $\R^{p+q}$ which is in coordinates $(x_1, \ldots , x_p,\linebreak z_1 , \ldots, z_q)$ given by 
$$\sum_{i=1}^{p}x_i \frac{\partial}{\partial x_i} + 2\sum_{j=1}^{q}z_j\frac{\partial}{\partial z_j}.$$
Let $v \in \mathcal{D}'(\R^{p+q})$ be a distribution with $\Supp v= \{0\}$, then for every integer $k \in \Z$, $(E+k)v=0$ if and only if $(E+k)^2=0$.
\end{enumerate}
\end{lemma}

\begin{proof}
(i) follows immediatiely from the Laurent expansion of $d_\mu v_\mu$.

Ad (ii): Since $v_\mu$ is supported in the origin, we have $$v= \;  \sum_{\mathclap{\substack{ \alpha \in \Z_{\geq 0}^p, \beta \in \Z_{\geq 0}^q}}}  \;w_{\alpha,\beta} \delta^{((\alpha,0)+(0,\beta))}$$
for scalars $w_{\alpha,\beta} \in \C$ which are almost all zero. Then by homogeneity and since different derivatives of the Dirac delta are linearly independent it follows that
\begin{align*}
(E+k)v&=\;  \sum_{\mathclap{\substack{ \alpha \in \Z_{\geq 0}^p, \beta \in \Z_{\geq 0}^q}}} \; (k-p-2q-\abs{\alpha}-2\abs{\beta})w_{\alpha,\beta} \delta^{((\alpha,0)+(0,\beta))}=0, \\
\intertext{if and only if}
(E+k)^2v&= \; \sum_{\mathclap{\substack{ \alpha \in \Z_{\geq 0}^p, \beta \in \Z_{\geq 0}^q}}}  \;(k-p-2q-\abs{\alpha}-2\abs{\beta})^2w_{\alpha,\beta} \delta^{((\alpha,0)+(0,\beta))}=0.\qedhere
\end{align*}
\end{proof}

\begin{proof}[Proof of Proposition~\ref{prop:continuation_solutions}]
The equivalence of (ii) and (iii) follows directly from the exactness of the sequence
	$$
	\begin{tikzcd}
	0 \arrow[r] & \mathcal{D}'_{\{0\}}(\nbar)_{\lambda,\nu} \arrow[r] & \mathcal{D}'(\nbar)_{\lambda,\nu} \arrow[r]     &\mathcal{D}'(\nbar\setminus \{0\})_{\lambda,\nu},
	\end{tikzcd}
	$$
so it remains to show the equivalence of (i) and (ii). By Proposition~\ref{prop:solutions}, $u_{\lambda,\nu}^A|_{\nbar \setminus \{0\}}$ vanishes if and only if $\Gamma(\frac{\lambda+\rho-\nu-\rho'}{2})$ has a pole, i.e. if and only if $(\lambda,\nu)\in \GlobalPoles$. 
Hence $u_{\lambda,\nu}^A|_{\nbar \setminus \{0\}}$ is a non-zero element of $\mathcal{D}'(\nbar\setminus \{0\})_{\lambda,\nu}$ for all $(\lambda,\nu) \notin \GlobalPoles$.
For
$(\lambda,\nu) \in L$, the distribution $u^B_{\lambda,\nu}|_{\nbar \setminus \{0\}}$ is a non-zero element of $\mathcal{D}'(\nbar\setminus \{0\})_{\lambda,\nu}$ by Theorem~\ref{thm:singular_family:iv}.
So we are left to show that for $(\lambda_0,\nu_0)\in \GlobalPoles\setminus L$ the restriction map $\mathcal{D}'(\nbar)_{\lambda_0,\nu_0}\to\mathcal{D}'(\nbar\setminus\{0\})_{\lambda_0,\nu_0}$ is trivial, i.e. every $w\in\mathcal{D}'(\nbar)_{\lambda_0,\nu_0}$ has $\Supp w\subseteq\{0\}$. Write $\lambda_0+\rho-\nu_0-\rho'=-2k$ with $k\in \Z_{\geq 0}$. For all $(\lambda,\nu) \in \C$ with $\lambda+\nu=\lambda_0+\nu_0$, consider the parameter $\mu = \lambda +\rho-\nu-\rho'+2k$. Then $v_{\lambda,\nu}:=\Gamma(\frac{\lambda+\rho-\nu-\rho'}{2})u_{\lambda, \nu}^A$ depends meromorphically on $\mu$ and has a simple pole at $\mu=0$, such that near $(\lambda_0,\nu_0)$:
$$v_{\lambda,\nu}=\mu^{-1}v_{-1}+v_0+\mu v_1 + \ldots,$$
for distributions $v_{i}\in\mathcal{D}'(\nbar)$. By Corollary~\ref{cor:kernel_zero_set} the distribution $v_{-1}$ is non-trivial for $(\lambda_0,\nu_0) \notin L$. Further,
$$ v_{\lambda,\nu}|_{\nbar\setminus\{0\}}=\frac{1}{\Gamma(\frac{\lambda+\rho+\nu-\rho'}{2})}N(X,Z)^{-2(\nu+\rho')}\abs{X''}^{\lambda-\rho+\nu+\rho'}, $$
which depends holomorphically on $(\lambda,\nu)\in\C^2$, so $v_{-1}|_{\nbar\setminus\{0\}}=0$ and $v_0|_{\nbar\setminus\{0\}}\neq0$ spans $\mathcal{D}'(\nbar\setminus\{0\})_{\lambda_0,\nu_0}$ by Proposition~\ref{prop:solutions}. If now $w \in \mathcal{D}'(\nbar)_{\lambda_0,\nu_0}$, then there exists a constant $c\in \C$ such that
$$ w|_{\nbar\setminus \{0\}} = cv_0|_{\nbar \setminus\{0\}}. $$
In particular the distribution $w':=w-cv_0\in\mathcal{D}'(\nbar)$ has support contained in $\{0\}$. Now, by Lemma~\ref{lemma:diff_equations:i} we have
$$ ((E+2\rho+2k)-\mu \cdot \mathbf{1})v_{\lambda,\nu} = (E-(\lambda-\rho-\nu-\rho'))v_{\lambda,\nu} = 0, $$
so Lemma~\ref{lemma:KS_continuation:i} implies
$$ (E+2\rho+2k)v_{-1}=0 \qquad \mbox{and} \qquad (E+2\rho+2k)v_0-v_{-1} =0. $$
Hence
$$ (E+2\rho+2k)^2w'=(E+2\rho+2k)^2w-(E+2\rho+2k)^2cv_0=0, $$
because also $(E+2\rho+2k)w=0$. Lemma~\ref{lemma:KS_continuation:ii} then implies $(E+2\rho+2k)w'=0$, so $c(E+2\rho+2k)v_0=0$ and therefore $cv_{-1}=0$. Since $v_{-1}\neq 0$ we must have $c=0$ and $\Supp w= \Supp w'\subseteq \{0\}$, i.e. $w|_{\nbar\setminus\{0\}}=0$.
\end{proof}

\subsection{Proof of the main theorem}
We can finally prove Theorem~\ref{theorem:classification}.
Using the exact sequence
	$$
	\begin{tikzcd}
	0 \arrow[r] & \mathcal{D}'_{\{0\}}(\nbar)_{\lambda,\nu} \arrow[r] & \mathcal{D}'(\nbar)_{\lambda,\nu} \arrow[r]     &\mathcal{D}'(\nbar\setminus \{0\})_{\lambda,\nu},
	\end{tikzcd}
	$$
Corollary~\ref{cor:DiffOpsOnlyInGlobalPoles} and Proposition~\ref{prop:continuation_solutions} imply the statement for $(\lambda,\nu)\notin L$. For $(\lambda,\nu) \in L$, Proposition~\ref{prop:solutions} implies that $\dim \mathcal{D}'_{\{0\}}(\nbar)_{\lambda,\nu} \leq \dim \mathcal{D}'(\nbar)_{\lambda,\nu} \leq \dim \mathcal{D}'_{\{0\}}(\nbar)_{\lambda,\nu}+1$. 
Since $u_{\lambda,\nu}^B \notin \mathcal{D}'_{\{0\}}(\nbar)_{\lambda,\nu}$ for $(\lambda,\nu) \in L$ due to its support,
Theorem~\ref{thm:singular_family} implies the statement for $(\lambda,\nu) \in L$. \hfill \qedsymbol

Combining Theorem~\ref{theorem:diff_solutions}, Corollary~\ref{corollary:support_in_0}, Corollary~\ref{cor:kernel_zero_set} and Theorem~\ref{thm:singular_family}:
\begin{corollary}
\label{cor:supports}
\begin{enumerate}[label=(\roman{*})]
\item For $(\lambda,\nu)\in\C^2$:
$$\Supp u^A_{\lambda,\nu}=\begin{cases}
\emptyset & \text{for } (\lambda,\nu) \in L, \\
\{0\} & \text{for } (\lambda,\nu) \in \GlobalPoles \setminus L, \\
\nbar' & \text{for } (\lambda,\nu) \in \LocalPoles \setminus \IntersectionPoles, \\
\nbar & \text{otherwise.} 
\end{cases}$$
\item For $(\lambda,\nu)\in \LocalPoles$:
$$\Supp u^B_{\lambda,\nu}=
\begin{cases}
\{0\}  & \text{for } (\lambda,\nu) \in \IntersectionPoles \setminus L, \\
\nbar' & \text{otherwise.} \\
\end{cases}
$$
\item For $(\lambda,\nu)\in \GlobalPoles$:
$$ \Supp u_{\lambda,\nu}^C=\{0\}. $$
\end{enumerate}
\end{corollary}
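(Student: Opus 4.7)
The plan is to assemble the three statements by invoking the already-established results about $u^A_{\lambda,\nu}$, $u^B_{\lambda,\nu}$ and $u^C_{\lambda,\nu}$, dividing the $(\lambda,\nu)$-plane into the four disjoint regions dictated by the intersection pattern of $L$, $\GlobalPoles$, $\LocalPoles$ and $\IntersectionPoles$. I would handle (iii) first since it is essentially immediate, then (ii), and finish with the slightly more delicate case analysis needed for (i).

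For (iii), the construction of $u^C_{\lambda,\nu}$ in \eqref{eq:definition_u^C1}--\eqref{eq:definition_u^C2} exhibits it as a finite linear combination of $\Delta_{\mathfrak{v}'}^h \Delta_{\mathfrak{v}''}^i \Box^j \delta$, so $\Supp u^C_{\lambda,\nu} \subseteq \{0\}$ for every $(\lambda,\nu) \in \GlobalPoles$. Theorem~\ref{corollary:diff_op_non_zero} asserts that $u^C_{\lambda,\nu} \neq 0$ throughout $\GlobalPoles$, hence $\Supp u^C_{\lambda,\nu} = \{0\}$.

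For (ii), I would simply quote Theorem~\ref{thm:singular_family:iv}, which is precisely the required statement; nothing more is needed.

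For (i), I split $\C^2$ into the four regions. If $(\lambda,\nu) \in L$, Corollary~\ref{cor:kernel_zero_set} gives $u^A_{\lambda,\nu} = 0$, hence empty support. If $(\lambda,\nu) \in \GlobalPoles \setminus L$, Corollary~\ref{corollary:support_in_0} shows $\Supp u^A_{\lambda,\nu} \subseteq \{0\}$, while Corollary~\ref{cor:kernel_zero_set} guarantees $u^A_{\lambda,\nu} \neq 0$, so the support is exactly $\{0\}$. If $(\lambda,\nu) \in \LocalPoles \setminus \IntersectionPoles$, then automatically $(\lambda,\nu) \notin \GlobalPoles$, and the second case of Corollary~\ref{corollary:support_in_0} yields $\Supp u^A_{\lambda,\nu} = \nbar'$. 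In the remaining region $\C^2 \setminus (\GlobalPoles \cup \LocalPoles)$, the first case of Corollary~\ref{corollary:support_in_0} gives $\Supp u^A_{\lambda,\nu} = \nbar$. There are no genuine obstacles here; the only bookkeeping point worth stating explicitly is that $\GlobalPoles \setminus L$ is disjoint from $\LocalPoles \setminus \IntersectionPoles$, so the four cases really do partition $\C^2$ and Corollary~\ref{corollary:support_in_0} together with Corollary~\ref{cor:kernel_zero_set} covers each of them unambiguously. Theorem~\ref{theorem:diff_solutions} is not strictly needed for the proof of (i)--(iii), but it underlies Corollary~\ref{cor:kernel_zero_set}, which is why it appears in the list of inputs.
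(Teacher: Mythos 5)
Your proposal is correct and follows exactly the route the paper intends: the paper gives no written proof beyond the remark that the corollary follows by ``combining Theorem~\ref{theorem:diff_solutions}, Corollary~\ref{corollary:support_in_0}, Corollary~\ref{cor:kernel_zero_set} and Theorem~\ref{thm:singular_family},'' and your case analysis is precisely the verification that this combination works. Your observation that Theorem~\ref{theorem:diff_solutions} enters only indirectly (via Theorem~\ref{thm:u^C} and Corollary~\ref{cor:kernel_zero_set}), and that the non-vanishing of $u^C_{\lambda,\nu}$ in part~(iii) is really Theorem~\ref{corollary:diff_op_non_zero}, is a fair reading of the paper's somewhat loose citation list.
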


\section{Functional equations}\label{sec:FunctionalEquations}

Let $\T_\lambda: \pi_{\lambda} \to \pi_{-\lambda}$ be the normalized Knapp--Stein intertwining operator for $G$ given in the non-compact picture on $\nbar$ by convolution with
$$ \frac{1}{\Gamma(\lambda)} N(X,Z)^{2(\lambda-\rho)}. $$
Similarly, we denote by $\T'_{\nu}:\tau_\nu \to \tau_{-\nu}$ the normalized Knapp--Stein intertwining operator for $G'$ which is defined for $m>0$ by convolution with
$$\frac{1}{\Gamma(\nu)} N(X,Z)^{2(\nu-\rho')}, $$
and for $m=0$ by convolution with
$$\frac{1}{\Gamma(\frac{\nu}{2})} \abs{Z}^{\nu-\rho'}. $$
By Lemma~\ref{lemma:absolute_value_distr_holomorphic} and Corollary~\ref{cor:HeisenbergNormPowersHolomorphic} both $\T_\lambda$ and $\T_\nu'$ are holomorphic in the parameters $\lambda$ and $\nu$ and non-trivial for all $\lambda,\nu\in\C$. Since the normalized Knapp--Stein operators are intertwining, they define maps between spaces of symmetry breaking operators by composition:
\begin{align*}
&\Hom_{G'}(\pi_\lambda|_{G'},\tau_\nu) \to \Hom_{G'}(\pi_{-\lambda}|_{G'},\tau_\nu), && A \mapsto A \circ \T_{-\lambda},\\
&\Hom_{G'}(\pi_\lambda|_{G'},\tau_\nu) \to \Hom_{G'}(\pi_{\lambda}|_{G'},\tau_{-\nu}), && A \mapsto \T_{\nu}' \circ A.
\end{align*}

Formulas expressing the image of a symmetry breaking operator under such a composition map in terms of another symmetry breaking operator are called functional equations. These equations are particularly interesting if the Knapp--Stein operator or the symmetry breaking operator is a differential operator. For $T_\lambda$ this happens precisely for $\lambda =-k \in -\Z_{\geq 0}$, where $\T_\lambda$ is given by convolution with (see Corollary~\ref{cor:HeisenbergNormPowersHolomorphic})
$$ \frac{(-1)^k2^{-k}k!\pi^{\frac{p+q}{2}}}{\Gamma(\frac{\rho+k}{2})} \sum_{i+2j=k} \frac{2^{-i}\Gamma(\frac{p+2i}{4})}{i!j!\Gamma(\frac{p+2i}{2})}\Delta_{\mathfrak{v}}^i\square^j\delta(X,Z). $$

\begin{remark}
Similarly for $m>0$ and $\nu=-k\in -\Z_{\geq 0}$, $T'_\nu$ is given by convolution with 
$$ \frac{(-1)^k2^{-k}k!\pi^{\frac{p'+q}{2}}}{\Gamma(\frac{\rho'+k}{2})} \sum_{i+2j=k} \frac{2^{-i}\Gamma(\frac{p'+2i}{4})}{i!j!\Gamma(\frac{p'+2i}{2})}\Delta_{\mathfrak{v}'}^i\square^j\delta(X',Z). $$
For $m=0$ and $\nu=-2k \in -2\Z_{\geq 0}$, Lemma~\ref{lemma:absolute_value_distr_holomorphic} implies that the operator $T'_\nu$ is given by convolution with
$$ (-1)^k \frac{\pi^\frac{q}{2}}{2^{2k}\Gamma(\frac{\rho'+2k}{2})}\square^k\delta(Z). $$
\end{remark}

We choose the maximal compact subgroup $K=\Urm(1;\F)\times\Urm(n+1;\F)$ of $G$. The spherical principal series representations $\pi_\lambda$ contain a unique (up to scalar multiples) $K$-spherical vector $\mathbf{1}_\lambda\in I_\lambda$ which we normalize by $\mathbf{1}_\lambda(0)=1$. We first find an explicit formula for $\mathbf{1}_\lambda$:

\begin{lemma}
For $(X,Z)\in\nbar$ we have
$$ \mathbf{1}_\lambda(X,Z)=((1+\abs{X}^2)^2+\abs{Z}^2)^{-\frac{\lambda+\rho}{2}}. $$
\end{lemma}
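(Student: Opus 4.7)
The plan is to compute the Iwasawa $A$-component of $\bar n_{(X,Z)}$ indirectly by evaluating the group element on a convenient vector in the defining representation of $G = \Urm(1,n+1;\F)$ on $\F^{n+2}$. Consider $v_+ := (1,1,0,\ldots,0)^T$. Direct inspection of the matrix form of $\mathfrak n$ shows that every element of $\mathfrak n$ annihilates $v_+$, while $Hv_+ = v_+$. Hence, writing the Iwasawa decomposition $\bar n_{(X,Z)} = k\exp(tH)n$ with $k \in K$ and $n \in N$, we get $\bar n_{(X,Z)} v_+ = e^t k v_+$. Since $K = \Urm(1;\F)\times \Urm(n+1;\F)$ acts unitarily on $\F^{n+2}$ with respect to the positive definite Hermitian form $\abs{z}^2 := \sum_{i=0}^{n+1}\abs{z_i}^2$, taking norms yields $e^{2t} = \abs{\bar n_{(X,Z)}v_+}^2/\abs{v_+}^2$. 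Combined with the standard identification $\mathbf{1}_\lambda(g) = a(g)^{-(\lambda+\rho)}$ of the normalized $K$-spherical vector, this gives
\[
\mathbf{1}_\lambda(\bar n_{(X,Z)}) = \bigl(\abs{\bar n_{(X,Z)}v_+}^2/\abs{v_+}^2\bigr)^{-(\lambda+\rho)/2}.
\]

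It then remains to compute $\exp(\bar{\mathcal Z}) v_+$ for $\bar{\mathcal Z}\in\nbar$ the matrix exponent of $\bar n_{(X,Z)}$. A direct matrix calculation gives $\bar{\mathcal Z} v_+ = (Z,-Z,2X^{\ast})^T$, and with $v_- := (1,-1,0,\ldots,0)^T$ one finds $\bar{\mathcal Z}^2 v_+ = 2\abs{X}^2 v_-$ and $\bar{\mathcal Z} v_- = 0$, so that the exponential series terminates after two terms:
\[
\bar n_{(X,Z)} v_+ = \bigl(1 + \abs{X}^2 + Z,\; 1 - \abs{X}^2 - Z,\; 2X^{\ast}\bigr)^T.
\]
Using $\overline Z = -Z$ (so $Z^2 = -\abs{Z}^2$) and $X X^{\ast} = \abs{X}^2 \in \R$, a short expansion of the Hermitian norm squared gives $\abs{\bar n_{(X,Z)}v_+}^2 = 2\bigl((1+\abs{X}^2)^2 + \abs{Z}^2\bigr)$, while $\abs{v_+}^2 = 2$. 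Substituting into the display above produces the claimed formula.

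The only point where one has to watch for subtleties is the non-commutativity of $\F$ in the cases $\F = \HH, \mathbb{O}$; however, every scalar appearing in the relevant norm computations ($\abs{X}^2$, $\abs{Z}^2$, $Z^2$, $XX^{\ast}$) turns out to be real and central, so no genuine obstacle arises and the calculation carries through uniformly for all three division algebras.
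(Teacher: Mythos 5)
Your proof is correct and follows essentially the same route as the paper: decompose $\bar n_{(X,Z)}$ via the Iwasawa decomposition $G=KAN$ and compute $e^{2t}=(1+\abs{X}^2)^2+\abs{Z}^2$, which the paper leaves as an "easy computation." You carry out that computation explicitly (and correctly) by evaluating on the vector $v_+$, which $\mathfrak{n}$ annihilates and $H$ fixes, and using the $K$-unitarity of the definite form; this is the standard technique the paper implicitly has in mind.
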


\begin{proof}
By the Iwasawa decomposition $G=KAN$ we can decompose $\bar{n}_{(X,Z)}=kan$ with $k\in K$, $a=\exp(tH)\in A$ and $n\in N$. An easy computation shows that
$$ e^{2t}=(1+\abs{X}^2)^2+\abs{Z}^2, $$
then the claim follows.
\end{proof}

The intersection $K'=K\cap G'=\Urm(1;\F)\times\Urm(m+1;\F)\times F$ is a maximal compact subgroup of $G'$ and we denote by
$$ \mathbf{1}'_\nu(X',Z) = ((1+\abs{X'}^2)^2+\abs{Z}^2)^{-\frac{\nu+\rho'}{2}} $$
the unique $K'$-spherical vector of $\tau_\nu$, normalized by $\mathbf{1}'_\nu(0)=1$. Note that for $m=0$ this equals $(1+\abs{Z}^2)^{-\frac{\nu+\rho'}{2}}$.

We now evaluate the Knapp--Stein operators $T_\lambda$ and $T_\nu'$ and the symmetry breaking operators $A_{\lambda,\nu}$ on these spherical vectors. The relevant integral in this context was computed by Frahm--Su~\cite{FraSu_2018}. To keep this paper self-contained, we include a full proof.

\begin{prop}[{\cite[Section 4.2]{FraSu_2018}}]
\label{prop:integral_spherical_vector}
For $(\lambda,\nu)\in\C^2$ with $\abs{\Re\nu}<\Re\lambda+\frac{p''}{2}$:
\begin{multline*}
\int_{\nbar} N(X,Z)^{-2(\nu+\rho')} \abs{X''}^{\lambda-\rho+\nu+\rho'} ((1+\abs{X}^2)^2+\abs{Z}^2)^{-\frac{\lambda+\rho}{2}} \,d(X,Z)\\= \frac{\pi^{\frac{p+q}{2}} \Gamma(\frac{2\lambda+p}{4}) \Gamma(\frac{\lambda+\rho-\nu-\rho'}{2})\Gamma(\frac{\lambda+\rho+\nu-\rho'}{2}) }
{\Gamma(\frac{p''}{2}) \Gamma(\frac{2\lambda+p}{2}) \Gamma(\frac{\lambda+\rho}{2})   }.
\end{multline*}
\end{prop}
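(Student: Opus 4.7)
The plan is to evaluate the integral by passing to the H-type polar coordinates of Lemma~\ref{lemma:integral_formula_polar_coordinates}. Parametrize $(X,Z) = (rx\omega, r^2\sqrt{1-x^4}\eta)$ with $r > 0$, $x \in [0,1]$, $\omega \in S^{p-1}$, $\eta \in S^{q-1}$, with measure $2r^{2\rho-1}x^{p-1}(1-x^4)^{q/2-1}\,dr\,dx\,d\omega\,d\eta$. Under this substitution one has $N(X,Z)=r$, $|X|=rx$, $|X''|=rx|\omega''|$, and the cross terms in the quartic polynomial conspire to give
\[
(1+|X|^2)^2+|Z|^2 = (1+r^2x^2)^2 + r^4(1-x^4) = 1+2r^2x^2+r^4,
\]
so the integrand becomes a product of an angular factor $|\omega''|^{\lambda-\rho+\nu+\rho'}$ and a function of $(r,x)$ only.

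Next I would carry out the two angular integrations. The $\eta$ integral contributes $2\pi^{q/2}/\Gamma(q/2)$, and the $\omega$ integral is the standard Beta evaluation
\[
\int_{S^{p-1}}|\omega''|^{\alpha}\,d\omega = \frac{2\pi^{p/2}\,\Gamma((\alpha+p'')/2)}{\Gamma(p''/2)\,\Gamma((\alpha+p)/2)}\qquad (\alpha = \lambda-\rho+\nu+\rho').
\]
Using $p''=2(\rho-\rho')$ and $p=2(\rho-q)$ the Gamma arguments clean up to $(\lambda+\nu+\rho-\rho')/2$ and $(\lambda+\rho+\nu+\rho')/2-q$, and the sphere factors assemble into $\pi^{(p+q)/2}$ and a $\Gamma(p''/2)^{-1}$ in the denominator, as required.

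The crux of the proof will be the remaining double integral
\[
J = \int_0^\infty\!\!\int_0^1 r^{\lambda+\rho-\nu-\rho'-1}\,x^{\lambda-\rho+\nu+\rho'+p-1}\,(1-x^4)^{q/2-1}\,(1+2r^2x^2+r^4)^{-(\lambda+\rho)/2}\,dx\,dr.
\]
My plan is to substitute $s=r^2$, use the identity $1+2sx^2+s^2 = (s+x^2)^2+(1-x^4)$, and then perform a Cayley-type substitution $s\mapsto(1-v)/(1+v)$. This converts the quadratic denominator into the shape $[1-2v(1-v)(1-x^2)]$ times explicit powers of $(1+v)$. After swapping the order of integration and substituting $y=1-x^2$, the integrand factors through a Fubini step into a product of two one-variable integrals, each of Beta type; these two Beta integrals yield precisely the Gamma factors $\Gamma\bigl(\tfrac{\lambda+\rho-\nu-\rho'}{2}\bigr)\Gamma\bigl(\tfrac{\lambda+\rho+\nu-\rho'}{2}\bigr)$ appearing in the stated answer.

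The main obstacle is exactly this decoupling step: the factor $(1+2r^2x^2+r^4)^{-(\lambda+\rho)/2}$ couples $r$ and $x$ nonlinearly, and one must choose the substitutions carefully so that the double integral reduces to independent Beta integrals rather than a genuine hypergeometric expression. An alternative route that makes the separation more transparent is to introduce a Feynman parameter combining the two denominators $((1+|X|^2)^2+|Z|^2)^{-(\lambda+\rho)/2}$ and $(|X|^4+|Z|^2)^{-(\nu+\rho')/2}$, integrate out the central variable $Z$ via $\int_{\R^q}(A+|Z|^2)^{-s}\,dZ=\pi^{q/2}\Gamma(s-q/2)A^{q/2-s}/\Gamma(s)$, and reduce the resulting $\R^p$-integral via polar coordinates to an Euler integral that collapses by Gauss's summation ${}_2F_1(a,b;c;1)=\Gamma(c)\Gamma(c-a-b)/[\Gamma(c-a)\Gamma(c-b)]$. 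Either way, Legendre's duplication formula is needed at the end to recognize the combination $\Gamma((2\lambda+p)/4)/[\Gamma((2\lambda+p)/2)\Gamma((\lambda+\rho)/2)]$ in the prefactor; after this final rearrangement the claimed identity falls out.
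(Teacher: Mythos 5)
Your reduction via the H\mbox{-}type polar coordinates of Lemma~\ref{lemma:integral_formula_polar_coordinates} is correct and in fact a genuine simplification over the paper's own computation, which instead uses ordinary spherical coordinates on $\mathfrak{v}'\times\mathfrak{v}''\times\mathfrak{z}$ with three radial variables. With your substitution one indeed gets $N(X,Z)=r$, $|X''|=rx\,|\omega''|$ and $(1+|X|^2)^2+|Z|^2=1+2r^2x^2+r^4$; the $\eta$- and $\omega$-integrals then factor off, and Lemma~\ref{lemma:int_techniques} produces the factors $\pi^{(p+q)/2}$, $\Gamma(p'')^{-1}$ and $\Gamma\bigl(\tfrac{\lambda+\rho+\nu-\rho'}{2}\bigr)$ exactly as in the answer. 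This part of the argument is sound.

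The gap lies in the evaluation of the remaining double integral $J$. First a small algebraic point: with $s=(1-v)/(1+v)$ as written one gets $1+2sx^2+s^2=\tfrac{4}{(1+v)^2}\bigl[1-\tfrac{1-v^2}{2}(1-x^2)\bigr]$, not the shape you display; the coupling term $\bigl[1-2v(1-v)(1-x^2)\bigr]$ arises from the map $s=v/(1-v)$ instead. More seriously, even with the correct substitution and after further putting $y=1-x^2$, $J$ becomes (up to explicit constants)
\begin{equation*}
\int_0^1\!\!\int_0^1 v^{\frac{\alpha}{2}-1}(1-v)^{\frac{\beta}{2}+q-1}(1-y)^{\frac{\beta}{2}-1}y^{\frac{q}{2}-1}(2-y)^{\frac{q}{2}-1}\bigl[1-2v(1-v)y\bigr]^{-\frac{\lambda+\rho}{2}}\,dy\,dv,
\end{equation*}
with $\alpha=\lambda+\rho-\nu-\rho'$ and $\beta=\lambda+\rho+\nu+\rho'-2q$. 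This does not decouple under any Fubini step: expanding the last factor binomially and integrating in $v$ yields a ${}_2F_1$ in $y/2$, not a Beta integral, and the remaining $y$-integral is again nonelementary. The paper faces the same difficulty and resolves it by pushing a chain of classical transformations through to a ${}_3F_2(\,\cdot\,;\,\cdot\,;1)$ whose second numerator parameter coincides with its second denominator parameter (the identity $-p-2q-a-b-c=p+2q+b$ in the paper's notation), so the ${}_3F_2$ degenerates to a ${}_2F_1(\,\cdot\,;\,\cdot\,;1)$ that Gauss evaluates. That parameter coincidence is the real content of the computation; your proposal asserts that the integrals ``factor into independent Beta integrals'' and that the alternative Feynman route ``collapses by Gauss's summation'' without exhibiting the hypergeometric structure or identifying why the collapse occurs. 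Until either route is carried through the explicit hypergeometric reduction, the key step of the proof is not established.
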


\begin{proof}
We abbreviate $a:=-2(\nu+\rho')$, $b:=\lambda-\rho+\nu+\rho'$ and $c:=-2(\lambda+\rho)$.
Using polar coordinates we find
\begin{multline*}
\int_{\nbar} N(X,Z)^{a} \abs{X''}^b ((1+\abs{X}^2)^2+\abs{Z}^2)^{\frac{c}{4}} \,d(X,Z) \\=
\frac{8\pi^{\frac{p+q}{2}}}{\Gamma(\frac{p'}{2})\Gamma(\frac{p''}{2})\Gamma(\frac{q}{2})}
\int_{\R_+^3}((r^2+s^2)^2+t^2)^{\frac{a}{4}}((1+r^2+s^2)^2+t^2)^{\frac{c}{4}}r^{p'-1}s^{p'+b-1}t^{q-1}\,dr\,ds\,dt.
\end{multline*}
We integrate over $t$ first using \cite[3.259 (3)]{Tables_Integrals_Prodicts}
\begin{multline*}
=\frac{4\pi^{\frac{p+q}{2}}\Gamma(-\frac{a+4b+2q}{4})}{\Gamma(\frac{p'}{2})\Gamma(\frac{p''}{2})\Gamma(-\frac{a+4b}{4})} 
\int_{\R_+^2} (1+r^2+s^2)^{\frac{c+2q}{2}} (r^2+s^2)^{\frac{a}{2}} r^{p'-1} s^{p''+b-1} 
\\ \times {_2F_1}\left(  -\frac{a}{4}, \frac{q}{2}; -\frac{a+c}{4};1-\frac{(1+r^2+s^2)^2}{(r^2+s^2)^2}  \right) \,dr\,ds.
\end{multline*}
Using polar coordinates on $\R^2$ we find
\begin{multline*}
=\frac{4\pi^{\frac{p+q}{2}}\Gamma(-\frac{a+4b+2q}{4})}{\Gamma(\frac{p'}{2})\Gamma(\frac{p''}{2})\Gamma(-\frac{a+4b}{4})} 
\int_{0}^{\frac{\pi}{2}} \cos^{p'-1} \phi \sin^{p''+b-1}\phi \,d\phi
\int_{0}^{\infty}x^{p+b+a-1}(1+x^2)^{\frac{c+2q}{2}} 
\\ \times{_2F_1}\left(  -\frac{a}{4}, \frac{q}{2}; -\frac{a+c}{4};1-\frac{(1+x^2)^2}{x^4}  \right) \,dx.
\end{multline*}
Calculating the first integral and using the Euler identity \cite[1, (2.2.7)]{SpecialFunct}
\begin{multline*}
=\frac{2\pi^{\frac{p+q}{2}}\Gamma(\frac{p''+b}{2})\Gamma(-\frac{a+4b+2q}{4})}{\Gamma(\frac{p''}{2})\Gamma(\frac{p+b}{2})\Gamma(-\frac{a+4b}{4})} 
\int_{0}^{\infty} x^{p+2q+a+b+c-1} 
\\ \times {_2F_1}\left(
-\frac{c}{4}, -\frac{a+c+2q}{4};-\frac{a+c}{4}, 1-\frac{(1+x^2)^2}{x^4}
\right) \,dx.
\end{multline*}
Substituting $y=\frac{1+x^2}{x^2}$
\begin{multline*}
=\frac{\pi^{\frac{p+q}{2}}\Gamma(\frac{p''+b}{2})\Gamma(-\frac{a+4b+2q}{4})}{\Gamma(\frac{p''}{2})\Gamma(\frac{p+b}{2})\Gamma(-\frac{a+4b}{4})} 
\int_{1}^{\infty} (y-1)^{-\frac{p+2q+a+b+c}{2}-1}
\\ \times {_2F_1}\left(
-\frac{c}{4}, -\frac{a+c+2q}{4};-\frac{a+c}{4}, 1-y^2
\right) \,dy.
\end{multline*}
Expanding the hypergeometric function, using the Euler integral representation \cite[1, (2.3.17)]{SpecialFunct} yields
\begin{equation*}
=\frac{\pi^{\frac{p+q}{2}}\Gamma(\frac{p''+b}{2})}{\Gamma(\frac{p''}{2})\Gamma(\frac{q}{2})\Gamma(\frac{p+b}{2}) } 
\int_{0}^{\infty}t^{-\frac{a+c+2q}{4}-1}(1+t)^\frac{a}{4}  \int_{1}^{\infty}(y-1)^{-\frac{p+2q+a+b+c}{2}-1}(1+y^2t)^{\frac{c}{4}} \,dy\,dt.
\end{equation*}
Evaluating the inner integral using \cite[3.254 (2)]{Tables_Integrals_Prodicts} we find
\begin{multline*}
=\frac{\pi^{\frac{p+q}{2}}\Gamma(\frac{p''+b}{2})\Gamma(-\frac{p+2q+a+b+c}{2})\Gamma(\frac{p+2q+a+b}{2})}{\Gamma(\frac{p''}{2})\Gamma(\frac{q}{2})\Gamma(\frac{p+b}{2}) \Gamma(-\frac{c}{2})} 
\int_{0}^{\infty} t^{-\frac{a+2q}{4}-1}(1+t)^{\frac{a}{4}} \\
\times{_2F_1} \left(
\frac{p+2q+a+b}{4}, \frac{p+2q+a+b+2}{4};\frac{2-c}{4};-t^{-1}
\right) \,dt.
\end{multline*}
Substituting $x=t^{-1}$ we have
\begin{multline*}
=\frac{\pi^{\frac{p+q}{2}}\Gamma(\frac{p''+b}{2})\Gamma(-\frac{p+2q+a+b+c}{2})\Gamma(\frac{p+2q+a+b}{2})}{\Gamma(\frac{p''}{2})\Gamma(\frac{q}{2})\Gamma(\frac{p+b}{2}) \Gamma(-\frac{c}{2})} 
\int_{0}^{\infty}
x^{\frac{q}{2}-1}(1+x)^\frac{a}{4}
\\ \times {_2F_1} \left(
\frac{p+2q+a+b}{4}, \frac{p+2q+a+b+2}{4};\frac{2-c}{4};-x
\right) \,dx.
\end{multline*}
Then \cite[7.512 (5)]{Tables_Integrals_Prodicts} yields
\begin{multline*}
=\frac{\pi^{\frac{p+q}{2}}\Gamma(\frac{p''+b}{2})\Gamma(\frac{p+b}{4})\Gamma(-\frac{p+2q+a+b+c}{2})\Gamma(\frac{p+2q+a+b}{2})}{\Gamma(\frac{p''}{2})\Gamma(\frac{p+b}{2})\Gamma(-\frac{c}{2})\Gamma(\frac{p+2q+b}{4})} \\ \times {_3F_2}\left( 
\frac{p+2q+a+b}{4},-\frac{p+2q+a+b+c}{4},\frac{q}{2}; \frac{2-c}{4},\frac{p+2q+b}{4};1
\right).
\end{multline*}
Since $-p+2q-a-b-c=\lambda+\rho+\nu+\rho'=p+2q+b$, this is
$$
=\frac{\pi^{\frac{p+q}{2}}\Gamma(\frac{p''+b}{2})\Gamma(\frac{p+b}{4})\Gamma(-\frac{p+2q+a+b+c}{2})\Gamma(\frac{p+2q+a+b}{2})}{\Gamma(\frac{p''}{2})\Gamma(\frac{p+b}{2})\Gamma(-\frac{c}{2})\Gamma(\frac{p+2q+b}{4})}   {_2F_1}\left( 
\frac{p+2q+a+b}{4},\frac{q}{2}; \frac{2-c}{4};1
\right).
$$
Then evaluating the hypergeometric function with \cite[Theorem 2.2.2]{SpecialFunct} and applying the duplication formula of the gamma function yields the desired formula.
\end{proof}

For the case $m=0$ we further need the following integral:

\begin{prop}
\label{prop:spherical_vector_KS_m=0}
For $\nu \in \C$ with $\Re \nu>0$ we have
$$\int_{\mathfrak{z}}\abs{Z}^{\nu-\rho'}(1+\abs{Z}^2)^{-\frac{\nu+\rho'}{2}} \, dZ=\frac{\pi^{\frac{q}{2}}\Gamma(\frac{\nu}{2})}{\Gamma(\frac{\nu+\rho'}{2})}.$$
\end{prop}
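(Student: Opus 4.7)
The key observation is that for $m=0$ we have $p'=0$, so $\rho' = \frac{p'+2q}{2} = q$, and $\mathfrak{z} = \Im \F \cong \R^q$. Thus the integral is of a radial function on $\R^q$ and reduces to a one-dimensional integral via polar coordinates.

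First, I would pass to polar coordinates on $\mathfrak{z}$: since the integrand depends only on $\abs{Z}$, we obtain
\begin{equation*}
 \int_{\mathfrak{z}} \abs{Z}^{\nu-\rho'}(1+\abs{Z}^2)^{-\frac{\nu+\rho'}{2}}\,dZ = \operatorname{vol}(S^{q-1})\int_0^\infty r^{\nu-q+q-1}(1+r^2)^{-\frac{\nu+q}{2}}\,dr,
\end{equation*}
using $\rho'=q$. Recalling $\operatorname{vol}(S^{q-1}) = \frac{2\pi^{q/2}}{\Gamma(q/2)}$, this simplifies to
\begin{equation*}
 \frac{2\pi^{q/2}}{\Gamma(q/2)}\int_0^\infty r^{\nu-1}(1+r^2)^{-\frac{\nu+q}{2}}\,dr.
\end{equation*}

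Next, I would substitute $t = r^2$, which yields $dr = \frac{dt}{2\sqrt{t}}$ and transforms the integral into
\begin{equation*}
 \frac{\pi^{q/2}}{\Gamma(q/2)}\int_0^\infty t^{\frac{\nu}{2}-1}(1+t)^{-\frac{\nu+q}{2}}\,dt.
\end{equation*}
This is a classical Beta integral, and applying $\int_0^\infty t^{a-1}(1+t)^{-(a+b)}\,dt = \frac{\Gamma(a)\Gamma(b)}{\Gamma(a+b)}$ with $a = \nu/2$ and $b = q/2$ gives
\begin{equation*}
 \frac{\pi^{q/2}}{\Gamma(q/2)} \cdot \frac{\Gamma(\nu/2)\Gamma(q/2)}{\Gamma(\frac{\nu+q}{2})} = \frac{\pi^{q/2}\Gamma(\nu/2)}{\Gamma(\frac{\nu+\rho'}{2})},
\end{equation*}
which is exactly the claimed formula. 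The convergence at $r=0$ requires $\Re\nu > 0$, and the decay at infinity is automatic since $(1+r^2)^{-\frac{\nu+q}{2}}r^{\nu-1}$ behaves like $r^{-q-1}$ for large $r$; thus the hypothesis $\Re\nu > 0$ is exactly what is needed. There is no real obstacle here — the statement is a direct Beta function evaluation once polar coordinates are applied.
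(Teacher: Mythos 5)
Your proof is correct and follows the same route as the paper: polar coordinates on $\mathfrak{z}\cong\R^q$, the substitution $t=r^2$, and the Beta integral, using $\rho'=q$ for $m=0$. Nothing to add.
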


\begin{proof}
Using polar coordinates on $\R^q$ the integral is equal to
$$\frac{2\pi^{\frac{q}{2}}}{\Gamma(\frac{q}{2})}\int_{0}^{\infty}r^{\nu-1}(1+r^2)^{-\frac{\nu+\rho'}{2}}\,dr.$$
Then substituting $r^2=t$ this is equal to
\begin{equation*}
\frac{\pi^{\frac{q}{2}}}{\Gamma(\frac{q}{2})}B\left(\frac{\nu}{2},\frac{\rho'}{2} \right)=\frac{\pi^{\frac{q}{2}}\Gamma(\frac{\nu}{2})}{\Gamma(\frac{\nu+\rho'}{2})}. \qedhere
\end{equation*}
\end{proof}

\begin{corollary}\label{lemma:spherical_vector_action}
For the spherical vectors $\mathbf{1}_\lambda$ and $\mathbf{1}'_\nu$ we have
$$ \T_\lambda \mathbf{1}_\lambda=\frac{\pi^{\frac{p+q}{2}}\Gamma(\frac{2\lambda+p}{4})}{\Gamma(\frac{2\lambda+p}{2})\Gamma(\frac{\lambda+\rho}{2})} \mathbf{1}_{-\lambda}, \qquad
\T'_\nu \mathbf{1}'_\nu=\frac{\pi^{\frac{p'+q}{2}}}{\Gamma(\frac{\nu+\rho'}{2})}\times \begin{cases}
\frac{\Gamma(\frac{2\nu+p'}{4})}{\Gamma(\frac{2\nu+p'}{2})} \mathbf{1}'_{-\nu} & \text{for $m>0$,}\\
\mathbf{1}'_{-\nu} & \text{for $m=0$}
\end{cases} $$
and
$$ A_{\lambda,\nu}\mathbf{1}_\lambda  = \frac{\pi^{\frac{p+q}{2}} \Gamma(\frac{2\lambda+p}{4})  }
{\Gamma(\frac{p''}{2}) \Gamma(\frac{2\lambda+p}{2}) \Gamma(\frac{\lambda+\rho}{2})   }\mathbf{1}'_\nu. $$
\end{corollary}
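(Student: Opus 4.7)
All three identities follow the same pattern: the operator in question sends the canonical spherical vector to a spherical vector, hence to a scalar multiple, and the scalar is recovered by evaluation at the identity coset. By uniqueness of the $K$-invariant vector in $\pi_{-\lambda}$, the $G$-equivariance of $\T_\lambda$ forces $\T_\lambda\mathbf{1}_\lambda=c_1(\lambda)\mathbf{1}_{-\lambda}$; similarly $\T'_\nu\mathbf{1}'_\nu=c_2(\nu)\mathbf{1}'_{-\nu}$. Since $K'\subseteq K$, the restriction $\mathbf{1}_\lambda|_{G'}$ is $K'$-spherical, and the $G'$-equivariance of $A_{\lambda,\nu}$ then yields $A_{\lambda,\nu}\mathbf{1}_\lambda=c_3(\lambda,\nu)\mathbf{1}'_\nu$.

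To compute the scalars I evaluate at $e$ (resp.\ $e'$) via the integral formula \eqref{eq:distribution_operator}. This reduces $c_3(\lambda,\nu)$ to the normalized version of the integral in Proposition~\ref{prop:integral_spherical_vector}: the two gamma factors in the denominator of $u^A_{\lambda,\nu}$ cancel against the two gamma factors $\Gamma(\tfrac{\lambda+\rho-\nu-\rho'}{2})\Gamma(\tfrac{\lambda+\rho+\nu-\rho'}{2})$ appearing in the numerator of that proposition, leaving exactly the claimed expression for $A_{\lambda,\nu}\mathbf{1}_\lambda$.

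For $c_1(\lambda)$ I specialize Proposition~\ref{prop:integral_spherical_vector} to $\nu=\rho-\rho'-\lambda$: this makes $\abs{X''}^{\lambda-\rho+\nu+\rho'}=1$ and $-2(\nu+\rho')=2(\lambda-\rho)$, so the integrand becomes exactly the Knapp--Stein kernel of $\T_\lambda$ paired with $\mathbf{1}_\lambda$. At this value one has $\tfrac{\lambda+\rho-\nu-\rho'}{2}=\lambda$ and $\tfrac{\lambda+\rho+\nu-\rho'}{2}=\tfrac{p''}{2}$, so $\Gamma(p''/2)$ in the numerator cancels against $\Gamma(p''/2)$ in the denominator, and the leftover $\Gamma(\lambda)$ is absorbed by the normalization $1/\Gamma(\lambda)$ of $\T_\lambda$. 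For $c_2(\nu)$ with $m>0$, the same specialization argument applies to the rank-one subgroup $\Urm(1,m+1;\F)$ of $G'$ (the compact factor $F$ acts trivially on everything in sight), producing the stated formula with $p,\rho$ replaced by $p',\rho'$. For $m=0$, $\nbar'=\mathfrak{z}$ is abelian and the relevant integral is evaluated by Proposition~\ref{prop:spherical_vector_KS_m=0}; the factor $\Gamma(\nu/2)$ produced there cancels the $1/\Gamma(\nu/2)$ in the normalization of $\T'_\nu$, yielding $\pi^{q/2}/\Gamma(\tfrac{\nu+\rho'}{2})$ as asserted.

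The only slightly delicate point is the specialization-and-cancellation of the $\Gamma(p''/2)$ factors for $\T_\lambda$ (and the analogous $\Gamma((p')''/2)$ in the $\T'_\nu$ case); this requires that $\nu=\rho-\rho'-\lambda$ lies in the region $\abs{\Re\nu}<\Re\lambda+\tfrac{p''}{2}$ of absolute convergence in Proposition~\ref{prop:integral_spherical_vector}, which holds for $\Re\lambda>0$, with the general case following by meromorphic continuation. The remainder is straightforward bookkeeping with gamma factors.
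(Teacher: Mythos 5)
Your argument is correct and takes essentially the same route as the paper: both observe that the image of a spherical vector under an intertwiner is itself spherical, reduce to evaluating the scalar at the origin, and compute that scalar via Proposition~\ref{prop:integral_spherical_vector} for $A_{\lambda,\nu}$ and via the specialization $\lambda-\rho+\nu+\rho'=0$ (respectively Proposition~\ref{prop:spherical_vector_KS_m=0} for $m=0$) for the Knapp--Stein operators. Your bookkeeping of the cancelling $\Gamma$-factors and of the domain of absolute convergence is accurate.
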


\begin{proof}
Since $A_{\lambda,\nu}:\pi_\lambda \to \tau_\nu$ is intertwining and since $K'\subseteq K$, the image $A_{\lambda,\nu}\mathbf{1}_\lambda$ of the spherical vector $\mathbf{1}_\lambda$ must be $K'$-invariant, i.e. $K'$-spherical. Hence
it is enough to compute $A_{\lambda,\nu} \mathbf{1}_\lambda (0)=\langle u_{\lambda,\nu}^A,\mathbf{1}_\lambda \rangle$. The last identity follows from Proposition~\ref{prop:integral_spherical_vector}. The remaining two identities follow similarly by setting $\lambda-\rho+\nu+\rho'=0$ in Proposition~\ref{prop:integral_spherical_vector} for $m>0$, and for $m=0$ by Proposition~\ref{prop:spherical_vector_KS_m=0}.
\end{proof}

Since the compositions of symmetry breaking operators and Knapp--Stein operators are again symmetry breaking operators and these are generically unique, we can read off the following functional equations by evaluating at the spherical vector and employing Corollary~\ref{lemma:spherical_vector_action}.

\begin{theorem}
\label{thm:functional_equations}
For $(\lambda,\nu)\in \C^2$ we have
\begin{align*}
 \T'_\nu \circ A_{\lambda,\nu} &= \frac{\pi^{\frac{p'+q}{2}}}{\Gamma(\frac{\nu+\rho'}{2})}\times \begin{cases}
 \frac{\Gamma(\frac{2\nu+p'}{4})}{\Gamma(\frac{2\nu+p'}{2})} A_{\lambda,-\nu} & \text{for $m>0$,}\\
 A_{\lambda,-\nu} & \text{for $m=0$,}\\
 \end{cases} \\
 A_{\lambda,\nu}\circ \T_{-\lambda} &= \pi^{\frac{p+q}{2}}\frac{\Gamma(\frac{2\lambda+p}{4})}{\Gamma(\frac{2\lambda+p}{2})\Gamma(\frac{\lambda+\rho}{2})}A_{-\lambda,\nu}.
\end{align*}
\end{theorem}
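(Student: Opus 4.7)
My plan is to reduce the two functional equations to a scalar identity by means of the generic multiplicity-one statement and then determine the scalar by evaluating on a carefully chosen test vector. The argument will finally be completed by analytic continuation.

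First I would observe that for generic parameters $(\lambda,\nu)$, Theorem~\ref{introthm:Classification} (or already Proposition~\ref{prop:solutions} combined with Proposition~\ref{prop:continuation_solutions}) implies that the space $\Hom_{G'}(\pi_\lambda|_{G'},\tau_\nu) \cong \mathcal{D}'(\nbar)_{\lambda,\nu}$ is one-dimensional, spanned by $A_{\lambda,\nu}$. The Knapp--Stein operators $T_\lambda$ and $T'_\nu$ are intertwining by construction, hence $A_{\lambda,\nu}\circ T_{-\lambda}$ lies in $\Hom_{G'}(\pi_{-\lambda}|_{G'},\tau_\nu)$ and $T'_\nu\circ A_{\lambda,\nu}$ lies in $\Hom_{G'}(\pi_{\lambda}|_{G'},\tau_{-\nu})$. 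For $(-\lambda,\nu)$ and $(\lambda,-\nu)$ outside the exceptional set $L\cup S_{1}\cup S_{2}\cup S_{3}$, these target spaces are spanned respectively by $A_{-\lambda,\nu}$ and $A_{\lambda,-\nu}$. Consequently, there exist scalars $c_1(\lambda,\nu)$ and $c_2(\lambda,\nu)$ such that
\begin{equation*}
A_{\lambda,\nu}\circ T_{-\lambda}=c_1(\lambda,\nu)\,A_{-\lambda,\nu}, \qquad T'_\nu\circ A_{\lambda,\nu}=c_2(\lambda,\nu)\,A_{\lambda,-\nu}.
\end{equation*}

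Next I would determine the scalars by evaluating both sides on the $K$-spherical vector $\mathbf{1}_\lambda\in\pi_\lambda$. Since $A_{\lambda,\nu}$ maps $K'$-invariants of $\pi_\lambda|_{G'}$ to $K'$-invariants of $\tau_\nu$, and $K'\subseteq K$, the image $A_{\lambda,\nu}\mathbf{1}_\lambda$ is a scalar multiple of $\mathbf{1}'_\nu$; similarly for $T_\lambda$ and $T'_\nu$. Corollary~\ref{lemma:spherical_vector_action} gives the three explicit scalars. For the first equation, applying both sides to $\mathbf{1}_{-\lambda}$ yields
\begin{equation*}
\frac{\pi^{\frac{p+q}{2}}\Gamma(\frac{-2\lambda+p}{4})}{\Gamma(\frac{-2\lambda+p}{2})\Gamma(\frac{-\lambda+\rho}{2})}\cdot\frac{\pi^{\frac{p+q}{2}}\Gamma(\frac{2\lambda+p}{4})}{\Gamma(\frac{p''}{2})\Gamma(\frac{2\lambda+p}{2})\Gamma(\frac{\lambda+\rho}{2})}\mathbf{1}'_\nu= c_1(\lambda,\nu)\frac{\pi^{\frac{p+q}{2}}\Gamma(\frac{-2\lambda+p}{4})}{\Gamma(\frac{p''}{2})\Gamma(\frac{-2\lambda+p}{2})\Gamma(\frac{-\lambda+\rho}{2})}\mathbf{1}'_\nu.
\end{equation*}
Solving for $c_1(\lambda,\nu)$ produces the claimed scalar. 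For the second equation I would similarly apply both sides to $\mathbf{1}_\lambda$ to extract $c_2(\lambda,\nu)$, using the second identity in Corollary~\ref{lemma:spherical_vector_action}, distinguishing $m>0$ and $m=0$.

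Finally, since $A_{\lambda,\nu}$, $T_\lambda$ and $T'_\nu$ all depend holomorphically on the relevant parameters (Theorem~\ref{lemma:holomorphic_cont_solution}, Lemma~\ref{lemma:absolute_value_distr_holomorphic} and Corollary~\ref{cor:HeisenbergNormPowersHolomorphic}), both sides of each equation are holomorphic in $(\lambda,\nu)\in\C^{2}$. Hence the identities, which hold on a non-empty Zariski open set where multiplicity one is available, extend to all of $\C^{2}$ by analytic continuation.

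The conceptual step that requires the most care is confirming that the evaluation on the spherical vector does not produce an indeterminate $\tfrac{0}{0}$: one must verify that for generic $(\lambda,\nu)$ the scalar $A_{\lambda,\nu}\mathbf{1}_\lambda$ computed in Corollary~\ref{lemma:spherical_vector_action} is non-zero, so the scalar $c_i(\lambda,\nu)$ is determined unambiguously. This is clear from the explicit gamma factors and is, along with identifying the precise generic locus where $\Hom_{G'}$ is one-dimensional, the only delicate point of the argument.
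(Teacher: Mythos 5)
Your proof is correct and follows essentially the same route as the paper: reduce to a scalar by multiplicity one on the generic locus, determine the scalar by evaluating on the spherical vector using Corollary~\ref{lemma:spherical_vector_action}, and conclude by analytic continuation. The paper compresses this to one sentence but the argument is identical, and you correctly flag the need to check the spherical-vector scalar is generically non-zero.
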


\begin{remark}
Theorem~\ref{thm:functional_equations} can be combined with the residue formulas Theorem~\ref{prop:diff_op_residue} and Theorem~\ref{thm:singular_family:ii} to obtain formulas for the composition of $B_{\lambda,\nu}$ and $C_{\lambda,\nu}$ with the Knapp--Stein operators $T_\lambda$ and $T'_\nu$. For example for $m>0$, 
$(\lambda,\nu)\in \IntersectionPoles$ and
$l,k \in \Z_{\geq 0}$ given by $\lambda+\rho+\nu-\rho'=-2l$, $\lambda+\rho-\nu-\rho'=-2k$ with $l \geq k$ we obtain the following identity involving only differential operators:
$$\T'_{k-l} \circ C_{-k-l-\frac{p''}{2},k-l}=(-1)^{l+k}\pi^{\frac{p'+q}{2}} \frac{l!\Gamma(\frac{2l-2k+p'}{4})}{k!\Gamma(\frac{2l-2k+p'}{2})\Gamma(\frac{l-k+\rho'}{2})} C_{-k-l-\frac{p''}{2},l-k},$$
and for $(\lambda,\nu)\in L$ with $\nu\in -\rho'-2\Z_{\geq 0}$ we obtain 
$\T'_\nu \circ B_{\lambda,\nu} = 0.$
\end{remark}

\newpage

\part*{Appendix}
\addcontentsline{toc}{part}{Appendix}

\appendix

\section{Homogeneous generalized functions}

Let $n\geq1$. For $\lambda\in\C$ with $\Re\lambda>-n$ the function
\begin{equation*}
u_{\lambda}(x)=\frac{\abs{x}^\lambda}{\Gamma(\frac{\lambda+n}{2})}
\end{equation*}
is locally integrable and hence defines a distribution $u_\lambda \in \mathcal{D}'(\R^n)$ which is homogeneous of degree $\lambda$.

\begin{lemma}[\cite{Gelfand_Shilov_1} Chapter I, 3.5 and 3.9]
\label{lemma:absolute_value_distr_holomorphic}
The family of distributions $u_{\lambda}\in \mathcal{D}'(\R^{n})$ extends holomorphically to an entire function in $\lambda \in \C$. For $\lambda\notin-n-2\Z_{\geq0}$ we have $\Supp u_\lambda=\R^n$ and for $\lambda=-n-2N\in-n-2\Z_{\geq 0}$ we have
$$ u_{-n-2N}(x) = (-1)^N\frac{\pi^{\frac{n}{2}}}{2^{2N}\Gamma(\frac{n+2N}{2})}(\Delta^N\delta)(x), $$
where $\Delta=\sum_{i=1}^n \frac{\partial^2}{\partial x_i^2}$ is the Laplacian on $\R^n$.
\end{lemma}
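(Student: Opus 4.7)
The plan is to reduce the problem to a one-dimensional meromorphic continuation via polar coordinates and then identify the residues by combining Taylor expansion with a standard spherical integral computation.

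First I would fix $\varphi\in C_c^\infty(\R^n)$ and, using polar coordinates $x=r\omega$, rewrite
$$ \langle u_\lambda,\varphi\rangle = \frac{1}{\Gamma(\frac{\lambda+n}{2})}\int_0^\infty r^{\lambda+n-1}\Phi(r)\,dr, \qquad \Phi(r):=\int_{S^{n-1}}\varphi(r\omega)\,d\omega, $$
valid for $\Re\lambda>-n$. The function $\Phi$ extends to an even smooth function on $\R$ with compact support, so the meromorphic continuation of the integral is the standard one: splitting at $r=1$, the tail is entire in $\lambda$, and on $[0,1]$ I would use the Taylor expansion $\Phi(r)=\sum_{j=0}^{M}\frac{\Phi^{(2j)}(0)}{(2j)!}r^{2j}+O(r^{2M+2})$ (only even powers appear because $\Phi$ is even) to write
$$ \int_0^1 r^{\lambda+n-1}\Phi(r)\,dr = \sum_{j=0}^M \frac{\Phi^{(2j)}(0)}{(2j)!}\cdot\frac{1}{\lambda+n+2j} + H_M(\lambda), $$
where $H_M$ is holomorphic for $\Re\lambda>-n-2M-2$. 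Hence $\int_0^\infty r^{\lambda+n-1}\Phi(r)\,dr$ has simple poles at $\lambda=-n-2j$ with residues $\Phi^{(2j)}(0)/(2j)!$. Dividing by $\Gamma(\frac{\lambda+n}{2})$, whose reciprocal has simple zeros precisely at these points with $\frac{1}{\Gamma(\frac{\lambda+n}{2})}=\frac{(-1)^j j!}{2}(\lambda+n+2j)+O((\lambda+n+2j)^2)$, shows that $\langle u_\lambda,\varphi\rangle$ is entire and evaluates at $\lambda=-n-2j$ to $\frac{(-1)^j j!}{2}\cdot\frac{\Phi^{(2j)}(0)}{(2j)!}$.

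Next I would identify this value as a multiple of $\Delta^j\varphi(0)$. Expanding $\Phi^{(2j)}(0)=\sum_{|\alpha|=2j}\frac{(2j)!}{\alpha!}(\partial^\alpha\varphi)(0)\int_{S^{n-1}}\omega^\alpha\,d\omega$, only multi-indices $\alpha=2\beta$ contribute, and the standard spherical moment
$$ \int_{S^{n-1}}\omega^{2\beta}\,d\omega = \frac{2\pi^{n/2}(2\beta)!}{4^{|\beta|}\beta!\,\Gamma(|\beta|+n/2)} $$
together with $\Delta^j=\sum_{|\beta|=j}\frac{j!}{\beta!}\partial^{2\beta}$ collapses the sum to
$$ \frac{\Phi^{(2j)}(0)}{(2j)!} = \frac{2\pi^{n/2}}{4^j\,j!\,\Gamma(j+n/2)}\Delta^j\varphi(0). $$
Combining with the residue computation yields $u_{-n-2N}=\frac{(-1)^N\pi^{n/2}}{2^{2N}\Gamma(N+n/2)}\Delta^N\delta$, as claimed.

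Finally, for the support statement, on $\R^n\setminus\{0\}$ the distribution $u_\lambda$ coincides with the smooth function $|x|^\lambda/\Gamma(\frac{\lambda+n}{2})$ by uniqueness of analytic continuation; this function is nowhere vanishing whenever $1/\Gamma(\frac{\lambda+n}{2})\neq0$, i.e. for $\lambda\notin-n-2\Z_{\geq0}$, so $\Supp u_\lambda=\R^n$ in that range. The main technical point is the bookkeeping of the two competing pole/zero structures of the integral and the Gamma factor; once these are matched the rest is straightforward, and the only genuine computation is the spherical moment identity used in the residue step.
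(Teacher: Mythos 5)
Your proof is correct: the paper only cites Gelfand--Shilov for this lemma, and your argument is a clean, self-contained version of their standard approach (polar coordinates, Taylor expansion at the origin to locate the poles of the radial integral, cancellation against the zeros of $1/\Gamma$, and the spherical moment identity to identify the residue with $\Delta^N\delta$). All the bookkeeping checks out, including the factor of $\frac{1}{2}$ from $s+j=\frac{\lambda+n+2j}{2}$ and the identity $\sum_{|\beta|=j}\frac{1}{\beta!}\partial^{2\beta}=\frac{1}{j!}\Delta^j$.
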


The following result follows immediately from the classification of homogeneous distributions on $\R$ in \cite[Chapter I, 3.11]{Gelfand_Shilov_1}:

\begin{lemma}
\label{lemma:homogenous_distributions}
Let $F < \Orm(n)$ be a compact subgroup which acts transitively on the unit sphere $S^{n-1} \subseteq \R^n$. If  $u\in \mathcal{D}'(\R^n)$ is homogeneous of degree $\lambda$ and invariant under the action of $F$, then $u$ is a scalar multiple of $u_\lambda$.
\end{lemma}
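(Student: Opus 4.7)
\textbf{Proof plan for Lemma~\ref{lemma:homogenous_distributions}.} The strategy is to reduce the classification of $F$-invariant homogeneous distributions on $\R^n$ to the one-dimensional classification quoted from \cite[Chapter I, 3.11]{Gelfand_Shilov_1}. The orbits of $F$ on $\R^n\setminus\{0\}$ are, by the transitivity assumption and the linearity of the $F$-action, exactly the spheres $\{|x|=r\}$ for $r>0$. Consequently, an $F$-invariant distribution is morally a function of $|x|$, and a homogeneous such distribution should be determined by a single scalar.

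I would make this precise by averaging test functions. Given $u\in\mathcal{D}'(\R^n)^F$ homogeneous of degree $\lambda$, and a test function $\varphi\in C_c^\infty(\R^n)$, $F$-invariance gives $\langle u,\varphi\rangle=\langle u,\varphi^F\rangle$, where $\varphi^F(x)=\int_F\varphi(f\cdot x)\,df$ is the $F$-average. By transitivity of $F$ on $S^{n-1}$, $\varphi^F$ depends only on $|x|$, i.e. there is a unique $\psi_\varphi\in C_c^\infty(\R)$, even in its argument, with $\varphi^F(x)=\psi_\varphi(|x|)$. The map $\varphi\mapsto\psi_\varphi$ is continuous and surjective onto the space of even compactly supported smooth functions on $\R$ extended smoothly across $0$ (using that an even smooth function of $r$ extends smoothly to a radial function on $\R^n$). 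Hence the functional $\tilde{u}\colon\psi\mapsto\langle u,\psi(|x|)\rangle$ is a well-defined even distribution on $\R$, and homogeneity of $u$ of degree $\lambda$ on $\R^n$ translates (by the scaling $x\mapsto tx$ in the pairing, absorbing the Jacobian $t^n$) into homogeneity of $\tilde{u}$ of degree $\lambda+n-1$ on $\R$ as a distribution.

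Now apply \cite[Chapter I, 3.11]{Gelfand_Shilov_1}: the space of even homogeneous distributions on $\R$ of any given complex degree is at most one-dimensional, spanned (after normalization) by $|r|^\lambda/\Gamma(\tfrac{\lambda+n}{2})$, which extends entirely in $\lambda$ and corresponds under polar coordinates precisely to $u_\lambda$. Therefore $\tilde{u}$ is a scalar multiple of the corresponding one-dimensional family, and reading back through the identification $\varphi\mapsto\psi_\varphi$, one obtains $u=c\cdot u_\lambda$ for some $c\in\C$.

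The main obstacle is the reduction step rather than the final appeal to Gelfand--Shilov: one must verify that $\varphi\mapsto\psi_\varphi$ maps $C_c^\infty(\R^n)$ onto the space of \emph{smooth} even functions of $r$ (so that $\tilde{u}$ is a genuine distribution on $\R$, not only on $\R_+$), and one must carefully treat the exceptional values $\lambda\in -n-2\Z_{\geq 0}$, where $u_\lambda$ becomes (a multiple of) $\Delta^N\delta$ and thus has support at the origin---here the equality follows by considering the degree of homogeneity together with the fact that any $F$-invariant distribution supported at $\{0\}$ is a polynomial in the Laplacian applied to $\delta$, and only one such polynomial of fixed homogeneity degree is available up to scalars (this last fact can itself be extracted from the $\Orm(n)$-invariance forced by $F$-transitivity on $S^{n-1}$ when restricted to the finite-dimensional space of derivatives of $\delta$ of a given order).
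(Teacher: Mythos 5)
Your proof is correct and is precisely the reduction the paper has in mind: the paper simply states that the lemma ``follows immediately'' from Gelfand--Shilov's one-dimensional classification, and the $F$-averaging argument you give (sending a test function $\varphi$ to the even function $\psi_\varphi$ of $r$, then noting that the induced functional $\tilde u$ on even test functions of $r$ is a homogeneous distribution of degree $\lambda+n-1$) makes this immediacy precise. Your final worry about the exceptional values $\lambda\in -n-2\Z_{\geq 0}$ is in fact already taken care of by the main reduction, since the space of even homogeneous distributions on $\R$ of any fixed degree is one-dimensional, including at the exceptional degrees where the normalized family becomes a derivative of $\delta$; the separate Fischer-type argument in your last paragraph is not needed, though it is not wrong.
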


\section{Integral formulas}

For $\alpha\in\Z_{\geq0}^p$ we write $|\alpha|=\alpha_1+\cdots+\alpha_p$ and $\alpha!=\alpha_1!\cdots\alpha_p!$.

\begin{lemma}
\label{lemma:int_techniques}
\begin{enumerate}[label=(\roman{*}), ref=\thetheorem(\roman{*})]
\item 
\label{lemma:int_techniques:i}
Let $S^{p-1}\subseteq\R^p$ be the unit sphere and $\alpha \in \Z_{\geq 0}^p$, then
\begin{align*}
\int_{S^{p-1}}\omega_{1}^{2\alpha_1}\dots  \omega_{p}^{2\alpha_{p}}\,d\omega 
=2^{-2\abs{\alpha}+1}\frac{(2\alpha)!\pi^{\frac{p}{2}}}{\alpha!\Gamma(\frac{p}{2}+\abs{\alpha})}.
\end{align*}
\item 
\label{lemma:int_techniques:ii}
For $p=p'+p''$ and $\omega\in S^{p-1}$, $\alpha\in\Z_{\geq0}^p$ we write $\omega''=(\omega_{p'+1}, \ldots \omega_p)$ and $\alpha''=(\alpha_{p'+1},\ldots,\alpha_p)$. Then for $\gamma \in \C$ with $\Re \gamma > -p''-2|\alpha''|$ we have
\begin{equation*}
\int_{S^{p-1}}\omega_1^{2\alpha_1}\dots \omega_p^{2\alpha_p}\abs{\omega''}^\gamma \,d\omega =
2^{-2\abs{\alpha}+1}\frac{\pi^{\frac{p}{2}}(2\alpha)!\Gamma(\frac{\gamma+p''}{2}+\abs{\alpha''})}{\alpha!\Gamma(\frac{\gamma+p}{2}+\abs{\alpha})\Gamma(\frac{p''}{2}+\abs{\alpha''})}.
\end{equation*}
\end{enumerate}
\end{lemma}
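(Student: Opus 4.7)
Both identities are classical and I would prove them by the standard Gaussian integral / bi-spherical decomposition technique.

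For (i), my plan is to evaluate the Gaussian integral $\int_{\R^p} x_1^{2\alpha_1}\cdots x_p^{2\alpha_p}e^{-|x|^2}\,dx$ in two different ways. On one hand, Fubini together with the well-known identity
$$\int_{\R} x^{2k} e^{-x^2}\,dx = \Gamma\!\left(k+\tfrac{1}{2}\right) = \frac{\sqrt{\pi}(2k)!}{4^k k!}$$
(using the duplication formula) gives $\pi^{p/2}(2\alpha)!/(4^{|\alpha|}\alpha!)$. On the other hand, passing to polar coordinates factors the integral as
$$\int_{S^{p-1}}\omega^{2\alpha}\,d\omega \cdot \int_0^\infty r^{p-1+2|\alpha|}e^{-r^2}\,dr = \frac{1}{2}\,\Gamma\!\left(\tfrac{p}{2}+|\alpha|\right)\int_{S^{p-1}}\omega^{2\alpha}\,d\omega.$$
Equating both expressions and solving for the spherical integral yields the formula.

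For (ii), I would use a bi-spherical decomposition adapted to the splitting $\R^p = \R^{p'}\oplus\R^{p''}$. Parametrizing $\omega \in S^{p-1}$ by $\omega = (\cos\phi\cdot\sigma',\sin\phi\cdot\sigma'')$ with $\phi\in[0,\pi/2]$, $\sigma'\in S^{p'-1}$, $\sigma''\in S^{p''-1}$, the measure decomposes as
$$d\omega = \cos^{p'-1}\phi\,\sin^{p''-1}\phi\,d\phi\,d\sigma'\,d\sigma''.$$
Since $|\omega''| = \sin\phi$, $\omega'_i = \cos\phi\cdot\sigma'_i$ and $\omega''_j = \sin\phi\cdot\sigma''_j$, the integral factors as
$$\int_0^{\pi/2}\cos^{p'-1+2|\alpha'|}\phi\,\sin^{p''-1+\gamma+2|\alpha''|}\phi\,d\phi \cdot \int_{S^{p'-1}}(\sigma')^{2\alpha'}\,d\sigma' \cdot \int_{S^{p''-1}}(\sigma'')^{2\alpha''}\,d\sigma''.$$
The $\phi$-integral evaluates via the standard Beta-function identity to $\tfrac{1}{2}B(\tfrac{p'}{2}+|\alpha'|,\tfrac{p''+\gamma}{2}+|\alpha''|)$, which provides the factor $\Gamma(\tfrac{\gamma+p''}{2}+|\alpha''|)$ in the numerator and $\Gamma(\tfrac{\gamma+p}{2}+|\alpha|)$ in the denominator. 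The two remaining spherical integrals are handled by part (i) applied to $S^{p'-1}$ and $S^{p''-1}$, which contributes the factors involving $(2\alpha)!/\alpha!$, $\pi^{p/2}$, $\Gamma(\tfrac{p''}{2}+|\alpha''|)$ in the denominator, and cancels $\Gamma(\tfrac{p'}{2}+|\alpha'|)$. Collecting all gamma factors and powers of $\pi$ and $2$ yields exactly the stated formula.

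The only real work is bookkeeping of the gamma factors, and the convergence condition $\Re\gamma > -p''-2|\alpha''|$ is precisely what guarantees convergence of the Beta-function integral at $\phi = \pi/2$. There is no conceptual obstacle — both identities follow from classical techniques.
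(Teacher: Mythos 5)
Your proposal is correct but takes a different route from the paper. For part (i), you compute the Gaussian integral $\int_{\R^p}x^{2\alpha}e^{-|x|^2}\,dx$ two ways (Cartesian via Fubini, then via polar coordinates), whereas the paper iteratively peels off one coordinate at a time via the parametrization $(-1,1)\times S^{m-1}\to S^m$ and evaluates a telescoping product of Beta integrals. Both are classical and yield the same formula; the Gaussian trick avoids the telescoping bookkeeping. For part (ii), you perform a single bi-spherical decomposition $\omega=(\cos\phi\cdot\sigma',\sin\phi\cdot\sigma'')$ with $\sigma'\in S^{p'-1}$, $\sigma''\in S^{p''-1}$, which isolates $|\omega''|=\sin\phi$ and reduces the computation to one Beta integral in $\phi$ plus two applications of (i); the $\Gamma(\tfrac{p'}{2}+|\alpha'|)$ cancellation and the power/factorial bookkeeping all work out as you describe. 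The paper instead continues its one-coordinate-at-a-time reduction, producing a product $\prod_{l=1}^{p'}B(\alpha_l+\tfrac12,\tfrac{\gamma+p-l}{2}+\alpha_{l+1}+\cdots+\alpha_p)$ before landing on the $S^{p''-1}$ integral. Your bi-spherical split is arguably more transparent, as it directly reflects the geometric decomposition $\R^p=\R^{p'}\oplus\R^{p''}$ at the heart of the statement, and correctly identifies the convergence condition $\Re\gamma>-p''-2|\alpha''|$ as controlling the endpoint $\phi=\pi/2$ of the Beta integral.
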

\begin{proof}
Ad (i):
By the repeated use of the coordinates $(-1,1)\times S^{m-1}\to S^{m}$, $(r,\eta)\mapsto\omega=(r,\sqrt{1-r^2}\eta)$ with $d\omega=(1-r^2)^{\frac{m-2}{2}}\,dr\,d\eta$ and the beta integral we find
\begin{multline*}
\int_{S^{p-1}}\omega_{1}^{2\alpha_1}\dots  \omega_{p}^{2\alpha_{p}}\,d\omega=2\prod_{l=1}^{p-1}\frac{\Gamma(\alpha_l+\frac{1}{2})\Gamma(\frac{p-l}{2}+\alpha_{l+1}+ \dots + \alpha_{p})}{\Gamma(\frac{p+1-l}{2}+\alpha_l+\dots + \alpha_{p})} \\
=\frac{2}{\Gamma(\frac{p}{2}+\abs{\alpha})}\prod_{l=1}^{p}\Gamma(\alpha_l+\tfrac{1}{2})= 2^{-2\abs{\alpha}+1}\frac{(2\alpha)!\pi^{\frac{p}{2}}}{\alpha!\Gamma(\frac{p}{2}+\abs{\alpha})}.
\end{multline*}

Ad (ii): Similar as in (i) we find
\begin{align*}
& \int_{S^{p-1}}\omega_1^{2\alpha_1}\dots \omega_p^{2\alpha_p}\abs{\omega''}^\gamma \,d\omega\\
& \hspace{1cm} =\int_{-1}^{1}r^{2\alpha_1}(1-r^2)^{\frac{\gamma +p-3}{2}+\alpha_2+\dots+\alpha_p}dr \int_{S^{p-2}}\omega_2^{2\alpha_2}\dots \omega_p^{2\alpha_p}\abs{\omega''}^\gamma \,d\omega\\
& \hspace{1cm} = B\left(\alpha_1+\tfrac{1}{2},\tfrac{\gamma+p-1}{2}+\alpha_2+\dots+\alpha_p\right)\int_{S^{p-2}}\omega_2^{2\alpha_2}\dots \omega_p^{2\alpha_p}\abs{\omega''}^\gamma \,d\omega\\
& \hspace{1cm} = \ldots = \prod_{l=1}^{p'}B\left( \alpha_l+\tfrac{1}{2},\tfrac{\gamma+p-l}{2}+\alpha_{l+1}+\dots \alpha_{p} \right) \int_{S^{p''-1}}\omega_{p'+1}^{2\alpha_{p'+1}}\dots\omega_p^{2\alpha_p}\,d\omega.
\end{align*}
The remaining integral can be evaluated with (i).
\end{proof}

Fix $p,q\geq1$ and let $\mathbb{S}:=\{ (\omega,\eta)\in \R^p\times\R^q, \abs{\omega}^4+\abs{\eta}^2=1 \}\subseteq\R^{p+q}$. By $d(X,Z)=dX\,dZ$ we denote the normalized Lebesgue measure on $\R^p\times\R^q\cong\R^{p+q}$.

\begin{lemma}[{\cite[Chapter I]{cowling_1982}}]
\label{lemma:polar_coordinates_H_type}
There exists a unique smooth measure $d\mathbb{S}$ on $\mathbb{S}$ such that for every $\varphi \in C^\infty_c(\R^{p+q})$:
$$\int_{\R^{p+q}}\varphi(X,Z)\,d(X,Z)=\int_{\R_+}r^{p+2q-1}\int_{\mathbb{S}}\varphi((r\omega,r^2\eta))\,d\mathbb{S}(\omega,\eta)\,dr.$$
\end{lemma}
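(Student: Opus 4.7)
The plan is to view the map
\[
\Psi: \R_+\times \mathbb{S} \longrightarrow \R^{p+q}\setminus\{0\}, \qquad (r,\omega,\eta)\longmapsto (r\omega,r^2\eta)
\]
as a diffeomorphism (its inverse is $(X,Z)\mapsto (N(X,Z),X/N(X,Z),Z/N(X,Z)^2)$), and to determine the pullback of the Lebesgue measure $d(X,Z)$ under $\Psi$ using the weighted dilations $\delta_s(X,Z)=(sX,s^2Z)$. Once we have $\Psi^*\,d(X,Z) = r^{p+2q-1}\,dr\,d\mathbb{S}(\omega,\eta)$ for some smooth positive measure $d\mathbb{S}$ on $\mathbb{S}$, the claimed formula follows from Fubini on $\R_+\times\mathbb{S}$.

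For \emph{uniqueness}, suppose $d\mathbb{S}_1$ and $d\mathbb{S}_2$ both satisfy the integral identity. Given $f\in C(\mathbb{S})$ and $\chi\in C_c^\infty(\R_+)$, applying the identity to the test function $\varphi(X,Z):=\chi(N(X,Z))\,f(X/N(X,Z),Z/N(X,Z)^2)$ yields
\[
\left(\int_{\R_+}\chi(r)r^{p+2q-1}\,dr\right)\int_{\mathbb{S}} f\,d\mathbb{S}_i = \int_{\R^{p+q}}\varphi(X,Z)\,d(X,Z), \qquad i=1,2.
\]
Choosing $\chi$ so that the $r$-integral is non-zero gives $\int_{\mathbb{S}} f\,d\mathbb{S}_1=\int_{\mathbb{S}} f\,d\mathbb{S}_2$ for all continuous $f$, hence $d\mathbb{S}_1=d\mathbb{S}_2$.

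For \emph{existence}, fix any smooth positive measure $d\sigma$ on the compact manifold $\mathbb{S}$ (e.g.\ the measure induced by the embedding $\mathbb{S}\hookrightarrow\R^{p+q}$). Since $\Psi$ is a diffeomorphism we may write
\[
\Psi^*\,d(X,Z) = J(r,\omega,\eta)\,dr\,d\sigma(\omega,\eta)
\]
for a smooth positive function $J$ on $\R_+\times\mathbb{S}$. The key observation is that the dilation $\delta_s$ has Jacobian $s^{p+2q}$ on $\R^{p+q}$, while in polar coordinates it is simply $(r,\omega,\eta)\mapsto (sr,\omega,\eta)$ on $\R_+\times\mathbb{S}$. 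Equating the two expressions for $\delta_s^*\Psi^*\,d(X,Z)$ and using that the change of variable $r\mapsto sr$ contributes a factor $s$ from $dr$, one obtains
\[
s\cdot J(sr,\omega,\eta) = s^{p+2q}\,J(r,\omega,\eta),
\]
so $J(r,\omega,\eta) = r^{p+2q-1}\,g(\omega,\eta)$ with $g(\omega,\eta):=J(1,\omega,\eta)$. Setting $d\mathbb{S}:=g\,d\sigma$ (a smooth positive measure since $g>0$) gives the desired decomposition.

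The main obstacle in such arguments is usually the direct computation of the Jacobian of $\Psi$, but here it is entirely bypassed by the dilation-equivariance trick: homogeneity forces the $r$-dependence to be $r^{p+2q-1}$, and the transverse factor $g(\omega,\eta)$ is then absorbed into the definition of $d\mathbb{S}$. No explicit local chart on $\mathbb{S}$ nor explicit formula for $g$ is needed, which is appropriate since the lemma only asserts existence and uniqueness of such a measure.
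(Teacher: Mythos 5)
The paper cites this lemma from Cowling's 1982 article and does not supply its own proof, so there is no internal argument to compare against; you are filling a gap rather than reproducing one. Your argument is correct and self-contained.

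The strategy is sound: observe that $\Psi(r,\omega,\eta)=(r\omega,r^2\eta)$ is a diffeomorphism $\R_+\times\mathbb{S}\to\R^{p+q}\setminus\{0\}$ (the inverse you give is smooth since $N$ is smooth and positive away from the origin), note the intertwining $\Psi\circ(s\cdot\mathrm{id}\times\mathrm{id})=\delta_s\circ\Psi$, and exploit $\det\delta_s=s^{p+2q}$ to force $J(r,\omega,\eta)=r^{p+2q-1}J(1,\omega,\eta)$. This sidesteps any explicit Jacobian computation on $\mathbb{S}$, which is exactly the right way to get the $r^{p+2q-1}$ power without pain. Uniqueness via separated test functions $\chi(N)\,f$ is also the standard and correct move.

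One small slip: in the uniqueness step you allow $f\in C(\mathbb{S})$, but then $\varphi=\chi(N)\cdot f(\cdot/N,\cdot/N^2)$ is merely continuous with compact support, whereas the defining identity is assumed only for $\varphi\in C_c^\infty(\R^{p+q})$. Take $f\in C^\infty(\mathbb{S})$ instead; since $\chi\in C_c^\infty(\R_+)$ is supported away from $0$ and $N$ is smooth on $\R^{p+q}\setminus\{0\}$, the resulting $\varphi$ is then genuinely a smooth compactly supported test function, and equality of the two measures against all $f\in C^\infty(\mathbb{S})$ already gives $d\mathbb{S}_1=d\mathbb{S}_2$. With this adjustment the proof is complete.
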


\begin{lemma}
For the measure $d\mathbb{S}$ of Lemma \ref{lemma:polar_coordinates_H_type} the following integral formula holds for all $\varphi\in C_c^\infty(\mathbb{S})$:
\label{lemma:integral_formula_polar_coordinates}
$$\int_{\mathbb{S}}\varphi(\omega,\eta)\,d\mathbb{S}(\omega,\eta) = 2\int_{0}^{1} x^{p-1} (1-x^4)^{\frac{q}{2}-1} \int_{S^{p-1}}\int_{S^{q-1}}\varphi(x\omega,\sqrt{1-x^4} \eta)\,d\eta\,d\omega\,dx.$$
\end{lemma}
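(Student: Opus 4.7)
\textbf{Proof proposal for Lemma~\ref{lemma:integral_formula_polar_coordinates}.}

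The strategy is to explicitly compute the Lebesgue integral on $\R^{p+q}$ by iterated polar coordinates and then match it against the defining formula of $d\mathbb{S}$ given in Lemma~\ref{lemma:polar_coordinates_H_type}, invoking uniqueness.

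First I would take an arbitrary $\varphi\in C_c^\infty(\R^{p+q})$ and write standard polar coordinates on each factor, i.e. $X=s\omega$ with $(s,\omega)\in\R_+\times S^{p-1}$ and $Z=t\eta$ with $(t,\eta)\in\R_+\times S^{q-1}$, so that
\begin{equation*}
\int_{\R^{p+q}}\varphi(X,Z)\,d(X,Z)=\int_0^\infty\!\!\int_0^\infty\!\!\int_{S^{p-1}}\!\!\int_{S^{q-1}}\varphi(s\omega,t\eta)\,s^{p-1}t^{q-1}\,d\eta\,d\omega\,dt\,ds.
\end{equation*}

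Next I would carry out the change of variables $(s,t)=(rx,r^2\sqrt{1-x^4})$ on $\R_+\times\R_+$, which is a diffeomorphism onto $\R_+\times(0,1)$ with inverse $r=(s^4+t^2)^{1/4}$, $x=s/r$. A direct Jacobian computation gives
\begin{equation*}
\left|\det\frac{\partial(s,t)}{\partial(r,x)}\right|=\left|\det\begin{pmatrix} x & r\\ 2r\sqrt{1-x^4} & -2r^2x^3/\sqrt{1-x^4}\end{pmatrix}\right|=\frac{2r^2}{\sqrt{1-x^4}},
\end{equation*}
so that $s^{p-1}t^{q-1}\,ds\,dt=2r^{p+2q-1}x^{p-1}(1-x^4)^{q/2-1}\,dr\,dx$. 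Plugging this back in yields
\begin{equation*}
\int_{\R^{p+q}}\varphi(X,Z)\,d(X,Z)=\int_0^\infty r^{p+2q-1}\left[2\int_0^1\!x^{p-1}(1-x^4)^{\frac{q}{2}-1}\!\!\int_{S^{p-1}}\!\!\int_{S^{q-1}}\!\!\varphi(rx\omega,r^2\sqrt{1-x^4}\eta)\,d\eta\,d\omega\,dx\right]dr.
\end{equation*}

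Comparing this with the integral formula of Lemma~\ref{lemma:polar_coordinates_H_type} and using uniqueness of the measure $d\mathbb{S}$ on $\mathbb{S}$ (apply the identity to test functions of the form $\varphi(X,Z)=\chi(N(X,Z))\psi(X/N(X,Z),Z/N(X,Z)^2)$ with $\chi\in C_c^\infty(\R_+)$ of total mass one and $\psi\in C^\infty(\mathbb{S})$) gives exactly the claimed formula. The only technical point is the Jacobian computation, but this is a one-line determinant evaluation; the rest of the argument is a routine disintegration.
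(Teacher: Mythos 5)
Your proof is correct and follows essentially the same route as the paper: double polar coordinates on $\mathfrak{v}\oplus\mathfrak{z}$, the substitution $(s,t)=(rx,r^2\sqrt{1-x^4})$, and matching against the defining formula of $d\mathbb{S}$. The Jacobian computation is right, and you supply slightly more detail than the paper does (the explicit determinant and the uniqueness argument via separable test functions $\chi(N)\psi$), which the paper leaves implicit in its closing sentence "Then Lemma~\ref{lemma:polar_coordinates_H_type} implies the statement."
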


\begin{proof}
Let $\psi \in C^\infty_c(\R^{p+q})$. Using polar coordinates we find
\begin{equation*}
 \int_{\R^{p+q}} \psi(X,Z)\,d(X,Z) = \int_{\R_+}s^{p-1}\int_{S^{p-1}}\int_{\R_+}t^{q-1}\int_{S^{q-1}} \psi(s\omega,t\eta)\,d\eta\,dt\,d\omega\,ds.
\end{equation*}
Now choosing coordinates $(s,t)=(rx,r^2\sqrt{1-x^4})$ we obtain
$$
= 2\int_{\R_+}r^{p+2q-1}
\int_{0}^{1} x^{p-1} (1-x^4)^{\frac{q}{2}-1} \int_{S^{p-1}}\int_{S^{q-1}}\psi(rx\omega,r^2\sqrt{1-x^4} \eta)\,d\eta\,d\omega\,dx\,dr.
$$
Then Lemma~\ref{lemma:polar_coordinates_H_type} implies the statement.
\end{proof}

For $\lambda\in\C$ with $\Re\lambda>-(p+2q)$ the function
$$ v_\lambda(X,Z) = \frac{1}{\Gamma(\frac{\lambda+p+2q}{2})}(\abs{X}^4+\abs{Z}^2)^{\frac{\lambda}{4}} $$
is locally integrable and hence defines a distribution $v_\lambda\in\mathcal{D}'(\R^{p+q})$.

\begin{corollary}\label{cor:HeisenbergNormPowersHolomorphic}
The family of distributions $v_\lambda\in\mathcal{D}'(\R^{p+q})$ extends holomorphically to an entire function in $\lambda\in\C$. For $\lambda\notin-(p+2q)-2\Z_{\geq0}$ we have $\Supp v_\lambda=\R^{p+q}$ and for $\lambda=-(p+2q)-2N\in-(p+2q)-2\Z_{\geq0}$ we have
$$ v_{-(p+2q)-2N}(X,Z) = \frac{(-1)^N2^{-N}N!\pi^{\frac{p+q}{2}}}{\Gamma(\frac{2N+p+2q}{4})} \sum_{i+2j=N} \frac{2^{-i}\Gamma(\frac{p+2i}{4})}{i!j!\Gamma(\frac{p+2i}{2})}\Delta^i\square^j\delta(X,Z), $$
where $\Delta=\sum_{i=1}^p\frac{\partial^2}{\partial X_i^2}$ and $\Box=\sum_{j=1}^q\frac{\partial^2}{\partial Z_j^2}$ are the Laplacians on $\R^p$ and $\R^q$.
\end{corollary}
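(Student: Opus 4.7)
\textbf{Proof plan for Corollary~\ref{cor:HeisenbergNormPowersHolomorphic}.}
The plan is to reduce to the one-dimensional result of Lemma~\ref{lemma:absolute_value_distr_holomorphic} via the polar coordinates of Lemma~\ref{lemma:polar_coordinates_H_type}, and then read off the residue formula from a Taylor expansion. For $\varphi\in C_c^\infty(\R^{p+q})$, parametrizing $(X,Z)=(r\omega,r^2\eta)$ with $(\omega,\eta)\in\mathbb{S}$ gives $N(r\omega,r^2\eta)=r$, so
$$ \langle v_\lambda,\varphi\rangle = \frac{1}{\Gamma(\tfrac{\lambda+p+2q}{2})}\int_0^\infty r^{\lambda+p+2q-1}F(r)\,dr, \qquad F(r):=\int_\mathbb{S}\varphi(r\omega,r^2\eta)\,d\mathbb{S}(\omega,\eta). $$
The function $F\in C^\infty(\R)$ is even in $r$, because the substitution $\omega\mapsto -\omega$ is a symmetry of $\mathbb{S}$ that transforms $\varphi(-r\omega,r^2\eta)$ into $\varphi(r\omega,r^2\eta)$. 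Hence the radial integral equals $\tfrac12\int_\R|r|^{\lambda+p+2q-1}F(r)\,dr$, and using Lemma~\ref{lemma:absolute_value_distr_holomorphic} in dimension one with $\mu=\lambda+p+2q-1$ we obtain
$$ \langle v_\lambda,\varphi\rangle = \tfrac12\langle u_{\lambda+p+2q-1},F\rangle, $$
which is an entire function of $\lambda$. This gives the holomorphic extension.

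For the support statement, on $\R^{p+q}\setminus\{0\}$ the function $N(X,Z)^\lambda$ is nowhere zero and smooth, so $v_\lambda|_{\R^{p+q}\setminus\{0\}}$ vanishes identically if and only if $1/\Gamma(\tfrac{\lambda+p+2q}{2})=0$, i.e.\ $\lambda=-(p+2q)-2N$; otherwise $\Supp v_\lambda=\R^{p+q}$, and at these exceptional values $\Supp v_\lambda\subseteq\{0\}$.

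At $\lambda=-(p+2q)-2N$ the corresponding one-dimensional parameter is $\mu=-1-2N$ and Lemma~\ref{lemma:absolute_value_distr_holomorphic} yields
$$ \langle v_\lambda,\varphi\rangle = \frac{(-1)^N\sqrt{\pi}}{2\cdot 2^{2N}\,\Gamma(N+\tfrac12)}F^{(2N)}(0). $$
To identify $F^{(2N)}(0)$, the plan is to Taylor-expand
$$ \varphi(r\omega,r^2\eta)=\sum_{\alpha,\beta}\frac{r^{|\alpha|+2|\beta|}\omega^\alpha\eta^\beta}{\alpha!\,\beta!}\,\partial_X^\alpha\partial_Z^\beta\varphi(0,0), $$
and pick off the coefficient of $r^{2N}$, so that $F^{(2N)}(0)/(2N)!=\sum_{|\alpha|+2|\beta|=2N}\tfrac{1}{\alpha!\beta!}\bigl(\int_\mathbb{S}\omega^\alpha\eta^\beta\,d\mathbb{S}\bigr)\partial_X^\alpha\partial_Z^\beta\varphi(0,0)$. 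Parity on $\mathbb{S}$ kills all terms except $\alpha=2a$, $\beta=2b$ with $|a|+2|b|=N$; for these, Lemma~\ref{lemma:integral_formula_polar_coordinates} (with the substitution $x^4=y$) combined with Lemma~\ref{lemma:int_techniques:i} evaluates the sphere integral explicitly as
$$ \int_\mathbb{S}\omega^{2a}\eta^{2b}\,d\mathbb{S} = \frac{2^{1-2|a|-2|b|}(2a)!(2b)!\,\pi^{(p+q)/2}\,\Gamma(\tfrac{p+2|a|}{4})}{a!\,b!\,\Gamma(\tfrac{p}{2}+|a|)\,\Gamma(\tfrac{p+2q+2N}{4})}. $$
Finally the identities $\sum_{|a|=i}\tfrac{1}{a!}\partial_X^{2a}=\tfrac{1}{i!}\Delta^i$ and $\sum_{|b|=j}\tfrac{1}{b!}\partial_Z^{2b}=\tfrac{1}{j!}\Box^j$ collect the terms into $\Delta^i\Box^j\delta$ with $i+2j=N$. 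Substituting back, the scalar $\tfrac{(2N)!\,\sqrt\pi}{2^{2N+1}\Gamma(N+\tfrac12)}$ simplifies to $\tfrac{N!}{2}$ by the Legendre duplication formula, and using $2^{-2i-2j}=2^{-N-i}$ on the index set $i+2j=N$ matches the normalization in the statement.

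The main obstacles are arithmetic rather than conceptual: keeping track of the parity argument that makes $F$ even (so that Lemma~\ref{lemma:absolute_value_distr_holomorphic} applies cleanly), and collapsing the various beta-function and duplication-formula factors to the compact form in the corollary. The structural content is precisely that the normalization $\Gamma(\tfrac{\lambda+p+2q}{2})^{-1}$ absorbs exactly the pole coming from the radial direction under the polar decomposition.
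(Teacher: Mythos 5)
Your proof is correct and follows essentially the same route as the paper: polar coordinates via Lemma~\ref{lemma:polar_coordinates_H_type} reduce the radial direction to the one-dimensional family of Lemma~\ref{lemma:absolute_value_distr_holomorphic}, the $\delta^{(2N)}$ residue is converted to a $2N$-th derivative of the sphere average, and Lemmas~\ref{lemma:integral_formula_polar_coordinates} and~\ref{lemma:int_techniques} with the Multinomial Theorem collapse the sphere moments into $\Delta^i\Box^j\delta$. The only cosmetic difference is that the paper invokes Lemma~\ref{lemma:combinatorics_deriviative} to compute the $2N$-th derivative of $\varphi(rx\omega,r^2\sqrt{1-x^4}\eta)$, whereas you Taylor-expand $\varphi(r\omega,r^2\eta)$ directly and make the evenness of $F$ explicit; these amount to the same bookkeeping.
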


\begin{proof}
Using the polar coordinates $(r\omega,r^2\eta)$, $(\omega,\eta)\in \mathbb{S}$, of Lemma~\ref{lemma:polar_coordinates_H_type}, the distribution $v_\lambda$ is given by
$$ \frac{1}{\Gamma(\frac{\lambda+p+2q}{2})}r^{\lambda+p+2q-1}, $$
which is holomorphic in $\lambda\in\C$ by Lemma~\ref{lemma:absolute_value_distr_holomorphic}. For $\lambda\notin-(p+2q)-2\Z_{\geq0}$ we have $\Supp r^{\lambda+p+2q-1}=\R_+$ and hence $\Supp v_\lambda=\R^{p+q}$. For $\lambda=-(p+2q)-2N$, $N\in\Z_{\geq0}$, Lemma~\ref{lemma:absolute_value_distr_holomorphic} shows that
$$ \frac{1}{\Gamma(\frac{\lambda+p+2q}{2})}r^{\lambda+p+2q-1} = (-1)^N\frac{N!}{2(2N)!}\delta^{(2N)}(r). $$
Let $\varphi \in C_c^\infty(\nbar)$. Then by Lemma~\ref{lemma:integral_formula_polar_coordinates} and Lemma~\ref{lemma:combinatorics_deriviative} we have
\begin{multline*}
\frac{d^{2N}}{dt^{2N}}\biggr|_{t=0} \int_{\mathbb{S}}\varphi(r\omega,r^2\eta) \,d\mathbb{S} =
2\sum_{i+2j=2N} \sum_{\abs{\alpha}=i} \sum_{\abs{\beta}=j} \frac{(2N)!}{\alpha!\beta!}\int_{0}^{1}x^{p+i-1}(1-x^4)^{\frac{q+j}{2}-1} \,dx\\ \times \int_{S^{p-1}}\omega^\alpha \,d\omega \int_{S^{q-1}} \eta^\beta \,d\eta \left( \frac{\partial}{\partial X} \right)^\alpha \left( \frac{\partial}{\partial Z} \right)^\beta \varphi(0,0).
\end{multline*}
Now the integrals over the spheres vanish for odd-length multi-indices and the remaining integrals can be computed using Lemma~\ref{lemma:int_techniques}. We obtain
$$
=\sum_{i+2j=N} \sum_{\abs{\alpha}=i} \sum_{\abs{\beta}=j} 2^{-2i-2j+1} \pi^{\frac{p+q}{2}}\frac{(2N)!B(\frac{p+2i}{4},\frac{q}{2}+j)}{\alpha! \beta! \Gamma(\frac{p}{2}+i)\Gamma(\frac{q}{2}+j)} \left( \frac{\partial}{\partial X} \right)^{2\alpha }\left( \frac{\partial}{\partial Z} \right)^{2\beta} \varphi(0,0),
$$
which is by the Multinomial Theorem~\eqref{theorem:multinomial_theorem} equal to
\begin{equation*}
 = \left \langle \sum_{i+2j=N} 2^{-2i-2j+1} \pi^{\frac{p+q}{2}}\frac{(2N)!\Gamma(\frac{p+2i}{4})}{\Gamma(\frac{2N+p+2q}{4})i!j! \Gamma(\frac{p}{2}+i)}\Delta_{\mathfrak{v}}^i\square^j\delta,  \varphi \right \rangle.\qedhere
\end{equation*}
\end{proof}

\section{Combinatorial identities}

We recall Faà di Bruno's Formula for the higher derivatives of the composition of two real-valued functions $f$ and $g$ on $\R$:
\begin{equation}\label{eq:faa_di_bruno}
 \frac{d^n}{dx^n}\Big[(f\circ g)(x)\Big] = \sum_{k_1+2k_2+\cdots+nk_n=n}\frac{n!}{k_1!\cdots k_n!}\Bigg[\frac{d^{k_1+\cdots+k_n}f}{dx}\circ g\Bigg](x)\prod_{m=1}^n\Bigg(\frac{1}{m!}\frac{d^mg}{dx^m}\Bigg)^{k_m}.
\end{equation}

\begin{lemma}
	\label{lemma:combinatorics_deriviative}
	Let $f \in C^\infty(\R^2)$, then $$ \frac{d^k}{d t^k}\biggr|_{t=0}f(tx, t^2y)= \sum_{i+2j=k} \frac{k!}{i!j!} x^iy^j\frac{\partial^{i+j}f}{\partial x^i \partial y^j}(0,0). $$
\end{lemma}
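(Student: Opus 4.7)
The statement asserts a clean expression for the $k$-th derivative at $t=0$ of a composition $t \mapsto f(tx, t^2 y)$. My plan is to obtain this directly from the two-variable Taylor expansion of $f$ at the origin rather than from an iterated application of Faà di Bruno's formula, since the substitution $(u,v) = (tx, t^2 y)$ is polynomial in $t$ and all the relevant bookkeeping is visible from the expansion.

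First, I would invoke Taylor's theorem for smooth functions of two variables to write, for $(u,v)$ in a neighborhood of $(0,0)$,
\[
 f(u,v) = \sum_{i,j \geq 0} \frac{1}{i!\,j!} \frac{\partial^{i+j} f}{\partial x^i \partial y^j}(0,0)\, u^i v^j + R_N(u,v),
\]
where $R_N$ is a remainder of order higher than some $N \geq k$. Substituting $u = tx$, $v = t^2 y$ produces
\[
 f(tx, t^2 y) = \sum_{i,j\geq 0} \frac{x^i y^j}{i!\,j!} \frac{\partial^{i+j} f}{\partial x^i \partial y^j}(0,0)\, t^{i+2j} + R_N(tx, t^2 y),
\]
and the remainder term contributes only powers of $t$ of order $> k$ to the derivative at $t=0$ once $N$ is chosen large enough.

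Next I would extract the coefficient of $t^k$ by applying $\frac{d^k}{dt^k}\big|_{t=0}$, which multiplies each surviving monomial $t^{i+2j}$ (with $i+2j=k$) by $k!$ and kills the rest. This yields precisely
\[
 \frac{d^k}{dt^k}\bigg|_{t=0} f(tx,t^2 y) = \sum_{i+2j=k} \frac{k!}{i!\,j!}\, x^i y^j\, \frac{\partial^{i+j} f}{\partial x^i \partial y^j}(0,0),
\]
as claimed. There is no real obstacle here; the only point to be mildly careful about is justifying that the Taylor remainder does not contribute, which is standard since $R_N(tx,t^2y) = O(t^{N+1})$ as $t \to 0$.
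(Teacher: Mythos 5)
Your argument is correct, and it takes a genuinely different route from the paper. The paper's proof is combinatorial: it first observes (essentially from the chain rule) that the $k$-th derivative at $t=0$ must be a linear combination of terms $x^iy^j\,\partial_x^i\partial_y^j f(0,0)$ with $i+2j=k$ and unknown integer coefficients $c_{ijk}$, then argues by a riffle-shuffle count that $c_{ijk}$ is divisible by $\binom{k}{i}$, reduces the determination of the normalized coefficient to the one-variable problem $\frac{d^{2j}}{dt^{2j}}\big|_{t=0}\varphi(t^2y)$, and finishes that off with Fa\`a di Bruno's formula \eqref{eq:faa_di_bruno} (which the paper has set up in the appendix and reuses elsewhere). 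You instead read the coefficient of $t^k$ directly off the two-variable Taylor expansion of $f$ at the origin after the substitution $(u,v)=(tx,t^2y)$, discarding the remainder because it is a smooth function of $t$ that is $O(t^{N+1})$ with $N\geq k$, hence has vanishing $k$-th derivative at $t=0$. Both arguments are short and correct; yours is more elementary in that it bypasses Fa\`a di Bruno and the shuffle argument entirely and makes the source of the multinomial-type coefficient $k!/(i!\,j!)$ transparent, while the paper's version fits naturally with the Fa\`a di Bruno machinery it has already introduced. One small point worth making explicit if you write this up: the vanishing of the remainder's $k$-th derivative at $0$ uses not just the asymptotic bound but also the smoothness of $t\mapsto R_N(tx,t^2y)$, which holds since $R_N=f-P_N$ is smooth.
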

\begin{proof}
	It is clear that the left hand side is an expression of the form
	$$ \sum_{i+2j=k}c_{ijk}x^iy^j\frac{\partial^{i+j}f}{\partial x^i \partial y^j}(0,0) $$ 
	with certain non-negative integer coefficients $c_{ijk}$.
	By riffle shuffle permutation the coefficients $c_{ijk}$ are divisible by $\binom{k}{i}$ and the normalized coefficients $\tilde{c}_{ijk}:=\binom{k}{i}^{-1}c_{ijk}$ satisfy $$\frac{d^{2j}}{d t^{2j}}\biggr|_{t=0}\varphi(t^2y)=\tilde{c}_{ijk}\frac{\partial^j\varphi}{\partial y^j}(0) \qquad \forall\,\varphi\in C^\infty(\R).$$
	By Faà di Bruno's Formula~\eqref{eq:faa_di_bruno} we have $\tilde{c}_{ijk}=\frac{(2j)!}{j!}$. Hence $c_{ijk}= \frac{(i+2j)!}{i!j!}$.
\end{proof}

For commuting variables $x_1,\ldots,x_n$ and $m\in\Z_{\geq0}$ the Multinomial Theorem holds:
\begin{equation}
 (x_1+\cdots+x_n)^m=\sum_{\substack{\alpha \in \Z_{\geq 0}^n\\\abs{\alpha}=m}}\frac{m!}{\alpha_1!\cdots \alpha_n!}x_1^{\alpha_1}\cdots x_n^{\alpha_n}.\label{theorem:multinomial_theorem}
\end{equation}

For $a\in\R$ and $n\in\Z_{\geq0}$ let $(a)_n=a(a+1)\cdots(a+n-1)$ denote the Pochhammer symbol. For multi-indices $\alpha\in\N^n$ and $x\in\R^n$ we further write
$$ (x)_\alpha = (x_1)_{\alpha_1}\cdots(x_n)_{\alpha_n}. $$

\begin{lemma}\label{lem:MultiindexSum}
For any $x\in\R^n$ and $m\in\N$ we have
$$ \sum_{|\alpha|=m}\frac{(x)_\alpha}{\alpha!} = \frac{(x_1+\cdots+x_n)_m}{m!}. $$
\end{lemma}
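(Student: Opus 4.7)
The plan is to recognize this identity as the multivariable form of the Chu--Vandermonde convolution and to prove it by a generating function argument. The key analytic input is the classical binomial series
\[
(1-t)^{-a} = \sum_{k=0}^\infty \frac{(a)_k}{k!}\, t^k,
\]
valid as a formal power series identity for any $a\in\R$ (and convergent for $|t|<1$).

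Given this, I would start from the trivial factorization
\[
(1-t)^{-(x_1+\cdots+x_n)} = \prod_{i=1}^n (1-t)^{-x_i}.
\]
Expanding each factor on the right using the binomial series and multiplying the series together gives
\[
\prod_{i=1}^n \sum_{\alpha_i=0}^\infty \frac{(x_i)_{\alpha_i}}{\alpha_i!}\, t^{\alpha_i}
= \sum_{m=0}^\infty \left( \sum_{|\alpha|=m} \frac{(x)_\alpha}{\alpha!} \right) t^m,
\]
while the left side expands as $\sum_{m\geq 0} \frac{(x_1+\cdots+x_n)_m}{m!}\, t^m$. Comparing coefficients of $t^m$ yields the desired identity.

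An alternative route, which avoids invoking the binomial series, is induction on $n$ using the scalar Chu--Vandermonde identity $(a+b)_m = \sum_{k=0}^m \binom{m}{k}(a)_k (b)_{m-k}$ applied with $a=x_n$ and $b=x_1+\cdots+x_{n-1}$; the inductive step then reduces the sum over $|\alpha|=m$ in $n$ variables to a sum over $|\alpha'|=m-k$ in $n-1$ variables. There is no real obstacle here: once one knows either the binomial series or the scalar Chu--Vandermonde identity, the proof is a direct coefficient comparison or a one-line induction. I would present the generating function version for concision.
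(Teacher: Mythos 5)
Your proof is correct. Your primary route — the generating function argument via the binomial series $(1-t)^{-a}=\sum_{k\geq0}\frac{(a)_k}{k!}t^k$ and coefficient comparison in $\prod_i(1-t)^{-x_i}=(1-t)^{-(x_1+\cdots+x_n)}$ — is a genuinely different approach from the paper, which proceeds by induction on $n$ and invokes the scalar Vandermonde identity $\sum_{k=0}^m\binom{m}{k}(a)_k(b)_{m-k}=(a+b)_m$ in the inductive step (this is exactly your stated alternative route). The two buy slightly different things: the generating function proof is shorter and makes the multiplicative structure transparent in one line, at the cost of introducing formal power series and the binomial series as an external input; the paper's inductive proof is more self-contained and elementary, reducing everything to the two-variable case, which it in turn reduces to the classical Vandermonde convolution for binomial coefficients via $\frac{1}{k!}(a)_k=(-1)^k\binom{-a}{k}$. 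Either is perfectly adequate for this appendix lemma.
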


\begin{proof}
The proof is by induction on $n$. For $n=1$ the statement is obvious. For the induction step write $x=(x',x_n)\in\R^{n-1}\times\R=\R^n$ and $\alpha=(\beta,k)$ with $\beta\in\Z_{\geq0}^{n-1}$ and $0\leq k\leq m$, then
\begin{align*}
 \sum_{|\alpha|=m}\frac{(x)_\alpha}{\alpha!} &= \sum_{k=0}^m\frac{(x_n)_k}{k!}\sum_{|\beta|=m-k}\frac{(x')_{\beta}}{\beta!}\\
 &= \sum_{k=0}^m\frac{(x_n)_k(x_1+\cdots+x_{n-1})_{m-k}}{k!(m-k)!}\\
 &= \frac{(x_1+\cdots+x_{n-1}+x_n)_m}{m!},
\end{align*}
where, in the last step, we have used the identity
$$ \sum_{k=0}^m\binom{m}{k}(a)_k(b)_{m-k} = (a+b)_m, $$
which is equivalent to the Vandermonde identity
\begin{equation*}
 \sum_{k=0}^m\binom{a}{k}\binom{b}{m-k} = \binom{a+b}{m},
\end{equation*}
since $\frac{1}{k!}(a)_k=(-1)^k\binom{-a}{k}$.
\end{proof}

\newpage
\addtocontents{toc}{\protect\setcounter{tocdepth}{4}}
\addcontentsline{toc}{subsubsection}{References}

\providecommand{\bysame}{\leavevmode\hbox to3em{\hrulefill}\thinspace}
\providecommand{\href}[2]{#2}

\vspace{1cm}

\textsc{JF: Department of Mathematics, Aarhus University, Ny Munkegade 118, 8000 Aarhus C, Denmark}\par
\textit{E-Mail address:} \texttt{frahm@math.au.dk}\\\par

\textsc{CW: Department of Mathematics, Aarhus University, Ny Munkegade 118, 8000 Aarhus C, Denmark}\par
\textit{E-Mail address:} \texttt{weiske@math.au.dk}

\end{document}